\newtheorem{theorem}{Theorem}[section]
\newtheorem{introtheorem}{Theorem}
\newtheorem{introcorollary}[introtheorem]{Corollary}
\newtheorem{corollary}[theorem]{Corollary}
\newtheorem{lemma}[theorem]{Lemma}
\newtheorem{definition}[theorem]{Definition}
\newtheorem{proposition}[theorem]{Proposition}
\newtheorem{remark}[theorem]{Remark}
\def\C{\mathbb{C}}
\def\H{{\mathbb{H}}}
\def\R{\mathbb{R}}
\def\Q{\mathbb{Q}}
\def\N{\mathbb{N}}
\def\Z{\mathbb{Z}}
\def\D{\mathbb{D}}
\def\QS{\Q / \Z}
\def\RZ{\R / \Z}
\def\CDC{\C \setminus \overline{\D}}
\def\CP2{{\mathbb{CP}^2}}
\def\poly{{\operatorname{poly}}}
\DeclareMathAlphabet{\pazocal}{OMS}{zplm}{m}{n}
\def\ccS{{\mathcal{S}}}
\def\ccS{{\mathcal{S}}}
\def\cC{{\pazocal{C}}}
\def\cE{{\pazocal{E}}}
\def\cH{{\pazocal{H}}}
\def\cI{{\pazocal{I}}}
\def\cM{{\pazocal{M}}}
\def\cO{{\pazocal{O}}}
\def\cR{{\pazocal{R}}}
\def\cS{{\pazocal{S}}}
\def\cU{{\pazocal{U}}}
\def\cW{{\pazocal{W}}}
\def\fstar{{f_\star}}
\def\itin{\operatorname{itin}}
\def\mult3{m_3}
\def\sector{Z}
\def\ray{{\operatorname{ray}}}
\newcommand{\numcirc}[1]{\raisebox{.5pt}{\textcircled{\raisebox{-.9pt} {#1}}}}
\numberwithin{equation}{section}
\begin{document}

\title{Irreducibility of periodic curves in cubic polynomial moduli space}

\author{Matthieu Arfeux \and Jan Kiwi}
\address[Matthieu Arfeux]{Pontificia Universidad Cat\'olica de Valpara\'iso.}
\email{matthieu.arfeux@pucv.cl}
\thanks{The first author was partially supported by  ``Proyecto Fondecyt Iniciación \#11170276''.}

\address[Jan Kiwi]{Facultad de Matem\'aticas, Pontificia Universidad Cat\'olica de Chile.}
\email{jkiwi@uc.cl}
\thanks{The second author was partially supported by CONICYT PIA ACT172001" and ``Proyecto Fondecyt \#1160550''}

\date{\today}
\subjclass[2010]{37F10, 37F20, 37F44, 14H10, 14H15}
\begin{abstract}
  In the moduli space of complex cubic polynomials with a marked critical point,
  given any $p \ge 1$, we prove that the loci formed by polynomials
  with the marked critical point periodic of period $p$ is an irreducible curve. Thus answering a question posed by Milnor in the 90's. 
\end{abstract}

\maketitle

\centerline{\it Dedicated to John Milnor on his 90th birthday.}

\setcounter{tocdepth}{3}
\section{Introduction}
 In their celebrated articles~\cite{BrannerHubbardCubicI,BrannerHubbardCubicII}, Branner and Hubbard 
layed the foundations for the study of complex cubic polynomials as dynamical systems.
These articles shed light on the structural importance of certain dynamically defined 
complex one dimensional slices of
 cubic parameter space~\cite{BrannerMilnorFest60}.
The systematic study of such slices was initiated by Milnor 
in a 1991 preprint later published as~\cite{MilnorPeriodicCubic}.

Following Milnor consider the moduli space $\poly^{cm}_3$ of affine conjugacy classes of cubic polynomials with a marked critical point.
This moduli space is isomorphic to the quotient of $\C^2$
by a linear involution with a unique fixed point.
 For each $p \ge 1$,
 Milnor considered the algebraic subset of $\poly^{cm}_3$
 formed by all polynomials such that the marked critical point  is periodic of exact period $p$, denoted  $\ccS_p$.  
He established that $\ccS_p$ is smooth for all $p \ge 2$.
Therefore, connectedness and irreducibility of $\ccS_p$ are equivalent properties.
Computing explicit parametrizations 
he proved that $\ccS_1, \ccS_2$ and $\ccS_3$ are connected and asked in general: 
\emph{Is $\ccS_p$ connected?}
(\cite[Question~5.3]{MilnorPeriodicCubic}).

Our aim here is to  answer this question in the positive:
\begin{introtheorem}
  \label{thr:main}
  For all $p \ge 1$, the affine algebraic set $\ccS_p$ is irreducible.
\end{introtheorem}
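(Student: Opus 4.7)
The plan is to reduce irreducibility of $\ccS_p$ to connectedness of its ``escape part''. Since Milnor proved $\ccS_p$ is smooth for $p \ge 2$, irreducibility is equivalent to connectedness there (the case $p = 1$ is already in~\cite{MilnorPeriodicCubic}). Viewing $\poly_3^{cm}$ as the quotient of $\C^2$ by an involution, $\ccS_p$ is a smooth affine algebraic curve, so each of its irreducible components is unbounded. Let $\cE_p \subset \ccS_p$ denote the open subset of classes for which the \emph{free} (unmarked) critical point has strictly positive escape rate; since the connectedness locus of $\poly_3^{cm}$ is compact, every component of $\ccS_p$ meets $\cE_p$, and it suffices to prove that $\cE_p$ is connected.

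Over $\cE_p$, B\"ottcher coordinates near infinity provide a combinatorial labelling of the connected components of $\cE_p$ (the \emph{escape regions}) by a ``critical portrait'' recording how the external angles of the free critical value sit relative to the super-attracting cycle of exact period $p$. Following the Branner--Hubbard-type analysis of escape regions for cubic polynomials, I would argue that each escape region is biholomorphic to a once-punctured disk, the puncture representing the degeneration in which the free critical point escapes to infinity in $\poly_3^{cm}$. It thus becomes enough to show that these escape regions are joined to one another.

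The heart of the argument is then to build a suitable compactification of $\poly_3^{cm}$ (via trees of polynomials, in the spirit of the authors' earlier work) in which the puncture of each escape region becomes an explicit boundary point of the closure of $\ccS_p$, and to show that all such boundary points lie in a single connected boundary stratum. At that stratum the cubic should split into a ``quadratic-like'' piece carrying the period-$p$ critical cycle together with a trivial degree-one piece containing the escaped critical point; different escape regions then differ only by the external angle at which the escaping branch attaches, and the space of admissible such angles can be shown to be connected.

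The main obstacle is the combinatorial bookkeeping at infinity: one must (i) enumerate the escape regions of $\ccS_p$ by their critical portraits, (ii) identify the limiting noded object associated to each escape region in the chosen compactification, and (iii) verify that the adjacency graph on escape regions induced by common boundary strata is connected. I expect an induction on $p$, anchored in Milnor's cases $p \le 3$ and exploiting the self-similar appearance of the period-$p$ locus near parabolic and Misiurewicz limits of lower periodic loci, to be required to close the combinatorial step; this is the step I anticipate being the most delicate.
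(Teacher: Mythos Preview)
Your opening reduction is correct and matches the paper: for $p\ge 2$ smoothness turns irreducibility into connectedness, every irreducible component of $\ccS_p$ is unbounded and hence meets the escape locus, and escape regions are punctured disks. From there, however, your strategy diverges from the paper's and contains a genuine gap.

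The paper does \emph{not} connect escape regions at infinity. It connects them through the \emph{connectedness locus}. Given an escape region $\cU$ with kneading word $\kappa\neq 1^{p-1}0$, the paper (i) finds a parameter ray in $\cU$ carrying a dynamical \emph{ray connection} between $-a$ and some $a_k$ of maximal return time to $D_0$ (Theorem~\ref{thr:ray-connection}); (ii) shows this parameter ray lands at a parabolic or pcf map $f_0$ that lies on the boundary of a type~$A$ or type~$B$ hyperbolic component $\cH$ (the landing and ``immigration'' theorems); (iii) crosses $\cH$ along an internal $0$- or $1/2$-ray to another boundary map $f_1$ whose take-off kneading $\kappa'$ is obtained from $\kappa$ by an explicit ``type~$A$'' or ``type~$B$'' move (the ``trekking'' theorems); and (iv) takes off from $f_1$ into a new escape region $\cU'$ with $\kappa(\cU')=\kappa'$. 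A short lemma shows that any sequence of such moves reaches the distinguished word $1^{p-1}0$ in at most $p^2+p$ steps, and since the distinguished escape region is unique, all escape regions lie in one component.

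Your approach instead pushes the entire content into step~(iii) of your outline---connectedness of the adjacency graph of escape regions via common boundary strata---and offers no argument for it. The claim that all punctures limit to a single stratum where the cubic splits as ``quadratic-like piece plus attaching angle'' is not correct as stated: distinct escape regions carry distinct kneading words, which encode how the period-$p$ orbit is distributed between $D_0$ and $D_1$, and this data persists in the limiting tree of polynomials; the number of ends of $\cS_p$ grows with $p$ and they are combinatorially distinct. So you would still need a transitivity argument on kneading data, which is precisely what the paper supplies (via the $A$/$B$ moves) but in the interior of $\cS_p$ rather than at infinity. The proposed induction on $p$ is also unsupported: there is no evident recursion relating escape regions of $\ccS_p$ to those of lower-period curves.
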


 Our result is similar to one known for  
 {\it quadratic dynatomic curves:}
$$\{ (z,c) \in \C^2 \mid z \mbox{ has exact period } n \mbox{ for } z \mapsto z^2 + c \}.$$ 
Dynatomic curves were shown to be smooth by Douady and Hubbard~\cite{OrsayNotes} and irreducible by Bousch~\cite{BouschThesis}. 
Nowadays several proofs for the irreducibility of these curves are available. The proofs range from those relying more on algebraic
methods like Bousch's original proof (cf. Morton~\cite{MortonDynatomic}) to those more strongly based on dynamical 
techniques by Schleicher-Lau~\cite{SchleicherPhD} (cf. Buff-Tan~\cite{BuffTanDynatomic}). 
Our methods rely on one dimensional parameter space techniques and
bear certain analogy with the work by Schleicher-Lau~\cite{SchleicherPhD}.

In further generality one might consider the algebraic subsets of moduli space determined by critical orbit relations. Their importance is confirmed
by recent results regarding the dynamical analogue of the Andr\'e-Oort conjecture by 
Favre and Gauthier~\cite{FavreGauthierSpecialCubic} and,  Ghioca and Ye~\cite{GhiocaYeSpecialCubic}.
Recently, Buff, Epstein and Koch~\cite{buff2018rational} studied the curves $\ccS_{k,1} \subset \poly^{cm}_3$ formed by polynomials such that the
 marked critical point maps in exactly $k$ iterations onto a fixed point. For any $k \ge 1$ they established that
 $\ccS_{k,1}$ is irreducible. Also they obtained an analogue result in the moduli space of quadratic rational maps.
 Their techniques are related to those employed by Bousch~\cite{BouschThesis} for
 dynatomic curves.

\medskip
There are still plenty of open questions about the global topology of $\ccS_p$. As anticipated by Milnor~\cite[Section 5D]{MilnorPeriodicCubic}
once we know that $\ccS_p$ is connected it becomes meaningful to determine its genus.
A formula for the Euler characteristic of $\ccS_p$ was obtained in~\cite[Theorem~7.2]{AKMCubic}. 
However a direct formula 
for the number of punctures of $\ccS_p$ remains unknown. De Marco and Schiff~\cite{DeMarcoSchiff} gave an algorithm to compute this number based on the work by De Marco and Pilgrim~\cite{DeMarcoPilgrimClassification}. Now it is also natural to ask if the corresponding dynatomic curves over $\ccS_p$ are irreducible. 

There is a rich interplay between the global topology of $\ccS_p$ and
how polynomials are organized within $\ccS_p$ according to dynamics.
Since the curves $\ccS_p$ are complex one dimensional parameter
spaces, they have been natural grounds to employ and further develop
the wealth of ideas available for quadratic polynomials
(cf.~\cite{FaughtThesis,RoeschSone,MilnorPeriodicCubic}).  However, a
relevant novelty here is that $\ccS_p$ has non-trivial global topology
for $p \ge 4$.  The usual dichotomy between connected and disconnected
Julia sets takes place on $\ccS_p$ as well.  The relative
connectedness locus $\cC(\ccS_p)$ is the set formed by all polynomials
in $\ccS_p$ with connected Julia set and its complement, the escape locus
$\cE(\cS_p)$, is a disjoint union of 
disks punctured at infinity
(e.g. ~\cite{AKMCubic}). Hence, as an immediate consequence of our main result
we have the following:

\begin{introcorollary}
\label{cor:main}
   For all $p \ge 1$, the connectedness locus $\cC(\ccS_p)$ is connected. 
 \end{introcorollary}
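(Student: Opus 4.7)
\medskip\noindent
\textbf{Proof plan.} The corollary is essentially topological; the plan is to deduce it from Theorem~\ref{thr:main} together with the structural description of $\cE(\ccS_p)$ recalled in the paragraph immediately preceding the statement.

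First, I would combine Theorem~\ref{thr:main} with Milnor's smoothness result to conclude that $\ccS_p$ is a connected Riemann surface for $p \ge 2$; the case $p=1$ is handled directly by Milnor's explicit parametrization of $\ccS_1$ as a rational curve. Passing to the smooth compactification $\overline{\ccS_p}$ then yields a connected compact Riemann surface, obtained from $\ccS_p$ by adjoining finitely many punctures $q_1, \dots, q_N$.

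Next, I would invoke the cited description of $\cE(\ccS_p)$ as a disjoint union of disks punctured at infinity, noting that each component $U_i$ corresponds to exactly one puncture $q_i$ of the compactification. Then $D_i := U_i \cup \{q_i\}$ is an open topological disk in $\overline{\ccS_p}$, the $D_i$ are pairwise disjoint, and
\[
\cC(\ccS_p) \;=\; \ccS_p \setminus \cE(\ccS_p) \;=\; \overline{\ccS_p} \,\setminus\, \bigsqcup_{i=1}^{N} D_i.
\]
Finally, I would appeal to the standard topological fact that removing a pairwise disjoint collection of open topological disks from a connected surface yields a connected set. This can be verified by a van~Kampen type argument, or concretely by taking any path in $\overline{\ccS_p}$ between two given points of $\cC(\ccS_p)$ and pushing it off each $D_i$ along the boundary circle $\partial D_i$.

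The serious content is absorbed into Theorem~\ref{thr:main} together with the structural description of the escape locus from~\cite{AKMCubic}; no real obstacle remains in the deduction. The only minor subtlety worth checking is that each escape component is in bijection with a single puncture of the smooth compactification—equivalently, that the punctured-disk structure on the ends of $\ccS_p$ is compatible with the algebraic compactification—which is standard for smooth affine curves.
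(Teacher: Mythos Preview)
Your proposal is correct and matches the paper's own reasoning: the paper does not give a separate proof but simply remarks that the corollary is ``an immediate consequence'' of Theorem~\ref{thr:main} together with the fact (cited from~\cite{AKMCubic} and~\cite{MilnorPeriodicCubic}) that the escape locus is a disjoint union of disks punctured at infinity, in bijection with the ends of $\ccS_p$. You have spelled out precisely the topological argument the paper leaves implicit, including the compactification step and the verification that escape regions correspond to punctures.
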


 The dynamics of quadratic rational maps, in analogy with complex cubic polynomials,  also strongly depends on the behavior of two  critical points and quadratic rational moduli spaces is also a complex surface.
 The  curves analogous to $\ccS_p$  have been under intensive study during the last 25 years as well (e.g. see~\cite{ReesI,ReesII,ReesAsterisque,AspenbergYampolskyS2,TimorinRegluing,FirsovaKahnSelinger,HironakaKoch,ramadas2020quadratic}). Nevertheless, the irreducibility of these curves is still to be determined. 

\section{Overview}
\label{s:overview}


The moduli space of complex cubic polynomials with a marked critical point,
denoted $\poly^{cm}_3 $, is formed by affine conjugacy classes of pairs $(f,a)$ where $f: \C \to \C$ is a cubic polynomial and $f'(a)=0$.
Following Milnor, by passing to a double cover one may avoid the nuances of working with conjugacy classes.
More precisely, for $(a,v) \in \C^2$ let $$f_{a,v} (z) = (z-a)^2 (z+2a) +v.$$
The critical points of $f_{a,v}$ are $a$ and $-a$. We say that $a$ is the \emph{marked critical point} and $-a$ is the
\emph{free critical point}.
Via conjugacy by $z \mapsto -z$, the polynomials $f_{a,v}$ and $f_{-a,-v}$ represent the same element of
$\poly^{cm}_3$. The quotient of $\C^2$ by the involution $(a,v)\mapsto (-a,-v)$ is $\poly^{cm}_3$.

Denote by $\cS_p \subset \C^2$ the affine algebraic curve formed by all  $f_{a,v}$ such that $a$ has period exactly $p$
under $f_{a,v}$. It follows that $\cS_1$ is a double cover of $\ccS_1$ ramified at the monomial map $f_{0,0}(z) =z^3$ and
$\cS_p \to \ccS_p$ is a regular double cover for all $p \ge 2$. Thus, to establish our main result it suffices to
prove that $\cS_p$ is connected for all $p\ge 2$.

For the rest of the paper let us fix $p \ge 2$ and consider the  natural decomposition of $\cS_p$ into the \emph{connectedness locus} $\cC(\cS_p)$ and the \emph{escape locus $\cE(\cS_p)$}. The former  consists of
all $f_{a,v} \in \cS_p$ with connected Julia set. The latter is  formed by maps with
disconnected Julia set or equivalently maps such that free critical point escapes to infinity. 

It will also be convenient to drop the subscripts from $f_{a,v} \in \cS_p$. We will simply say that $f \in \cS_p$
has marked critical point $a(f)$, free critical point $-a(f)$ and, for $j \ge 0$, 
let  $a_j(f) = f^j (a(f))$ be the corresponding periodic point in the  orbit of $a(f)$.

Given a map $f \in \cS_p$
let $G_f$ be the Green function of the basin of infinity. For all
$f \in \cE(\cS_p)$,
$$\{ z : G_f(z) < G_f(-a(f)) \}$$
is the union of two disjoint topological disks $D_0$ and $D_1$.
We label
these disks so that $a(f) \in D_0$. Although $D_0$ and $D_1$ depend on $f$
it will cause no confusion to employ a notation that ignores this fact. 
The filled Julia set $K(f)$ is contained in
$D_0 \cup D_1$, in particular, the period $p$ critical orbit points $a_0(f), a_1(f), \dots, a_{p-1}(f)$ lie either in $D_0$ or $D_1$. 

The \emph{kneading word} $\kappa(f)$ of $f$ is the binary sequence
$$\kappa(f)=\iota_1 \dots \iota_{p-1} \iota_p$$ where $\iota_j = 0$ or $1$ according to whether $a_j(f) \in D_0$ or $D_1$. Note that $\iota_p=0$.

Connected components of the escape locus are called \emph{escape regions}. They will play a central role in our work. From~\cite[\S 5B]{MilnorPeriodicCubic} we may extract the following fundamental facts. Escape regions are in one-to-one correspondence with the ends of $\cS_p$. Maps within an escape region $\cU$
share the same {kneading word} which we denote by $\kappa(\cU)$.
Different escape regions can have the same kneading word. However there is exactly one region $\cU$, called
the \emph{distinguished escape region}, such that $\kappa(\cU) =1^{p-1}0$. 
We say that $1^{p-1}0$ is the \emph{distinguished kneading word}. 

Every component of the algebraic set $\cS_p$ contains at least one escape region. Given an undistinguished escape region $\cU$ with kneading word $\kappa$ our strategy consists on constructing a path in $\cS_p$ joining $\cU$ with another escape region $\cU'$ such that, in a certain sense,
the kneading word $\kappa' = \kappa(\cU')$
is closer to the distinguished kneading word (cf.~\cite{ArfeuxKiwiDCDS}).
The kneading word $\kappa'$ will be obtained from $\kappa$ by type $A$ or $B$ move described below.

To describe type $A$ and $B$ moves, for a kneading word
$\kappa \neq 0^p$, let $\mu$ be such that $1^{\mu-1}$ is the largest string of $1$'s that one encounters in $\kappa$; for  $\kappa = 0^p$, let $\mu=1$. If $f \in \cE(\cS_p)$ has kneading word $\kappa$, then $\mu$ is the maximal time that takes an element of the periodic critical orbit in $D_0$
to return to $D_0$.
We say that $\mu$ is the \emph{maximal return time of $\kappa$}.

Let $\kappa=\iota_1 \dots \iota_{p-1} 0$ be an undistinguished word
with maximal return time~$\mu$:  
 \begin{itemize}
\item[(A)]   We say that $\kappa'$ is obtained from $\kappa$ by a \emph{type $A$ move} if:
  $$\kappa = 1^{\mu-1}0 \iota_{\mu+1} \cdots \iota_{p-1} 0$$
  and
  $$\kappa = 0^{\mu-1}1 \iota'_{\mu+1} \cdots \iota'_{p-1} 0$$
  for some $\iota'_{\mu+1}, \dots, \iota'_{p-1} \in \{0,1\}$.
\item[(B)] We say that $\kappa'$ is obtained from $\kappa$ by a \emph{type $B$ move} if, for some $1 \le k \le p-1$:
$$\kappa = \iota_1 \cdots \iota_{k-1} 0 \cdot 1^{\mu-1} \iota_{k+\mu} \cdots \iota_{p-1} 0$$
and
$$\kappa' = \iota_1 \cdots \iota_{k-1} 1 \cdot 1^{\mu-1} \iota_{k+\mu} \cdots \iota_{p-1} 0.$$
  \end{itemize}
  That is, the type $A$ move changes an initial string of $\kappa$ in a specific manner but there is no restriction on how the tail changes.
  The type $B$ just changes the
  $k$-th symbol from a ``$0$'' to a ``$1$'' and leaves the rest of the symbols untouched. The former can only occur if the maximal return time
  is ``attained'' at the position $k=0$ and the latter if it is ``attained'' at a position $k \neq 0$ in the kneading word $\kappa$.
  
  It is not difficult to show (see~Lemma~\ref{lem:typeAB}) that no matter how we successively apply type $A$ and $B$ moves in at most $p^2+p$ steps we arrive to the distinguished word.
  Type $A$ and $B$ moves will be associated to
  a special way of crossing certain type of hyperbolic components in $\cC(\cS_p)$ known as type $A$ and $B$ components.

\smallskip
Given an escape region $\cU$ our path within $\cS_p$ starts at a map with a \emph{ray connection}. For a general background on external rays we refer to \S~\ref{s:external-rays}.
Denote by $V(a_k(f))$ the Fatou component of $f \in \cU$ containing $a_k(f)$. 
  We say that a non-smooth ray $R^\sigma_f(\vartheta)$ where $\sigma=+$ or $-$
  is  a \emph{ray connection between}
$-a(f)$ and $a_k(f)$ if the following hold:
  \begin{itemize}
  \item $3 \vartheta$ is periodic of period $p$ under multiplication
    by $3$ in $\RZ$.
    \item $-a(f) \in R_f^\sigma(\vartheta)$.
    \item $R_f^\sigma(\vartheta)$ is a relatively (left or right) supporting ray of $V(a_k(f))$.
    \end{itemize}
    A detailed discussion about ray connections is given in the introduction
    to \S~\ref{s:ray-connection} where the notion of relatively supporting rays is introduced as a generalization of the usual notion of supporting rays.
    Then we prove that for any undistinguished escape region $\cU$ there exists a map $f \in \cU$ which has a ray connection between $-a(f)$ and $a_k(f)$ for some
  $a_k(f) \in D_0$ such that $a_k(f)$ has maximal return time to $D_0$ (Theorem~\ref{thr:ray-connection}).

\medskip
Once we have a map $f$ with an appropriate ray connection we head towards the connectedness locus along the parameter ray
$\cR_\cU(\theta)$ containing $f$.
The landing behavior of  rational parameter rays $\cR_\cU(\theta)$
is studied by Bonifant, Milnor and Sutherland
in~\cite{cm3,ParabolicGreen}.
These rays land at parabolic or postcritically finite (pcf) maps.
For our purpose we need to describe in greater detail the
supporting properties of external rays and to keep track of kneading words
at the landing point of these parameter rays.
To properly state our description, given a parabolic or pcf map $f_0$, we introduce the notions of a \emph{take-off argument $\theta$} of $f_0$ and its \emph{associated    kneading word $\kappa(f,\theta)$}, see~definitions~\ref{def:take-off} and \ref{def:take-off-word}. 
Rays landing at parabolic maps are the subject of
\S~\ref{s:parabolic-landing} while rays landing at pcf maps are considered in
\S~\ref{s:pcf-landing}. Special care is taken to describe the maps which are the landing point of parameter rays with ray connections. 
 ``Take-off'' results are also relevant to our discussion.
Namely, given any  parabolic or pcf map $f_0$
with take-off argument $\theta$ we establish that
there exists an escape region $\cU$ such that $\kappa(\cU)= \kappa(f,\theta)$
and 
a parameter ray $\cR_\cU(\theta)$ of $\cU$ that lands at $f_0$.
This follows directly from  Bonifant and Milnor's work~\cite{cm3} 
for $f_0$ pcf and from
Theorem~\ref{thr:take-off-parabolic} for $f_0$ parabolic.

\smallskip
According to Milnor~\cite{MilnorPeriodicCubic}, bounded hyperbolic components of $\cC(\cS_p)$ fall into four types: adjacent (A), bitransitive (B), capture (C), and disjoint (D).
Our main interest will be on types $A$ and $B$. A hyperbolic map $f \in \cC(\cS_p)$ is of type $A$ if the free critical point $-a(f)$ lies in
the Fatou component $V(a(f))$.  The map $f$ is of type $B$ if
$-a(f) \in V(a_k(f))$ for some $0 < k <p$. For completeness let us just mention that $f$ is of type $C$ if $-a(f)$ lies in a strictly preperiodic Fatou component and of type $D$ if $-a(f)$ and $a(f)$ lies
in disjoint cycles of Fatou components.

In \S~\ref{s:trekking} we start by discussing parameter internal rays and sectors
of type $A$ and $B$ hyperbolic components (see~\S~\ref{s:bounded-hyperbolic} and~\S~\ref{s:sectorAB}). The key for the rest of the results in \S~\ref{s:trekking}
is contained in~\ref{s:landing-internal-parameter} where
we study the landing points of $0$ and $1/2$ parameter internal rays.
For us, a ``root'' of a type $A$ or $B$ component will be the landing point
of a $0$-parameter internal ray and a ``co-root'' will be the landing point of a $1/2$-parameter internal ray of a type $A$ component.
In~\S~\ref{s:trekkingB}
and ~\S~\ref{s:trekkingA}
given a ``root'' or a ``co-root'' of 
a type $A$ and $B$ hyperbolic component we compute the effect
that crossing (trekking) to another  ``root'' or ``co-root'' has
in their take-off kneadings.
 In~\S~\ref{s:parabolic-immigration} and \S~\ref{s:pcf-immigration},
 we certify that the landing points of parameter rays with ray connections are in the boundary of
a type $A$ or $B$ hyperbolic component. Loosely speaking, we are granted entry to such a hyperbolic component and say that these are ``immigration'' results.
More precisely, we will show that in the presence of a periodic ray connection the landing point is a ``root'' of a type $A$ or $B$ component, see~\S~\ref{s:parabolic-immigration}.  In the presence of a preperiodic ray connection the landing point will be a ``co-root'' of a type $A$ component, see~\S~\ref{s:pcf-immigration}.

In \S~\ref{s:final} we assemble the proof of our main result.
Given an escape region $\cU$ with kneading word $\kappa$
we consider  the landing point $f_0$ of a parameter ray
$\cR_\cU(\theta)$ with a ray connection between $-a(f)$ and $a_k(f)$ where $a_k(f)$ has maximal return time to $D_0$. The immigration results
guarantee that $f_0$ is in the boundary of a type $A$ or $B$ hyperbolic component $\cH$. The trekking theorems  apply
 to find in $\partial \cH$ a parabolic or pcf map $f_1$ with a take-off argument $\vartheta$ so that the corresponding kneading $\kappa(f_1, \vartheta)$
 is obtained from $\kappa(f_0, \theta)=\kappa$
 by a type $A$ or $B$ move, according to
 whether $k=0$ or $k \neq0$. Finally, the
 take-off theorems yield
 an escape region $\cU'$ with kneading word $\kappa'=\kappa(f_1, \vartheta)$ and a parameter ray $\cR_{\cU'}(\vartheta)$ landing at $f_1$.

 In Figure~\ref{HypTypeB} we label parameters  \numcirc{1} through \numcirc{6} along a path crossing a type $B$ component. The path
 starts at \numcirc{1} which corresponds to a map in $\cR_\cU(\theta)$.
 The parabolic maps $f_0$ and $f_1$ above correspond to
 \numcirc{2} and \numcirc{5}, respectively. The dynamical planes along this path are illustrated in Figure~\ref{trekingB}.

\begin{figure}[h]
\begin{center}
\fbox{\includegraphics[width=10cm]{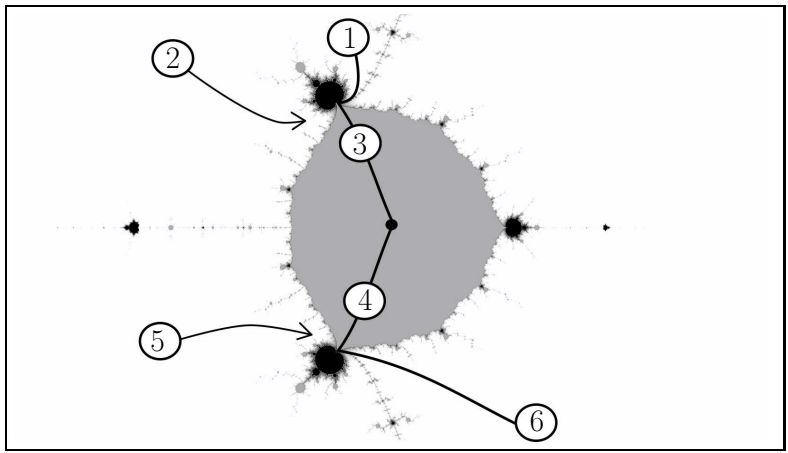} }
\end{center}
\caption{Illustration of a path across a type $B$
  hyperbolic component used to prove Theorem \ref{thr:main}. For the dynamical planes of parameters \raisebox{.5pt}{\textcircled{\raisebox{-.9pt} {1}}} to \raisebox{.5pt}{\textcircled{\raisebox{-.9pt} {6}}} see Figure \ref{trekingB}.   
\label{HypTypeB}}
\end{figure}

\begin{figure}[h]
\begin{minipage}[c]{.30\linewidth}
\begin{center}
\fbox{\includegraphics[width=4.1cm]{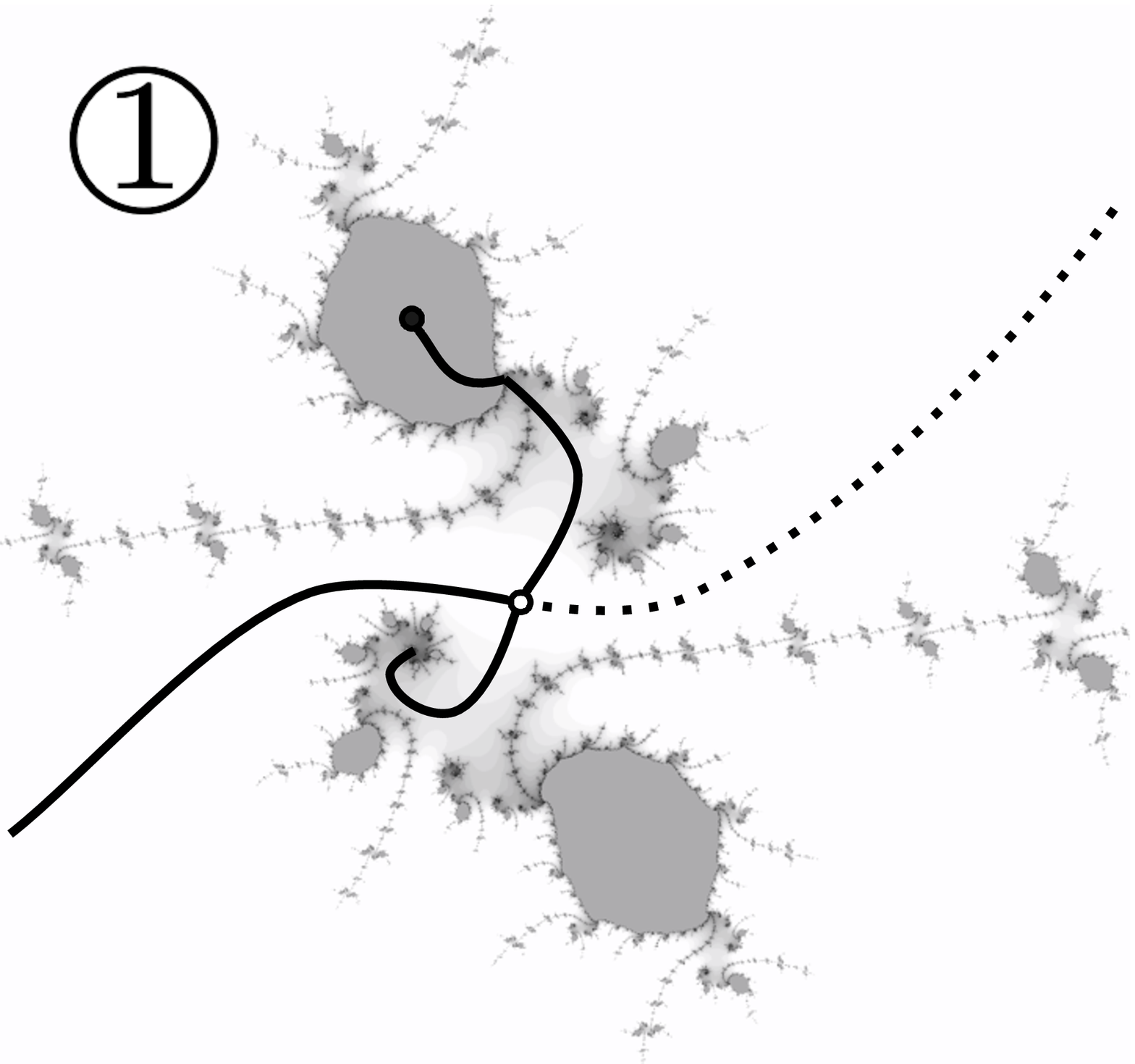}} 
\end{center}
\end{minipage}
\hfill
\begin{minipage}[c]{.30\linewidth}
\begin{center}
\fbox{\includegraphics[width=4cm]{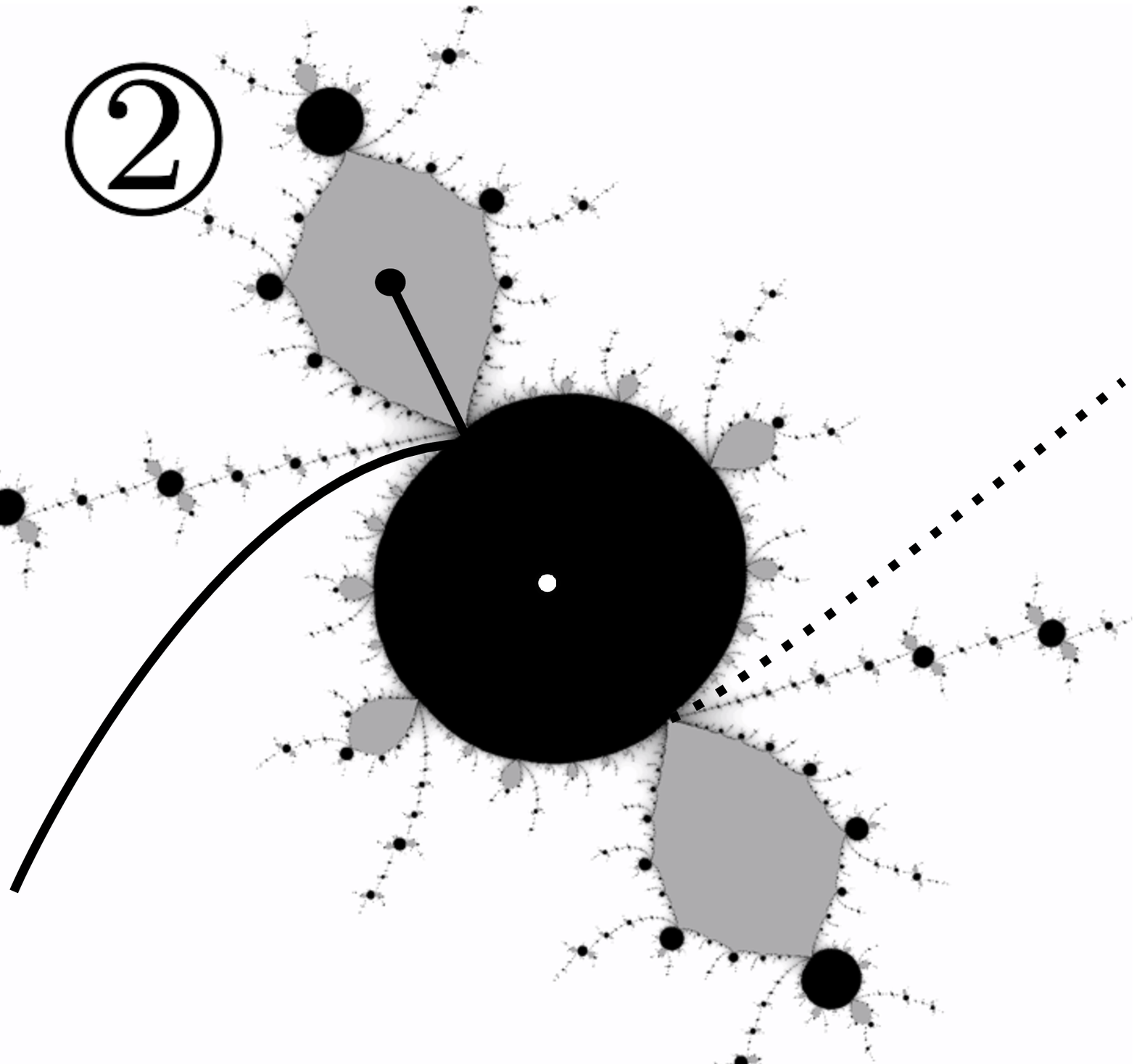} }
\end{center}
\end{minipage}
\hfill
\begin{minipage}[c]{.3\linewidth}
\begin{center}
\fbox{\includegraphics[width=4cm]{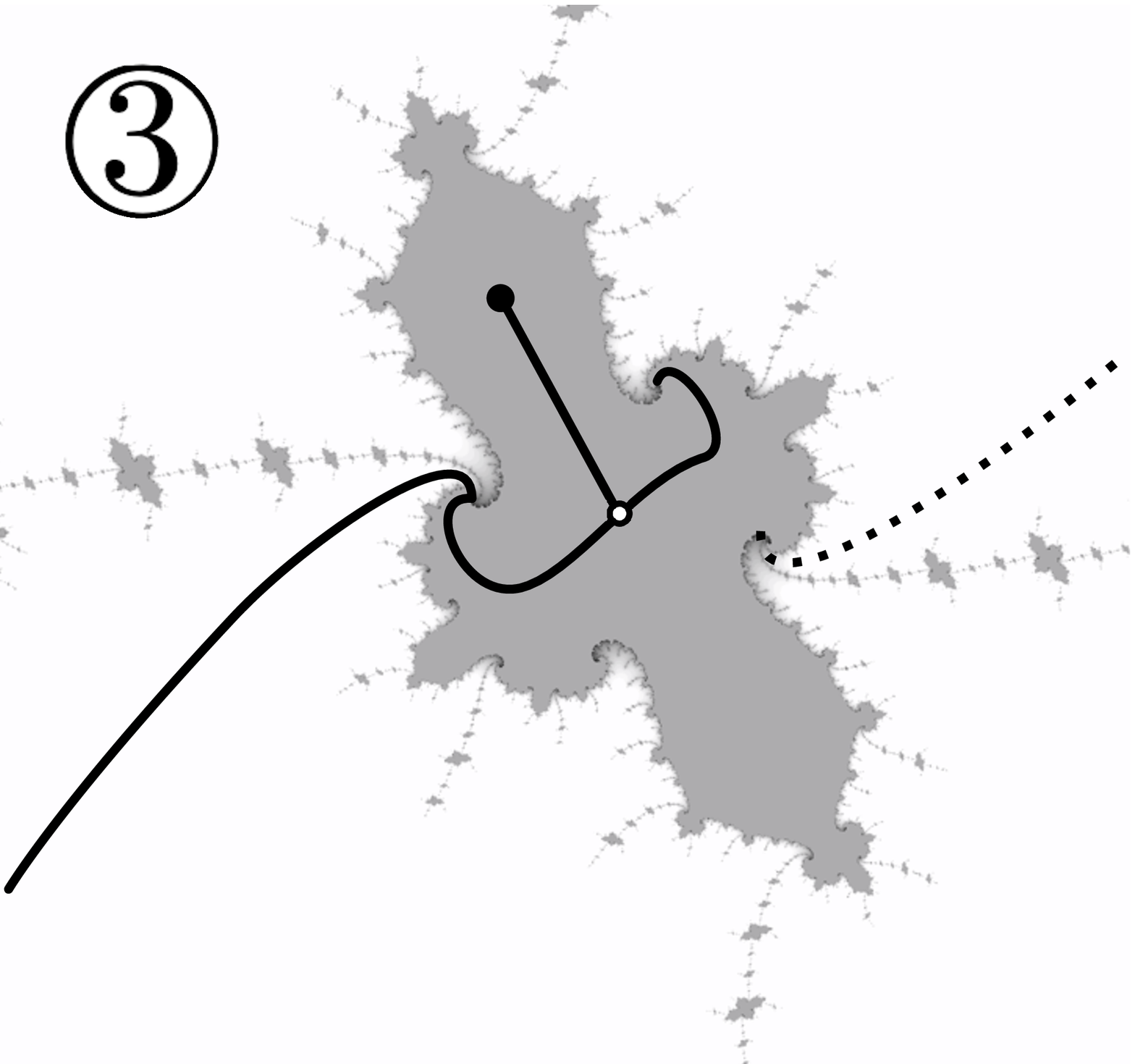} }
\end{center}
\end{minipage}
\begin{minipage}[c]{.3\linewidth}
\begin{center}
\fbox{\includegraphics[width=4cm]{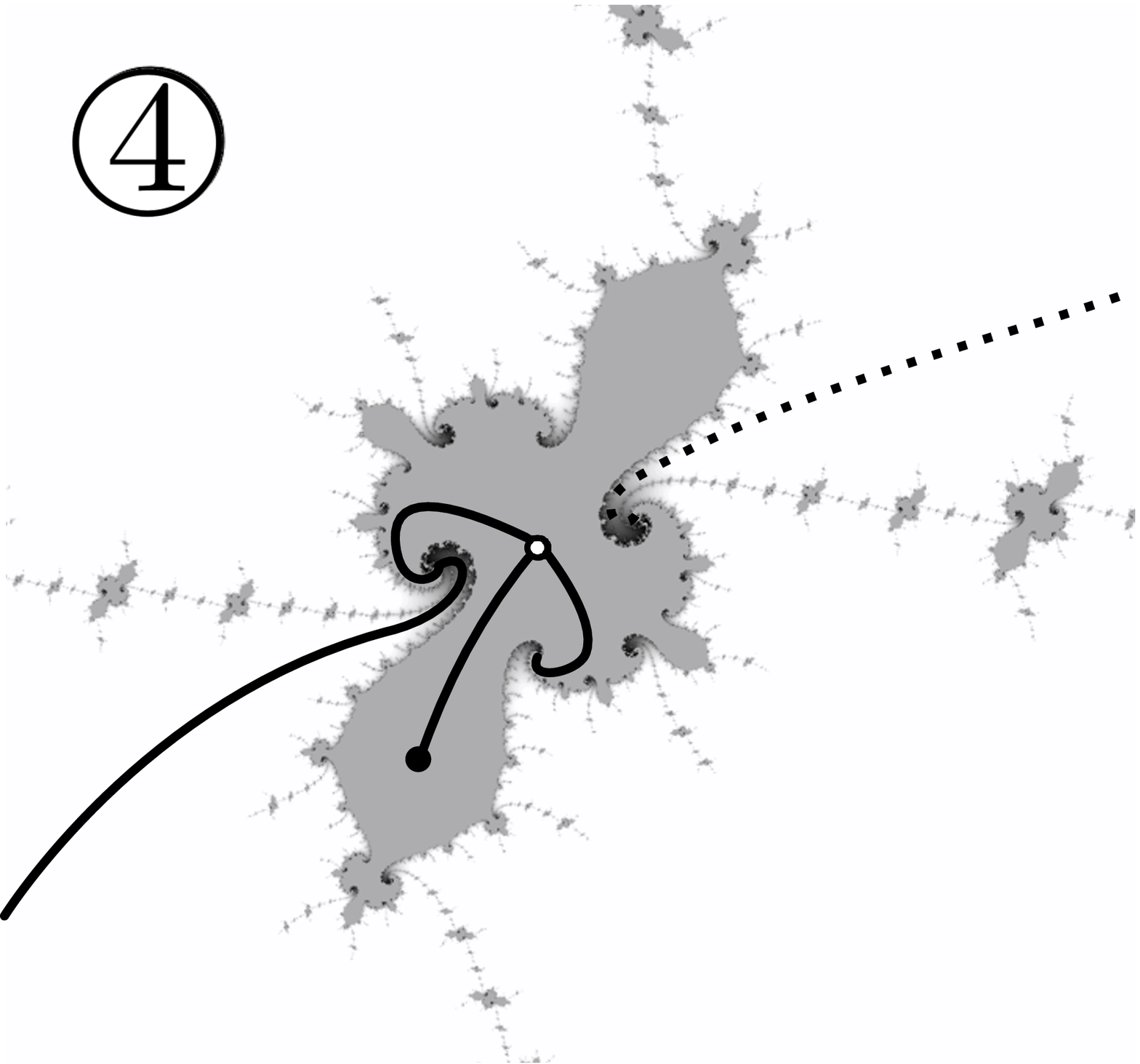} }
\end{center}
\end{minipage}
\hfill
\begin{minipage}[c]{.3\linewidth}
\begin{center}
\fbox{\includegraphics[width=4cm]{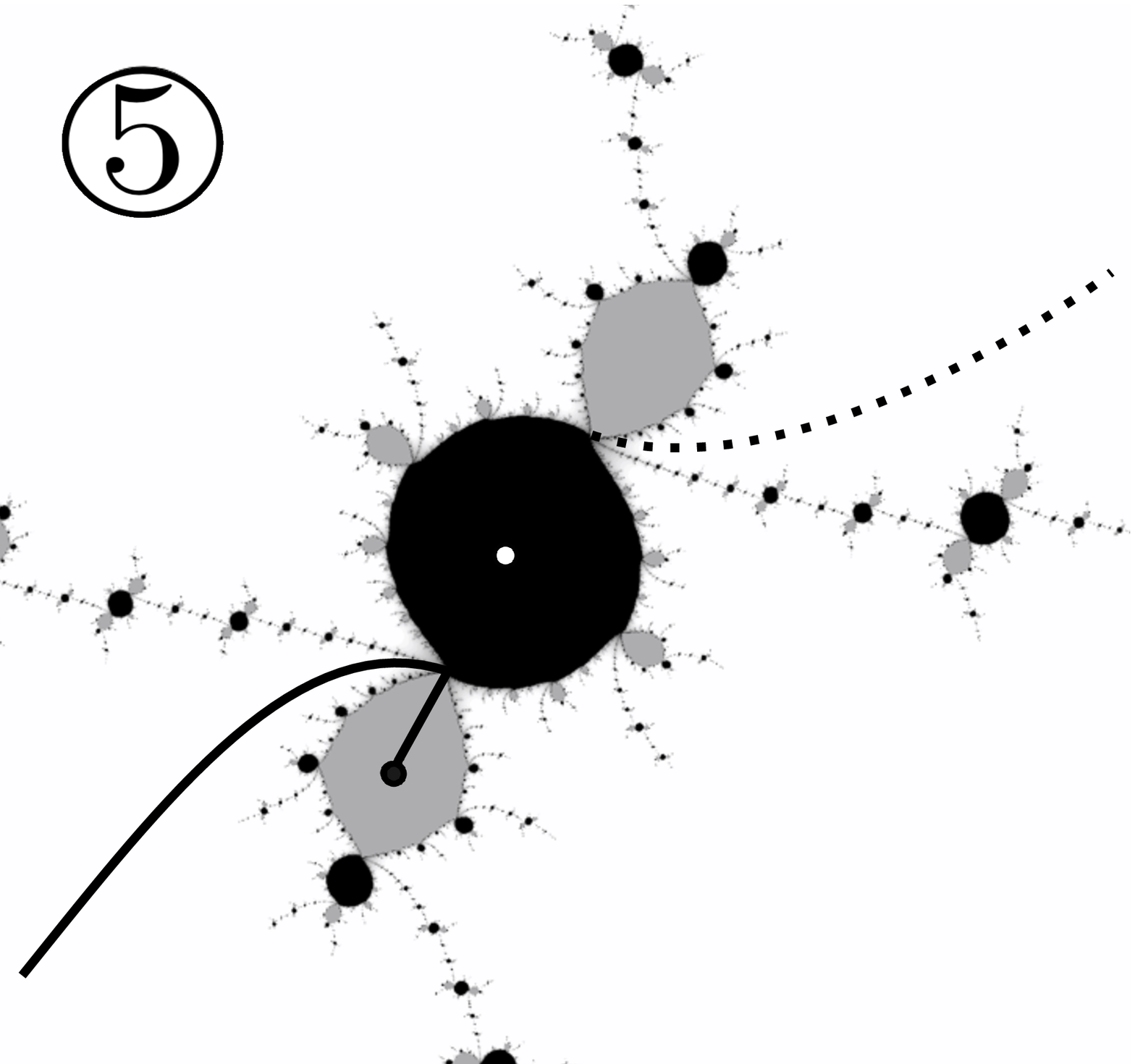} }
\end{center}
\end{minipage}
\hfill
\begin{minipage}[c]{.3\linewidth}
\begin{center}
\fbox{\includegraphics[width=4cm]{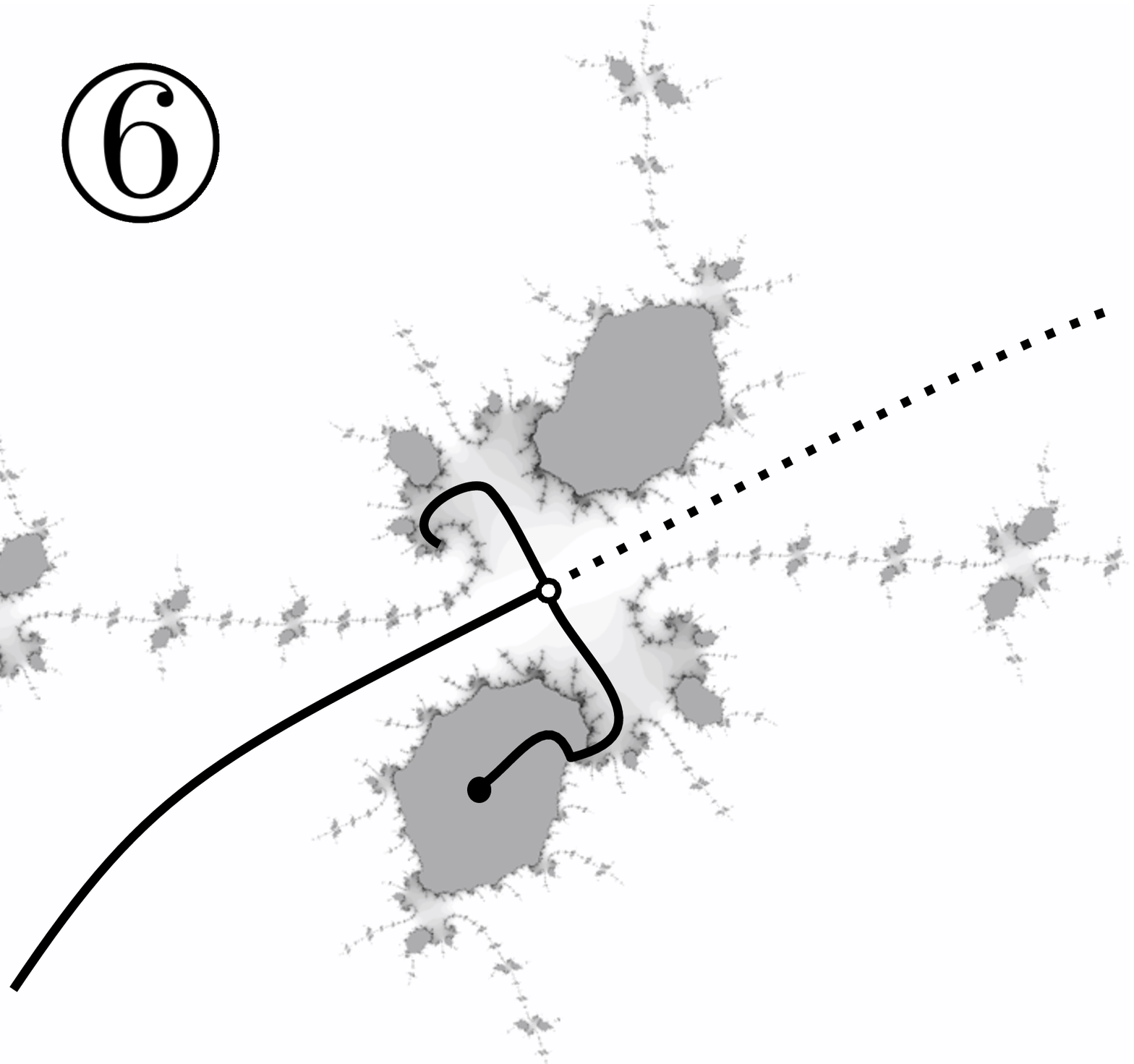} }
\end{center}
\end{minipage}
\caption{Details of the dynamical planes of parameters labeled on the path in  Figure \ref{HypTypeB}, starting at  \numcirc{1} with a ray connection  between $-a$ and $a_k$ 
  and ending at \numcirc{6} where the kneading symbol corresponding to $a_k$ changes.
 Labels for $-a$ and $a_k$ are unfilled and filled dots, respectively.
   \label{trekingB} }
  \end{figure}

  \begin{remark} \emph{The figures contained in the paper are details of $\cS_3$ or Julia sets of this parameter space generated by FractalStream using the parametrization provided in \cite{MilnorPeriodicCubic}. The parameter and dynamical rays are handmade crude approximations. In parameter space illustrations, escape regions are colored white, type $A$, $B$, $C$ hyperbolic components 
are in grey, and type $D$ in black. In dynamical plane illustrations, the basin of infinity is colored white, the basin of the marked critical periodic point $a(f)$ is in grey and, when existing, other Fatou components are in black.}
\end{remark}

\section{Rays and Sectors}
For a general background on
complex polynomial dynamics we refer to~\cite{DynInOne}. In~\S~\ref{s:external-rays} we concentrate on external rays of disconnected Julia sets.
External rays are often employed
to subdivide the complex plane into regions for which
it is convenient to introduce a notation explained in~\S~\ref{s:sector-notation}.
Parameter external rays are the subject of
~\S~\ref{s:parameter-rays}. Internal rays are discussed in~\S~\ref{s:internal-rays}. 

\subsection{External rays}
\label{s:external-rays}
We will intensively exploit external rays for maps with disconnected Julia sets. This brief section aims at recalling notations, definitions and
summarizing results that apply to the context of disconnected Julia sets in $\cS_p$ (see~\cite{GoldbergMilnor,LevinPrzytyckiExternal,LevinSodin}). 

 Given $f \in \cS_p$, the B\"ottcher coordinate
 $\phi_f$, initially defined near $\infty$~(e.g. see \cite[Section~9]{DynInOne}), is the unique conformal isomorphism asymptotic to the identity at infinity ($\phi_f(z)/z \to 1$) such that $(\phi_f(z))^3 = \phi_f (f (z)).$
There exists a unique continuous extension of $|\phi_f(z)|$ 
 to the basin of infinity $V_f(\infty)$ respecting the functional relation
$|\phi_f(z)|^3 = |\phi_f (f (z))|$.

 Denote by $V_f^*(\infty)$ the basin of infinity under the flow of the vector field  $\nabla|\phi_f|$.
 The B\"ottcher coordinate extends along flow lines of  $\nabla |\phi_f|$
 to $V_f^*(\infty)$ (cf.~\cite{LevinSodin}).
 The singularities $z$ of the vector field (i.e. $\nabla |\phi_f(z)| =0$)
 coincide with the points $z$ which eventually map onto a critical point (in our case onto $-a(f)$).

 If $f \in \cC(\cS_p)$, then $V_f^*(\infty)=V_f(\infty)$,
$\phi_f : V_f(\infty) \to \CDC$ is a conformal isomorphism and
$R_f(t) = \phi_f^{-1} (]1,\infty[ \exp(2 \pi i t)$ is called the \emph{external ray of $f$ at argument $t \in \RZ$}. 

When $f \in \cE(\cS_p)$,  for each $t \in \RZ$ there exists a unique
maximal $\nabla |\phi_f|$-flow line, denoted  $R^*_f(t)$,
and a number $\rho(t) \ge 1$ such that
$$\phi_f : R^*_f(t) \to  ]\rho(t), \infty[ \cdot \exp(2 \pi i t) $$
is a homeomorphism. If $\rho(t) > 1$, then the limit in the negative flow
direction along $R^*_f(t)$ is a singularity $w$ of the flow (i.e.
an iterated preimage of the critical point $-a(f)$).
We say that $R^*_f(t)$ \emph{terminates} at $w$.
There is exactly one line $R^*_f(\theta)$ that contains the cocritical point $2a(f)$ (i.e. $f(2a(f)) = f(-a(f))$).
Then  $R^*_f(\theta + 1/3)$ and $R^*_f(\theta - 1/3)$ are the
flow lines terminating at $-a(f)$. These flow lines together with $-a(f)$ cut the complex plane into two regions.
The one containing the periodic critical point $a(f)$ also contains the disk $D_0$ as well as $R^*_f(t)$ for all  $t \in ]\theta-1/3, \theta+1/3[$.
The critical point free region contains the disk $D_1$ as well as $R^*_f(t)$ for all  $t \in ]\theta+1/3, \theta-1/3[$.
Observe that $\rho(t) = 1$ if and only if
$3^nt \neq \theta \pm 1/3$ for all $n \ge 0$ and, in this case, we say that
$R_f(t) = \phi_f^{-1} (]1,\infty[ \exp(2 \pi i t))$ is the \emph{external ray of $f$ at argument $t \in \RZ$}.
Although  the external ray $R_f(t)$ does not exist for arguments $t$ such that $\rho(t) >1$, we may consider the \emph{left} and \emph{right} limit rays at $t$, denoted $R^-_f(t)$ and $R^+_f (t)$, respectively. They are defined as the limit of $R_f(s)$ when $s \to t$
from the left or from the right, as arcs parametrized by $|\phi_f|$
(see~Goldberg and Milnor~\cite[Appendix A]{GoldbergMilnor}). Left and right limit rays with arguments $t$ such that $\rho(t)>1$ are usually called \emph{singular, bouncing, broken or non-smooth}.
We adopt the convention that $R^\pm_f(t)$  simply denotes the smooth ray $R_f(t)$ when $\rho(t)=1$. With this convention $f$ maps
$R^\pm_f(t)$ onto $R^\pm_f(3t)$. It follows that every non-smooth ray
eventually maps onto $R^\pm_f(\theta \pm 1/3)$.

In this paper, multiplication by $3$ acting in $\RZ$ will be denoted by $m_3: \RZ \to \RZ$. Similarly, $m_d$ will denote multiplication by $d$ for  $d \ge 2$.

In $\cS_p$ a map $f$ with disconnected Julia set  $J(f)$
is hyperbolic.
Thus, every ray lands (i.e. has a well defined limit as it approaches the Julia set). Keep in mind that a repelling periodic point $z$ can be the landing point of infinitely many rays if  $\{z \}$ is a connected component of $J(f)$; otherwise, $z$ is the landing point of finitely many periodic rays
(some of which might be left or right limit rays)~\cite{LevinPrzytyckiExternal}.

For any map $f \in \cS_p$, according to Roesch and Yin~\cite{RoeschYin} we have that $\partial V(a(f))$ is a Jordan curve. It follows that $f^p : \partial V(a(f)) \to \partial V(a(f))$ is conjugate to multiplication by $m_d$ for $d=2,3$ or $4$. Thus, $\partial V(a(f))$ contains at least one periodic point of period
dividing $p$. The external rays landing at any one of these points have period $p$ (e.g. see~Proposition~\ref{dynamics-typeAB}).

\subsection{Regions and  sectors}
\label{s:sector-notation}
Given  $f \in \cS_p$, a connected and full subset $X$ of $K(f)$ and
two rays $R_f(s), R_f(t)$ landing at $X$ we denote by
$\sector_f(X,s,t)$ the connected component of
$$\C \setminus (R_f(s) \cup R_f(t) \cup X)$$
containing the external rays with arguments in $]s,t[ \subset \RZ$.
When $X$ is a singleton $\{ z \}$ we simply write $\sector_f(z,s,t)$.
In this case, we say that $\sector_f(z,s,t)$ is a \emph{sector at $z$} if no ray
with argument in $]s,t[$ lands at $z$. 

We freely employ this notation in the presence of non-smooth rays, for example
if $R_f^-(s)$ and $R_f(t)$ land at $z$, then $\sector_f (z,s,t)$ denotes the region containing all rays with arguments in $]s,t[$.

Consider a point $z$ in the boundary of a Fatou component
$V$ of $f \in \cS_p$. Following Poirier~\cite{PoirierCP}, if
$\sector_f(z,s,t)$ is the sector based at $z$ containing $V$, we say that
$s$ (resp. $t$) is the \emph{left (resp. right) supporting argument of $V$ at $z$}.

\subsection{Parameter external rays}
\label{s:parameter-rays}
For every $f \in \cE (\cS_p)$ the cocritical point $2 a(f)$ 
belongs to the domain $V^*_f(\infty)$ of the B\"ottcher coordinate $\phi_f$.
Moreover, $f \to \phi_f (2 a (f))$ depends holomorphically on $f \in \cE (\cS_p)$. Furthermore, the map
$$
\begin{array}{rccc}
  \Phi_\cU: &\cU & \to & \CDC\\
   & f  & \mapsto & \phi_f (2 a (f)), 
\end{array}
$$
is a regular covering map (without branched points) of some degree called the \emph{multiplicity of $\cU$}~\cite[Lemma~5.9]{MilnorPeriodicCubic}.

A \emph{parameter ray $\cR_\cU (\theta)$ with argument $\theta \in \RZ$} is defined as an arc in $\cU$ which maps bijectively onto
$]1,\infty[ \exp (2 \pi i \theta)$ by $\Phi_\cU$. Be aware that there might be
several parameter rays $\cR_\cU (\theta)$ associated to the same argument $\theta$. Note that $f$ lies  in a ray $\cR_\cU (\theta)$ if and only if
$R^*_f(\theta \pm 1/3)$ terminate at $-a(f)$.

\subsection{Internal rays} 

\label{s:internal-rays}

We work under the assumption that
$-a(f)$ does not belong to the periodic orbit of $a(f)$ unless
otherwise stated.
That is, the local degree
of $f^p$ at $a_k(f)$ is $2$, for all $k$ subscripts modulo $ p$.
There exists a neighborhood $W$ of $a_k(f)$ and a unique 
univalent map $\varphi_{f,a_k} : W \to \D$
 conjugating the action of $f^p$ with
$z \mapsto z^2$. We say that $\varphi_{f,a_k} : W \to \D$ is the \emph{B\"ottcher coordinate at $a_k$.}
It follows that $$ \varphi_{f,a_{k+1}} \circ f (z) = \left( \varphi_{f,a_{k}} (z) \right)^{\deg_{a_k} f}$$
where $\deg_{a_k} f =1$ if $k \neq 0$ and $\deg_{a_0} f =2$.
According to~\cite[Corollary~9.2]{DynInOne},
$|\varphi_{f,a_k}|$ has a unique continuous extension
 to $V_f(a_k(f))$ which
satisfies the functional relation $|\varphi_{f,a_k} (f^p(z))| =
|\varphi_{f,a_k}(z)|^2$. 
In analogy with what occurs in the basin of infinity we let $V^*_f(a_k(f))$ be the basin 
of $a_k(f)$ under the flow of $-\nabla |\varphi_{f,a_k}|$. The B\"ottcher coordinate extends along flow lines to 
a conformal isomorphism
$$\varphi_{f,a_k} : V^*_f(a_k(f)) \to U^*_{f,a_k} \subset \D$$
where $U^*_{f,a_k}$ is a star-like domain around $w=0$.  The zeros of
$\nabla  |\varphi_{f,a_k}|$ are exactly the points in $V_f(a_k(f))$ that
eventually map onto $-a(f)$.

If $-a(f) \notin V_f(a_j(f))$ for all $j$, or equivalently
$f$ is not in a type $A$ or $B$ component, then the B\"ottcher coordinate extends to a conformal isomorphism
$\varphi_{f,a_k}: V_f(a_k(f)) \to \D$. Moreover, $\partial  V_f(a_k(f))$ is a Jordan curve and $\varphi_{f,a_k}$ extends continuously to a conjugacy
$\varphi_{f,a_k}: \overline{V_f(a_k(f))} \to \overline{\D}$
between $f^p$ and $z \mapsto z^2$. In particular, there is exactly one periodic point of period dividing $p$ in $\partial V_f(a_k(f))$. (This holds in further generality e.g. see~Proposition~\ref{dynamics-typeAB}.)

Our main interest here is on maps $f$
that lie in a type $A$ or $B$ hyperbolic component.
For such a map $f$ be aware that $V^*_f(a_k(f))$ 
is not dense in $V_f(a_k(f))$, in contrast with the discussion in~\S~\ref{s:external-rays}. 
Given $t \in \RZ$ there exists a maximal
$0< \rho(t) \le 1$ such that $[0,\rho(t)[ \cdot \exp(2 \pi i t) \subset U^*_{f,a_k}$.
Let
$$I^*_{f,a_k} (t) = \varphi_{f,a_k}^{-1} ([0,\rho(t)[ \cdot \exp(2 \pi i t)).$$
This arc starts at $a_k(f)$ and, at the other end, it approaches $\partial V_f (a_k(f))$ or
an iterated preimage $w$ of $-a(f)$. In the former case $\rho(t)=1$ and we say 
that $I_{f,a_k}(t)$ is an \emph{internal ray at $a_k$ with argument $t$}. In the latter $\rho(t)  <1$ and we say that
$I^*_{f,a_k} (t)$ \emph{terminates} at $w$. Right and left limit internal rays $I^\pm_{f,a_k}(t)$ are defined 
as in~\S~\ref{s:external-rays}.  

\section{Ray connections}
\label{s:ray-connection}
The definition of ray connections relies on the notion 
 of relatively supporting rays.
This notion generalizes the definition of left and right supporting rays for Fatou components (see~\S~\ref{s:sector-notation}).

\begin{definition}
  \label{d:relatively-supporting}
  Given a bounded Fatou component $V$ of a map $f \in \cS_p$ consider  an eventually periodic point $z \in \partial V$.
  There exist unique $t_1,t_2 \in \QS$ (possibly equal)
  in the same grand orbit under $m_3$ such that 
  external rays (maybe bouncing) with arguments $t_1, t_2$ land at $z$,
  $V \subset \sector (z,t_1,t_2)$ and, if a ray with argument $ t \in ]t_1,t_2[$  lands
  at $z$, then $t$ is not in the grand orbit of $t_1,t_2$.
We say that $t_1$ (resp. $t_2$) is a \emph{relatively left (resp. right) supporting argument of $V$ at $z$}.
\end{definition}

Recall that for any $f \in \cS_p$ and $0 \le k < p$, there exists a period $p$ argument $t$ such that a ray  with argument $t$ lands at $\partial V(a_k(f))$.  Among
the arguments in the orbit of $t$ we can always choose one that is relatively left (resp. right)  supporting for $V(a_k(f))$.
We are particularly interested on the situation when $f$ lies in an escape region and such a ray is one bouncing off the critical point $-a(f)$. We will also be interested on the situation when $t$ itself is not periodic but $3t$ has period $p$ and a ray with argument $t$ is relatively supporting for $V(a(f))$.

\medskip
Consider an escape region $\cU$ and $f \in \cU$.
  Recall that an external ray $R_f^\sigma(\vartheta)$, where $\sigma \in \{ +,-\}$, is  a \emph{ray connection between $-a(f)$ and $a_k(f)$} if the following hold:
  \begin{itemize}
    \item $3 \vartheta$ is periodic of period $p$ under $m_3$. 
    \item $-a(f) \in R_f^\sigma(\vartheta)$.
    \item $R_f^\sigma(\vartheta)$ is a relatively (left or right) supporting ray of $V(a_k(f))$.
  \end{itemize}

   If a map $f$ in a parameter ray $\cR_\cU (\theta)$ has a ray connection $R_f^\sigma (\vartheta)$  between $-a(f)$ and $a_k(f)$, then $\vartheta = \theta + 1/3$ or $\theta-1/3$. Moreover,   all the maps in $\cR_\cU (\theta)$ have such a ray connection. That is, ray connections are preserved along parameter rays.
  
  A periodic ray connection $R_f^\sigma (\vartheta)$ lands at the
  unique point $z_0$ of period dividing $p$ in $\partial V(a_k(f))$.
  A preperiodic ray connection lands at the unique
  non-periodic point $w_0$ in $\partial V(a_0(f))$ such that $f(w_0)$
  has period dividing $p$. 
  That is, preperiodic ray connections only occur between
  $-a(f)$ and $a_0(f)=a(f)$. 

\medskip
\medskip
The key to start our journey towards the distinguished escape region is the following result:

\begin{theorem}
  \label{thr:ray-connection}
  Given an undistinguished escape region $\cU$,
  there exists $f  \in \cU$ with a ray connection between 
$-a(f)$ and $a_k(f)$  such that the following statements hold:
\begin{enumerate}
\item $a_k(f) \in D_0$,
\item the return time of $a_k(f)$ to $D_0$ is maximal.
\end{enumerate}
\end{theorem}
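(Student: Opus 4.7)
The plan is to realize $f$ as a point on a well-chosen parameter ray of $\cU$ whose associated bouncing dynamical ray lands on $\partial V(a_k(f))$ as a relatively supporting ray.

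First I would set up the combinatorial target. Write $\kappa(\cU) = \iota_1 \cdots \iota_{p-1} 0$, put $\iota_0 := \iota_p = 0$, and let $\mu$ be the maximal return time. Since $\cU$ is undistinguished, $\mu \le p-1$, so there exists an index $k$ with $\iota_k = \iota_{k+\mu} = 0$ and $\iota_{k+1} = \cdots = \iota_{k+\mu-1} = 1$. For any $f \in \cU$, the point $a_k(f) \in D_0$ has return time exactly $\mu$, making it a valid target. Next I would analyze $\partial V(a_k(f))$: by Roesch--Yin it is a Jordan curve on which $f^p$ acts as $z \mapsto z^2$, containing a unique periodic point $z_k$ of period dividing $p$. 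Among the external rays landing at $z_k$ (or at a suitable preperiodic boundary point when $k=0$), I would select a relatively supporting argument $\vartheta$ of $V(a_k(f))$ with $3\vartheta$ of exact period $p$ under $m_3$. The crucial combinatorial requirement is that the itinerary of $R_f(\vartheta)$ under iteration is compatible with $\kappa(\cU)$, so that $\vartheta = \theta \pm 1/3$ for an angle $\theta$ whose parameter ray can be realized in $\cU$.

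Once such $\vartheta$ is chosen, the regular covering $\Phi_\cU : \cU \to \CDC$ guarantees that the parameter ray $\cR_\cU(\theta)$ is realized inside $\cU$. For any $f$ on this ray, $R^*_f(\vartheta)$ terminates at $-a(f)$ and hence $-a(f) \in R^\sigma_f(\vartheta)$ for an appropriate sign $\sigma$. By continuity of the ray portrait together with the combinatorial selection in the previous step, the non-smooth continuation of $R^\sigma_f(\vartheta)$ past $-a(f)$ lands at $z_k \in \partial V(a_k(f))$ as a relatively supporting ray of $V(a_k(f))$, yielding the desired ray connection.

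The principal obstacle is the combinatorial matching: verifying that among the relatively supporting arguments of $V(a_k(f))$, one has its shifted angle $\theta = \vartheta \mp 1/3$ corresponding to a parameter ray of this specific $\cU$, rather than of a different escape region with the same kneading word or compatible multiplicity. The cleanest route is likely a deformation argument: start from a map $f_0$ deep in $\cU$ (with $|\Phi_\cU(f_0)|$ large), and for that $f_0$ fix a smooth supporting ray $R_{f_0}(\vartheta)$ of $V(a_k(f_0))$; then continuously vary $f$ along parameter rays inside $\cU$ so as to slide $-a(f)$ onto $R_f(\vartheta)$, using topological persistence of $\kappa(\cU)$ along the path to certify that the endpoint remains in $\cU$. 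The combinatorial bookkeeping that links the kneading word of $\cU$ to the accessible supporting arguments of $V(a_k(f))$ is where the genuine technical work lives.
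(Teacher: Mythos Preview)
Your deformation idea is essentially the content of the paper's Lemma~4.1 (the ``easy'' ray connection lemma): pick a supporting argument $t$ of $V(a_k(f_0))$ at some base point $f_0\in\cU$, then move along parameter rays until the dynamical ray with argument $t$ becomes non-smooth. The paper explicitly remarks that this step is \emph{not enough} for the theorem. The reason is the following genuine gap in your plan. When $R_{f}(t)$ first becomes non-smooth at some parameter $f$, what you learn is that \emph{some iterate} $R^+_{f}(3^{j'}t)$ contains $-a(f)$; the ray connection you obtain is therefore between $-a(f)$ and $a_{k+j'}(f)$, not $a_k(f)$. Nothing in your argument controls $j'$, so you have no control over the return time of the resulting connected point. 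Your ``combinatorial matching'' paragraph misidentifies the obstacle: existence of $\cR_\cU(\theta)$ for any $\theta$ is automatic since $\Phi_\cU$ is a covering; the real difficulty is forcing the bouncing ray to support the \emph{right} Fatou component.

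The paper's proof supplies precisely this missing control, and it is substantially more delicate than a deformation. It works on an equipotential circle $\theta\mapsto f_\theta$ in $\cU$ and singles out explicit parameter intervals $]\alpha_\mu,\beta_\mu[$ (with $\alpha_\mu=1/3-1/(3^\mu-1)$, $\beta_\mu=1/3-1/(3(3^\mu-1))$) on which the itinerary of the bouncing arguments $\theta\pm1/3$ is computed to begin with $01^{\mu-1}0$. Combined with the Branner--Hubbard level disk structure, this pins the landing point of the bouncing rays inside the level-$(\mu+1)$ disk containing a maximal-return-time orbit point $a_j(\theta)$. Only then is a continuity argument run: either the supporting argument $t$ becomes non-smooth somewhere in $]\alpha_\mu,\beta_\mu[$ (and the itinerary constraint forces the resulting connection to hit a point of return time $\mu$), or $t$ stays smooth and an Intermediate Value Theorem computation produces $\theta_0$ with $3^\mu t=3^\mu(\theta_0-1/3)$, yielding a preperiodic ray connection at $a_0$. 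Your sketch contains none of this localization, and without it the argument does not close.
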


The rest of the section is devoted to the proof of this theorem.

\smallskip
Given an escape region $\cU$ it will be convenient to consider a one-parameter family
$$
\begin{array}[h]{ccc}
  \RZ \setminus \{0\} & \to & \cU \\
  \theta & \mapsto & f_\theta
\end{array}
$$
 such that:
\begin{enumerate}
\item $f_\theta = f_{a(\theta), v(\theta)}$ where $a(\theta)$ and $v(\theta)$ are real analytic functions.
\item $f_\theta$ lies in a parameter ray with argument $\theta$.
\item The escape rate of $-a (\theta)$ is independent of $\theta$.
\end{enumerate}
The existence of such a family is guaranteed by~\S~\ref{s:parameter-rays}.
In the sequel $f_\theta$ will always denote a family as above in an escape region $\cU$ which will be clear from context. The external rays of $f_\theta$ will be simply denoted by $R_\theta (t)$, the Fatou component containing $a_k(f_\theta)$
simply by $V(a_k(\theta))$, etc.

It is not difficult to show that, for any $t \in \RZ$,  as $\theta \nearrow \theta_0$, the rays $R_\theta(t)$ converge to $R_{\theta_0}^+ (t)$, as arcs parametrized by $|\phi_\theta|$.
A similar statement holds as $\theta \searrow \theta_0$. 

Let us first show that ray connections are present in
any escape region. However, it is essential (and harder) to obtain 
ray connections with the extra properties required in the theorem.

\begin{lemma}
  \label{l:easy-ray-connection}
  If $\cU$ is an escape region, then there exists $f \in \cU$ with a ray connection
  between $-a(f)$ and $a_k(f)$ for some $k$.
\end{lemma}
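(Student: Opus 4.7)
The plan is to identify a distinguished periodic dynamical argument $t_0$ that lands at the $\beta$-fixed point on $\partial V(a(f))$, and then to force the external ray at argument $t_0$ to bounce through $-a(f)$ by moving to the parameter ray of $\cU$ whose cocritical argument is $t_0 - 1/3$.

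First, I would fix any $g \in \cU$. By Roesch--Yin~\cite{RoeschYin}, $\partial V(a_0(g))$ is a Jordan curve, and since the marked critical point is the only critical point in the periodic Fatou cycle, $g^p$ restricts to a degree~$2$ self-map of this curve, conjugate to $z \mapsto z^2$. Hence the unique fixed point $\beta_0(g)$ of $g^p$ on $\partial V(a_0(g))$ is landed by a single external ray of argument $t_0 \in \RZ$ periodic of exact period $p$ under $m_3$ (a proper divisor is ruled out because the Fatou cycle has length $p$). As $f$ varies in $\cU$, the free critical point $-a(f)$ escapes, so $f^p$ stays degree~$2$ on $\partial V(a_0(f))$, and the discrete datum $t_0$ is constant on the connected set $\cU$; the (possibly bouncing) ray at argument $t_0$ lands at the continuously varying $\beta_0(f) \in \partial V(a_0(f))$ for every $f \in \cU$.

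Next, I would select the parameter ray $\cR_\cU(\theta_*)$ with $\theta_* = t_0 - 1/3$; this is a valid choice since $\theta_* \not\equiv 0 \pmod 1$, because $t_0$ has period $p \ge 2$ while $1/3$ is preperiodic under $m_3$. For any $f \in \cR_\cU(\theta_*)$ the flow line $R^*_f(\theta_* + 1/3) = R^*_f(t_0)$ terminates at $-a(f)$, so $-a(f) \in R^\pm_f(t_0)$; combined with $3t_0$ being of period $p$, this already delivers two of the three defining conditions of a ray connection at $k = 0$.

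The remaining, and main, step is to show that the non-smooth ray $R^\sigma_f(t_0)$ is a relatively supporting ray of $V(a_0(f))$. I would establish the landing by approaching $f$ along the real-analytic family $f_\theta \in \cU$ from $\theta$ slightly below $\theta_*$: for such $\theta$ the ray $R_{f_\theta}(t_0)$ is smooth and lands at $\beta_0(f_\theta)$, and the continuity of rays as arcs parametrized by $|\phi_{f_\theta}|$, together with the continuous dependence of $\beta_0(f_\theta)$, forces $R^-_f(t_0)$ to land at $\beta_0(f)$; an analogous limit from above gives the same for $R^+_f(t_0)$. Finally, since $f^p|_{\partial V(a_0(f))}$ is degree~$2$, the $\beta$-fixed point $\beta_0(f)$ is landed by a single external argument, so $t_0$ is the unique element of its grand $m_3$-orbit landing at $\beta_0(f)$, and is therefore both the relatively left and relatively right supporting argument of $V(a_0(f))$ there. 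The ray $R^\sigma_f(t_0)$ is then the sought ray connection between $-a(f)$ and $a(f) = a_0(f)$.
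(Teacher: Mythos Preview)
Your approach has a genuine gap. The assertion that $\beta_0(g)$ is ``landed by a single external ray of argument $t_0$'', and hence that $t_0$ is a discrete datum constant on $\cU$, is not justified. The fact that $f^p\mid_{\partial V(a_0(f))}$ has degree~$2$ only tells you there is a \emph{unique fixed point} on that Jordan curve; it says nothing about the number of \emph{external} rays of the cubic $f$ landing there. In the disconnected setting several periodic rays (in the same $m_3$-grand-orbit) can and do land at $\beta_0(f)$, so neither the uniqueness of $t_0$ nor its constancy over $\cU$ follows. The same error recurs in your last paragraph, where you deduce ``relatively supporting'' from ``single external argument''.

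Even if you fix one such $t_0$ at a basepoint $g$, as you move $\theta$ toward $\theta_*=t_0-1/3$ the ray $R_{f_\theta}(t_0)$ becomes non-smooth at each parameter with $3^n t_0=\theta\pm 1/3$ for some $0<n<p$; across such a parameter the landing point can jump and the ray need no longer land at $\beta_0(f_\theta)$. This is exactly what the paper's proof accounts for: it moves $\theta$ toward $t+1/3$ and, at the \emph{first} parameter $\theta'$ where the ray with argument $t$ fails to be smooth, observes that some iterate $R^+_{\theta'}(3^{k}t)$ contains $-a(\theta')$ and remains relatively supporting for $V(a_k(\theta'))$, yielding a ray connection with $k$ possibly $\neq 0$. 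Your argument tries to force $k=0$ and therefore breaks precisely in the cases the paper's argument is designed to catch. The fix is to replace the global constancy claim by this ``first time non-smooth'' argument.
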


\begin{proof}[Proof of Lemma~\ref{l:easy-ray-connection}]
  Pick any $0 < \theta <1/3$ and let $t$ be the argument of a relatively supporting periodic ray $R^\pm_\theta(t)$
  landing at the periodic  point $w(\theta)$ in $\partial V(a(\theta))$ of period dividing $p$. If the ray is not smooth, then
  an iterate, say  $R^\pm_\theta(3^k t)$, contains $-a(\theta)$. Hence, it is a ray connection between $-a(\theta)$ and
  $a_k(\theta)$. Thus we may assume that $R_\theta(t)$ is smooth and relatively supporting for $V(a(\theta))$.
  Then $\theta -1/3 < t < \theta +1/3$. For some minimal $\theta' \in ]\theta, t+1/3]$ we have that rays at argument $t$ are not smooth. Such a $\theta'$ exists since for $\theta=t+1/3$ the rays at arguments $t$ and $t+2/3$ are not smooth.
  It follows that $R^+_{\theta'} (t)$ is relatively supporting for $V(a(\theta'))$ since it lands at the analytic continuation
  of $w(\theta)$. As before, we conclude that an iterate of $R^+_{\theta'} (t)$ is a ray connection between $-a(\theta')$ and
  $a_k(\theta')$ for some $k$.
\end{proof}

In the special case that $\kappa(\cU) = 0^p$, the existence of a ray connection
as claimed in Theorem~\ref{thr:ray-connection} follows from the above lemma.
For a general escape component $\cU$, our search for ray connections with maximal return time starts
by determining certain parameter intervals of $\theta$ such that the rays
bouncing off $-a(\theta)$ land at points in $D_0$ with a prescribed return time to $D_0$. 

\medskip
Given $\theta \in \RZ$ we are interested on the dynamics of $m_3: t \mapsto 3t$
according to the partition of 
$\RZ$  by the intervals:
$$\{ I^\sigma_0 (\theta), I^\sigma_1 (\theta) \}$$ where
$\sigma = + \mbox{ or } -$, and
\begin{eqnarray*}
I^+_0 (\theta) &=& [\theta -1/3, \theta+1/3[,\\
   I^+_1 (\theta) &=&  [\theta+1/3, \theta-1/3[,
\end{eqnarray*}
\begin{eqnarray*}
I^-_0 (\theta) &=& ]\theta -1/3, \theta+1/3],\\
   I^-_1 (\theta) &=&  ]\theta+1/3, \theta-1/3].
\end{eqnarray*}
These intervals are chosen so that $R^+_\theta(t)$ lands in $D_0$ or $D_1$ according to whether $t \in I^+_0(\theta)$ or $I^+_1(\theta)$.
A similar statement holds for left limit rays. 
Now define
$$
\begin{array}[h]{rccl}
  \itin^\sigma_\theta:&\RZ & \to & \{0,1\}^{\N_0}\\
  &t& \mapsto & (i_n)
\end{array}
$$
where $i_n$ is such that $m_3^n (t) \in I_{i_n}^\sigma(\theta)$,
for all $n \ge 0$.
Note that $\itin^+_\theta (t) = \itin^-_\theta (t)$ for all $t$ which are not eventually mapped
into $\{ \theta-1/3, \theta+1/3 \}$. That is, for all $t$ such that $R_\theta(t)$ exists and is smooth.
 
\begin{lemma}
  \label{itineraries}
  For $\ell \ge 2$, consider the arguments:
 \begin{eqnarray*}
   \alpha_\ell &=& \dfrac{1}{3} -  \dfrac{1}{3^\ell-1},   \\
   \beta_\ell &=& \dfrac{1}{3} -  \dfrac{1}{3(3^\ell-1)}. 
 \end{eqnarray*}
 \begin{enumerate}
 \item If $\theta \in ]\beta_\ell,1/3[$ or $\theta \in - ]\beta_\ell,1/3[$, then for some $i_n, j_n$:
   \begin{eqnarray*}
          \itin^-_\theta (\theta+1/3) & =& 0 1^{\ell} i_{\ell+1} i_{\ell+2} \cdots,\\
     \itin^+_\theta (\theta-1/3) & = &0 1^{\ell} j_{\ell+1} j_{\ell+2} \cdots.
   \end{eqnarray*}
\item If $\theta \in ]\alpha_\ell, \beta_\ell[$ or $\theta \in -]\alpha_\ell,\beta_\ell[$, then for some $i_n, j_n$:
   \begin{eqnarray*}  
     \itin^-_\theta (\theta+1/3) &= & 0 1^{\ell-1} 0 \, i_{\ell+1} i_{\ell+2} \cdots,\\
     \itin^+_\theta (\theta-1/3) &= & 0 1^{\ell-1} 0 \, j_{\ell+1} j_{\ell+2} \cdots.
   \end{eqnarray*}
  \end{enumerate}
\end{lemma}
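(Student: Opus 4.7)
The plan is a direct computation. The first key observation is that for all $k \ge 1$, $m_3^k(\theta + 1/3) \equiv 3^k\theta \equiv m_3^k(\theta - 1/3) \pmod 1$, so the two forward orbits coincide from the first iterate onward, and the choice $\sigma = \pm$ only matters if some iterate lands exactly on the partition boundary $\{\theta - 1/3, \theta + 1/3\}$, which will not happen in the open ranges of $\theta$ considered. With $\sigma = -$ for $\theta + 1/3$ and $\sigma = +$ for $\theta - 1/3$, each starting point lies in $I_0^\sigma(\theta)$ by inclusion of the relevant endpoint, giving $i_0 = j_0 = 0$.

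Next, write $\theta = 1/3 - \epsilon$ with $\epsilon > 0$, so that $3^k\theta \equiv -3^k\epsilon \pmod 1$, which equals $1 - 3^k\epsilon$ when $3^k\epsilon < 1$ and $2 - 3^k\epsilon$ when $1 < 3^k\epsilon < 2$. In $[0,1)$, $I_1^\pm(\theta)$ is contained in the arc $[2/3 - \epsilon,\, 1 - \epsilon]$, and $I_0^\pm(\theta)$ is the complementary arc through $0$. For part (1), the bound $\epsilon < 1/(3(3^\ell - 1))$ forces $3^k\epsilon < 3^\ell\epsilon < 3^{\ell-1}/(3^\ell - 1) < 1$ for $k = 1, \ldots, \ell$, and the two inequalities $2/3 - \epsilon < 1 - 3^k\epsilon < 1 - \epsilon$ (equivalent to $0 < (3^k-1)\epsilon < 1/3$) both hold strictly, the upper one because $\epsilon < 1/(3(3^\ell - 1)) \le 1/(3(3^k - 1))$. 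Hence $3^k\theta \bmod 1$ lies in the interior of $I_1^\pm(\theta)$, so $i_k = j_k = 1$ for $k = 1, \ldots, \ell$, producing the prefix $0\,1^\ell$.

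For part (2), $\epsilon \in \, ]1/(3(3^\ell - 1)),\, 1/(3^\ell - 1)[$. The same argument still yields $i_k = j_k = 1$ for $k = 1, \ldots, \ell - 1$, since $3(3^{\ell-1} - 1) < 3^\ell - 1$ gives $\epsilon < 1/(3(3^{\ell-1} - 1))$. For the critical index $k = \ell$, I would split on the sign of $1 - 3^\ell\epsilon$: if $\epsilon \le 1/3^\ell$ then $3^\ell\theta \bmod 1 = 1 - 3^\ell\epsilon \le 2/3 - \epsilon$ (equivalent to $\epsilon \ge 1/(3(3^\ell - 1))$), landing in $[0,\, 2/3 - \epsilon] \subset I_0^\pm(\theta)$; if $\epsilon > 1/3^\ell$ then $3^\ell\theta \bmod 1 = 2 - 3^\ell\epsilon > 1 - \epsilon$ (equivalent to $\epsilon < 1/(3^\ell - 1)$), landing in $]1 - \epsilon,\, 1) \subset I_0^\pm(\theta)$. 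Either way $i_\ell = j_\ell = 0$, giving the prefix $0\,1^{\ell - 1}\,0$; the two subcases join seamlessly at $\epsilon = 1/3^\ell$, where both formulas collapse to $3^\ell\theta \equiv 0$. Finally, the ranges $\theta \in -]\beta_\ell, 1/3[$ and $-]\alpha_\ell, \beta_\ell[$ reduce to the positive case by the symmetry $\itin^+_\theta(t) = \itin^-_{-\theta}(-t)$, which is immediate from $m_3(-t) = -m_3(t)$ and $-I_i^+(\theta) = I_i^-(-\theta)$; substituting $t = \theta \mp 1/3$ simply interchanges the two itineraries in the statement, and both share the same prefix. The only mildly delicate point is the boundary bookkeeping at $3^\ell\epsilon = 1$ in part (2).
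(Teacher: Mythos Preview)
Your proof is correct and follows essentially the same approach as the paper: a direct computation of where the iterates $3^k\theta$ fall relative to the partition, together with the symmetry $\itin^+_\theta(t)=\itin^-_{-\theta}(-t)$ to reduce the negative-$\theta$ ranges to the positive ones. The paper phrases the computation using cyclic-order chains in $\RZ$ (e.g.\ $\theta-2/3<3^\ell\theta<\cdots<3\theta<\theta-1/3<0$) and, for part~(2) with $\ell\ge 3$, explicitly invokes the inclusion $]\alpha_\ell,\beta_\ell[\subset\,]\beta_{\ell-1},1/3[$ to borrow part~(1); your inequality $3(3^{\ell-1}-1)<3^\ell-1$ is exactly that inclusion, so the two arguments coincide up to notation.
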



Let us denote the unique fixed point  of $f_\theta$ in $D_1$ by $z(\theta)$.
The arguments $\alpha_\ell$ and $\beta_\ell$ are closely related to the rays landing at
 $z(\theta)$.
In fact, $\alpha_\ell-1/3= m_3(\beta_\ell+1/3)$ has a period $\ell$ orbit $\cO$ under $m_3$ contained in $[\beta_\ell+1/3,\alpha_\ell-1/3]$ with combinatorial rotation number $-1/\ell$. For all $\theta \in ]\alpha_\ell,\beta_\ell[$, it follows that $\cO \subset 
]\theta+1/3,\theta-1/3[$ and  the external rays
of $f_\theta$  with arguments in $\cO$ land at $z(\theta)$. Similar statements hold for $\theta \in -]\alpha_\ell,\beta_\ell[$
with the difference that the orbit $\cO$ contained in $[-\alpha_\ell+1/3,-\beta_\ell-1/3]$ now has combinatorial rotation number $1/\ell$. The orbits
$\cO$ are examples of ``rotation sets'' thoroughly discussed in~\cite{ZakeriRotationSets}.

 \begin{proof}[Proof of Lemma~\ref{itineraries}]
   It is sufficient to  prove assertion (1) for $\theta \in ]\beta_\ell, 1/3[$ and
   assertion (2) for $\theta \in ]\alpha_\ell, \beta_\ell[$ since
   $m_3^n (\theta) \in I_1^\sigma (\theta)$ if and only if 
$m_3^n (-\theta) \in I_1^{-\sigma} (-\theta)$.

If $\theta \in ]\beta_\ell,1/3[$, then
$$\dfrac{-1}{3(3^\ell-1)} < \theta- \dfrac13 < 0,$$
where we employ hereafter ``$<$''
to denote cyclic order in  $\RZ$.
Therefore, $$\theta -\dfrac{2}{3} < 3^\ell \theta < 3^{\ell-1} \theta < \cdots < 3 \theta < \theta- \dfrac13 < 0$$
and (1) follows.

If $\theta \in ]\alpha_\ell, \beta_\ell[$, then 
$$ \theta - \dfrac13 <3^\ell \left(\theta - \dfrac13\right) < \left(\theta - \dfrac13\right) - \dfrac13 = \theta - \dfrac23.$$
Thus $m^\ell_3 (\theta) \in I_0^\sigma(\theta)$.  For $\ell =2$ it is easy to check that $3 \theta \in I_1^\sigma$ and (2) follows.
For $\ell \ge 3$
we may apply (1) to obtain (2) since $]\alpha_\ell, \beta_\ell[ \subset ]\beta_{\ell-1}, 1/3[$.

\end{proof}

\medskip
Let us now locate in the dynamical plane of $f_\theta$ the points
with a given return time to $D_0$. 
Following Branner and Hubbard~\cite{BrannerHubbardCubicI,BrannerHubbardCubicII}, for $\ell \ge 1$,  we say that 
$$L^{(\ell)}_\theta = \{ z \in \C: f^j_\theta (z) \in D_{0} \cup D_1, 0 \le j < \ell\}$$
is the \emph{level $\ell$ set of $f_\theta$}.
Each connected component of a level $\ell$ set is a Jordan domain called \emph{a level $\ell$ disk}. For $z \in L^{(\ell)}_\theta$ we denote by $$D^{(\ell)}_\theta (z)$$
the unique level $\ell$ disk containing $z$. 

\medskip
Given a word $w = (i_0, \dots, i_{\ell-1}) \in \{ 0, 1 \}^\ell$ let 
$$L^{(\ell)}_\theta (w)= \{ z \in \C: f^j_\theta (z) \in D_{i_j}, 0 \le j < \ell\}$$
We say that points in $L^{(\ell)}_\theta (w)$ have itinerary starting with $w$. 
Note that our numbering of the levels is so that words of length $\ell$ determine subsets
of the set of level $\ell$. In particular, 
$$L^{(1)}_\theta (0) = D_0, \quad L^{(1)}_\theta (1) = D_1.$$
Both level $1$ disks have a common image $D$ under $f_\theta$.
While $f_\theta:D_0 \to D$ has degree $2$,
the map $f_\theta:D_1 \to D$ has degree $1$. Recall that  $D_1$ contains a unique fixed point
denoted $z(\theta)$. For all $\ell \ge 1$,
$$L^{(\ell)}_\theta (1^{\ell}) =D^{(\ell)}_\theta (z(\theta)).$$
Moreover, each disk $D^{(\ell)}_\theta (z(\theta))$ contains exactly two disks of the next level.
One of this disks is $D^{(\ell+1)}_\theta (z(\theta))$ and the other one 
agrees with $L^{(\ell+1)}_\theta (1^{\ell}0)$.

Our main interest is to locate points in the orbit of $a(\theta)$ with maximal return time to $D_0$.
These points have itineraries starting with  $01^{\ell-1}0$ for some $\ell \ge \mu_0$
where $\mu_0$ is the return time of $a(\theta)$ to $D_0$. Their location is closely related to the two preimages of the fixed point  $z(\theta)$ in $D_0$.

\medskip
\noindent
{\bf Notation:}
\emph{Denote by $z'(\theta)$ and $z''(\theta)$ the two $f_\theta$-preimages in $D_0$ of the unique $f_\theta$-fixed point $z(\theta) \in D_1$.}

\medskip
For $\ell \ge 2$,
\begin{equation}
  \label{eq:01ell}
  L^{(\ell)}_\theta (01^{\ell-1}) = D^{(\ell)}_\theta (z'(\theta)) \cup D^{(\ell)}_\theta (z''(\theta)), 
\end{equation}
where the union is disjoint if and only if $\ell > \mu_0$; otherwise the disks involved coincide
with $D^{(\ell)}_\theta (a(\theta))$. 
The level $\mu_0+1$ disk $D^{(\mu_0+1)}_\theta (a(\theta))$
is the preimage in $D_0$ of $L^{(\mu_0)}_\theta (1^{\mu_0-1}0)$, therefore
\begin{equation}
  \label{eq:mu0}
  L^{(\mu_0+1)}_\theta (01^{\mu_0-1}0) = D^{(\mu_0+1)}_\theta (a(\theta)). 
\end{equation}
If $\ell > \mu_0$ and $z =  z'(\theta)$ or $z''(\theta)$, then
$D^{(\ell)}_\theta (z)$ 
contains exactly two disks of level $\ell+1$. One of these disks is $D^{(\ell+1)}_\theta (z)$.
We denote the other one by $C^{(\ell+1)}_\theta (z)$ and call it the \emph{level $\ell+1$ companion disk of $z$}.
Thus, for $\ell > \mu_0$,
\begin{equation}
  \label{eq:ell}
L^{(\ell+1)}_\theta({01^{\ell-1}0}) = C^{(\ell+1)}_\theta (z'(\theta)) \sqcup C^{(\ell+1)}_\theta (z''(\theta)).
\end{equation}

Now our aim is to show that for $\ell \ge \mu_0$ we can adjust the parameter $\theta$ in order
to 
connect $-a(\theta)$ with a level $\ell$ disk around a given
prefixed point $z'(\theta)$ or $z''(\theta)$.
We label $z'(\theta)$ and $z''(\theta)$ such that:

\smallskip
\centerline{\emph{$R_{1/3}^-(2/3)$ lands at $z'(1/3).$}}
\smallskip
For any $\theta \in ]1/3,2/3[$, the external ray $R_\theta(0)$ lands at the fixed point $z(\theta)$. When we cross from $\theta < 1/3$ to $\theta >2/3$
 rays bouncing off the critical point $-a(\theta)$ switch from landing ``close'' to $z'(\theta)$ to landing ``close'' to $z''(\theta)$:

\begin{lemma}
  \label{lem:4}
  Assume that $\mu_0 >1$ where $\mu_0$
  is the return time of $a(\theta)$ to $D_0$. 
If $\ell \ge \mu_0 $, then the following statements hold:
\begin{enumerate}
\item for all $\theta \in ] \beta_\ell,1/3[$ the landing points of $R^-_\theta(\theta+1/3)$ and $R^+_\theta(\theta-1/3)$ belong to $D^{(\ell+1)}_\theta (z'(\theta))$,
\item for all $\theta \in ]2/3, -\beta_\ell[$ the landing points of $R^-_\theta(\theta+1/3)$ and $R^+_\theta(\theta-1/3)$ belong to $D^{(\ell+1)}_\theta (z''(\theta))$.
\end{enumerate}
\end{lemma}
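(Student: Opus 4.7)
The plan is to combine three ingredients: the itinerary constraint of Lemma~\ref{itineraries} (to confine each landing to a pair of disjoint level disks), a local analysis of the gradient flow at the critical saddle $-a(\theta)$ (to force both bouncing rays to share a single landing point), and a continuity argument anchored at $\theta\to 1/3^-$ via the labelling convention $R^-_{1/3}(2/3)\mapsto z'(1/3)$ (to identify which of the two disks contains that landing). Fix $\theta\in ]\beta_\ell,1/3[$. By Lemma~\ref{itineraries}(1) the itineraries $\itin^-_\theta(\theta+1/3) = 0\,1^\ell i_{\ell+1}\cdots$ and $\itin^+_\theta(\theta-1/3) = 0\,1^\ell j_{\ell+1}\cdots$ both begin with $0\,1^\ell$, so both landing points lie in $L^{(\ell+1)}_\theta(0\,1^\ell)$. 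Since $\ell\ge\mu_0$, equation~(\ref{eq:01ell}) applied at level $\ell+1$ yields
\[
L^{(\ell+1)}_\theta(0\,1^\ell) \;=\; D^{(\ell+1)}_\theta(z'(\theta))\;\sqcup\;D^{(\ell+1)}_\theta(z''(\theta))
\]
as a \emph{disjoint} union, so each landing lies in exactly one of these two disks.

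Next I will show that both bouncing rays actually have the \emph{same} landing. The point $-a(\theta)$ is a saddle of the flow $-\nabla|\phi_\theta|$, and the level curve $\{|\phi_\theta|=|\phi_\theta(-a(\theta))|\}$ is a figure eight self-intersecting at $-a(\theta)$; the four separatrices at the saddle are the two incoming critical rays $R^*_\theta(\theta\pm 1/3)$ from infinity, together with two outgoing separatrices, one entering $D_0$ and the other entering $D_1$. Both $R^-_\theta(\theta+1/3)$ and $R^+_\theta(\theta-1/3)$ are limits (in the $|\phi_\theta|$-parametrization) of smooth rays whose argument lies in the $D_0$-side interval $I^+_0(\theta)$; these smooth rays curve past $-a(\theta)$ from the $D_0$-side, so in the limit both must exit along the unique outgoing separatrix into $D_0$ and therefore land at a common point $p(\theta)\in\partial D_0$. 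Since $\cU\subset\cE(\cS_p)$ consists of hyperbolic maps, $p(\theta)$ and the two disks depend continuously on $\theta$ and the disks remain disjoint throughout the connected interval $]\beta_\ell, 1/3[$; hence $p(\theta)$ stays in one of them for all such $\theta$. As $\theta\nearrow 1/3$, $R^-_\theta(\theta+1/3)$ converges to $R^-_{1/3}(2/3)$, which by the labelling convention lands at $z'(1/3)$; so $p(\theta)\to z'(1/3)\in D^{(\ell+1)}_{1/3}(z'(1/3))$, forcing $p(\theta)\in D^{(\ell+1)}_\theta(z'(\theta))$ for every $\theta\in ]\beta_\ell, 1/3[$, which is (1).

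Assertion (2) will follow from (1) via the complex-conjugation symmetry $\theta\mapsto -\theta$, which sends $]\beta_\ell,1/3[$ onto $]2/3,-\beta_\ell[$, exchanges left and right limit rays at conjugate arguments, and swaps the labels $z'$ and $z''$. The hard part is the local analysis at $-a(\theta)$: one must carefully match the four-sector structure of the saddle flow with the argument partition $I^\pm_{0,1}(\theta)$ in order to verify that both bouncing rays, although they arise from limits taken at opposite endpoints of the $D_0$-interval $I^+_0(\theta)$, really do exit along the same outgoing separatrix. A secondary subtlety is that the anchoring limit $\theta\nearrow 1/3$ cannot simply be evaluated at $\theta=1/3$ for $R^+_\theta(\theta-1/3)$: its formal limit $R^+_{1/3}(0)$ is $f_{1/3}$-invariant and so lands at a fixed point in $\partial D_0$, distinct from $z'(1/3)$; it is precisely the common-landing claim that lets us transfer the conclusion established for $R^-_\theta(\theta+1/3)$ to $R^+_\theta(\theta-1/3)$ throughout the open interval.
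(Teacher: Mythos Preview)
Your argument for (1) follows the same outline as the paper's—restrict via itineraries to two level-$(\ell+1)$ disks, argue the two bouncing rays land in the same one, then use continuity and the labeling convention at $\theta=1/3$—but the claim that $R^-_\theta(\theta+1/3)$ and $R^+_\theta(\theta-1/3)$ land at a \emph{common point} is too strong and is not established by the saddle picture alone. After the saddle $-a(\theta)$ both rays indeed follow the unique outgoing separatrix into $D_0$, but that separatrix may hit a deeper saddle (an iterated $f_\theta$-preimage of $-a(\theta)$), at which the two left/right limits can split. What is actually needed, and what the paper checks, is the weaker statement that the shared separatrix is saddle-free down to level $\ell+1$: for $\theta\in]\beta_\ell,1/3[$ one verifies $m_3^k(\theta\pm1/3)\notin\{\theta+1/3,\theta-1/3\}$ for $1\le k\le\ell$ (this can in fact be read off from the two itineraries in Lemma~\ref{itineraries}(1)), so the sub-arc of each ray between $-a(\theta)$ and $\partial L^{(\ell+1)}_\theta$ is common, and both rays land in the same level-$(\ell+1)$ disk $D_\theta$. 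Continuity of the disk $D_\theta$ (not of a single landing point $p(\theta)$) then completes (1).

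Your reduction of (2) to (1) via a ``complex-conjugation symmetry $\theta\mapsto-\theta$'' is a genuine gap. The escape region $\cU$ and the one-parameter family $\theta\mapsto f_\theta$ are not assumed invariant under $(a,v)\mapsto(\bar a,\bar v)$; that involution may carry $\cU$ to a different escape region, so there is no intrinsic symmetry sending $f_\theta$ to $f_{-\theta}$ inside the family. Moreover the labels $z',z''$ are pinned by a single convention at $\theta=1/3$, with no symmetry relating them. The paper's proof of (2) is substantively different: it tracks the smooth preimage ray $R_\theta(2/3)$ across the interval $\theta\in]1/3,2/3[$ (where $R_\theta(0)$ is fixed and lands at $z(\theta)$), deducing that $R^+_{2/3}(2/3)$ lands at $z'(2/3)$ and hence that the other preimage $R^+_{2/3}(1/3)$ lands at $z''(2/3)$. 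This supplies the correct anchor at $\theta=2/3$, after which the argument for (1) repeats on $]2/3,-\beta_\ell[$.
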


\begin{proof}
  
  For all $\theta \in ]\beta_\ell,1/3[$ and $1 \le k \le \ell$, a direct computation shows that $m_3^k(\theta \pm 1/3) \notin \{ \theta +1/3,\theta-1/3\}$. Hence,  the rays $R^-_\theta(\theta+1/3)$ and
  $R^+_\theta(\theta-1/3)$ share a common sub-arc between $-a(\theta)$ and
  $L^{(\ell+1)}_\theta$ since the sub-arc in each ray is free of iterated preimages of
  $-a(\theta)$. Therefore these rays land in the same level $\ell+1$ disk, say $D_\theta$.
  By Lemma~\ref{itineraries}  (1) the landing point has  itinerary starting with $01^\ell$.
  From~(\ref{eq:01ell}), we have that $D_\theta
  =D_\theta^{(\ell+1)}(z'(\theta))$ or 
  $D_\theta^{(\ell+1)}(z''(\theta))$. By continuity of the arcs and disks involved, we have that either $D_\theta
  =D_\theta^{(\ell+1)}(z'(\theta))$ for all $\theta \in ]\beta_\ell,1/3[$ or 
   $D_\theta
   =D_\theta^{(\ell+1)}(z''(\theta))$ for all $\theta \in ]\beta_\ell,1/3[$.
   Our choice of labeling for $z'(\theta)$ and $z''(\theta)$ implies that, for all $t \in ]2/3 - 1/3^{\ell+1}, 2/3[$ such that the external ray $R_{1/3}(t)$ is smooth, this ray $R_{1/3}(t)$  lands  in $D^{(\ell+1)}_{1/3} (z'(1/3))$.  If
   $\theta \in ]1/3-1/3^{\ell+1}, 1/3[$ then
   all the smooth rays  $R_\theta (t)$ with arguments in $]2/3-1/3^{\ell+1}, \theta+1/3[$ land in the same level $\ell+1$ disk $D_\theta$  which depends continuously on $\theta$.
   Therefore, $D_\theta = D^{(\ell+1)}_{\theta} (z'(\theta))$ and the first statement of the lemma holds.

   For $\theta \in ]1/3,2/3[$, the (smooth) fixed  external  ray $R_\theta(0)$ lands at $z(\theta)$ and the landing point
   of the smooth ray $R_\theta(2/3)$ is 
 either  $z'(\theta)$ or  
 $z''(\theta)$. 
 From our choice of labeling  and  continuity
we conclude that  $R_\theta(2/3)$ lands at $z'(\theta)$  for all $\theta  \in ]1/3,2/3[$. Therefore $R^+_{2/3}(2/3)$ lands at $z'(2/3)$.
Note also that $R_{2/3}^+(0)$ lands at $z(2/3)$.
It follows that $R^+_{2/3} (1/3)$ lands at $z''(2/3)$ since this ray is the other preimage of $R_{2/3}^+(0)$.

The second statement of the lemma now follows along  similar  lines. Indeed,
for parameters $\theta \in ]2/3, -\beta_\ell[$
again  $R^+_\theta (\theta-1/3)$ and $R^-_\theta (\theta+1/3)$
must land in a continuously varying level $\ell+1$ disk $D'_\theta$.
We may argue as above to conclude that   $D'_\theta =D^{(\ell+1)}_{\theta} (z''(x))$
since from the previous paragraph we know that for $t \in ]1/3,1/3+1/3^{\ell+1}[$ the rays $R_{2/3}(t)$ land
inside $D^{(\ell+1)}_{2/3} (z''(x))$.
\end{proof}

\begin{corollary}
  \label{lem:2}
  Suppose
  that $\ell > \mu_0 >1$ where $\mu_0$ is the return time of $a_0(\theta)$ to $D_0$.
Let
$$\begin{array}{ccr}
  \theta_0 &\in& ]\alpha_\ell,\beta_\ell[,\\
  \theta_1&\in &-]\alpha_\ell,\beta_\ell[.\\
\end{array}
$$
Then  the following statements hold:
\begin{enumerate}
\item $R^-_{\theta_0} (\theta_0+1/3)$ and $R^+_{\theta_0}(\theta_0 -1/3)$ land at points in the companion
  disk $C^{(\ell+1)}_\theta (z'(\theta_0))$.
\item $R^-_{\theta_1} (\theta_1+1/3)$ and $R^+_{\theta_1}(\theta_1 -1/3)$ land at points in the companion
  disk $C^{(\ell+1)}_\theta (z''(\theta_1))$.
\end{enumerate}
\end{corollary}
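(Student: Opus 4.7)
Fix $\theta_0 \in ]\alpha_\ell,\beta_\ell[$. The plan is to combine the itinerary bookkeeping of Lemma~\ref{itineraries}(2) with a continuity argument seeded by Lemma~\ref{lem:4}(1).

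First, by Lemma~\ref{itineraries}(2) both $\itin^-_{\theta_0}(\theta_0+1/3)$ and $\itin^+_{\theta_0}(\theta_0-1/3)$ begin with $01^{\ell-1}0$, and mimicking the opening paragraph of the proof of Lemma~\ref{lem:4}, a direct inspection of the $m_3$-orbit of $\theta_0\pm 1/3$ shows that its first $\ell$ iterates avoid $\{\theta_0+1/3,\theta_0-1/3\}$. Consequently the sub-arcs of $R^-_{\theta_0}(\theta_0+1/3)$ and $R^+_{\theta_0}(\theta_0-1/3)$ joining $-a(\theta_0)$ to $L^{(\ell+1)}_{\theta_0}$ are free of iterated preimages of $-a(\theta_0)$, so the two rays share a common sub-arc and land at a common point in a single level-$(\ell+1)$ disk. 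By equation~\eqref{eq:ell}, this disk is either $C^{(\ell+1)}_{\theta_0}(z'(\theta_0))$ or $C^{(\ell+1)}_{\theta_0}(z''(\theta_0))$.

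To decide between the two alternatives I argue by continuity on the full interval $]\alpha_\ell,1/3[$. Since $\ell>\mu_0$, the level-$\ell$ disks $D^{(\ell)}_\theta(z'(\theta))$ and $D^{(\ell)}_\theta(z''(\theta))$ are disjoint and vary continuously with $\theta$, and the common landing point of $R^-_\theta(\theta+1/3)$ and $R^+_\theta(\theta-1/3)$ depends continuously on $\theta$ as well (one-sided limit rays being continuous limits of smooth rays $R_\theta(s)$). For $\theta\in ]\beta_\ell,1/3[$, Lemma~\ref{lem:4}(1) places the landing in $D^{(\ell+1)}_\theta(z'(\theta))\subset D^{(\ell)}_\theta(z'(\theta))$, and for $\theta\in ]\alpha_\ell,\beta_\ell[$ the previous paragraph places it in $D^{(\ell)}_\theta(z'(\theta))\sqcup D^{(\ell)}_\theta(z''(\theta))$. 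Since $]\alpha_\ell,1/3[$ is connected and the landing belongs throughout to the disjoint union $D^{(\ell)}_\theta(z'(\theta))\sqcup D^{(\ell)}_\theta(z''(\theta))$, continuity forces it to remain in the same component, namely $D^{(\ell)}_\theta(z'(\theta))$. Combining with the previous paragraph at $\theta=\theta_0$ yields the landing in $C^{(\ell+1)}_{\theta_0}(z'(\theta_0))$, proving~(1). Statement~(2) follows by applying the same reasoning to $-\theta_1\in ]\alpha_\ell,\beta_\ell[$, using Lemma~\ref{lem:4}(2) and the symmetry $m_3^n(\theta)\in I^\sigma_1(\theta)\iff m_3^n(-\theta)\in I^{-\sigma}_1(-\theta)$.

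The most delicate step is the continuity argument at the wall $\theta=\beta_\ell$, where the rays $R^\pm_\theta(\theta\pm 1/3)$ degenerate: a short calculation gives $3^\ell(\beta_\ell+1/3)=\beta_\ell+1/3$ modulo $1$, so the critical argument is itself $m_3$-periodic and the corresponding rays are broken precisely at the transition. One has to verify that the one-sided limit rays at $\beta_\ell$ land inside $\overline{D^{(\ell)}_{\beta_\ell}(z'(\beta_\ell))}$ so that the $z'$-side containment survives the passage across the wall. A parallel but milder concern arises at the countably many $\theta\in ]\alpha_\ell,\beta_\ell[$ at which an iterate of $\theta\pm 1/3$ falls on a critical argument; there too the rays become non-smooth and the continuity of the landing must be justified via left/right limits.
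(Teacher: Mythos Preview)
Your argument assembles the right ingredients but takes an unnecessarily hard route, and the step you yourself flag as ``most delicate'' is left genuinely open. At $\theta=\beta_\ell$ the argument $\beta_\ell+1/3$ is $m_3$-periodic of period~$\ell$ \emph{and} is one of the two critical arguments, so the corresponding left/right limit rays degenerate badly (they bounce through $-a$ infinitely often along the periodic orbit). You assert that the landing point varies continuously with $\theta$ across this wall and stays on the $z'$ side, but you do not verify it; the sentence ``One has to verify that the one-sided limit rays at $\beta_\ell$ land inside $\overline{D^{(\ell)}_{\beta_\ell}(z'(\beta_\ell))}$'' is a statement of what remains to be done, not a proof. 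Without this, the connectedness argument on $]\alpha_\ell,1/3[$ does not go through.

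The paper avoids this wall entirely by a one-line observation: since $\ell\ge 3$ (because $\ell>\mu_0>1$), one has $]\alpha_\ell,\beta_\ell[\subset]\beta_{\ell-1},1/3[$, an inclusion already recorded in the proof of Lemma~\ref{itineraries}. Because $\ell-1\ge\mu_0$, Lemma~\ref{lem:4}(1) applies with $\ell-1$ in place of $\ell$ and gives directly that $R^\mp_{\theta_0}(\theta_0\pm1/3)$ land in $D^{(\ell)}_{\theta_0}(z'(\theta_0))$ for every $\theta_0\in]\alpha_\ell,\beta_\ell[$. Combining this with the itinerary $01^{\ell-1}0$ from Lemma~\ref{itineraries}(2) and equation~\eqref{eq:ell} forces the landing into $C^{(\ell+1)}_{\theta_0}(z'(\theta_0))$, with no continuity argument and no crossing of $\beta_\ell$ required. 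Replace your second and third paragraphs with this observation and the proof is complete.
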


\begin{proof}
  Let us just prove the first assertion
  as the proof of the second one is similar. 
  Note that $]\alpha_\ell,\beta_\ell[ \subset ]\beta_{\ell-1},1/3[$.
  By the previous lemma, $R^\mp_{\theta_0} (\theta_0\pm 1/3)$ lands in $ D^{(\ell)}_{\theta_0}(z'(\theta_0))$.
  In view of Lemma~\ref{itineraries}, the itinerary $\itin^\mp_{\theta_0} (\theta_0\pm 1/3)$ starts with $01^{\ell-1}0$.
From~\ref{eq:ell}, the corresponding rays land in the companion disk $C^{(\ell+1)}_\theta (z'(\theta_0))$.
\end{proof}

\begin{lemma}
  \label{lem:5}
Let $\mu$ be the maximal return time to $D_0$ and assume that $1<\mu <p$.
Then at least one of the following statements hold:

\begin{enumerate}
\item  
$\cO(a(\theta)) \cap D^{(\mu)}_{\theta}(z'(\theta)) \neq \emptyset$, for some $\theta$, and
there exists $\theta_0 \in ]\alpha_\mu, \beta_\mu[$ such that   $f_{\theta_0}$ has a ray connection between $-a(\theta_0)$ and $a_k(\theta_0)$ for some $k$ such that $a_k(\theta_0) \in D^{(\mu)}_{\theta_0}(z'(\theta_0))$.

\item
$\cO(a(\theta)) \cap D^{(\mu)}_{\theta}(z''(\theta)) \neq \emptyset$, for some $\theta$, and  
there exists $\theta_1 \in ]\alpha_\mu, \beta_\mu[$ such that   $f_{\theta_1}$ has a ray connection between $-a(\theta_1)$ and $a_k(\theta_1)$ for some $k$ such that $a_k(\theta_1) \in D^{(\mu)}_{\theta_1}(z''(\theta_1))$.
\end{enumerate}
\end{lemma}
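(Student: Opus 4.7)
The plan is to verify the orbit-intersection hypothesis in (1) or (2) as a direct consequence of the definition of maximal return time, and then to locate $\theta_0$ using Corollary~\ref{lem:2} together with a tracking argument inside the relevant companion disk. For the first step, maximality of $\mu$ produces an index $k$ with $a_k(\theta) \in D_0$, $a_{k+j}(\theta) \in D_1$ for $1 \le j \le \mu-1$, and $a_{k+\mu}(\theta) \in D_0$. Thus $a_k(\theta)$ has itinerary starting with $01^{\mu-1}0$; in particular, by~(\ref{eq:01ell}) it lies in $D^{(\mu)}_\theta(z'(\theta))$ or $D^{(\mu)}_\theta(z''(\theta))$, yielding the orbit-intersection part of (1) or (2). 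Without loss of generality, assume the $z'$-case.

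For the ray-connection part, the plan is to assume $\mu > \mu_0$ (if $\mu = \mu_0$, the conclusion reduces to a direct refinement of Lemma~\ref{l:easy-ray-connection} in the interval $]\alpha_\mu,\beta_\mu[$, using that in this case $D^{(\mu)}_\theta(z'(\theta))$ coincides with $D^{(\mu)}_\theta(a(\theta))$). Apply Corollary~\ref{lem:2} with $\ell = \mu$: for every $\theta \in ]\alpha_\mu, \beta_\mu[$ the bouncing rays $R^-_\theta(\theta+1/3)$ and $R^+_\theta(\theta-1/3)$ land inside the companion disk $C^{(\mu+1)}_\theta(z'(\theta))$. Since $a_k(\theta)$ has itinerary starting with $01^{\mu-1}0$, equation~(\ref{eq:ell}) places $a_k(\theta)$ in the same companion disk for all $\theta$ in this interval. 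So the landing point of the bouncing ray and the orbit point $a_k(\theta)$ are confined to a common level $\mu+1$ Jordan disk throughout $]\alpha_\mu,\beta_\mu[$.

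The remaining step is to select $\theta_0 \in ]\alpha_\mu,\beta_\mu[$ making one of these bouncing rays relatively supporting for $V(a_k(\theta_0))$. The plan is to follow the landing $w(\theta)$ of $R^-_\theta(\theta+1/3)$ as $\theta$ varies real-analytically: the ray becomes periodic precisely when $3\theta$ has period $p$ under $m_3$, and at such parameters the landing $w(\theta)$ is a period dividing $p$ point sitting on $\partial V(a_j(\theta))$ for some $j$; the combinatorial constraint that the ray bounces through the critical point, combined with the fact that the corresponding orbit visits the companion disk with itinerary $01^{\mu-1}0\cdots$, is what should force $j=k$ for at least one such $\theta_0$. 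The main obstacle is this last combinatorial matching: among the rays of period dividing $p$ landing inside the companion disk, identifying the one supplied by the bouncing argument as a relatively supporting ray of $V(a_k(\theta_0))$ in the sense of Definition~\ref{d:relatively-supporting}. Carrying this out cleanly will require the Roesch--Yin description of $\partial V(a_k(\theta))$ as a Jordan curve on which $f_\theta^p$ acts as a degree $2$, $3$, or $4$ cover, together with a count of periodic rays landing in the companion disk, to single out the ray with argument $\theta_0 \pm 1/3$.
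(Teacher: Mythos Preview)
Your reduction of the orbit-intersection clause is fine, and so is the use of Corollary~\ref{lem:2} (together with~(\ref{eq:mu0}) when $\mu=\mu_0$) to place both $a_k(\theta)$ and the landing point of the bouncing ray in the same level $\mu+1$ disk for all $\theta\in]\alpha_\mu,\beta_\mu[$. The genuine gap is in the last step. Knowing that the bouncing ray $R^-_\theta(\theta+1/3)$ lands in the companion disk gives you no control over \emph{which} Julia component it lands on: that disk contains many connected components of $K(f_\theta)$, and there is no reason the landing point should lie on $\partial V(a_k(\theta))$, let alone be relatively supporting. Restricting to the countably many $\theta$ with $3\theta$ of period $p$ does not help, since period-$p$ points abound in $J(f_\theta)$ away from the boundaries of the $V(a_j)$. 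The Roesch--Yin description of $\partial V(a_k(\theta))$ and a count of rays landing in the companion disk will not close this: the problem is not identifying a supporting ray among those that land on $\partial V(a_k)$, it is getting the bouncing ray to land on $\partial V(a_k)$ at all.

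The paper's argument avoids this by reversing the roles. Instead of following the bouncing ray and hoping it hits $\partial V(a_j)$, it starts at $\theta=\alpha_\mu$ with a \emph{fixed} period-$p$ argument $t$ such that $R_{\alpha_\mu}(t)$ is already smooth and relatively supporting for $V(a_j(\alpha_\mu))$, and then varies $\theta$ through $]\alpha_\mu,\beta_\mu[$. Either $R_\theta(t)$ becomes non-smooth at some first $\theta_0$, in which case the limit $R^+_{\theta_0}(t)$ is still relatively supporting and some iterate of it contains $-a(\theta_0)$, yielding a periodic ray connection; or $R_\theta(t)$ stays smooth throughout, and an Intermediate Value argument (comparing arc lengths of $[\theta-1/3,m_3^\mu(\theta-1/3)]$ and $[\theta-1/3,m_3^\mu t]$) produces $\theta_0$ with $3^\mu t=3^\mu(\theta_0-1/3)$, which after a short argument forces $\mu=\mu_0$, $j=0$, and a preperiodic ray connection. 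Your dismissal of the case $\mu=\mu_0$ as ``a direct refinement of Lemma~\ref{l:easy-ray-connection}'' is also too quick: that lemma gives a ray connection somewhere in $\cU$, not one in $]\alpha_\mu,\beta_\mu[$ attached to an orbit point with maximal return time.
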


\begin{proof}
  Points having itinerary
  starting with $01^{\mu-1}$ belong to $ D^{(\mu)}_{\theta}(z'(\theta)) $ or  to $D^{(\mu)}_{\theta}(z''(\theta))$, in view of~\ref{eq:01ell}. Thus if $a_j(\theta) \in D_0$ has return time $\mu$ to $D_0$, then
  $a_j(\theta) \in  D^{(\mu)}_{\theta}(z'(\theta)) $ for all $\theta$
  or,  $a_j(\theta) \in D^{(\mu)}_{\theta}(z''(\theta))$   for all $\theta$.
  We assume the former and  prove that the first assertion holds
  for some $k$ not necessarily equal to $j$. The other case
  follows along similar lines.

  When $\mu_0 = \mu$ we let  $j=0$ and observe that $a_0(\theta) = a(\theta)$
  belongs to  $D^{(\mu)}_{\theta}(z'(\theta)) =D^{(\mu)}_{\theta}(z''(\theta))$. Otherwise we choose any $j$ such that $a_j(\theta) \in  D^{(\mu)}_{\theta}(z'(\theta)) $.
  Note that $a_j(\theta)$ has itinerary starting with $01^{\mu-1}0$. If $\mu >\mu_0$, then 
  $D^{(\mu+1)}_{\theta}(a_j(\theta))$ is the level $\mu+1$ companion disk of $z'(\theta)$.
  Let $\theta \in ]\alpha_\mu, \beta_\mu[$.
  By~\ref{eq:mu0} if $\mu=\mu_0$ and by Corollary~\ref{lem:2} if $\mu > \mu_0$, the rays $R^\mp_\theta(\theta\pm1/3)$ land
  in $D^{(\mu+1)}_{\theta}(a_j(\theta))$. That is, our previous work says that rays bouncing off $-a(\theta)$ into $D_0$
  land in the level $\mu+1$ disk
  of $a_j(\theta)$ whenever $\theta \in ]\alpha_\mu, \beta_\mu[$.
  We will find 
  a suitable value of $\theta \in ]\alpha_\mu, \beta_\mu[$ such that
  $f_\theta$ has a ray connection with return time $\mu$.

Note that $\alpha_\mu-1/3$  which has period $\mu < p$ under $m_3$. In particular, no period $p$ ray is bouncing. 
Thus we may consider a period $p$ argument $t$  such that the external ray $R_{\alpha_\mu} (t)$ is smooth and relatively supporting for $V(a_j(\alpha_\mu))$.
Moreover, $R_{\theta} (t)$
is a relatively supporting ray of $V(a_j(\alpha_\mu))$ for all  $\theta$ in  a sufficiently small neighborhood of
$\alpha_\mu$. 

\emph{Claim.} If for all  $\theta \in  [\alpha_\mu,\beta_\mu]$ the ray $R_\theta(t)$
exists (it is smooth), then there exists $\theta_0 \in  ]\alpha_\mu,\beta_\mu[$ such that
$3^\mu t = 3^\mu (\theta_0-1/3)$.

\emph{Proof of the Claim.}
Assume that the ray $R_{\theta}(t)$ is smooth for all $\theta \in [\alpha_\mu,\beta_\mu]$.
Consider the continuous functions $\delta(\theta) \ge 0$ and $\hat{\delta}(\theta) \ge 0$ with domain $[\alpha_\mu,\beta_\mu]$,
where $\delta(\theta)$ is defined as the arc length of $[ \theta-1/3,
m_3^\mu (\theta-1/3) ] \subset \RZ$
and $\hat{\delta}(\theta)$ is the arc length of the interval $[\theta-1/3,
m^\mu_3 (t)]\subset \RZ$. 
The arguments $\alpha_\mu-1/3$ and $\beta_\mu+1/3$ have period $\mu$ and therefore
$\delta(\alpha_\mu)=0$ and $\delta(\beta_\mu) =2/3$.
Observe that $R_\theta(m_3^\mu(t))$ lands in $\partial V(a_{j+\mu}(\theta))$.
 Since $a_{j+\mu}(\theta) \in D_0$ we have that
the external ray with argument $m^\mu_3 (t)$ lands in $D_0$.
Therefore $\hat{\delta}(\theta) \le 2/3$.
Taking into account that $t$ has  period $p > \mu$ it  follows that $0<\hat{\delta}(\theta)<2/3$. By the Intermediate Value Theorem, there exists
$\theta_0 \in  ]\alpha_\mu,\beta_\mu[$ such that $\delta(\theta_0) = \hat{\delta}(\theta_0)$. That is, $3^\mu t = 3^\mu(\theta_0-1/3)$ and the claim follows.

\smallskip
The proof continues by considering two situations. The first situation is when there exist $\theta$ in
$[\alpha_\mu,\beta_\mu]$ such that rays with argument $t$ of $f_\theta$ are not smooth.
The second is when $R_\theta(t)$ is smooth for all $\theta \in [\alpha_\mu,\beta_\mu]$. In the latter situation the claim allows us to work with a parameter $\theta_0$ such that also $3^\mu t = 3^\mu(\theta_0-1/3)$. Below we show that the first situation yields a periodic ray connection while the second leads to a preperiodic one. 

\smallskip
In the first situation let
 $\theta_0 \in  ]\alpha_\mu,\beta_\mu[$  be minimal  such that the 
rays with argument $t$ are not smooth. As $\theta \in ]\alpha_\mu,\theta_0[$ converges to
$\theta_0$ the rays $R_\theta(t)$ converge to $R^+_{\theta_0} (t)$. Hence, $R^+_{\theta_0} (t)$ is a non-smooth ray relatively supporting $V(a_j(\theta_0))$. Since every non-smooth ray eventually maps onto one containing $-a(\theta_0)$, there exists $j'$ such that
$R^+_{\theta_0} (3^{j'}t)$ contains $-a(\theta_0)$
and relatively supports $V(a_{k}(\theta_0))$ where $k=j+j'$.
It follows that $3^{j'} t = \theta_0 -1/3$ or $\theta_0+1/3$.
The itinerary $\itin^+_{\theta_0}(3^{j'}t)$ coincides with
the itinerary of $a_k(\theta_0)$ according to the disks $D_0$ and $D_1$.
Thus $3^{j'} t \neq \theta_0+1/3$, since $\itin^+_{\theta_0}(\theta_0+1/3)$ starts with $1^\mu 0$ and the maximal length for a
string of $1$'s in $\kappa(\cU)$ is $\mu-1$. Therefore, $3^{j'} t = \theta_0-1/3$ and $R^+_{\theta_0} (\theta_0-1/3)$ is
a periodic
ray connection between $-a(\theta_0)$ and $a_{k}(\theta_0)$.
Lemma~\ref{lem:4} (2) yields that 
 $a_{k}(\theta_0)$ lies in $D^{(\mu)}_{\theta_0}(z'(\theta_0))$.

\smallskip
Now we consider the second situation
in which $R_{\theta} (t)$ is smooth for all $\theta \in [\alpha_\mu,\beta_\mu]$ and  $\theta_0 \in ]\alpha_\mu,\beta_\mu[$ is such that
$3^\mu t = 3^\mu(\theta_0-1/3)$.
To lighten notation we omit the dependence on $\theta_0$  of rays and disks.
Recall that  $R^- (\theta_0+1/3)$ lands in $D^{(\mu+1)}(a_j(\theta_0))$.
Let $\zeta$ be the intersection point of  $R^- (\theta_0+1/3)$ with 
$\partial D^{(\mu+1)}(a_j(\theta_0))$. Note that $\zeta$ also lies in $R^+ (\theta_0-1/3)$
since the portion of $R^-(\theta_0+1/3)$ between the boundaries of levels $1$ and $\mu+1$ is free 
of iterated preimages of $-a(\theta_0)$.
Also, $f^\mu_{\theta_0} (\zeta) \in R(3^\mu t)$
since $3^\mu t = 3^\mu(\theta_0-1/3)$. Moreover, the smooth ray $R(t)$
intersects $\partial D^{(\mu+1)} (a_j(\theta_0))$ at one point, say $\xi$.
If $ \mu > \mu_0$, then $f^\mu_{\theta_0} : \partial D^{(\mu+1)} (a_j(\theta_0)) \to D_0$ is injective, and therefore $\zeta = \xi  \in R (t)$ which contradicts our assumption that $R  (t)$ is smooth.
Thus, $\mu = \mu_0$, $a_j(\theta_0) = a (\theta_0)$, and  $f^\mu_{\theta_0} : \partial D^{(\mu+1)}(a(\theta_0)) \to \partial D_0$ is two-to-one. In this case, $f_{\theta_0} : \partial D^{(\mu+1)}(a(\theta_0)) \to  \partial D^{(\mu)}(a_1(\theta_0))$ is two-to-one.
The gradient flow lines between $\zeta$ and the Julia set as well as the flow line between $\xi$ and the Julia set
map under $f_{\theta_0}$ onto the same sub-arc of $R(3t)$.
It follows that $t=\theta_0$. Thus $R(\theta_0)$ lands at the unique
point of period dividing $p$  in $\partial V(a(\theta_0))$ and
the rays $R^\pm (\theta_0 \mp 1/3)$ also land at a point in
 $\partial V(a(\theta_0))$. Moreover,
one of these rays is relatively supporting for $V(a(\theta_0))$ because $R(\theta_0)$ is relatively supporting. That is, $R^+ (\theta_0 - 1/3)$ or  $R^- (\theta_0 + 1/3)$ is a  preperiodic ray connection between $-a(\theta_0)$ and $a(\theta_0)$.
\end{proof}

For $\theta=\theta_0$ or $\theta_1$ as in the Lemma, the return time of
$a_k(\theta)$ to $D_0$ is $\mu$ and hence maximal. Therefore, the lemma finishes the proof of Theorem~\ref{thr:ray-connection} by establishing the existence of the corresponding ray connection.

\section{Landing and take-off}
\label{s:landing}

The aim of this section is to discuss where rational parameter rays land and
which rays land at a given map. 
According to Bonifant, Milnor and Sutherland~\cite{cm3,ParabolicGreen}
a parameter ray $\cR_\cU(\theta)$ with $\theta \in \QS$ lands at a parabolic or pcf map $f_0$ whose basic features are summarized in~\cite[Theorem~2.8]{cm3} which in our notation reads as follows:

\begin{theorem}[Milnor, Bonifant and Sutherland]
  \label{thr:bms}
  A parameter ray $\cR_\cU(\theta)$ with $\theta \in \QS$ lands at a map $f_0$.
  If $\theta'= \theta+1/3$ or $\theta-1/3$ is periodic, then $f_0$ is a parabolic map, the dynamical rays $ R_{f_0}(\theta+1/3), R_{f_0}(\theta-1/3)$ land
  in $\partial V(-a(f_0))$, and $R_{f_0}(\theta')$ lands at a parabolic periodic point.
  If $\theta+1/3$ and $\theta-1/3$ are strictly preperiodic, then $f_0$ is a
  pcf map and $ R_{f_0}(\theta+1/3), R_{f_0}(\theta-1/3)$ land at $-a(f_0)$.  
\end{theorem}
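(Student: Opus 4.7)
The plan is to exploit the cocritical parametrization $\Phi_\cU: \cU \to \CDC$, $f \mapsto \phi_f(2a(f))$, which by~\S\ref{s:parameter-rays} is a finite-degree covering map. The ray $\cR_\cU(\theta)$ is a lift of the radial ray $]1,\infty[\cdot \exp(2\pi i \theta)$, and the maps $f$ on it are exactly those for which $R^*_f(\theta + 1/3)$ and $R^*_f(\theta - 1/3)$ terminate at $-a(f)$. As one moves inward along $\cR_\cU(\theta)$, the cocritical escape rate $\log|\Phi_\cU(f)|$ decreases to $0$, so the cocritical point $2a(f)$ approaches the Julia set. First I would establish compactness of the accumulation set: since the only ends of $\cS_p$ lie inside escape regions, and inside any other escape region $|\Phi_\cU|$ would blow up, the ray cannot escape to infinity in $\cS_p$, and its accumulation set is a non-empty compact subset of $\partial \cU \subset \cC(\cS_p)$. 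At any accumulation point $f_0$, the point $2a(f_0)$ lies on $\partial V_{f_0}(\infty)$, hence $f_0(-a(f_0)) = f_0(2a(f_0)) \in J(f_0)$, and full invariance of $J(f_0)$ forces $-a(f_0) \in J(f_0)$.

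Next I would track the dynamical rays $R^\pm_f(\theta \pm 1/3)$ as $f \to f_0$ along $\cR_\cU(\theta)$ and show that they converge to smooth rays $R_{f_0}(\theta \pm 1/3)$ of $f_0$. Consider first the periodic case, in which some $\theta' \in \{\theta+1/3, \theta-1/3\}$ is periodic under $m_3$. For $f$ on the ray, the ray $R^\pm_f(\theta')$ is the image of the branch of $R_f^\pm(\theta \pm 1/3)$ past $-a(f)$, hence smooth, and it lands at a repelling periodic point $z(f)$. A holomorphic-motion argument propagates $z(f)$ to a periodic point $z(f_0)$ which is the landing point of $R_{f_0}(\theta')$; the assumption $f_0 \in \partial \cU$ obstructs extending the motion any further, so $z(f_0)$ must be non-repelling, and being periodic it can only be parabolic. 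The two rays $R_{f_0}(\theta \pm 1/3)$ are the dynamical preimages of $R_{f_0}(3\theta)$ not containing $-a(f_0)$, and since $-a(f_0)$ lies in the parabolic immediate basin $V(-a(f_0))$, these rays must land on its boundary. In the preperiodic case nothing in the orbit of $\theta \pm 1/3$ under $m_3$ is periodic, no parabolic obstruction arises, and the only way for the approximating broken rays terminating at $-a(f)$ to persist in the limit is for $R_{f_0}(\theta \pm 1/3)$ to land at $-a(f_0)$ itself, forcing the orbit of $-a(f_0)$ to be preperiodic and $f_0$ pcf.

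Finally one must upgrade from accumulation points to a single landing point. I would show that the combinatorial data cut out by the ray, namely the ray portrait at the parabolic periodic orbit (resp.\ at the preperiodic critical value $-a(f_0)$) together with the kneading of the orbit of $a(f_0)$ relative to $\partial V_{f_0}(\infty)$, depends only on $\theta$; hence all accumulation points share the same combinatorics. Then I would invoke rigidity to conclude: in the pcf case Thurston rigidity, adapted to $\cS_p$, pins down $f_0$ uniquely; in the parabolic case a quasiconformal deformation argument in the style of Douady--Hubbard, combined with parabolic implosion near $f_0$, excludes non-trivial deformations within $\cS_p$. I expect this last uniqueness step to be the main obstacle: the two cases require substantially different machinery, and controlling the parabolic implosion carefully enough to identify a unique landing point (rather than just an accumulation set) is the technical heart of the argument as carried out in~\cite{cm3,ParabolicGreen}.
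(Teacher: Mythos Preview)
Your proposal contains one concrete error and one major overcomplication.

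The error: you claim that at any accumulation point $f_0$ the cocritical point $2a(f_0)$ lies on $\partial V_{f_0}(\infty) = J(f_0)$, hence $-a(f_0) \in J(f_0)$. This is false in the parabolic case. The condition $|\Phi_\cU(f)| \to 1$ only yields $G_{f_0}(2a(f_0)) = 0$, i.e.\ $2a(f_0) \in K(f_0)$; nothing forces it onto the boundary. For a parabolic $f_0 \in \cS_p$ the free critical point $-a(f_0)$ sits in the interior of $K(f_0)$, namely in the immediate parabolic basin $V(-a(f_0))$, not in $J(f_0)$. The paper's argument (proof of Lemma~\ref{l:limsup}) is different: if the landing point of $R_g(\theta+1/3)$ were eventually repelling with critical-point-free orbit, then by stability of such rays the dynamical ray $R_f(\theta+1/3)$ would be smooth for all $f$ near $g$, contradicting $f \in \cR_\cU(\theta)$. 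Hence every accumulation point $g$ is parabolic or pcf.

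The overcomplication: you propose to upgrade from accumulation to landing via Thurston rigidity in the pcf case and Douady--Hubbard style parabolic rigidity in the parabolic case, and you identify this as ``the technical heart''. In fact no rigidity is needed. The accumulation set of the arc $\cR_\cU(\theta)$ in $\cC(\cS_p)$ is connected, and by the previous paragraph it is contained in the set of parabolic or pcf maps in $\cS_p$, which is countable. A connected subset of a countable set is a point. This one-line argument is exactly what the paper does in the proof of Lemma~\ref{l:limsup}.

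Finally, note that the paper does not give a fully self-contained proof of Theorem~\ref{thr:bms}: the preperiodic conclusion (that $f_0$ is pcf and the rays land at $-a(f_0)$) is cited from~\cite{ParabolicGreen}, while the periodic conclusion is reproved in Lemmas~\ref{l:limsup} and~\ref{l:limsupperiodic} via hyperbolic-metric estimates on $\limsup R^*_f(t)$ rather than through holomorphic motion of landing points.
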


For our purpose we need to study in greater detail the supporting properties
of the rays $ R_{f_0}(\theta+1/3), R_{f_0}(\theta-1/3)$, the kneading words induced by these rays and
to determine the existence of certain parameter rays landing at $f_0$.
Our study relies on the notions of take-off arguments and their associated kneading word defined below.

\begin{definition}[Take-off argument]
  \label{def:take-off}
  Let $f_0$ be a map in $\cS_p$ with a parabolic periodic point $z_0$. Assume that $-a(f_0)$ is in the immediate basin of $z_0$ (i.e. $z_0 \in \partial V(-a(f_0))$).
We say that $\theta$ is a \emph{take-off  argument} for $f_0$ if
$R_{f_0}(\theta + 1/3)$ and $R_{f_0}(\theta - 1/3)$
are relatively supporting arguments for
$\partial V(-a(f_0))$ and one of these rays lands at $z_0$.

For a pcf map $f_0 \in \partial \cC(\cS_p)$
we say that \emph{$\theta$ is a take-off argument for $f_0$} if
the external rays $R_{f_0} (\theta \pm 1/3)$ land at $-a(f_0)$. Equivalently, if $R_{f_0}(\theta)$ lands at the cocritical point $2a(f_0)$.
\end{definition}

\begin{definition}[Take-off kneading]
  \label{def:take-off-word}
Given a parabolic (resp. pcf) map $f_0 \in \cS_p$  with take-off argument $\theta$,
let $X= \overline{V(-a(f_0))}$ (resp. $X=\{-a(f_0)\}$) and
$\Gamma = R_{f_0}(\theta-1/3)\cup X \cup  R_{f_0}(\theta+1/3)$.
Then $\C \setminus \Gamma$ consists of two connected components. Denote by $U_0$ the one containing $a(f_0)$ and by $U_1$ the other one. We say that
$$\kappa (f_0, \theta) = i_1 i_2 \dots i_{p-1} 0$$
is the \emph{take-off kneading word of $f_0$ associated to $\theta$} if $a_j(f_0) \in U_{i_j}$ for $j=1, \dots, p-1$.
\end{definition}

Parameter rays landing at pcf maps are completely described by~\cite[Theorem~5.2]{cm3}:

\begin{theorem}[Bonifant and Milnor]
  \label{thr:bm}
  If $\theta$ is a take-off argument of a pcf map $f_0 \in \cS_p$, then there exists  unique parameter ray $\cR_\cU(\theta)$ of some escape region $\cU$
  with argument $\theta$
landing at $f_0$.
\end{theorem}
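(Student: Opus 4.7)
The statement has two parts—existence and uniqueness—and I expect the main obstacle to be existence, which requires constructing a concrete family of escaping maps that converges to the pcf map $f_0$ along a parameter ray of argument $\theta$.

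\emph{Existence via quasiconformal surgery.} Since $\theta$ is a take-off argument, $\overline{R_{f_0}(\theta)}$ is a proper arc in the basin of infinity running from $\infty$ down to the cocritical point $2a(f_0)$, while $R_{f_0}(\theta\pm 1/3)$ land at $-a(f_0)$. For small $s > 0$ the plan is to cut the dynamical plane along $\overline{R_{f_0}(\theta)}$ from the equipotential at height $3\log(1+s)$ down to $2a(f_0)$ and glue in a thin annular collar modeled on $z \mapsto z^3$, arranged so that in the new B\"ottcher chart the cocritical point is pushed outward to radius $1+s$ with argument $\theta$. This produces a quasiregular degree three branched cover of $\hC$ that agrees with $f_0$ outside the collar; its invariant Beltrami coefficient is supported on a thin annulus with $L^\infty$-norm tending to $0$ as $s \to 0^+$. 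Straightening by the Measurable Riemann Mapping Theorem and normalizing yields a cubic polynomial $f_s \in \cS_p$ (the marked periodic orbit is preserved because the surgery does not touch any $\overline{V(a_k(f_0))}$) satisfying $\phi_{f_s}(2a(f_s)) = (1+s)e^{2\pi i\theta}$. Since $f_s \to f_0$ as $s \to 0^+$, the curve $s \mapsto f_s$ parametrizes a parameter ray $\cR_\cU(\theta)$ landing at $f_0$ in some escape region $\cU$, whose kneading word automatically matches $\kappa(f_0,\theta)$.

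\emph{Uniqueness.} Suppose $\cR_\cU(\theta)$ and $\cR_{\cU'}(\theta)$ both land at $f_0$. Along each ray the dynamical rays $R_f(\theta\pm 1/3)$ converge, as flow lines parametrized by $|\phi_f|$, to $R_{f_0}(\theta\pm 1/3)$ at $-a(f_0)$. Hence both rays approach $f_0$ through the same local connected component of $\cE(\cS_p)$—namely the one determined by the side of the limit sector on which $-a(f)$ must lie. The surgery of the previous paragraph produces, on that local component, a holomorphic homeomorphism $\Psi(f) = \phi_f(2a(f))$ onto a slit exterior disk, coinciding with $\Phi_\cU$ on $\cU$ and with $\Phi_{\cU'}$ on $\cU'$. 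Because $\Psi^{-1}$ of the radial arc $\{(1+s)e^{2\pi i\theta}: s>0\}$ is a single embedded curve, the two parameter rays must agree in a neighborhood of $f_0$; by real-analyticity they coincide globally, and $\cU = \cU'$.

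The hard part will be the technical control of the surgery: arranging an $f_0$-invariant Beltrami form supported on a thin annulus with norm tending to $0$, verifying that the straightened map lies in $\cS_p$ with the marked critical point of exact period $p$ and with $\phi_{f_s}(2a(f_s))$ having argument exactly $\theta$, and showing that the resulting escape region is constant in $s$ for $s$ small. Once the chart $\Psi$ is set up, both halves of the theorem follow from it.
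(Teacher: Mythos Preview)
The paper does not prove this statement; it is quoted from Bonifant and Milnor~\cite[Theorem~5.2]{cm3} and used as a black box. So there is no in-paper argument to compare against, and the question is only whether your proposal stands on its own.

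It does not, and the gap is at the heart of the construction. You propose to cut along a ray that lands on the Julia set at $2a(f_0)$ and ``glue in a thin annular collar modeled on $z\mapsto z^3$'' so that after straightening one obtains $\phi_{f_s}(2a(f_s))=(1+s)e^{2\pi i\theta}$. But the B\"ottcher coordinate degenerates at the Julia set, so there is no clean model near $2a(f_0)$ to glue against; and even granting some quasiregular modification, the straightening homeomorphism moves both the B\"ottcher chart and the critical points simultaneously, so nothing in your sketch pins down the value of $\phi_{f_s}(2a(f_s))$, let alone forces it onto the radial arc of argument~$\theta$. The assertion that the invariant Beltrami has $L^\infty$ norm tending to $0$ is likewise unsupported (small support does not imply small dilatation). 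Your uniqueness paragraph then assumes a local biholomorphism $\Psi$ onto a slit exterior disk, but that is precisely the content of existence-plus-uniqueness and cannot be read off from a surgery that, at best, manufactures one curve of escaping maps.

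The argument you should attempt avoids surgery. Choose $N\ge 1$ with $f_0^N(-a(f_0))$ repelling periodic; then $3^N\theta$ is $m_3$-periodic and the landing point $\zeta(f)$ of $R_f(3^N\theta)$ varies holomorphically for $f$ near $f_0$. On the smooth curve $\cS_p$ set $G(f)=\psi_f\bigl(f^N(-a(f))\bigr)$, where $\psi_f$ is a linearizing coordinate at $\zeta(f)$. Then $G$ is holomorphic, $G(f_0)=0$, and $G\not\equiv 0$ since pcf maps with a fixed critical orbit relation are isolated. For $f$ on any parameter ray of argument $\theta$ one has $f^N(-a(f))\in R_f(3^N\theta)$, hence $G(f)$ lies on a fixed ray through $0$ in the linearizing chart; existence of a landing parameter ray follows from the local mapping degree of $G$, and uniqueness is the transversality statement that $f_0$ is a simple zero.
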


In~\S~\ref{s:parabolic-landing}, we  analyze
in greater detail the case in which $f_0$ is parabolic where we reprove part of Theorem~\ref{thr:bms}.
The techniques employed are a combination of the original techniques of the Orsay Notes~\cite{OrsayNotes} (cf.~\cite{TanLeiTheme}) with elementary
properties of the limit of rational dynamical rays.
 The aforementioned elementary properties are deduced in \S~\ref{s:limit-rays}
 following ideas from~\cite{kiwi-1997}. Ideas that are 
 related to those employed by Petersen and Ryd~\cite{PetersenRyd} and by Bonifant, Milnor and Sutherland~\cite{ParabolicGreen}.
 In
 \S~\ref{s:pcf-landing} we discuss how to deduce from the above theorems
the statements that we need about parameter rays landing at pcf maps.

\subsection{Limit of rational rays}
\label{s:limit-rays}
Below we consider the behavior of $R^*_f (t)$ as $f \in \cR_\cU(\theta)$
approaches a map $f_0$ in the connectedness locus.
In this context,
$\limsup R^*_f (t)$ is defined as the set formed by all $z \in \C$ such that
  for every neighborhood $W$ of $z$ and any neighborhood $\cW$ of $f_0$
  we have that 
  $R^*_f(t) \cap W \neq \emptyset$ for some $f \in \cR_\cU(\theta) \cap \cW$.
  The set $\limsup I_{f,a_k}(t)$ is defined similarly.
  
\begin{lemma}
  \label{l:limsup}
  Let $\cR_\cU(\theta)$ be a parameter ray of an escape region $\cU$ such that $\theta \in \QS$. Then $\cR_\cU(\theta)$ lands at a parabolic or pcf map $f_0$.
  Moreover, the following statements hold:
  \begin{enumerate}
  \item If $t \in \QS$ is periodic under $m_3$ then $\{ u_0 \} =
  J(f_0) \cap \limsup R^*_f (t)$ where $u_0$ is the landing point of
  $R_{f_0}(t)$.
\item For all $k$, we have that $\{ v_0 \} = J(f_0) \cap \limsup I_{f,a_k}(0)$
  where $v_0$ is the landing point of $I_{f_0,a_k}(0)$.
\item If  $t \in \QS$ is strictly preperiodic and $w_0 \in J(f_0) \cap \limsup R^*_f (t)$, then $w_0$ is strictly preperiodic.
  \item  If $\theta' = \theta +1/3$ or $\theta-1/3$ is periodic, then  $f_0$ is a parabolic map and $R_{f_0}(\theta')$ lands at a parabolic periodic point.
  \end{enumerate}
\end{lemma}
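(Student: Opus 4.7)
The landing of $\cR_\cU(\theta)$ at some $f_0$, the parabolic/pcf dichotomy, and the portion of (4) asserting that $R_{f_0}(\theta')$ lands at a parabolic periodic point are all immediate consequences of Theorem~\ref{thr:bms}, so the plan addresses only statements (1)--(3). The unifying tool will be the continuous dependence of the B\"ottcher coordinate: for each $r > 1$, the arc $R^*_f(t) \cap \{|\phi_f| \ge r\}$ converges Hausdorff to $R_{f_0}(t) \cap \{|\phi_{f_0}| \ge r\}$ as $f \to f_0$ along $\cR_\cU(\theta)$, and an analogous continuity holds for the internal coordinates $\varphi_{f,a_k}$ on compact subsets of $V_{f_0}(a_k(f_0)) \setminus \{a_k(f_0)\}$.

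For (1), fix $t$ periodic of period $q$ under $m_3$ and let $u_0$ denote the landing point of $R_{f_0}(t)$, a periodic point of period dividing $q$ with multiplier of modulus at least $1$. The inclusion $u_0 \in \limsup R^*_f(t)$ is immediate from the convergence above. For the reverse, I would take a sequence $z_n \in R^*_{f_n}(t)$ with $z_n \to w_0 \in J(f_0)$, so $|\phi_{f_n}(z_n)| \to 1$, and choose $m_n$ so that $|\phi_{f_n}(z_n)|^{3^{qm_n}} \in [r, r^{3^q}]$ for a fixed $r > 1$. Since $f$ maps $R^*_f(t)$ onto $R^*_f(3t)$ tripling B\"ottcher height, the iterate $f_n^{qm_n}(z_n)$ lies in the compact arc $R^*_{f_n}(t) \cap \{|\phi_{f_n}| \in [r, r^{3^q}]\}$, which converges Hausdorff to the corresponding compact sub-arc of $R_{f_0}(t)$. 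After extracting a subsequence, $f_n^{qm_n}(z_n) \to \zeta_\star \in R_{f_0}(t)$; then pulling $\zeta_\star$ back under iterates of the branch of $f_0^{-q}$ that contracts toward $u_0$ (by linearization if $u_0$ is repelling, or by Fatou coordinates for the parabolic germ with repelling petal along $R_{f_0}(t)$ if $u_0$ is parabolic) would force $z_n \to u_0$, hence $w_0 = u_0$.

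Statement (2) should follow from the same scheme with $\varphi_{f,a_k}$ in the role of $\phi_f$: the periodic point $a_k(f)$ varies holomorphically with $f$ near $f_0$, the argument $0$ is fixed by squaring so $f^p$ acts on the internal ray $I^*_{f,a_k}(0)$ squaring internal B\"ottcher height, and the pull-back argument near the landing $v_0$ of $I_{f_0,a_k}(0)$ is identical. Statement (3) reduces to (1) via $f^\ell(R^*_f(t)) = R^*_f(m_3^\ell t)$: the image $f_0^\ell(w_0)$ lies in $J(f_0) \cap \limsup R^*_f(s)$ for $s = m_3^\ell t$ periodic, which by (1) equals the singleton $\{u_s\}$, so $w_0$ is preperiodic. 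The ``strictly'' part is more delicate: a periodic $w_0$ would have to lie in the $f_0$-orbit of $u_s$, hence be itself the landing of some periodic ray $R_{f_0}(3^j s)$, and I would rule this out by refining the scheme of (1) applied to the periodic argument $3^j s$ to show that $\limsup R^*_{f_n}(t) \cap J(f_0)$ cannot contain that landing, leveraging $t \ne 3^j s$.

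The hardest step will be the parabolic sub-case of (1), where the contraction toward $u_0$ is only polynomial rather than exponential. My approach will be to work in Fatou coordinates for the parabolic germ of $f_0^q$ at $u_0$ and to coordinate the rate $m_n \to \infty$ with the rate of convergence $f_n \to f_0$ so that the tail of $R^*_{f_n}(t)$ below any fixed B\"ottcher height remains trapped in a shrinking neighborhood of $u_0$. This is where the argument comes closest in spirit to the treatment of parabolic ray-landing in the Orsay Notes~\cite{OrsayNotes}.
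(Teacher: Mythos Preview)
Your approach to (1) is genuinely different from the paper's, and the difference is exactly where your acknowledged hardest step lies. You propose to iterate $z_n$ forward $qm_n$ times into a fixed compact sub-arc of $R_{f_0}(t)$ and then pull back by inverse branches contracting toward $u_0$. The paper instead proves a \emph{uniform hyperbolic-distance estimate}: it exhibits, for every $f\in\cR_\cU(\theta)$ with bounded escape rate, a sector $S_t$ around the radial line at angle $t$ that is contained in the image of the B\"ottcher coordinate, and observes that in the hyperbolic metric of $S_t$ the displacement $z\mapsto z^{3^q}$ along the radial line is bounded by a constant $C$ independent of the radius and of $f$. Comparing hyperbolic metrics, one gets $d_f(z,f^q(z))<C$ for all $z\in R^*_f(t)$, where $d_f$ is the hyperbolic metric of the basin of infinity. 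Since the Euclidean distance from $w_n$ to $J(f_n)$ tends to $0$, the usual Euclidean/hyperbolic comparison forces $|w_n-f_n^q(w_n)|\to 0$, so every $w_0\in J(f_0)\cap\limsup R^*_f(t)$ is periodic of period dividing $q$. The singleton conclusion is then pure topology: the connected set $K(f_0)\cap\limsup R^*_f(t)$ contains $u_0$, and the return map to any parabolic Fatou component of $f_0$ has degree~$2$, so its boundary contains a unique periodic point of period dividing $q$. No case distinction between repelling and parabolic $u_0$, no Fatou coordinates, no coordination of rates.

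Your pull-back scheme is not obviously completable in the parabolic case. Note that $f_n$, $z_n$ (and hence $m_n$) are \emph{given}: you are proving a statement about arbitrary accumulation points, so you have no freedom to ``coordinate the rate $m_n\to\infty$ with the rate $f_n\to f_0$''. What you would actually need is that the $m_n$-fold composition of the relevant branch of $f_n^{-q}$, applied to a fixed compact arc near $u_0$, stays in a neighborhood of $u_0$ that shrinks as $n\to\infty$. In the parabolic regime this is precisely the content of perturbed Fatou coordinates / Lavaurs theory, and making it rigorous here would be substantially more work than the three-line hyperbolic estimate the paper uses.

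Your reduction of (2) to the same scheme as (1) matches the paper. For (3), your plan for the ``strictly'' part is vague: applying (1) to the periodic argument $3^js$ controls $\limsup R^*_f(3^js)$, not $\limsup R^*_f(t)$, so it is unclear how ``$t\neq 3^js$'' enters. The paper's argument is short and direct: it suffices to treat $t$ with $3t$ periodic. If $w_0$ were periodic, let $t'$ be the periodic preimage of $3t$ and choose $w_n'\in R^*_{f_n}(t')$ with $f_n(w_n')=f_n(w_n)$; then $w_n$ and $w_n'$ both converge to the unique periodic preimage $w_0$ of $f_0(w_0)$, so $f_0$ fails to be locally injective at $w_0$, forcing $w_0$ to be a periodic critical point in $J(f_0)$, which is impossible.
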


Under the assumptions of the previous lemma, Bonifant, Milnor and Sutherland~\cite{ParabolicGreen}  show that if
$ \theta +1/3$ and $\theta-1/3$ are strictly preperiodic, then
the landing point of $\cR_\cU(\theta)$ is a pcf map. Moreover, when
$\theta' = \theta +1/3$ or $\theta-1/3$ is periodic they give
a complete description of $ \limsup R^*_f (\theta')$.
Although we provide a self-contained prove here,
it is not difficult to deduce the lemma directly from these facts.

\begin{proof}
To show that $\cR_\cU(\theta)$ lands, consider an accumulation point $g$ 
as we approach the connectedness locus along $\cR_\cU(\theta)$.
Denote by $w$ the landing point of $R_{g} (\theta+ 1/3)$.
Then, for some $k \ge 0$,
$g^k(w)$ is a parabolic periodic point or a critical point; for otherwise
$w$ would be an eventually repelling periodic point (maybe periodic) with a critical point free orbit
and for every $f$ close to $g$
the corresponding ray $R_{f} (\theta+ 1/3)$ would be smooth which is not the case for $f \in \cR_\cU(\theta)$ (e.g. see~\cite{GoldbergMilnor}).
Thus, $g$ is a parabolic or  pcf map.
Since the accumulation set of $\cR_\cU(\theta)$ in $\cC(\cS_p)$ is connected and there are only countably many parabolic and pcf maps in $\cS_p$,
we have that $\cR_\cU(\theta)$ lands, say at $f_0$.
It follows as well that if $\theta+ 1/3$ or $\theta- 1/3$ is periodic, the landing point of the corresponding $f_0$-ray is a parabolic periodic point. That is, (4) holds.

Now given $t \in \QS$ we study $X:=J(f_0) \cap \limsup R^*_f (t)$.
Provided that $t$ is periodic of period $q$,
we start by showing that all the elements of $X$ are periodic of period dividing $q$. Afterwards, taking into account that $f_0$ has only one free critical point we establish (1).

For $f \in R_\cU(\theta)$  the B\"ottcher coordinate $\phi_f : V_f^*(\infty) \to \CDC$ has as image a star-like domain $U^*_f$ around infinity obtained from $\CDC$  by removing the ``needles'' $$]1,|\phi_f(-a(f))|] \exp(2 \pi i(\theta\pm 1/3))$$
as well as all their iterated preimages under $z \mapsto z^3$. Recall that we let $\rho(s)$ be such that
the maximal radial line  with argument $s$ contained in
$U^*_f$ is $]\rho(s),\infty[ \exp(2 \pi i s)$.
Given $R >1$, it is not difficult to show that there exists $\delta >0$ such that
for all $f \in \cR_\cU(\theta)$ with $|\phi_f(2 a(f))|<R$,
the sector
$$S_t= \{ z \in \CDC : |\arg(z- \rho(t) \exp(2 \pi i t)) - 2 \pi t| < \delta \}$$
is contained in $U^*_f$. Hence, for $ r > \rho(t)$, the hyperbolic distance in $S_t$ from $z = r \exp (2 \pi i t)$ to $z^{3^q}$ is uniformly bounded above by a constant $C$ independent of $r >  \rho(t)$.
Denote by $d_f$ the hyperbolic metric  of $V_f(\infty)$.
The standard comparison between hyperbolic metrics yields that the hyperbolic distance in $\phi_f^{-1}(S_t)$ bounds from above $d_f$.
Then for all $f \in \cR_\cU(\theta)$
 and all $z \in R^*_f(t)$, we have
  $d_f(z, f^q(z)) < C$ for some constant $C$ independent of $f$. 
  Now if $ z_0 \in X $
  then there exist $f_n \to f$ and $w_n \to z_0$ such that $w_n \in R^*_{f_n}(t)$. Since $\limsup J(f_n) \supset J(f_0)$, the Euclidean distance $\varepsilon_n$ from $w_n$ to $J(f_n)$ converges to
  $0$, which together
  with $d_{f_n}(w_n,f^q_n(w_n)) <C$, implies that $|w_n - f^q_n(w_n)| \to 0$, using the comparison between Euclidean and hyperbolic metrics. Thus, $f_0^q(z_0) =z_0$. That is, every element of $X$ is periodic of period dividing $q$.

  Now let $u_0$ be the landing point of $R_{f_0}(t)$.
  When $u_0$  is a repelling periodic point,  $R_{f}(t)$ is a smooth ray
for all $f$ in a neighborhood of $f_0$ and $R_{f}(t)$ converges uniformly
to $R_{f_0} (t)$, so (1) holds in this case (cf.~\cite{GoldbergMilnor}).
When $u_0$ is a parabolic periodic point, 
 consider the connected set
 $Y:=K(f_0) \cap \limsup R^*_{f}(t)$ which contains
 $u_0$ and $X$. 
  Let $P$ be the union of the periodic Fatou components having $u_0$ in its boundary.
  Taking into account that the return map to any parabolic periodic Fatou component has degree $2$, we conclude that $u_0$ is the unique periodic point in $\partial P$ of period dividing $q$
  and  $X \subset Y \cap \partial{P} = \{u_0\}$. Hence we have proven (1).

  The second assertion of the Lemma follows along similar lines but
  considering $\varphi_{f,a_k}$ instead of $\phi_f$.



  For assertion (3), it is sufficient to consider the case in which
  $t \in \QS$ is not periodic and $3t$ is periodic.
  Let $w \in J(f_0) \cap \limsup R^*_f (t) $. We claim that
  $w$ is strictly preperiodic. Proceeding by contraction,
  let us suppose that $w$ is periodic.
  Let $f_n \in \cR_\cU(\theta)$
  and $w_n \in R^*_{f_n} (t)$ be such that $f_n \to f_0$ and $w_n \to w$.
  Let $t'$ be the periodic preimage of $3t$ and $w'_n \in R^*_{f_n} (t')$
  be such that $f_n(w_n') = f_n(w_n)$. Then both $w_n'$ and $w_n$ converge to the unique periodic preimage $w$ of $f_0(w)$. Thus $f_0$ is not locally injective around $w$ for otherwise $f_n$ would  also be, but $w_n' \neq w_n$. Hence $w$ is a periodic critical point of $f_0$ in $J(f_0)$ which is impossible. Therefore $w$ is strictly preperiodic and the lemma follows.  
\end{proof}



\subsection{Parabolic landing and take-off theorems}
\label{s:parabolic-landing}

\begin{theorem}[Periodic Landing Theorem] 
\label{thr:arrival-periodic-connection}
Consider a parameter ray $R_\cU(\theta)$ of an escape region $\cU$
such that $\theta'=\theta+1/3$ or $\theta-1/3$ is periodic.
Then  $R_\cU(\theta)$ lands at a parabolic map with take-off argument $\theta$ and $\kappa(f_0,\theta) = \kappa (\cU)$.
Moreover, if $R^\sigma_f(\theta')$ is a periodic ray  connection  between
  $-a(f)$ and $a_k(f)$ for $f\in R_\cU(\theta)$, then $\theta'$ is a relatively supporting  argument for $V(a_k(f_0))$.
\end{theorem}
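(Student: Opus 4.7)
The plan is to derive the four conclusions from Lemma~\ref{l:limsup} together with basic facts about parabolic dynamics, proceeding by passage to the limit as $f \to f_0$ along $\cR_\cU(\theta)$. First, Lemma~\ref{l:limsup}(4) shows that $\cR_\cU(\theta)$ lands at a parabolic map $f_0 \in \cC(\cS_p)$ with $R_{f_0}(\theta')$ landing at a parabolic periodic point $z_0$. Because $a(f_0)$ remains a super-attracting periodic point of period $p$ in the limit, the classical fact that every non-repelling cycle attracts a critical point forces the free critical point $-a(f_0)$ into the immediate basin $V$ of $z_0$. Hence $V = V(-a(f_0))$ and $z_0 \in \partial V$.

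To verify that $\theta$ is a take-off argument I would show that both $R_{f_0}(\theta+1/3)$ and $R_{f_0}(\theta-1/3)$ land in $\partial V$ and are relatively supporting at their landing points. The periodic ray $R_{f_0}(\theta')$ lands at $z_0$ by construction. Since the other argument $\theta''$ satisfies $m_3(\theta'')=m_3(\theta')$, its ray lands at some $w_0$ with $f_0(w_0)=f_0(z_0)$; tracking the limits of the flow lines $R^*_f(\theta\pm 1/3)$ which terminate at $-a(f)$ and approach $-a(f_0)\in V$ from opposite sides shows that $w_0$ lies in $\partial V$ as well, and is the ``other'' preimage of $f_0(z_0)$ supplied by the local degree $2$ of $f_0$ at $-a(f_0)$. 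The relatively supporting property is then a combinatorial consequence of pushing the dynamical partition $D_0\cup D_1$ to the limit: smooth rays $R_f(s)$ with $s$ just outside $[\theta-1/3,\theta+1/3]$ land in $D_1$, so their $f_0$-limits land in the corresponding component of $\C\setminus \Gamma$, which prevents any intermediate grand-orbit argument from landing at $z_0$ or $w_0$ from the wrong side. The kneading identity $\kappa(f_0,\theta)=\kappa(\cU)$ follows similarly: each $a_j(f)$ converges holomorphically to the super-attracting periodic point $a_j(f_0)$, which is disjoint from the parabolic basin and hence lies in the interior of $U_{i_j}$ with the same index $i_j$ as in $\kappa(\cU)$.

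For the ray-connection statement, suppose $R^\sigma_f(\theta')$ lands at the unique periodic point $w_f\in\partial V(a_k(f))$ of period dividing $p$. Standard perturbation theory for repelling cycles approaching a parabolic cycle gives $w_f\to z_0$, and the super-attracting Fatou component $V(a_k(f))$ varies continuously so that $z_0\in\partial V(a_k(f_0))$. The relatively supporting hypothesis at $w_f$ then passes to $z_0$: any intermediate grand-orbit argument landing at $z_0$ from the $V(a_k(f_0))$ side would, by continuity of ray landings at periodic points, produce a forbidden intermediate ray landing at $w_f$ for nearby $f$. I expect the principal technical difficulty in the second and third paragraphs to be that Fatou components may behave erratically near a parabolic parameter; accordingly the proof will need to combine the flow-line limit control from \S\ref{s:limit-rays} with an Orsay-style perturbation analysis localized at $V$, stabilizing both the relevant pieces of $V(a_k(f_0))$ and the positions of $z_0$ and $w_0$ with respect to the converging rays.
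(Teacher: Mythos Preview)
Your overall architecture is right, and you correctly identify Lemma~\ref{l:limsup} as the starting point. But the two places where you invoke ``continuity of ray landings'' or ``pushing the partition to the limit'' are exactly where the parabolic bifurcation bites, and your proposal does not contain the idea that closes the gap.

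The missing ingredient is a precise description of \emph{which} external rays of $f_0$ land at the parabolic point $z_0$. The paper obtains this (Lemma~\ref{l:limsupperiodic}) by introducing the landing points $z^+(f)$ and $z^-(f)$ of the two limit rays $R^\pm_f(\theta')$, proving via a hyperbolic-metric estimate in sectors of the B\"ottcher plane that both $z^\pm(f)\to z_0$, and then showing that $R_{f_0}(t)$ lands at $z_0$ if and only if $R_f(t)$ lands in the $f^\ell$-orbit of $z^+(f)$ or $z^-(f)$. Crucially, exactly one of $z^+(f),z^-(f)$ has period $\ell$ and the other has period $q$; this dichotomy is what pins down the relative-supporting property for $V(-a(f_0))$, because only one iterate $R^\sigma_f(3^{n\ell}\theta')$ can land at the period-$q$ point. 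Your argument about ``rays with $s$ just outside $[\theta-1/3,\theta+1/3]$'' does not address grand-orbit arguments of $\theta'$ and so does not prove relative supporting.

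The same device is needed for the ray-connection clause. Your appeal to ``continuity of ray landings at periodic points'' fails precisely because several periodic orbits of $f$ coalesce at $z_0$; a ray $R_{f_0}(3^{n\ell}\theta')$ landing at $z_0$ may, for nearby $f$, land at $z^+(f)$, at $z^-(f)$, or at an $f^\ell$-iterate of either, and there is no a priori reason this cannot intrude between $V(a_k(f))$ and the ray $R^\sigma_f(\theta')$. The paper instead shows $z_0\in\partial V(a_k(f_0))$ via Lemma~\ref{l:limsup}(2) applied to the internal ray $I_{f,a_k}(0)$ (not via continuity of Fatou components), and then compares the positions of the $f^\ell$-orbit of $z^{\mp}(f)$ against a fixed smooth reference ray $R_f(t)$ landing elsewhere on $\partial V(a_k(f))$ to conclude that $\theta'$ relatively supports $V(a_k(f_0))$.
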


A similar result is proven by Bonifant and Milnor~\cite[Theorem 2.8]{cm3}.
Under the assumptions of the theorem they prove that 
$R_\cU(\theta)$ lands at a parabolic map $f_0$ such that the rays
$R_{f_0}(\theta+1/3)$ and $R_{f_0}(\theta-1/3)$
land  in $\partial V(-a(f_0))$. For our purpose we will also need
to study the supporting properties of these rays. 

\smallskip
Our proof relies on the following:
\begin{figure}[h]
\begin{minipage}[c]{.48\linewidth}
\begin{center}
\fbox{\includegraphics[width=6.3cm]{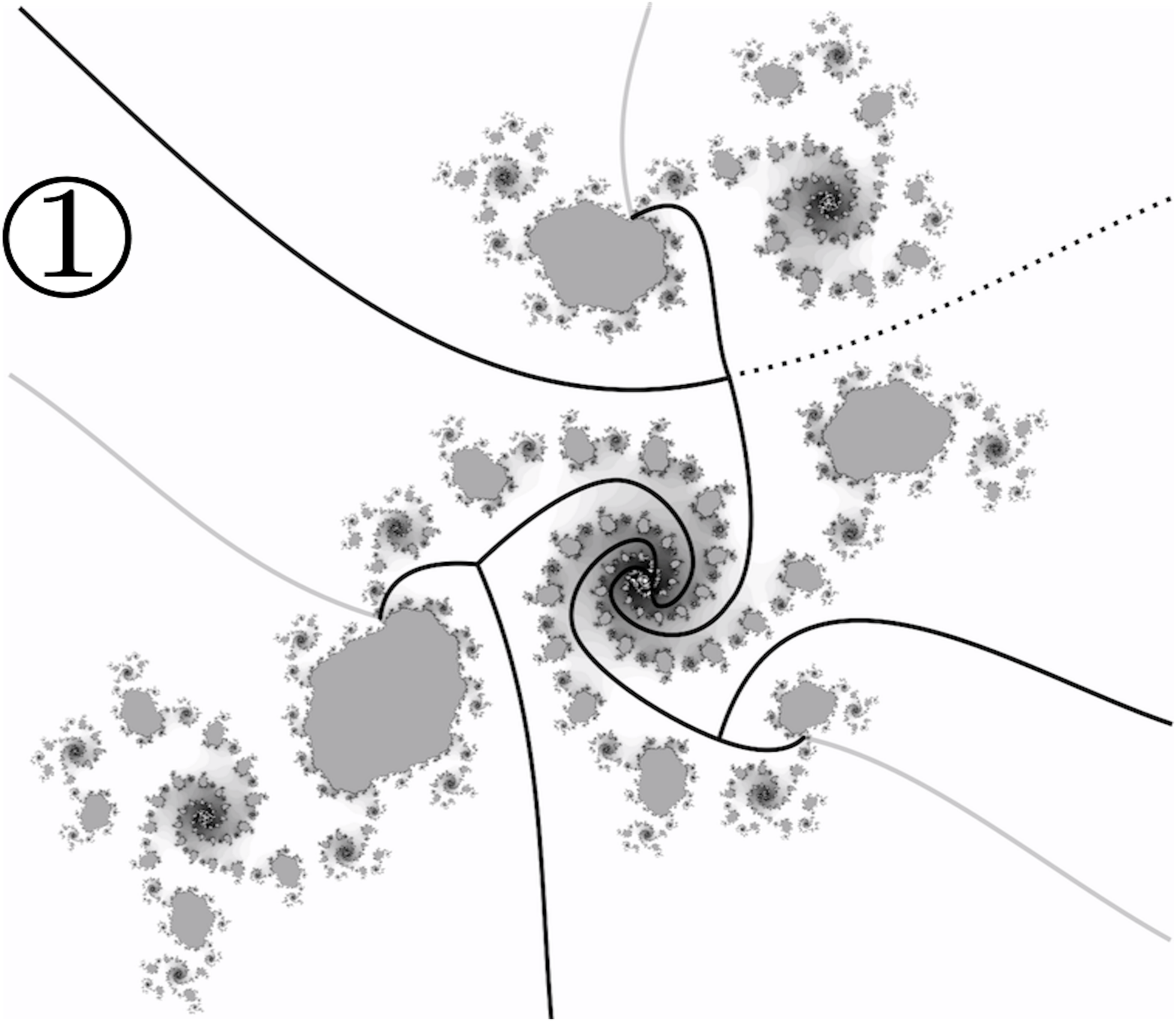}  }
\end{center}
\end{minipage}
\begin{minipage}[c]{.48\linewidth}
\begin{center}
\fbox{\includegraphics[width=6.5cm]{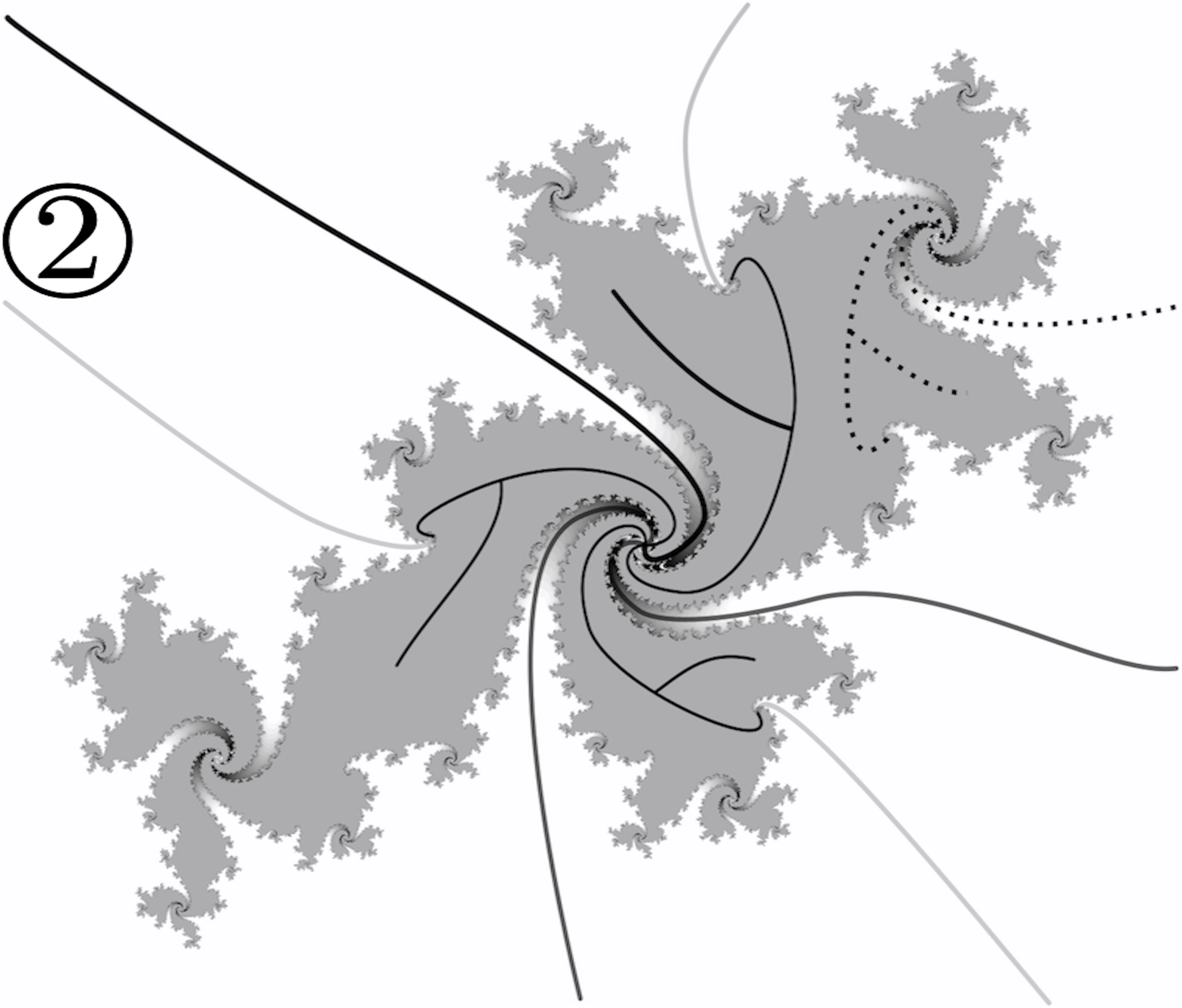} }
\end{center}
\end{minipage}
\begin{minipage}[c]{.48\linewidth}
\begin{center}
\fbox{\includegraphics[width=6.3cm]{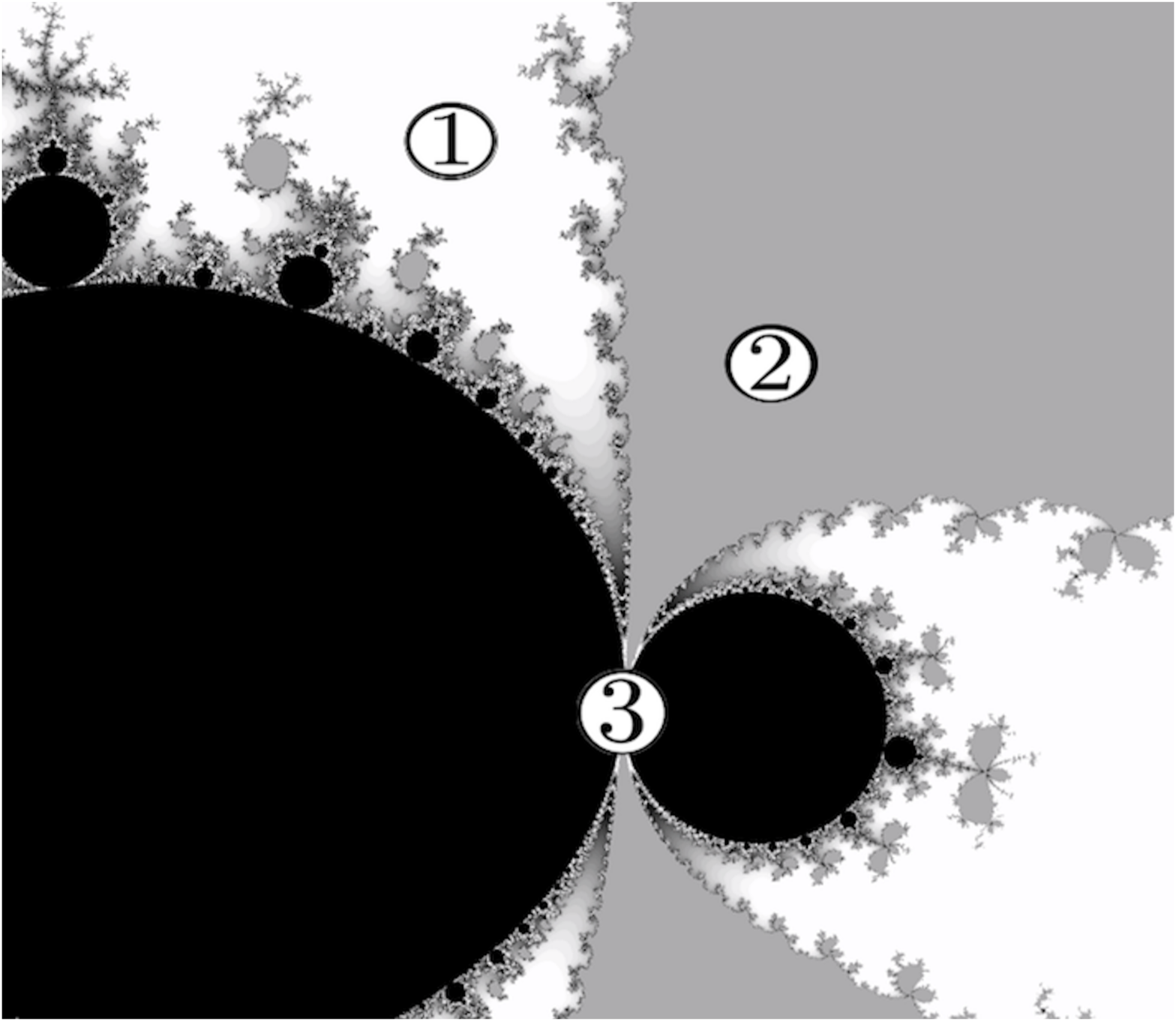}  }
\end{center}
\end{minipage}
\begin{minipage}[c]{.48\linewidth}
\begin{center}
\fbox{\includegraphics[width=6.5cm]{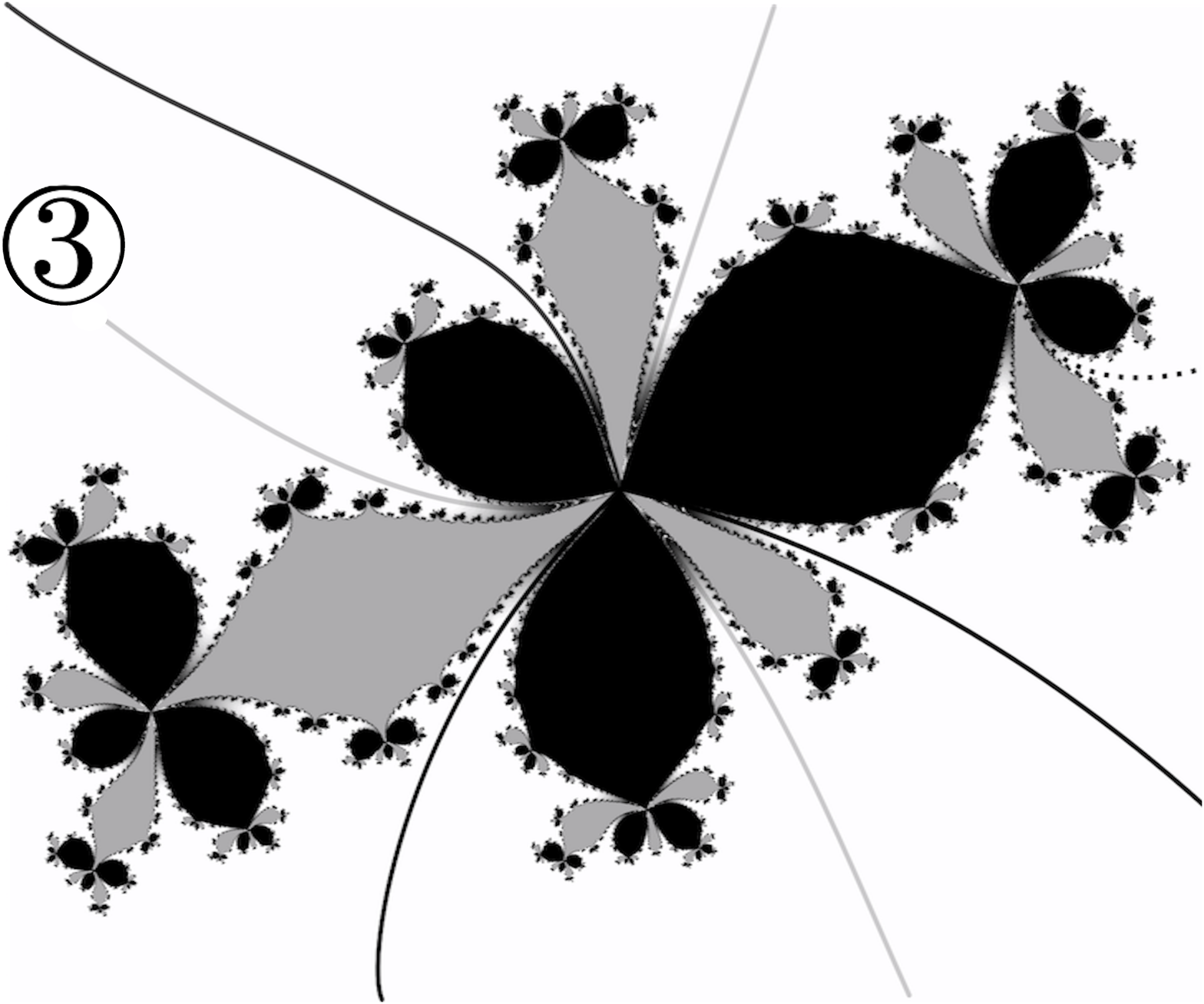}  }
\end{center}
\end{minipage}
\caption{
  The lower left figure is a detail of $\cS_3$ with an external parameter ray 
  and a $0$-internal ray of a type $B$ component, both landing at a parabolic map
  \numcirc{3}. 
  When converging to \numcirc{3}, along each one of the two parameter rays, a fixed point with combinatorial rotation number $1/3$ and
  a period $3$ periodic orbit coalesce.
   Relevant external rays are illustrated. For \numcirc{2},
  the $0$-internal rays are also drawn.
}\label{NoTriv}
\end{figure}

\begin{lemma}
  \label{l:limsupperiodic}
   Consider $\theta \in \QS$ such that $\theta'= \theta+1/3$ or $\theta-1/3$ is periodic of exact period $q$.
   Let $\cR_\cU(\theta)$ be a parameter ray of an escape region $\cU$
   landing at $f_0$. 
 For $f\in\cR_\cU(\theta)$ denote by $z^\pm(f)$ the landing point of $R_{f}^\pm(\theta')$. Then all of the following statements hold:
  \begin{enumerate}
  \item $R_{f_0}(\theta')$ lands at a parabolic periodic point $z_0\in \partial V_{f_0} (-a(f_0))$ of  some period $\ell$ that divides $q$.
  \item $z^\pm(f) \to z_0$ as $f \in \cR_\cU(\theta)$ approaches $f_0$.
\item The periods of $z^+(f)$ and $z^-(f)$ are $\ell$ and $q$, maybe not respectively.
\item A ray $R_{f_0} (t)$ lands at $z_0$ if and only if a ray with argument $t$
  lands at the iterates of $z^+(f)$ or $z^-(f)$ under $f^{\ell}$.
\item $R_{f_0}(\theta+1/3)$ and $R_{f_0}(\theta-1/3)$ land at $\partial V_{f_0} (-a(f_0))$.
\item $f_0$ has take-off argument $\theta$.
\item  $\kappa(f_0,\theta) = \kappa (\cU)$.
  \end{enumerate}
\end{lemma}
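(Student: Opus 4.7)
My plan is to leverage the $\limsup$ identifications of Lemma~\ref{l:limsup} together with the classical Leau--Fatou bifurcation picture, and then reduce the combinatorial statements to Hausdorff-limit arguments describing how the partition of the dynamical plane by the two rays bouncing off $-a(f)$ degenerates at $f_0$.

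For (1), (2) and (5), I would argue by a uniform limit analysis. By Lemma~\ref{l:limsup}(1) applied to the periodic argument $\theta'$, the set $J(f_0)\cap\limsup R^*_f(\theta')$ reduces to the single landing point $u_0$ of $R_{f_0}(\theta')$; since $z^\pm(f)\in J(f)\cap R^*_f(\theta')$, we conclude $z^\pm(f)\to u_0=:z_0$, and $z_0$ is a parabolic periodic point by Lemma~\ref{l:limsup}(4). Its period $\ell$ divides $q$ because $m_3^q(\theta')=\theta'$ forces $f_0^q(z_0)=z_0$. To place $z_0\in\partial V_{f_0}(-a(f_0))$ I would take the Hausdorff limit of the compact sub-arc of $R^*_f(\theta')$ joining $-a(f)$ to $z^\pm(f)$: its potentials range over the fixed interval $[0,G_f(-a(f))]$, so the limit is a connected continuum of $K(f_0)$ containing both $-a(f_0)$ and $z_0$, forcing $z_0\in\overline{V_{f_0}(-a(f_0))}\cap J(f_0)=\partial V_{f_0}(-a(f_0))$. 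The same argument applied to the strictly preperiodic member of $\{\theta+1/3,\theta-1/3\}$ (using Lemma~\ref{l:limsup}(3) in place of (1)) yields (5).

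I would then address (3) and (4) via parabolic bifurcation. Since rays of exact period $q$ land at $z_0$, which has period $\ell$, the multiplier $(f_0^\ell)'(z_0)$ is a primitive $(q/\ell)$-th root of unity. Applying the Leau--Fatou flower theorem to $f_0^q$ at $z_0$ gives the standard deformation: along any parameter path converging to $f_0$, the parabolic fixed point of $f_0^q$ at $z_0$ dissolves into a single fixed point of $f^\ell$ and a single cycle of $f^q$ of exact period $q$, all accumulating on $z_0$. Both $z^\pm(f)$ are fixed by $f^q$ (since $m_3^q(\theta')=\theta'$), both converge to $z_0$, and are distinct (they lie on opposite sides of $-a(f)$), which forces one of them to have period $\ell$ and the other period $q$, establishing (3). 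For (4), the same flower picture identifies the rays landing at $z_0$ as the union of the rays that continue to land at the two bifurcated orbits; the forward direction uses Lemma~\ref{l:limsup}(1) applied to arguments of period dividing $q$, while the reverse direction uses holomorphic motion of repelling landing points near $f_0$ along $\cR_\cU(\theta)$.

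For (6) and (7), I would analyze how the pair of rays $R^*_f(\theta\pm 1/3)$ degenerates. For $f\in\cR_\cU(\theta)$, the set $R^*_f(\theta+1/3)\cup\{-a(f)\}\cup R^*_f(\theta-1/3)$ separates $\C$ into two open regions containing $D_0$ and $D_1$; passing to the Hausdorff limit, $R_{f_0}(\theta+1/3)\cup\overline{V_{f_0}(-a(f_0))}\cup R_{f_0}(\theta-1/3)$ separates $\C$ into two regions $U_0,U_1$ with $a(f_0)\in U_0$, and each of the two rays supports $V_{f_0}(-a(f_0))$ at its landing point. The non-existence of intermediate rays in the grand orbit of $\theta\pm 1/3$ landing at these points follows by exhausting the candidates using the $\limsup$ identifications of (1), (3) and (4), and together with (1) this gives (6). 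For (7), $a_j(f)\to a_j(f_0)$ with $a_j(f)\in D_{\iota_j}$, where $\iota_j$ is the $j$-th symbol of $\kappa(\cU)$; the degeneration of the partition places $a_j(f_0)\in\overline{U_{\iota_j}}$, and since $a_j(f_0)$ is periodic and cannot lie on the separating rays, $a_j(f_0)\in U_{\iota_j}$, giving $\kappa(f_0,\theta)=\kappa(\cU)$. The main obstacle will be the bifurcation step (3)--(4): rigorously extracting the Leau--Fatou structure from convergence along $\cR_\cU(\theta)$, matching $z^\pm(f)$ to the correct periodic orbits, and establishing the bijective correspondence of landing arguments in (4), will require a careful local parametrization near $f_0$ and bookkeeping of the two sides of $-a(f)$ relative to the attracting and repelling petals at $z_0$.
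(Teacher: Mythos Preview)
The central gap is in your argument for (2). You write ``since $z^\pm(f)\in J(f)\cap R^*_f(\theta')$'', but this is false: by the paper's definition, $R^*_f(\theta')$ is the maximal $\nabla|\phi_f|$-flow line, which for $f\in\cR_\cU(\theta)$ terminates at the escaping critical point $-a(f)$ and never reaches $J(f)$. The points $z^\pm(f)$ are landing points of the left/right \emph{limit} rays $R^\pm_f(\theta')$, which agree with $R^*_f(\theta')$ only above the critical equipotential and then continue below it along different gradient lines. Consequently Lemma~\ref{l:limsup}(1), whose hyperbolic estimate is obtained in a sector with vertex at height $\rho(\theta')=|\phi_f(-a(f))|$, says nothing about the location of $z^\pm(f)$, and your conclusion $z^\pm(f)\to z_0$ is unjustified. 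The same confusion recurs in your argument for $z_0\in\partial V_{f_0}(-a(f_0))$: there is no ``sub-arc of $R^*_f(\theta')$ joining $-a(f)$ to $z^\pm(f)$''.

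The paper handles (2) by constructing auxiliary $f^q$-invariant arcs $\gamma^\pm_f(\alpha)$ obtained by pulling back tilted half-lines in the universal cover $\H_-\to\C\setminus\overline\D$; these arcs lie in fixed sectors $S^\pm\subset\widetilde U_f$ (uniformly in $f$), land at $z^\pm(f)$, and satisfy a uniform bound on $d_f(z,f^q(z))$, so the argument of Lemma~\ref{l:limsup} applies to them. For $z_0\in\partial V_{f_0}(-a(f_0))$ the paper instead observes that both $z_0$ and $-a(f_0)$ lie in the connected set $K(f_0)\cap\limsup R^*_f(\theta')$, which is legitimate since $-a(f)$ is the terminal point of $R^*_f(\theta')$.

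Your treatments of (3), (4) and (7) are close in spirit to the paper's, though your degeneration argument for (7) is delicate: the separating set for $f$ is the graph $R^*_f(\theta+1/3)\cup\{-a(f)\}\cup R^*_f(\theta-1/3)$, which does not Hausdorff-converge to the separating set $R_{f_0}(\theta+1/3)\cup\overline{V_{f_0}(-a(f_0))}\cup R_{f_0}(\theta-1/3)$ used to define $\kappa(f_0,\theta)$, so passing $a_j(f)\in D_{\iota_j}$ to $a_j(f_0)\in U_{\iota_j}$ needs an additional argument. The paper sidesteps this by choosing, for each $j$, an argument $t_j$ in the correct interval with $R_{f_0}(t_j)$ landing at a repelling point of $\partial V(a_j(f_0))$ and using stability of such rays. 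Your sketch for (6) is also too coarse; the paper tracks which side of the graph $\Gamma$ contains the $f^\ell$-iterates of $z^\pm(f)$ and then invokes (4) to identify the arguments in the orbit of $\theta'$ landing at $z_0$, which is what actually yields the relatively-supporting property.
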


\begin{proof}
  As a direct consequence of Lemma~\ref{l:limsup}, the parameter ray $\cR_\cU(\theta)$ lands at a map
  $f_0$ such that the landing point $z_0$ of  $R_{f_0}(\theta')$ is a parabolic periodic point, say of period $\ell$.
  Moreover, since $-a(f_0)$ belongs to the connected
  set $ K(f_0) \cap \limsup R^*_f(\theta') $ whose unique Julia set point
  is  $ z_0$, we have that $z_0  \in \partial V_{f_0} (-a(f_0))$. 
  That is, (1) holds.


There is only one cycle of repelling petals at $z_0$ since $-a(f_0)$ is the unique free critical point of $f_0$. Hence, for $f$ converging to $f_0$, there exist
one period $q/\ell$ orbit of $f^\ell$ and one fixed point of $f^\ell$  that converge to $z_0$.  
Note that possibly $q=\ell$, in this case two distinct fixed points of $f^q$ converge to $z_0$. See Figure \ref{NoTriv} for a case in which $\ell =1$ and $q=3$.
No other periodic point can converge to $z_0$ (that is, given a period $N$,
all other periodic points of period at most $N$ are bounded away from $z_0$ for $f$ close to $f_0$).

Let us prove $z^\pm(f) \to z_0$. Denote by $d_f$ the hyperbolic metric
in the basin of infinity of $f$ and by $\H_- \subset \C$  the lower half plane.
It is convenient to consider the universal covering $h:\H_-\to \C\setminus \overline{\D}$ given by $z\mapsto \exp(i {z})$. Let $\widetilde{U}_f := \pi^{-1} (U^*_f)$.
Given $R>1$, there exists $\delta >0$ such that for all $f \in \cR_\cU(\theta)$
with $|\phi_f(-a(f))| <R$ 
we have that $\widetilde{U}_f$ contains  sectors
$S^+= \{ z \in \H_- : \pi/2 <\arg (z-2 \pi \theta')<(\pi/2) + \delta \}$ and
 $S^-= \{ z \in \H_- :
 (\pi/2) -\delta <\arg (z-2 \pi \theta')<\pi/2 \}$. Thus, for $\alpha \in 
 ]0,\delta[$ we may define
 $$\gamma^\pm_f(\alpha):=\phi_f^{-1}(h(2 \pi \theta'- ]0,\infty[ \cdot i \cdot \exp(\pm i \alpha))).$$
For each $\alpha$, the arc $\gamma^\pm_f(\alpha)$ is $f^q$ invariant and
$d_f(z,f^q(z))$ is uniformly bounded for all $z \in \gamma^\pm_f(\alpha)$
and all $f \in \cR_\cU(\theta)$
with $|\phi_f(-a(f))| <R$.
In particular, $\gamma^\pm_f(\alpha)$ limit towards a periodic point $w^\pm(\alpha)$ of period dividing $q$ as it approaches the Julia set.
It follows that $w^+(\alpha)$ (resp. $w^-(\alpha)$) is independent of $\alpha \in ]0,\delta[$ (e.g. see~\cite{DynInOne}[Corollary 17.10]). Therefore,
$w^+(\alpha) = z^+(f)$ and $w^-(\alpha) = z^-(f)$.
Similarly, we may introduce arcs $\gamma^\pm_{f_0}(\alpha)$ for $f_0$ and
conclude that their limit point in the Julia set is $z_0$. 
We can argue as in the proof of Lemma \ref{l:limsup} but  considering $\gamma^\pm_f(\alpha)$ instead of  $R^*_f (t)$  and prove that 
$\{ z_0 \}=J(f_0) \cap \limsup \gamma^\pm_{f}(\alpha)$.
Since 
any limit point of $z^\pm(f)$  as $f \in \cR_\cU(\theta)$ approaches $f_0$ lies in this set, we deduce that $z^\pm(f) \to z_0$, which concludes the proof of (2).

 Taking into account that only two periodic orbits under $f^\ell$ can converge to $z_0$ and that their $f^\ell$-periods are $1$ and $q/\ell$, assertion (3) follows. 

 The $f$-period of $z_0$ is $\ell$, thus a ray $R_{f_0}(3^n \theta')$ lands at $z_0$ if and only $\ell$ divides $n$. That is, (4) holds for arguments in the orbit of $\theta'$. Suppose that $t$ is a period $q$ argument not in the orbit of $\theta'$.
  As $f \in \cR_\cU(\theta)$ converges to $f_0$, from Lemma~\ref{l:limsup},
 $R_{f_0}(t)$ lands at $z_0$ if and only if the landing point
 $z(f)$ of $R_f(t)$ converges to $z_0$. This is equivalent to
 $z(f)$ being in the $f^\ell$-orbit of $z^+(f)$ or $z^-(f)$, since these are the only  periodic points that converge to $z_0$. That is we have proven (4).

 Let $\theta'' = \theta+1/3$ or $\theta-1/3$ be such that $\theta''$ is strictly preperiodic. We claim that $R_{f_0}(\theta'')$ lands at $\partial V(-a(f_0))$.
Denote by $w_0$ the unique non-periodic preimage of $f_0(z_0)$ in $\partial V(-a(f_0))$.
Let $P'$ be the union of the Fatou components that have $w_0$ in their boundaries. No other preperiodic boundary point of these Fatou components maps onto $f_0(z_0)$. 
It follows that $K(f_0) \cap \limsup R^*_{f} (\theta'')$
is contained in $\overline{P'}$. 
Therefore, $J(f_0) \cap \limsup R^*_{f} (\theta'') = \{ w_0 \}$ and
$R_{f_0} (\theta'')$ lands at $w_0 \in \partial  V(-a(f_0))$ which yields (5).
 
To prove (6) we must show that $\theta'$ relatively supports $V(-a(f_0))$.
From (4) we have a good description of the rays landing at $z_0$.
By (3) either $z^+(f)$ or $z^-(f)$ has period $q$.
For simplicity let us assume that $z^+ (f)$ has period $q$. 
The period of $\theta'$ is also $q$.
Therefore, the ray $R^+_f(\theta')$ is the unique right limit ray with argument in the orbit of $\theta'$ landing at $z^+(f)$.
 Moreover, since $z^-(f)$ and $z^+(f)$ belong to distinct orbits, no left
ray with argument of the form $3^n \theta'$ can land at $z^+(f)$.
In the dynamical plane of $f \in \cR_\cU(\theta)$ consider the graph $\Gamma$
formed by the flow lines $R^*_f(\theta')$, $R^*_f(\theta'')$ and $-a(f)$.
The point  $z^+(f)$ belongs to the connected component of $\C \setminus \Gamma$ containing rays with arguments
in $]\theta',\theta''[$ while 
$z^-(f)$ and hence the rest of the $f^\ell$-orbit of $z^+ (f)$ lies in the other component. It follows from (4) that every ray of $f_0$ with argument $t$ in the orbit of $\theta'$ landing at $z_0$ is such that $t \in ]\theta'',\theta']$ which together with (5) shows that $\theta'$ is relatively left supporting.
Similarly, if we assume that $z^-(f)$ has period $q$, then
$\theta'$ is relatively right supporting. That is, we have proven that
$\theta$ is a take-off argument for $f_0$.
 
To prove (6), let $\kappa(f_0,\theta) = \iota_1 \dots \iota_{p-1} 0$.
If $\iota_j =0$ (resp. $1$),  consider $t_j \in ]\theta-1/3,\theta+1/3[$ (resp. $]\theta+1/3,\theta-1/3[$) such that  the ray
$R_{f_0}(t_j)$ lands at a repelling periodic point $u_0$ in $\partial V(a_j(f_0))$. Since $u_0$ is at the same time the landing point of some periodic internal ray $I_{f_0,a_j}(s_j)$, we have that the analytic continuation $u (f)$ of $u_0$ lies in
$\partial V(a_j(f))$ for $f$ sufficiently close to $f_0$. Also the external ray
$R_f(t_j)$ must be smooth and land at $u(f)$ (e.g.~\cite{GoldbergMilnor}).
We conclude that the kneading words $\kappa(f_0,\theta)$ and $\kappa(\cU)$ coincide.

\end{proof}

\begin{proof}[Proof of Theorem~\ref{thr:arrival-periodic-connection}]
  Suppose that
  $R^\sigma_f(\theta')$ is a periodic ray connection between $-a(f)$ and $a_k(f)$ for all
  $f \in \cR_\cU(\theta)$. To fix ideas we assume that $\sigma=-$.
  In the notation of the previous lemma,
  $z_0 \in \partial V(a_k(f_0))$
  since $I_{f,a_k}(0)$ lands at $z^-(f)$ and, from Lemma \ref{l:limsup}, we 
  have that $I_{f_0,a_k}(0)$ lands at $z_0 = \lim z^-(f)$.
  Hence, the landing point $z_0$ of $R_{f_0}(\theta')$ is in $\partial V(a_k(f_0))$. The theorem follows once we have proven that $R_{f_0}(\theta')$ is a relatively supporting ray for $V(a_k(f_0))$. 

  In view of (4) of the previous lemma, $R_{f_0}(3^n \theta')$ lands at $z_0$
  if and only $\ell$ divides $n$.  Consider a reference
  smooth periodic ray $R_f(t)$ landing at some point
  $u (f) \in \partial V(a_k(f))$ where
  $u (f) \neq z^-(f)$. Note that $R_{f_0}(t)$ lands at a repelling periodic point $u(f_0) \in \partial V(a_k(f_0))$ such that $u(f_0) \neq z_0$.
  If the period of $z^-(f)$ is also $\ell$, then 
  $]t,\theta'[$ or $]\theta',t[$ is disjoint from $\{ 3^{n\ell} \theta' : n \ge 1\}$ according to whether $\theta'$ is a right or left relatively
  supporting argument of $V(a_k(f))$.
  If the period of $z^-(f)$ is  $p$ and, therefore, $z^{+}(f)$ has period
  $\ell$, then $]t,\theta'[$  and 
  $\{ 3^{n\ell} \theta' : n \ge 1\}$ are disjoint
  since 
   $z^{+}(f)$ lies in $Z_f(\overline{V(a_k(f))},\theta', t)$ and all the rays $R^{+}_f(3^{n\ell} \theta')$ land at $z^{+}(f)$.
In all cases,  $R_{f_0}(\theta')$ relatively supports $V(a_k(f_0))$.



 \end{proof}


\begin{theorem}[Parabolic Take-off Theorem]
  \label{thr:take-off-parabolic}
  Let $f_0 \in \cS_p$ be a parabolic map with take-off argument $\theta$.
  Then there exists an escape region $\cU$
  with $\kappa(\cU) = \kappa(f_0,\theta)$
  and a parameter ray $R_\cU (\theta)$ that lands at $f_0$.
\end{theorem}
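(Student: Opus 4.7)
The plan is to construct a one-parameter family $\{f_s\}_{s\in (0,\varepsilon)}\subset \cE(\cS_p)$ converging to $f_0$ as $s\to 0^+$, lying in a single escape region $\cU$, and satisfying (a)~$\kappa(\cU)=\kappa(f_0,\theta)$ and (b)~$\arg\Phi_\cU(f_s)\to\theta$ with $|\Phi_\cU(f_s)|\to 1^+$. Once such a family is in hand, the parameter ray $\cR_\cU(\theta)$ — which exists as a lift of $]1,\infty[\cdot\exp(2\pi i\theta)$ under the regular covering $\Phi_\cU$ of~\S\ref{s:parameter-rays} — is forced to accumulate at $f_0$, so we may choose the lift that lands at $f_0$.

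To construct $f_s$, I would use a parabolic perturbation of $f_0$ along a transverse direction in $\cS_p$. Since $\theta$ is a take-off argument for $f_0$, by Definition~\ref{def:take-off} the free critical point $-a(f_0)$ lies in the immediate basin of a parabolic periodic point $z_0$, and one of the rays $R_{f_0}(\theta')$ with $\theta'\in\{\theta+1/3,\theta-1/3\}$ lands at $z_0$ and relatively supports $\partial V(-a(f_0))$. Working in Fatou coordinates for $f_0^\ell$ near $z_0$, and using the fact that $\ccS_p$ is smooth at $f_0$ (so there is a well-defined one-dimensional perturbation direction within $\ccS_p$), one opens the parabolic channel on the repelling side: for small $s>0$ the orbit of $-a(f_s)$ traverses the perturbed petal, exits near $z_0$, crosses $\partial V(-a(f_0))$ (which has perturbed out of existence as a Fatou component), and escapes to infinity. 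This yields a continuous arc in $\cE(\cS_p)$ with endpoint $f_0$. The identification $\kappa(\cU)=\kappa(f_0,\theta)$ follows by continuity: the periodic points $a_j(f_s)$ converge to $a_j(f_0)$, and the Jordan curve $\Gamma=R_{f_0}(\theta+1/3)\cup\overline{V(-a(f_0))}\cup R_{f_0}(\theta-1/3)$ of Definition~\ref{def:take-off-word} is the geometric limit (away from the critical orbit) of the curve $R^*_{f_s}(\theta+1/3)\cup\{-a(f_s)\}\cup R^*_{f_s}(\theta-1/3)$ separating $D_0$ from $D_1$; hence $a_j(f_s)$ lies in the disk labelled $\iota_j$ where $\iota_j$ is as in Definition~\ref{def:take-off-word}.

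The asymptotic computation $\arg\Phi_\cU(f_s)\to\theta$ is the analytic heart of the argument and the main obstacle. The idea is to track the B\"ottcher extension $\phi_{f_s}$ backward along the flow line through $-a(f_s)$: after exiting the perturbed petal near $z_0$, the orbit of $-a(f_s)$ follows the arc along which $R^*_{f_s}(\theta')$ is flowing, so $\arg\phi_{f_s}(-a(f_s))\to\theta'$. Passing to the cocritical point via the formula $f_s(2a(f_s))=f_s(-a(f_s))$ and the identity $\phi_{f_s}(f_s(\cdot))=\phi_{f_s}(\cdot)^3$ shifts the argument by $\mp 1/3$ (with the sign determined by which of $R_{f_0}(\theta+1/3)$ or $R_{f_0}(\theta-1/3)$ lands at $z_0$), giving $\arg\Phi_\cU(f_s)\to\theta$. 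Making this rigorous requires a parabolic implosion / horn-map calculation in the spirit of Douady–Hubbard and of Bonifant–Milnor–Sutherland~\cite{ParabolicGreen} relating the perturbation parameter $s$ to the exit Fatou coordinate and matching it with the B\"ottcher chart at infinity. Once this is established, a radial reparametrization of $\{f_s\}$ using $|\Phi_\cU|$ as the parameter, together with the covering property of $\Phi_\cU$, produces the sought-after parameter ray $\cR_\cU(\theta)$ landing at $f_0$.
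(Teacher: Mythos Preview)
Your outline points in the right direction but has a genuine gap at the ``analytic heart'' you yourself flag. The assertion that along a single continuous perturbation arc $\{f_s\}$ one has $\arg\Phi_\cU(f_s)\to\theta$ is not correct in general, and this is exactly the phenomenon the Orsay machinery is designed to handle. Under parabolic implosion the exit position of the critical orbit through the opened gate depends on the number of iterates spent crossing the egg-beater, and as $s\to 0^+$ this number diverges; the effect is that the exit point (and hence $\arg\phi_{f_s}(-a(f_s))$) winds around infinitely often rather than converging. This is the content of the \emph{Tour de Valse} (Walz) Theorem. So the step ``the orbit of $-a(f_s)$ follows the arc along which $R^*_{f_s}(\theta')$ is flowing, so $\arg\phi_{f_s}(-a(f_s))\to\theta'$'' fails as stated.

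The paper's proof exploits this winding rather than fighting it. After setting up perturbed Fatou coordinates (Shishikura) on a sector $\Lambda'$ of parameters, one fixes a target point $\ray_0(r_0)$ on the ray $R_{f_0}(\theta')$ inside a repelling petal and applies the Walz Theorem to produce, for each large $n$, a parameter $\lambda_n\in\Lambda'$ solving $g_{\lambda_n}^{n+m}(\omega_{\lambda_n})=\ray_{\lambda_n}(r_0)$. Two short pull-back claims then show first that $g_{\lambda_n}^{m}(\omega_{\lambda_n})=\ray_{\lambda_n}(r_0/d^{n})$ and second that $\omega_{\lambda_n}$ itself lies on $\ray_{\lambda_n}$; the second claim uses that the parabolic basin contains a unique critical point. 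Thus one obtains a \emph{sequence} $f_{\lambda_n}\to f_0$ with $-a(f_{\lambda_n})\in R^*_{f_{\lambda_n}}(\theta')$ exactly, i.e.\ $f_{\lambda_n}\in\cR_{\cU_n}(\theta)$ for some escape region $\cU_n$; finiteness of escape regions and of rays of a given argument lets one extract a constant $\cU$ and conclude that $\cR_\cU(\theta)$ lands at $f_0$. The equality $\kappa(\cU)=\kappa(f_0,\theta)$ then follows from the Periodic Landing Theorem already proved. In short: replace your continuous arc and limiting-argument claim by the Walz Theorem producing discrete parameters where the critical point sits \emph{on} the ray, and then pull back along the ray.
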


Our proof of this theorem follows the original ideas introduced in the Orsay Notes (\cite{OrsayNotes}) to characterize which external rays of the Mandelbrot set land at a given parabolic parameter.
Our exposition employs perturbed Fatou coordinates in the spirit of Tan Lei's article~\cite{TanLeiTheme}. The existence of appropriate perturbed Fatou coordinates
is guaranteed by~\cite{ShishikuraHausdorff,ShishikuraTheme} (cf.~\cite{Oudkerk}).  We employ the Walz Theorem as stated and proved in~\cite{OrsayNotes}.

\begin{proof}
  Let $f_0 \in \cS_p$ be a  map such that $z_0 \in \partial V(-a(f_0))$
is a parabolic periodic point of period $q_0$ and multiplier a primitive $q_1$ root of unity.
Let $R_{f_0} (\theta')$ be a relatively supporting ray for $V(-a(f_0))$ landing at $z_0$
where $\theta'= \theta+1/3$ or $\theta-1/3$.
It follows that $\theta'$ has period $q=q_0q_1$.

From the fact that $f_0$ has one cycle of attracting petals,
maybe after passing to a branched covering $\lambda \mapsto f_\lambda$ from a disk $\Lambda \subset \C$ centered at $\lambda =0$
onto a neighborhood of $f_0$ in $\cS_p$,
we may assume that for all $\lambda \in \Lambda \setminus \{0\}$
the map $f_\lambda$ has a periodic 
point $z(\lambda)$ of  period $q_0$ and distinct
periodic points  $\zeta_1(\lambda),
\dots, \zeta_{q_1}(\lambda)$ of period $q$ depending analytically on $\lambda$ such that at $\lambda=0$ all of them converge to $z_0$. For convenience we change coordinates in the dynamical plane, via a translation $z \mapsto z + (z(\lambda)-z_0)$,
and  abuse of notation by also calling $f_\lambda$
the resulting family of monic critically marked cubic polynomials.
With this normalizations $z_0$ is a fixed point of $f_\lambda$ for all $\lambda \in \Lambda$.
The free critical point of $f_\lambda$ will be denoted by $\omega_\lambda$.
We continue denoting by  $\zeta_1(\lambda),
\dots, \zeta_{q_1}(\lambda)$ the period $q$ points of $f_\lambda$ close to $z_0$.
It is useful to also consider
 the $q$-th  iterate of $f_\lambda$ by introducing
$g_\lambda = f_\lambda^q$ and setting $d= 3^q$.

Let $L_-$ be the attracting direction 
at $z_0$ under iterations of $g_0$ corresponding to the Fatou component $V(\omega_0)$ and
let $L_+$ be the repelling direction  corresponding to the external ray $R_{f_0} (\theta')$. Since $R_{f_0} (\theta')$ is relatively supporting for $V(\omega_0)$ and there is only one cycle of attracting/repelling directions under
$f_0^{q_0}$, the directions $L_+$ and $L_-$ are consecutive. That is, their angle at $z_0$ is
$\pi/q_1$. Denote by $L$ the bisector of $L_+$ and $L_-$. That is,
$L$ is  a half-line emanating from $z_0$ making an angle of $\pi/2q_1$ with
both directions $L_+$ and $L_-$.

Given a small sector $\hat{L}$ based at $z_0$ around $L$, maybe after shrinking $\Lambda$ and relabeling $\zeta_j(\lambda)$, there exists a parameter space 
sector
$\Lambda' \subset \Lambda$ based at $\lambda = 0$ such that $\zeta_1(\lambda) \in \hat{L}$  and $\pi/4 < |\arg((g_\lambda)'(z_0) - 1)| < 3\pi/4$, for all $\lambda \in \Lambda'$ (e.g. see~\cite[Expos\'e XI]{OrsayNotes}).

According to Shishikura (see~\cite[Proposition 3.2.2]{ShishikuraTheme}
 and \cite[Appendix A.5]{ShishikuraHausdorff}),
 given a neighborhood $U$ of $z_0$, after shrinking $\Lambda'$ if necessary,
for all $\lambda \in \Lambda'$,
 we have that  $U$ contains a Jordan domain $S_{\pm,\lambda}$
 bounded 
by a closed arc $\ell_\pm$ and its image $g_\lambda (\ell_\pm)$ 
such that the endpoints of
 $\ell_\pm$ are $z_0$ and $\zeta_1(\lambda)$, and $S_{-,\lambda} \cap S_{+,\lambda}
= \ell_\pm \cap g_\lambda (\ell_\pm) = \{z_0, \zeta_1(\lambda) \}$.
Moreover, for every $z \in S_{-,\lambda}$ there exists  $n$ (depending on $\lambda$)
such that $g^n_\lambda (z) \in S_{+,\lambda}$ and, for the smallest such $n$, we have that  $g^j_\lambda(z)$ belongs to the Jordan domain $\Omega_\lambda$ enclosed by $\ell_- \cup g_\lambda(\ell_+)$ for all $0 \le j \le n$. 
Also, $\ell_{\pm,\lambda}$ converges to an arc $\ell_{\pm,0}$ as $\lambda \in \Lambda'$ converges to $\lambda=0$. The arc $\ell_{-,0}$ bounds an attracting petal
$\Omega_{0,-}$ in the direction
$L_-$ and  $\ell_{+,0}$ a repelling petal $\Omega_{0,+}$ in the direction $L_+$. We denote by $S_{\pm,0}$ the Jordan domain bounded by $\ell_{\pm,0}$ and $g_0 (\ell_{\pm,0})$.

Let $m \ge 1$ be the smallest positive integer such that
$g_0^{m}(\omega_0) \in S_{-,0}$. 
This integer exists,
since $g_0^{n}(\omega_0)$ converges to $z_0$ tangentially to $L_-$.
Shrinking $U$ we may assume that $g_0^{m}(\omega_0)$ lies in a small sector based at $z_0$ around $L_-$. 

Recall from \S\ref{s:external-rays} that we denote the maximal
$\nabla |\phi_{f_\lambda}|$ flow line asymptotic to the direction
$\theta'$ at infinity by $R^*(\theta')$. This flow line is naturally
parametrized via $|\phi_{f_\lambda}|$ by $]\rho_\lambda,\infty[$, for some $\rho_\lambda >1$. Here it
will be convenient to consider the continuous parametrization
$$\ray_\lambda:[\log|\rho_\lambda|, \infty[ \to
\overline{R^*_{f_\lambda}(\theta')}$$ such that
$r = \log |\phi_{f_\lambda}(\ray_\lambda (r))|$ for all
$r > \log|\rho_\lambda|$. Sometimes we abuse of notation and simply write $\ray_\lambda$ instead of $\overline{R^*_{f_\lambda}(\theta')}$.

The ray $R_{f_0}(\theta')$ lands at $z_0$
tangentially to $L_+$. Thus we may also assume
that $R_{f_0}(\theta') \cap \Omega_{0,+}$ is contained in a small sector
based at $z_0$ around
$L_+$. Let $r_0 > 0$ be such that $\ray_0 (r_0) \in S_{+,0}$.
 Then according to the Walz Theorem~\cite[Expos\'e XI]{OrsayNotes}
there exists $N_0$ with the property that for all $n \ge N_0$, we can find
$\lambda_n \in \Lambda'$ 
such that
$$g^{n}_{\lambda_n} (g^{m}_{\lambda_n} (\omega_{\lambda_n})) =
\ray_{\lambda_n} (r_0).$$
Moreover, $\lambda_n \to 0$ as $n \to \infty$.

Along the lines of the Orsay Notes (cf.~\cite{TanLeiTheme}), two claims yield a proof for the take-off theorem:

\begin{itemize}
\item[Claim 1.] 
  $g_{\lambda_n}^{m} (\omega_{\lambda_n}) = \ray_{\lambda_n}  (r_0/d^{n})$.
\item[Claim 2.]  $\omega_{\lambda_n} = \ray_{\lambda_n}
  (r_0/d^{n+m})$.
\end{itemize}

\noindent
\emph{Proof of Claim 1.}
For $n$ sufficiently large, by analytic continuation, we may find an arc
$\gamma: [r_0/d^{n},r_0/d^{n-1}] \to \Omega_{\lambda_n} $ connecting
$g^{m}_{\lambda_n} (\omega_{\lambda_n})$ with
$g^{m+1}_{\lambda_n} (\omega_{\lambda_n})$ such that
$g_{\lambda_n}^{n-1}(\gamma(r)) = \ray_{\lambda_n} (r d^{n-1})$.
By induction, for
$j= 1, \dots, n$, it follows that
$g_{\lambda_n}^{n-j}(\gamma(r)) = \ray_{\lambda_n}(r d^{(n-j)})$.
In particular, when $j=n$ and $r = r_0$ we obtain Claim 1.

\smallskip
\noindent
\emph{Proof of Claim 2.}
To prove the second claim we strongly employ the fact that the parabolic basin
contains a unique critical point.
Indeed, observe that the 
attracting petal  $g_0(\Omega_{0,-})$ is free of critical values
of $g^{m}_0$. Hence,
arcs $\beta$  connecting
$g^{m+1}_0(\omega_0)$ to $g^{2m+1}_0(\omega_0)$ contained in $g_0(\Omega_{0,-})$
lift, under $g^m_0$, to  arcs $\tilde{\beta}$ connecting some point $w$
to $g^{m+1}_0(\omega_0)$ where $w$ is independent of the path $\beta$
and the choice of the petal $g_0(\Omega_{0,-})$, modulo homotopy relative to the  critical values of $g^{m}_0$. 
The action of $g_0$ in $V(\omega_0)$ is conformally conjugate to the map $B: z \mapsto (3z^2+1)/(z^2+3)$ acting on $\D$.
Since the critical point $z=0$ of $B$ in $\D$ has forward orbit contained in $]0,1[$,
it is not difficult to conclude that $\tilde{\beta}$ connects the critical
value $g_0(\omega_0)$ with $g^{m+1}_0(\omega_0)$. 
By continuity, for $n$ large enough, 
the portion $\beta_n$ of  $\ray_{\lambda_n}$ connecting
$g^{m+1}_{\lambda_n}(\omega_{\lambda_n})$ to $g^{2m+1}_{\lambda_n}(\omega_{\lambda_n})$, lifts under $g_{\lambda_n}^m$ to an arc
$\tilde{\beta}_n$, contained in $\ray_{\lambda_n}$, connecting
$g_{\lambda_n} (\omega_{\lambda_n})$ with $g^{m+1}_{\lambda_n}(\omega_{\lambda_n})$. It follows that $g_{\lambda_n} (\omega_{\lambda_n})$ lies in
$\ray_{\lambda_n}$. Finally, the portion of $\ray_{\lambda_n}$ between
$g_{\lambda_n}(\omega_{\lambda_n})$ and $g^2_{\lambda_n}(\omega_{\lambda_n})$
lifts, under $g_{\lambda_n}$ to a path compactly contained in $V(\omega_0)$ which
necessarily connects $\omega_{\lambda_n}$ to $g_{\lambda_n}(\omega_{\lambda_n})$
since $\omega_{\lambda_n}$ is the unique $g_{\lambda_n}$ preimage of $g_{\lambda_n}(\omega_{\lambda_n})$ in $V(\omega_0)$ for $n$ sufficiently large.
It follows that $\omega_{\lambda_n}$ is contained in $\ray_{\lambda_n}$. That is, we have proven Claim 2.

\smallskip
From the second claim we conclude that arbitrarily close to $f_0$ there exist
$f_{\lambda_n}$ which lies in a parameter ray $R_{\cU_n}(\theta)$ for some escape region $\cU_n$. Since there are finitely many escape regions and each one has finitely many parameter rays with argument $\theta$, after passing to a subsequence
we may assume that both the escape region and the ray,
say  $R_{\cU}(\theta)$, are independent of $n$. Therefore $R_{\cU}(\theta)$
lands at $f_0$.
\end{proof}

\subsection{Landing at pcf maps: kneading and  preperiodic ray connections}
\label{s:pcf-landing}

We deduce from Theorem~\ref{thr:bms} the following:

\begin{theorem}
  \label{thr:arrival-preperiodic-connection}
  Consider a parameter ray $R_\cU(\theta)$ of an escape region $\cU$ such that
  $\theta + 1/3$ and $\theta-1/3$ are strictly preperiodic arguments.
  Then $R_\cU(\theta)$ lands at a pcf map $f_0$
  with take-off argument~$\theta$ and $\kappa(f_0,\theta) = \kappa (\cU)$.
  Moreover, if maps $f \in R_\cU(\theta)$ have
  a preperiodic ray connection between
  $-a(f)$ and $a(f)$, then  $-a(f_0) \in \partial V(a(f_0))$  and
  $\theta$ is a supporting argument for $V(a(f_0))$.
 \end{theorem}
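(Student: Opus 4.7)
The first assertion (that $\theta$ is a take-off argument of the pcf landing point $f_0$) is immediate from Theorem~\ref{thr:bms} and Definition~\ref{def:take-off}. The kneading identity $\kappa(f_0,\theta)=\kappa(\cU)$ will be obtained by repeating, mutatis mutandis, the device used to prove assertion~(7) of Lemma~\ref{l:limsupperiodic}: writing $\kappa(\cU)=\iota_1\cdots\iota_{p-1}0$, for each $j$ I would select a smooth periodic external ray $R_f(t_j)$ landing at a repelling periodic point $u_j\in\partial V(a_j(f))$ with $t_j$ on the $\iota_j$-side of $\{\theta\pm 1/3\}$; Lemma~\ref{l:limsup}~(1) then forces $R_{f_0}(t_j)$ to land at the analytic continuation of $u_j$ on $\partial V(a_j(f_0))$, placing $a_j(f_0)$ in the component $U_{\iota_j}$ of $\C\setminus\Gamma$.

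Now suppose $f\in\cR_\cU(\theta)$ carries a preperiodic ray connection $R^\sigma_f(\vartheta)$ with $\vartheta\in\{\theta\pm 1/3\}$, landing at $w(f)\in\partial V(a(f))$. The functional identity $\varphi_{f,a_1}\circ f=\varphi_{f,a}^2$ shows that the preimage in $V(a(f))$ of the internal ray $I_{f,a_1}(0)$ (which lands at $f(w(f))$) is $I_{f,a}(0)\cup I_{f,a}(1/2)\cup\{a(f)\}$; since $w(f)$ is the non-periodic preimage of $f(w(f))$ on $\partial V(a(f))$, it must be the landing point of $I_{f,a}(1/2)$. Taking the limit $f\to f_0$, Lemma~\ref{l:limsup}~(2) applied to $f(I_{f,a}(1/2))=I_{f,a_1}(0)$ together with continuity of internal rays yields that the landing point $v$ of $I_{f_0,a}(1/2)$ satisfies $f_0(v)=$ landing of $I_{f_0,a_1}(0)$. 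By Roesch--Yin, $\partial V(a(f_0))$ is a Jordan curve on which $f_0$ acts $2$-to-$1$, so $v$ is the unique non-periodic preimage on this boundary. Combining with $-a(f_0)=\lim w(f)$, which follows from Theorem~\ref{thr:bms} and Hausdorff convergence of rays, we conclude $v=-a(f_0)\in\partial V(a(f_0))$.

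Finally, to show $\vartheta$ is a supporting argument of $V(a(f_0))$ at $-a(f_0)$: the conjugacy $\varphi_{f_0,a}:\overline{V(a(f_0))}\to\overline{\D}$ extending the B\"ottcher coordinate (Roesch--Yin) together with $\varphi_{f_0,a}(-a(f_0))=e^{i\pi}$ pins down the ``direction into $V(a(f_0))$'' at $-a(f_0)$. The plan is then to rule out the appearance, in the limit $f_0$, of any external ray landing at $-a(f_0)$ strictly between $\vartheta$ and this direction; if such a ray had a periodic (resp.\ strictly preperiodic) argument, Lemma~\ref{l:limsup}~(1) (resp.\ (3)) would force, for $f$ sufficiently close to $f_0$, an extra external ray landing on $\partial V(a(f))$ arbitrarily close to $w(f)$ from the $V(a(f))$-side, contradicting the fact that $R^\sigma_f(\vartheta)$ is the unique ray landing at $w(f)$ from that side (a consequence of the relative supporting hypothesis and the Jordan-curve structure of $\partial V(a(f))$). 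This last combinatorial step, analogous to the final paragraph of the proof of Lemma~\ref{l:limsupperiodic} for the parabolic case, is the main obstacle.
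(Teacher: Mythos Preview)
Your approach to the first part (take-off argument and kneading identity) is correct and essentially matches the paper's argument; the kneading device you describe is exactly the one used in Lemma~\ref{l:limsupperiodic}~(7), and it works even more smoothly here since all cycles of the pcf map $f_0$ are repelling.

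For the ``moreover'' clause, however, you have taken a harder road than necessary, and the obstacle you flag at the end is genuine for your route. The paper avoids it entirely by switching attention from the \emph{preperiodic} argument $\vartheta\in\{\theta\pm1/3\}$ to the \emph{periodic} argument $\theta$ itself. Here is the point: since both $\theta\pm1/3$ are strictly preperiodic and $3\theta=3\vartheta$ has period $p$, the argument $\theta$ is periodic of period $p$, and for every $f\in\cR_\cU(\theta)$ the dynamical ray $R_f(\theta)$ is \emph{smooth} (its $m_3$-orbit never meets $\theta\pm1/3$). This smooth periodic ray lands at the periodic point $\zeta(f)\in\partial V(a(f))$ and is relatively supporting for $V(a(f))$. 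Because $\zeta(f_0)$ is repelling, the convergence $R_f(\theta)\to R_{f_0}(\theta)$ and $\zeta(f)\to\zeta(f_0)\in\partial V(a(f_0))$ is immediate, so $\theta$ is relatively supporting for $V(a(f_0))$ at $\zeta(f_0)$. A short orbit-portrait argument (the critical values of $f_0$ are not separated by the rays at the orbit of $\zeta(f_0)$, so the portrait has at most two cycles; see~\cite{KiwiWOP}) then upgrades ``relatively supporting'' to ``supporting''. Note, incidentally, that the theorem asserts that $\theta$, not $\vartheta$, supports $V(a(f_0))$.

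The specific difficulty with your final paragraph is that Lemma~\ref{l:limsup}~(1) and~(3) give information about $J(f_0)\cap\limsup R^*_f(t)$, i.e.\ they go from $f$-rays to $f_0$-landing points; you need the opposite direction (from a hypothetical $f_0$-ray at $-a(f_0)$ to nearby $f$-rays landing near $w(f)$), and since $-a(f_0)$ is a critical point the standard stability of landing does not apply. Working with the periodic $\theta$ sidesteps this completely, and also yields $-a(f_0)\in\partial V(a(f_0))$ as a corollary (since $R_{f_0}(\theta)$ lands both at $2a(f_0)$ and at $\zeta(f_0)\in\partial V(a(f_0))$, the critical point $-a(f_0)$, being the other preimage of $f_0(\zeta(f_0))$ on the $a(f_0)$-side, lies on $\partial V(a(f_0))$).
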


 \begin{proof}
   From Theorem~\ref{thr:bms} we know
   that $\cR_\cU(\theta)$ lands at a pcf map $f_0$.
   In particular all Julia cycles of $f_0$ are repelling
   and $R_f(\theta)$ converges
   uniformly to $R_{f_0} (\theta)$, 
   as $f \in \cR_\cU(\theta)$ converges to $f_0$. Therefore,  $R_{f_0}(\theta)$ lands at the cocritical point
   $2a(f_0)$ and $R_{f_0}(\theta\pm 1/3)$ land at $-a(f)$, so $\theta$ is a take-off argument for $f_0$.
   Moreover, $R^*_f(\theta\pm 1/3)$ converge to $R_{f_0}(\theta\pm 1/3)$, thus $\kappa(\cU) = \kappa(f_0,\theta)$. See~Figure~\ref{trekingA}.

   In the presence of a preperiodic ray connection between $-a(f)$ and $a(f)$ the periodic ray $R_f(\theta)$
   lands at $\zeta(f) \in \partial V(a(f))$ and it is relatively supporting for $V(a(f))$. Since $\zeta(f)$ converges to a repelling periodic point
   $\zeta(f_0) \in \partial V(a(f_0))$, we have that $R_{f_0} (\theta)$ lands at $\zeta(f_0)$ and it is relatively supporting for $V(a(f_0))$.
   From~\cite{KiwiWOP}, observing that the critical values of $f_0$ are not
   separated by rays landing at the orbit of $\zeta(f_0)$, 
   the periodic orbit portrait of $\zeta(f_0)$
   consists of exactly one cycle of rays if the period $\zeta(f_0)$ is a proper divisor of $p$, and of exactly two cycles of rays
   if the period of $\zeta(f_0)$ is $p$.
   In both cases, $R_{f_0} (\theta)$ is in fact a left or right supporting ray for $V(a(f_0))$.
 \end{proof}

\section{Immigration and trekking}
\label{s:trekking}

In \S~\ref{s:bounded-hyperbolic} we introduce and discuss parameter internal rays of  type $A$ and $B$ hyperbolic components.
In \S~\ref{s:sectorAB} we discuss basic properties of maps in $0$-parameter internal rays as well as in the sectors bounded by these rays.
The landing of  $0$ and $1/2$-parameter internal rays is the subject of
\S~\ref{s:landing-internal-parameter}. We will say that a ``root'' of a type $A$ or $B$ component is the landing point
of a $0$-parameter internal ray and a ``co-root'' is the landing point of a $1/2$-parameter internal rays of a type $A$ component.
The effect that crossing from one root to another root of a type $B$ hyperbolic component has on kneading words is the content of \S~\ref{s:trekkingB}. The crossing of type $A$ components is discussed in \S~\ref{s:trekkingA}.
In \S~\ref{s:parabolic-immigration} we establish that
in the presence of a periodic ray connection the
corresponding parameter ray lands at a ``root'' of a type $A$ or $B$ component.  In \S~\ref{s:pcf-immigration} we show that a parameter ray with a preperiodic ray connection lands at a  ``co-root'' of a type $A$ component.

\subsection{Parameter internal rays}
\label{s:bounded-hyperbolic}
Given a type $A$ or $B$ hyperbolic component $\cH$ let $0 \le k < p$ be such that $-a(f) \in V_f(a_k(f))$ for all $f \in \cH$. There is exactly one map $f_\star \in \cH$ such that $-a(f_\star) = a_k(f_\star)$ (see~~\cite{MilnorHyperbolicComponents}). We say that the pcf map $f_\star$ is the \emph{center} of $\cH$. 
For each $f \in \cH \setminus \{ f_\star\}$ there
exists exactly one argument $s$ such that
$I^*_{f,a_k} (s)$ terminates at $-a(f)$. The B\"ottcher coordinate $\varphi_{f,a_k}$ has a well defined limit
as $z\in I^*_{f,a_k}(t)$ converges to $-a(f)$ which, by abuse of notation,
we write as $\varphi_{f,a_k} (-a(f))$.
Thus we may introduce:
$$\begin{array}[h]{rccll}
    \Phi_\cH : & \cH  & \to & \D \\
               & f & \mapsto & \varphi_{f,a_k} (-a(f)), & \text{ if } f \neq f_\star,\\
     & f_\star & \mapsto & 0.
  \end{array}
  $$

  \begin{proposition}
    \label{parametrizationH}
    The map $\Phi_\cH :  \cH  \to  \D$ is a holomorphic branched covering with a unique branch point $\fstar$. The degree of $\Phi_\cH$ is $2$ if $\cH$ is of type $A$ and $3$ if $\cH$ is of type $B$. 
  \end{proposition}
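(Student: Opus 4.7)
The plan is to establish that $\Phi_\cH : \cH \to \D$ is a proper holomorphic branched cover, identify $f_\star$ as the unique preimage of $0$, compute the local degree of $\Phi_\cH$ at $f_\star$, and finish with Riemann-Hurwitz using that $\cH$ is simply connected. Holomorphy on $\cH\setminus\{f_\star\}$ is immediate from the joint holomorphic dependence of $\varphi_{f,a_k}$ and $-a(f)$ on $f$, and extends across $f_\star$ by continuity given the normalization $\varphi_{f,a_k}(a_k(f))=0$. Properness means $|\Phi_\cH(f)|\to 1$ as $f\to\partial \cH$ in $\cS_p$: standard hyperbolic-component theory shows that boundary approach forces either a parabolic bifurcation of the $f^p$-cycle through $a_k$ or the critical orbit of $-a$ to escape $V_f(a_k)$, and in both cases the star-like extension of the B\"ottcher coordinate can no longer reach $-a$ from within $\D$. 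Since $\varphi_{f,a_k}$ is injective on $V^*_f(a_k)$ with $a_k\mapsto 0$, the equation $\Phi_\cH(f)=0$ forces $-a=a_k$, i.e.\ $f=f_\star$; hence the global degree $d$ of the branched cover equals its local degree at $f_\star$.

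For type $A$: $a(f_\star)=0$ and $f_\star(z)=z^{3}+v_\star$, and the condition $f^{p}(a)=a$ cutting out $\cS_p$ is transverse to the $v$-direction at $f_\star$, so $a$ is a local uniformizer on $\cH$. An iterated chain rule along the cycle $a_0,\ldots,a_{p-1}$ gives $(f^p)''(a)=f''(a)\prod_{j=1}^{p-1}f'(a_j)=6a\prod_{j=1}^{p-1}f'(a_j)$, so the B\"ottcher's leading coefficient is $c(f)=3a\prod_{j=1}^{p-1}f'(a_j)$. Substituting into $\varphi_{f,a}(a+w)=c(f)\,w+O(w^{2})$ at $w=-2a$ yields
\[
\Phi_\cH(f) \;=\; -6\,a^{2}\prod_{j=1}^{p-1}f'(a_j) + O(a^{3}),
\]
with non-vanishing leading coefficient (since $a_j\ne 0$ for $0<j<p$), giving local degree $2$. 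For type $B$: let $\epsilon=-a(f)-a_k(f)$, a local uniformizer vanishing at $f_\star$. Three critical points of $f^p$ in $V_f(a_k)$ cluster near $a_k$: the point $a_k$ itself, $-a=a_k+\epsilon$, and a third one $z^{**}$. The quadratic normal form of the degree-two map $f^{p-k}:V_f(a_k)\to V_f(a_0)$ near its critical point $-a$, whose fiber over $a_0$ is $\{a_k,z^{**}\}$, forces $a_k+z^{**}=-2a$, hence $z^{**}-a_k=2\epsilon$. Factoring $(f^{p})'(a_k+w)=A(w)\,w(w-\epsilon)(w-2\epsilon)$ with $A(0)\ne 0$, integrating to get $f^{p}(-a)-a_k = \tfrac{A(0)}{4}\,\epsilon^{4}+O(\epsilon^{5})$, and applying the B\"ottcher functional equation $\Phi_\cH(f)^{2}=\varphi_{f,a_k}(f^{p}(-a))$ yields
\[
\Phi_\cH(f) \;=\; \pm\,\tfrac{A(0)}{2}\,\epsilon^{3} + O(\epsilon^{4}),
\]
so the local degree is $3$.

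Finally, $\cH$ is simply connected (a standard property of hyperbolic components in rational-map moduli spaces), so $\chi(\cH)=1$, and Riemann-Hurwitz for the proper branched cover $\Phi_\cH:\cH\to\D$ reads $1=d-\sum_i(e_i-1)$, forcing the total branching to equal $d-1$. Since $f_\star$ already contributes this full amount (with $d=2$ or $d=3$ according to type), no other branch points exist. The main technical obstacle is the type $B$ local-degree computation: identifying the third critical point $z^{**}$ and, more delicately, verifying that the several leading-order cancellations in the B\"ottcher expansion conspire to leave a nonzero coefficient of $\epsilon^{3}$, which is where the simultaneous triple coalescence of critical points of $f^p$ at the center must be handled carefully.
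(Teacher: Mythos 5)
Your approach is genuinely different from the paper's. The paper reduces via Milnor's conformal isomorphism $\Psi_\cH$ to the explicit cubic and quartic families $A_\mu$, $B_\mu$, proves properness by the standard quasiconformal surgery argument (which also shows there is no branching away from the center), and then determines the degree by counting parameters with a parabolic fixed point of multiplier~$1$, namely the solutions of $\mu^2=-4/9$ (type $A$) and $\mu^3=-2/3$ (type $B$), using the $\zeta$-symmetry $\zeta T_\mu(\zeta z)=T_{\zeta\mu}$ to pin the degree down exactly. Your route is instead a local power-series computation of the order of vanishing of $\Phi_\cH$ at $f_\star$, combined with Riemann--Hurwitz. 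That is a reasonable alternative strategy in spirit, but as written it has gaps.

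The type $A$ local-degree computation is incorrect in detail. When you substitute $w=-2a$ into $\varphi_{f,a}(a+w)$, both $w$ and the leading B\"ottcher coefficient $c(f)=3a\prod_{j=1}^{p-1}f'(a_j)$ are $O(a)$, so the linear term $c(f)\,w$ contributes at order $a^2$ --- but so does the quadratic term $c_2\,w^2$, since $c_2=(f^p)'''(a)/12$ stays $O(1)$ as $a\to0$ (one finds $c_2\to\tfrac12\prod f_\star'(a_j)$). The correct leading coefficient is $\bigl(4c_2-6\prod f'(a_j)\bigr)\big|_{a=0}=-4\prod f_\star'(a_j)$, not $-6\prod f_\star'(a_j)$. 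The conclusion (nonzero, local degree~$2$) still holds, but the argument as stated drops a term of the same order; an approach based on delicate power-series cancellations must be carried out fully. You flag this very difficulty for the type~$B$ case yourself and leave it unresolved; in fact the same phenomenon already appears in type~$A$, and in type~$B$ you also freely replace $z^{**}-a_k=2\epsilon+O(\epsilon^2)$ by $2\epsilon$ in the factorization of $(f^p)'$ without controlling that error against the final $\epsilon^3$ leading term. The paper sidesteps all of this: once one knows $\Phi_T$ is unbranched away from $\mu=0$ (surgery) and has at most $d_T$ internal $0$-rays (each lands at a distinct parabolic parameter, by the Maximum Principle), the exact degree follows from symmetry with no series expansions.

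Two further structural points. First, you invoke "simple connectivity of hyperbolic components" to apply Riemann--Hurwitz and then conclude there are no other branch points; but the cleanest deduction of simple connectivity from properties of $\Phi_\cH$ would itself use that $\Phi_\cH$ is unbranched away from $f_\star$, which is what you were trying to prove --- so you do genuinely need to import simple connectivity as an external input, whereas the paper gets branch-freedom directly from quasiconformal surgery. Second, "extends across $f_\star$ by continuity" glosses over the fact that the B\"ottcher coordinate $\varphi_{f,a_k}$ conjugating $f^p$ to $z\mapsto z^2$ exists only for $f\ne f_\star$ (at $f_\star$ the local degree of $f_\star^p$ at $a_k$ jumps to $3$ or $4$); the continuous extension $\Phi_\cH(f_\star)=0$ is correct but requires an argument (e.g.\ that $|\Phi_\cH(f)|$ is the escape rate of $-a(f)$ in $V(a_k(f))$, which tends to $0$ as $f\to f_\star$), not just the normalization $\varphi_{f,a_k}(a_k)=0$.
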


   Given $t \in \RZ$, we say that an arc $\cI_\cH(t)$ is a \emph{parameter internal ray with argument $t$} if
   $$ \Phi_\cH : \cI_\cH(t) \to [0,1[ \exp(2 \pi i t)$$ is a homeomorphism.

   \smallskip
Auxiliary families of  cubic and quartic polynomials with a fixed critical point $\mu$ will be useful to apply the results contained in~\cite{MilnorHyperbolicComponents}  in order to prove Proposition~\ref{parametrizationH}.  Indeed, let 
$$A_\mu (z) = (z-\mu)^2(z+2\mu) + \mu$$ and
$$B_\mu(z) = (z^2 - \mu^2)^2 + \mu$$
where $\mu \in \C$.
Observe that $\cS_1$ corresponds exactly to the polynomials $A_\mu$ with marked critical point $\mu$ and free critical point
$\omega=-\mu$.
The quartic polynomial $B_\mu$ has critical points at $\pm \mu$ and $0$. Note that $B_\mu(-\mu) = B_\mu (\mu)=\mu$ while $\omega=0$ is the free critical point of the family.

The main hyperbolic component 
$\cH_A$ of the $A_\mu$ family is the one containing the monomial $A_0(z) =z^3$
while the main hyperbolic component $\cH_B$ of the $B_\mu$ family is the
one containing $B_0 (z) =z^4$. We will rely on a result by Milnor~\cite{MilnorHyperbolicComponents} which implies that type $A$ and type $B$ hyperbolic components $\cH \subset \cS_p$ posses a natural conformal isomorphism with the main hyperbolic components of the $A_\mu$ and $B_\mu$ families, respectively.

    We let $T =A$ or $B$ and freely use the notation $T_\mu$   for
    the family $A_\mu$ or $B_\mu$. Similarly, $\cH_T$ denotes the main hyperbolic component of the corresponding family. To describe Milnor's conformal isomorphism we choose a continuously varying fixed point $z_0 (\mu)$ in the Julia set of $T_\mu \in \cH_T$ and a  continuously  varying fixed point
 $z (f)$ of $f^p : \partial V(a_k(f)) \to \partial V(a_k(f))$  for $f \in \cH$.
Then according to~\cite[Theorem~6.1]{MilnorHyperbolicComponents}, given $f \in \cH$
there exists a unique map $T_\mu :=\Psi_\cH(f)  \in \cH_T$  such that $f^p : \overline{V(a_k(f))} \to
  \overline{V(a_k(f))}$ is  conjugate to
$T_\mu : K(T_\mu) \to K(T_\mu)$  via a
homeomorphism $h_f : \overline{V(a_k(f))} \to K(T_\mu)$, conformal in the interior, such that
$h_f(z(f)) = z_0 (\mu)$. Moreover, $\Psi_\cH : \cH \to \cH_T$  is a conformal isomorphism. 

\begin{proof}[Proofs of Proposition~\ref{parametrizationH}] 
It suffices to establish the corresponding assertions for $\cH_A$ and $\cH_B$.
Although we may directly apply  the content of Proposition~2.4 (b) in~\cite{RoeschSone} for $\cH_A$ we briefly sketch here
how to proceed in both cases: $A$ and $B$.

For $0 \neq \mu \in \cH_T$ let  $\varphi_\mu$ be the B\"ottcher coordinate that conjugates the action of $T_\mu$ near $z=\mu$ with $z \mapsto z^2$.
Denote the basin of $\mu$ under $T_\mu$ by $V(\mu)$.
As before, $|\varphi_\mu|$ extends continuously
to   $V(\mu)$ via the functional relation $|\varphi_\mu \circ A_\mu| =
|\varphi_\mu|^2$ and $\varphi_\mu$ extends
along the flow lines of $-\nabla |\varphi_\mu|$. For each $s \in \RZ$,
denote by  $I^*_\mu(s)$ the arc starting at $\mu$
such that $\varphi_\mu (I^*_\mu(s)) = [0,\rho(s)[ \exp(2 \pi i s)$ for some maximal $\rho(s) >0$. There exists a unique $t \in \RZ$ such that $I^*_\mu(t)$
terminates at the free critical point $\omega$.
Let $\Phi_T : \cH_T \to \D$ be defined by $\Phi_T (\mu) = \rho(t) \exp(2 \pi i t)$ if $\mu \neq 0$, and $\Phi_T(0) =0$.
It is not difficult to show that
$\Phi_T$ is a proper holomorphic map. The standard quasiconformal surgery argument shows that $\Phi_T$
is branch point free in $\cH_T \setminus \{0\}$ (e.g.~\cite[Section~4.2.2]{BrannerFagellaBook}).
Parameter internal rays for $\cH_T$ are now defined via $\Phi_T$ as above for $\cH$. 
Moreover, if $\cH \subset \cS_p$ is of type $T$, then $\Phi_\cH = \Phi_T \circ \Psi_\cH$. 

We claim that the degree of $\Phi_A$ is $d_A:=2$ and of $\Phi_B$ is $d_B:=3$.
In fact, it is sufficient to show that there are exactly $d_T$
parameter internal rays $\cI \subset \cH_T$  with argument $0$.
Given such an internal ray $\cI$, any accumulation point $\mu_0$ of $\cI$ in $\partial \cH_T$ is such that
the internal ray $I_{\mu_0}(0)$ lands at a parabolic fixed point, for otherwise $I_{\mu}(0)$ is smooth and lands at a repelling
fixed point for all $\mu$ sufficiently close to $\mu_0$ which would contradict the definition of $\cI$.
There are exactly $d_T$ parameters $\mu$  for which
$T_\mu$ has a parabolic fixed point with multiplier $1$, namely the solutions of $\mu^2 = -4/9$ for $T=A$ and
the solutions of $\mu^3 =-2/3$ for $T=B$.
Thus $\cI$ must limit one of these three parameters as it approaches $\partial \cH_T$.
By the Maximum Principle, distinct parameter internal rays with argument $0$ must land at distinct parabolic parameters.
Hence the degree of $\Phi_T$ is at most $d_T$. Now let $\zeta = \exp(2 \pi i /d_T)$ and observe that
$\zeta T_\mu (\zeta z) = T_{\zeta \mu}$. Hence, $\Phi_T (\zeta \mu ) = \Phi_T(\mu)$ and we conclude that $\deg \Phi_T = d_T$.
Proposition~\ref{parametrizationH} now follows from the identity $\Phi_\cH = \Phi_T \circ \Psi_\cH$.
 \end{proof}

 \subsection{Sectors of type $A$ and $B$ components}
 \label{s:sectorAB}
 
 We say that a \emph{sector $S$ of  $\cH$} is a connected component of the preimage under $\Phi_\cH$ of $\{ z \in \D \setminus \{0\} : s < \arg z < t \}$ for some $s,t \in \RZ$ not necessarily distinct. 

 We are particularly interested on sectors bounded by $0$-rays.
 Consider a sector $S$ of $\cH$ such that
 $\Phi_\cH : S \to \D \setminus [0,1[$ is a bijection and  $s$-internal rays
 $\cI(s)$ contained in $\overline{S}$
  converge to the $0$-rays $\cI^\pm$ as $s \to 0^\mp$. We say that $S$ is \emph{the sector of $\cH$ from $\cI^-$ to $\cI^+$}.  


 

 
   \begin{proposition}
     \label{sectorH}
     Let $\cH$ be a type $A$ or $B$ hyperbolic component.
     Consider a continuously varying fixed point $z(f)$  of $f^p : \partial V(a_k(f)) \to \partial V(a_k(f))$, for $f \in \cH$.
    Then  the following hold:
    \begin{enumerate}   
      \item There exist unique $0$-parameter internal rays $\cI^-$ and $\cI^+$ such that for all $f \in \cI^-$ (resp. $\cI^+$), the ray $I^-_{f,a_k}(0)$ (resp. $I^+_{f,a_k}(0)$) lands at $z(f)$.
    \item The sector $S$ in $\cH$ from $\cI^-$ to $\cI^+$ coincides with the set formed by all $f \in \cH$ such that the $0$-internal ray
      $I_{f,a_k}(0)$ is smooth and lands at $z(f)$.
    \end{enumerate}
  \end{proposition}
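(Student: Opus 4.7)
My plan is to transfer the question to the model family $T_\mu$ via Milnor's conformal conjugacy $h_f : \overline{V(a_k(f))} \to K(T_\mu)$ (where $T_\mu = \Psi_\cH(f)$), which intertwines $f^p$ with $T_\mu$ and preserves the B\"ottcher coordinates at the superattracting periodic critical point. Normalized so that $h_f(z(f)) = z_0(\mu)$, and combined with the identity $\Phi_\cH = \Phi_T \circ \Psi_\cH$, this reduces both assertions to the analogous statements for the main hyperbolic component $\cH_T$ of the model family.

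In the model, the star-like domain $U^*_\mu = \varphi_\mu(V^*(\mu))$ is $\D$ minus the radial cuts emanating from the iterated preimages of $w_0 = \Phi_T(\mu)$ under $z \mapsto z^2$. Consequently, $I_\mu(0)$ is smooth iff $w_0 \notin (0,1)$, i.e., iff $\mu \notin \Phi_T^{-1}((0,1))$. The preimage $\Phi_T^{-1}((0,1))$ is a disjoint union of $d_T$ parameter internal rays at argument $0$, and their complement in $\cH_T$ consists of $d_T$ sectors. When $I_\mu(0)$ is smooth, it is $T_\mu$-invariant, and hence lands at a fixed point of $T_\mu$ on $\partial V(\mu)$; since this boundary map is conjugate to $z \mapsto z^{d_T+1}$, there are exactly $d_T$ such fixed points. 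By continuity of the landing and discreteness of the fixed set, the landing is constant on each sector.

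To establish a bijection between sectors and fixed points I would use the deck transformation $\mu \mapsto \zeta \mu$ with $\zeta = e^{2\pi i/d_T}$. It cyclically permutes the $d_T$ parameter rays, hence the $d_T$ sectors, and via the relation $\zeta T_\mu(\zeta z) = T_{\zeta \mu}$ it cyclically permutes the $d_T$ boundary fixed points through $z \mapsto \zeta z$ (these corresponding, under the conjugacy with $z^{d_T+1}$, to the $d_T$-th roots of unity). By equivariance, the sector-to-fixed-point landing map is either constant or a bijection; constancy is impossible since no nonzero boundary fixed point is $\zeta$-invariant. Hence there is a unique sector $S$ on which $I_\mu(0)$ lands at $z_0(\mu)$, which proves (2), with $\cI^-$ and $\cI^+$ identified as the two parameter rays bounding $S$.

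For assertion (1), uniqueness of $\cI^\pm$ follows from the previous bijection: any other parameter ray borders sectors whose landings are fixed points distinct from $z_0$, so the one-sided dynamical limits $I^\pm$ on that ray cannot land at $z_0$. For existence, approaching $\cI^\pm$ from within $S$, the smooth landing of $I_\mu(0)$ at $z_0(\mu)$ persists in the limit and matches, at the boundary ray, with one of $I^\pm_{\mu_0}(0)$. The key step is the orientation matching: approaching $S$ through $\cI^-$ (respectively $\cI^+$) corresponds to the dynamical limit $t \to 0^-$ (respectively $t \to 0^+$), which is the main obstacle of the argument. This correspondence is established by tracking how the B\"ottcher singularity at $-\mu$ moves from each side onto the positive real axis of $\D$ as $\arg \Phi_T(\mu) \to 0$, using the conformality of $\varphi_\mu$.
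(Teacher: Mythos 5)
Your strategy of transferring to the model family $T_\mu$ via Milnor's conjugacy $\Psi_\cH$ and the identity $\Phi_\cH = \Phi_T \circ \Psi_\cH$ is exactly the paper's, and your characterization of the smooth locus ($I_\mu(0)$ smooth iff $\Phi_T(\mu)\notin(0,1)$) is correct. Your equivariance argument via the deck transformation $\mu\mapsto\zeta\mu$ is a clean alternative to the paper's route: the paper instead identifies the parameter $0$-rays of $\cH_A$ and $\cH_B$ \emph{explicitly} as real curves ($\cI_A=\,]0,2/3[\,\cdot i$, $\cI_B=\,]\mu_0,0]$, and their rotates), pins down the landing of $I^\pm_\mu(0)$ on these curves by a \emph{reflection} symmetry, and lastly locates the sector containing $\mu>0$ to settle~(2). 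Your rotation-equivariance establishes the sector-to-fixed-point bijection at least as efficiently, although the phrasing ``constant or bijective; constancy is impossible'' is slightly off: since $h(z)=\zeta z$ shifts the boundary-fixed-point labels by one and the deck transformation cyclically permutes the sectors, the intertwining relation $L(\pi(i))=L(i)+1$ \emph{forces} $L$ to be a bijection directly; there is no constant alternative to rule out.

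The genuine gap is the orientation matching, which you correctly identify as ``the main obstacle'' but do not actually carry out. Rotational symmetry alone cannot distinguish $I^+$ from $I^-$ on a given parameter $0$-ray, since it only cyclically permutes the data; one needs an orientation-reversing input. The paper supplies this by exploiting the reflection symmetry of $\cI_A$ (resp.\ $\cI_B$) about the imaginary (resp.\ real) axis, which swaps $I^+$ and $I^-$ and thereby determines that $I^+_\mu(0)$ and $I^-_\mu(0)$ land at consecutive boundary fixed points. Your proposed alternative --- tracking the B\"ottcher needle at angle $\arg\Phi_T(\mu)$ as it approaches $0^\pm$ and noting that $I_\mu(0)$ converges to the limit ray $I^\mp_{\mu_0}(0)$ from the corresponding side --- is the correct idea and would close the gap, but as written it is only a one-line promissory note. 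Without making that step precise, both the assignment of $\cI^-$ versus $\cI^+$ in part~(1) and the identification of the sector's boundary rays in part~(2) remain undetermined, so the proof is incomplete at exactly the point where the sign conventions must be nailed down.

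One minor remark: the conjugacy relation you quote from the paper, $\zeta T_\mu(\zeta z)=T_{\zeta\mu}$, is a typo in the source for $T=B$ (where $\zeta\neq\zeta^{-1}$); the correct form is $\zeta\, T_\mu(\zeta^{-1}z)=T_{\zeta\mu}(z)$, i.e.\ $L\circ T_\mu\circ L^{-1}=T_{\zeta\mu}$ for $L(z)=\zeta z$. This does not affect the substance of your equivariance argument, but you should state the relation in the conjugation form to avoid compounding the error.
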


  \begin{definition}
    \label{d:sector-theta}
    In the above proposition, when $z(f)$ is the landing point of an external ray $R_f(\theta')$ for all $f \in \cH$, the corresponding
    sector is denoted by $S(\cH,\theta')$ and called
    the \emph{sector of $\cH$ associated to $\theta'$.}
  \end{definition}

\smallskip
  The proof of the proposition will simultaneously yield additional information:
  
  \begin{lemma}
    \label{consecutive}
    Let $\cI$ be a parameter internal ray of $\cH$ with argument $0$. Denote by
    $z^\pm(f)$ the landing point of $I^\pm_{f,a_k}(0)$ for $\fstar \neq f \in \cI$. Then $z^-(f)$ is the successor of $z^+(f)$ among the fixed points of
    $f^p: \partial V(a_k(f)) \to \partial V(a_k(f))$ in the standard orientation
    of $\partial V(a_k(f))$. 
  \end{lemma}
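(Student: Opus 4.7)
The plan is to prove this lemma simultaneously with Proposition~\ref{sectorH}, as announced just before its statement. For $f \in \cI \setminus \{\fstar\}$, the two limit rays $I^\pm_{f,a_k}(0)$ share the common stem $I^*_{f,a_k}(0)$ from $a_k(f)$ to $-a(f)$ and bifurcate at $-a(f)$ into arcs $\gamma^\pm$ landing at $z^\pm(f) \in \partial V(a_k(f))$; both are $f^p$-fixed since argument $0$ is fixed under doubling. The distinctness $z^+(f) \ne z^-(f)$ is best seen in the B\"ottcher model: $\varphi_{f,a_k}$ identifies $V^*_f(a_k(f))$ with a domain $U^*$ obtained from $\D$ by removing the radial slit $[c,1)$ and its iterated preimages under squaring, where $c = \Phi_\cH(f) \in (0,1)$; the two sides of this slit define two distinct prime ends of $U^*$ above $1 \in \partial \D$, and the Carath\'eodory extension of $\varphi_{f,a_k}^{-1}$ sends them to two distinct boundary points of $V(a_k(f))$.

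Simultaneously I would establish Proposition~\ref{sectorH} by a counting argument. Since $\cH$ is a topological disk (Proposition~\ref{parametrizationH}), the $d-1$ fixed points of $f^p$ on $\partial V(a_k(f))$ admit continuous sections $\zeta_0(f), \dots, \zeta_{d-2}(f)$ over $\cH$, where $d = d_T + 1$. For each index $i$ set $\ccS_i := \{\, f \in \cH \setminus \{\fstar\} : I_{f,a_k}(0)\ \text{is smooth and lands at}\ \zeta_i(f)\,\}$; each $\ccS_i$ is open, and its complement in $\cH \setminus \{\fstar\}$ equals $\Phi_\cH^{-1}((0,1))$, which has $d_T$ connected components by Proposition~\ref{parametrizationH}. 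An injectivity argument (two sectors with the same associated $\zeta_i$ would lie in a common connected component of $\{\,f : I_{f,a_k}(0)\ \text{smooth and lands at}\ \zeta_i(f)\,\}$) forces $d_T = d-1$ and identifies each of the $d_T$ sectors with exactly one $\ccS_i$. For Proposition~\ref{sectorH}, the sector $S$ is the $\ccS_i$ with $\zeta_i(f) = z(f)$, bounded by the two $0$-rays across which $I_{f,a_k}(0)$ becomes non-smooth while one of its limits still lands at $z(f)$.

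For Lemma~\ref{consecutive} itself, what remains is to show that the bijection $\ccS_i \leftrightarrow \zeta_i$ matches the natural cyclic order on sectors around $\fstar$ (coming from $\Phi_\cH$ being locally $\zeta \mapsto \zeta^{d_T}$ at the unique branch point $\fstar$) with the standard cyclic order on $\partial V(a_k(f))$. Via $\Psi_\cH \colon \cH \to \cH_T$, which is both a conformal isomorphism and a dynamical conjugacy on $\overline{V(a_k(f))}$, this reduces to the main hyperbolic component $\cH_T$ of the auxiliary family. There the $\Z/d_T\Z$-symmetry $\mu \mapsto e^{2\pi i/d_T}\mu$ (conjugating $T_\mu$ to $T_{e^{2\pi i/d_T}\mu}$ via $z \mapsto e^{2\pi i/d_T} z$) permutes parameter $0$-rays and fixed points on $\partial V(\mu)$ cyclically and equivariantly, so it suffices to verify the orientation for a single adjacent pair.

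This final orientation check is the principal obstacle; everything before it is essentially local or combinatorial. I would carry it out at the parabolic endpoint of one $0$-parameter ray: Theorem~\ref{thr:arrival-periodic-connection} shows the ray lands at a parabolic map $f_0$ where exactly two repelling fixed points of $T_\mu$ on $\partial V(\mu)$ coalesce into a single parabolic fixed point, and since this is a local bifurcation, the two merging fixed points must be cyclically adjacent on $\partial V(\mu)$. The landing points $z^\pm(\mu)$ of the limit rays $I^\pm_\mu(0)$ both converge to this parabolic point, so $\{z^+(\mu), z^-(\mu)\}$ coincides with the merging pair. Identifying which of the two is $z^+$ and which is $z^-$ (and thus the orientation) reduces to a short local analysis at $f_0$, using that $z^+(\mu)$ arises as the limit of landing points of smooth rays $I_\mu(s)$ with $s \to 0^+$, which lie on one specific side of the bifurcation. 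An alternative perturbative analysis at $\mu=0$, tracking the deformation as the local model transitions from $z \mapsto z^{d_T+1}$ at $\fstar$ to $z \mapsto z^2$ nearby, is feasible but technically more delicate.
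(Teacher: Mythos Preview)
Your reduction to $\cH_T$ via $\Psi_\cH$ and the $\Z/d_T\Z$-rotational symmetry is exactly the paper's plan; the divergence is only in the base-case orientation check. The paper's check is explicit: it identifies the $0$-rays concretely ($\cI_A=]0,2/3[\cdot i$, $\cI_B=]\mu_0,0[$ with $\mu_0=-(2/3)^{1/3}$) and uses that for $\mu$ on such a ray $T_\mu$ commutes with reflection across the imaginary (resp.\ real) axis. This anticonformal symmetry swaps $I^\pm_\mu(0)$ and permutes the fixed points $z_j(\mu)$ in a known way; together with the concrete picture (which half-plane each $z_j$ lies in, where the segment $[\mu,\omega]$ sits) one reads off $z^\pm$ directly. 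No limits or bifurcation analysis are needed. Incidentally, for $\cH_A$ the lemma is vacuous once $z^+\neq z^-$, since with only two fixed points each is the successor of the other.

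Your parabolic-endpoint route has two problems. First, Theorem~\ref{thr:arrival-periodic-connection} concerns \emph{external} parameter rays of escape regions, not internal rays of hyperbolic components; landing of $0$-internal rays and the convergence $z^\pm\to z_0$ are the content of Proposition~\ref{internal-landing-0}, which in the paper's logical order comes \emph{after} this lemma (its proof of that convergence does not actually invoke Lemma~\ref{consecutive}, so you could extract it, but you would have to redo that hyperbolic-metric argument here). Second and more seriously, knowing that $z^\pm$ are the adjacent pair that merges at the parabolic endpoint still leaves open which is $z^+$ and which is $z^-$; your ``short local analysis at $f_0$'' is a placeholder for precisely the content of the lemma. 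Your remark that $z^+(\mu)$ is the limit of landing points of $I_\mu(s)$ as $s\to 0^+$ is correct and would eventually resolve it, but once unpacked it amounts to the same explicit half-plane check the paper performs directly, with extra machinery in front of it.

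Minor correction: in your counting argument, the complement of $\ccS_i$ in $\cH\setminus\{\fstar\}$ is not $\Phi_\cH^{-1}((0,1))$, since it also contains the other $\ccS_j$; what you want is $\partial\ccS_i\subset\Phi_\cH^{-1}((0,1))$, so that each $\ccS_i$ is a union of the $d_T$ open sectors, after which your injectivity step shows the union has a single member.
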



\begin{proof}[Proof of Proposition~\ref{sectorH} and Lemma~\ref{consecutive}]
The parameter internal rays with argument $0$ for $\cH_A$ and $\cH_B$ have an explicit description.
Indeed, it is not difficult to check that for $\mu \in ]0,2/3[ \cdot i \subset \cH_A$
the arc  $]0,|\mu|[ \cdot (-i)$ is mapped by
$\varphi_\mu$ into $]0,1[$. It follows that $\cI_A =  ]0,2/3[ \cdot i$ is one parameter internal ray with argument $0$
and $(-1) \cdot \cI_A = ]0,2/3[ \cdot (-i)$ is the other one. 
Similarly, we let $\mu_0 = -(2/3)^{1/3} < 0$ and check that $]\mu_0, 0] \subset \cH_B$. Moreover,
for $\mu_0 < \mu <0$ we have that $[\mu,0[$ is mapped under
$\varphi_\mu$ into $]0,1[$. Hence $\cI_B = ]\mu_0,0]$ is one parameter internal ray with argument $0$ and
$\exp(\pm 2 \pi i/3) \cdot \cI_B$ are the other ones.

We first prove the analogue of Proposition~\ref{sectorH} for $\cH_A$ and $\cH_B$.
Assume that $\mu \neq 0$ lies in a  ray $\cI$ with argument $0$.
Denote by $z^\pm (\mu)$ the landing points of $I^\pm_\mu (0)$.
It follows that $z^\pm (\mu)$ depend continuously on $\mu$.
Denote by $z_j(\mu)$ the fixed point where external ray $R_\mu(j/d_T)$
lands where $j=0,\dots, d_T-1$.
For $0 \neq \mu \in \cI_A$ (resp. $\cI_B$),
exploiting the symmetry under reflection around the imaginary axis (resp. real axis)
$z^-(\mu) = z_1(\mu)$  and $z^+(\mu) = z_0(\mu)$.
(resp. $z^+(\mu) = z_1(\mu)$ and $z^-(\mu) = z_2(\mu)$). The  multiplication by $\zeta = \exp(2 \pi i/d_T)$ conjugacy
maps the landing point $z^\pm(\mu)$ of $I^\pm_\mu(0)$ for $0 \neq \mu \in \cI_T$ onto
the landing point $z^\pm(\zeta \mu)$ of  $I^\pm_{\zeta \mu} (0)$  for $\zeta \mu \in \zeta \cI_T$.
The same conjugacy maps the fixed point $z_j(\mu)$ onto the fixed point
$z_{j+1}(\mu)$ subscripts $\mod d_T$. Hence for any $z_j(\mu)$ there exist a unique parameter internal ray $\cI^\pm$ such that $z_j(\mu)$ is landing
point of $I^\pm_\mu(0)$ 
for all $0 \neq \mu \in \cI^\pm$. Moreover along any parameter internal ray with argument $0$ the
landing point of $I^-_\mu(0)$ is a successor of the landing point of $I^+_\mu(0)$ among the fixed point in the Julia set $J(T_\mu)=\partial V(\mu)$.
Now we may transfer these results to $\cH$, via $\Psi_\cH$, to conclude that Lemma~\ref{consecutive} as well as (1) of the proposition hold.

For $0 < \mu \in  \cH_A$ it is not difficult to see that
$I_\mu(0)$ is smooth and lands at $z_0(\mu)$. It
follows that the sector  $S$ of $\cH_A$ from $(-1) \cdot \cI_A$ to $\cI_A$   (i.e. contained in the right half plane) is such that $I_\mu(0)$ lands at $z_0(\mu)$.
 Along $\cI_A$, we have that $I^+_\mu(0)$ lands at $z_0(\mu)$
and along $(-1) \cdot \cI_A$ we have that  $I^-_\mu(0)$ lands at $z_0(\mu)$.
Conjugacy via multiplication by $-1$ establishes a similar assertion for the sector in the left half plane. Each one of these
sectors map bijectively under $\Phi_A$ onto $\D \setminus ]0,1[$. Transferring these results to any hyperbolic component $\cH$ of type $A$
part (2)  of the proposition follows for  type $A$ components.
A similar analysis of maps $B_\mu$ with $\mu >0$ in $\cH_B$ proves assertion (2) of the proposition for type $B$ components.
\end{proof}



\subsection{Landing of $0$ and $1/2$-parameter internal rays}
\label{s:landing-internal-parameter}

Here we  describe where $0$-parameter internal rays of type $A$ and $B$ hyperbolic components land. Landing points of $1/2$-rays of
type $A$ components are also considered.
Before stating and proving the results, we summarize basic dynamical plane facts:

\begin{proposition}
  \label{dynamics-typeAB}
  Let $f \in  \cC(\cS_p)$ be a hyperbolic map such that $-a(f) \in V(a_k(f))$ for some $0 \le k < p$. Then the following statements hold:
  \begin{enumerate}
  \item  $f^p : \partial V(a_k(f)) \to \partial V(a_{k}(f))$ is conjugate to $m_3: \RZ \to \RZ$ if $k=0$ and to
    $m_4: \RZ \to \RZ$ otherwise.
  \item Suppose that $z_2$ is the successor of $z_1$ among the fixed points of  $f^p : \partial V(a_k(f)) \to \partial V(a_{k}(f))$. Then the open arc from $z_2$ to $z_1$ in $\partial V(a_{k}(f))$ contains a unique point  $w_1$
    (resp. $ w_2$) 
    such that $f(w_1) = f(z_1)$ (resp. $f(w_2) = f(z_2)$).
    Moreover, the list $z_2,w_1, w_2, z_1$ respects cyclic order in $\partial V(a_k(f))$.
    \item If $R_f(t)$ lands at a fixed point of $f^p : \partial V(a_k(f)) \to \partial V(a_{k}(f))$ then the period of $t$ is exactly $p$. 
  \item There exists at most one periodic point of $f$ in $ \partial V(a_k(f))$  with more than one external ray landing at it.
  \item There exists at most one periodic point of $f$ in $ \partial V(a_k(f))$ of period
     strictly less then $p$. 
  \end{enumerate}
\end{proposition}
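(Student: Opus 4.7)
My plan is to treat the five assertions in order, since assertion (1) supplies coordinates reducing (2)--(5) to combinatorial computations with $m_d\colon \RZ\to\RZ$. For (1), I would first show that each $\partial V(a_k(f))$ is a Jordan curve: Roesch--Yin yields this for $\partial V(a_0(f))$, and since no critical point lies on any Fatou boundary in the attracting cycle, $\partial V(a_k(f)) = f^k(\partial V(a_0(f)))$ is also Jordan as the image of a Jordan curve under a topological covering. The global degree of $f^p\colon V(a_k(f))\to V(a_k(f))$ is determined by a Riemann--Hurwitz count: in type~$A$ both critical points $\pm a(f)$ lie in $V(a_0(f))$, giving $d=3$; in type~$B$ the critical points $a(f)\in V(a_0(f))$ and $-a(f)\in V(a_k(f))$ are traversed at two separate stages of the cycle, giving $d = 2\cdot 2 = 4$. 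An orientation-preserving degree-$d$ self-covering of a Jordan curve is topologically conjugate to $m_d$, which yields (1).

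Assertion (2) becomes an explicit calculation in these coordinates. Fix the identification $\partial V(a_k(f))\cong\RZ$ with $f^p = m_d$, so the fixed points of $f^p$ are $j/(d-1)$, $j=0,\dots,d-2$. A compatible parametrization of $\partial V(a_{k+1}(f))$ identifies $f$ with $m_e$, where $e=3$ in type~$A$ and $e=2$ in type~$B$. Listing the $m_e$-preimages of $f(z_1)$ and $f(z_2)$ and comparing to the position of the open arc from $z_2$ to $z_1$ shows, in each of the possible ordered pairs of consecutive fixed points, that exactly one preimage of $f(z_1)$ and one preimage of $f(z_2)$ lies in that arc, appearing in the cyclic order $z_2, w_1, w_2, z_1$.

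Assertions (3)--(5) rest on a pinch-point analysis. Any periodic point $z\in \partial V(a_k(f))$ of $f$-period $q$ satisfies $f^q(z) = z \in \partial V(a_{k+q \bmod p}(f))$, so $z$ is shared between $p/\gcd(p,q)$ Fatou boundaries of the cycle. For a fixed point of $f^p|_{\partial V(a_k(f))}$ (the case $q\mid p$), either $q = p$ and, by the Douady--Hubbard landing theorem, a unique external ray of $m_3$-period $p$ lands at $z$; or $q<p$ and $z$ is a pinch point where exactly $p/q$ Fatou components of the cycle accumulate, at which the rotation-number theorem for the repelling periodic point $z$ forces exactly $p/q$ external rays to land, each of $m_3$-period $q\cdot(p/q) = p$. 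In every case the ray period equals $p$, which is (3). Statements (4) and (5) then reduce to showing that $\partial V(a_k(f))$ contains at most one pinch point, and this is the main obstacle I foresee. A priori, several of the $d-1$ fixed points of $m_d$ could qualify. My approach is combinatorial--topological: two distinct pinch points $z,z'\in\partial V(a_k(f))$ would each impose a cyclic arrangement of accumulating Fatou components on the same Jordan curve, and the resulting double arrangement is incompatible with the cyclic order induced on $\partial V(a_k(f))$ by the conjugation $f^p\cong m_d$; an alternative route is to count periodic external rays of $m_3$-period dividing $p$ landing on $\bigcup_j \partial V(a_j(f))$ and observe that the presence of two pinch points forces this count to exceed the total allowed by the degrees $d$ and $e$. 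Making one of these arguments fully rigorous is the crux of the proof.
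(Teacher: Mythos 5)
Your treatment of (1) and (2) is essentially the paper's: expansion plus a Riemann--Hurwitz degree count reduces $f^p|_{\partial V(a_k(f))}$ to $m_3$ or $m_4$, after which (2) is a finite computation. Two small cautions: the image of a Jordan curve under a branched covering need not be Jordan in general (Roesch--Yin already applies to every component of the cycle), and ``degree-$d$ self-covering of a circle $\Rightarrow$ conjugate to $m_d$'' requires expansion, which you do have here.

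For (3) the paper argues differently, and more locally: for a small arc $\gamma\subset\partial V(a_k(f))$ starting at a fixed point $z$, one has $f^p(\gamma)\supset\gamma$, so the prong of $\gamma$ is fixed by $f^p$; since $f^p$ preserves the cyclic order of prongs at $z$ and a cyclic-order-preserving permutation of a finite cyclic set with a fixed point is the identity, every ray landing at $z$ is $f^p$-invariant, and the same reasoning rules out a smaller period. Your version invokes ``the Douady--Hubbard landing theorem'' to get a \emph{unique} ray at a period-$p$ point (false in general; uniqueness is precisely what (4) is after) and ``the rotation-number theorem'' to get \emph{exactly} $p/q$ rays at a lower-period point (also not what that theorem gives; the number is a multiple of the denominator of the rotation number). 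Both claims need the Fatou-boundary constraint, which is exactly the prong argument above, so as written there is a circularity with (4)--(5).

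The genuine gap is (4), and you flag it yourself. Neither of your two sketches is the paper's argument and I do not see how to push either through. Two pinch points on a Jordan curve are not obviously incompatible with the $m_d$-induced cyclic order, and a raw count of period-$p$ arguments of $m_3$ against the degrees $d,e$ does not directly bound the number of pinch points, because rays landing on preimage components and on strictly preperiodic boundary points also participate. The missing idea is the one the paper uses: let $X$ be the set of all periodic pinch points on $\bigcup_j\partial V(a_j(f))$; among all sectors $\sector_f(w,s,t)$ based at points of $X$ choose one of \emph{minimal angular length}; by minimality this sector is disjoint from $X$; by the fact that \emph{every sector at a periodic point contains a postcritical point} (from the orbit-portrait machinery, \cite{KiwiWOP}) the sector contains some $V(a_\ell(f))$, so $w$ is the unique point of $X$ on $\partial V(a_\ell(f))$, which then propagates around the cycle. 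This ``minimal sector $+$ postcritical-point-in-every-sector'' lemma is the key input that your proposal lacks. Statement (5) then follows from (4) as you anticipate.
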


\begin{proof}
  Since $f^p : \partial V(a_k(f)) \to \partial V(a_{k}(f))$  is an uniformly expanding degree $3$ or $4$ map of a Jordan curve,
  the first assertion follows.  The second assertion holds
  since it is true for $m_3$ and $m_4$.
  Let $z$ be a fixed point $z$ of $f^p : \partial V(a_k(f)) \to \partial V(a_{k}(f))$. For a small arc $\gamma$ starting at $z$, contained in
  $\partial V (a_k(f))$, we have that $f^p(\gamma) \supset \gamma$.
  Since $f^p$ preserves the cyclic order of arcs emanating from $z$, every periodic ray landing at $z$ must be fixed under $f^p$ and we have proven (3).

  Consider the set $X$ formed by the periodic points of $f$ which belong to  $\partial V(a_j(f))$ for some $0\le j <p$ and have more than one ray landing at it. To prove (4) we assume that $X \neq \emptyset$. It is sufficient to
  show that $X$ has exactly one point in $ \partial V(a_\ell(f))$ for some $0 \le \ell <p$. In fact, 
  let  $Z_f(w,s,t)$  be a sector that has minimal angular length among all
  the sectors based at elements of $X$. Note that $Z_f(w,s,t)$ is disjoint from $X$. 
  Every sector based at a periodic point contains a postcritical point (e.g.~\cite{KiwiWOP}). Hence,
  $Z_f(w,s,t)$ contains $V(a_\ell(f))$ for some $0 \le \ell <p$ and $w$ is the unique element of $X$ in $\partial V(a_\ell(f))$.
  Statement (5) is a direct consequence of (4). 
\end{proof}

\subsubsection{Landing of $0$-parameter rays}
\label{s:landing0-internal}
Consider a $0$-ray $\cI_\cH(0)$ of a type $A$ or $B$ hyperbolic component $\cH \subset \cC (\cS_p)$ centered at $\fstar$. For $f \in \cI_\cH(0)$ such that $f \neq \fstar$,  we will freely use the following notation
(see~Figure~\ref{f:0-internal-notation}):

(1) The landing points of
$I^\pm_{f,a_k} (0)$ are denoted by $z^\pm (f)$. By Lemma~\ref{consecutive}, $z^-(f)$ is a successor of $z^+(f)$ among the fixed points of $f^p$ in $\partial V(a_k(f))$. 

(2) Denote by $w^\pm(f)$ the preimages of $f(z^\pm(f))$ in $\partial V(a_k(f))$ so that $$z^-(f), w^+(f), w^-(f),z^+(f)$$ are listed respecting cyclic order.
These points exist and are well defined due to Proposition~\ref{dynamics-typeAB}.

(3) For each one of the points $z = z^\pm(f)$ or $ w^\pm(f)$ there exists a unique
sector based at $z$ that contains $V(a_k(f))$.
This sector is bounded at the left by a ray with argument $t(z)$ and
at the right by one with argument $s(z)$.
That is, we let $s(z), t(z) \in \QS$ be such that
$V(a_k(f))$ is contained in the sector $\sector_f (z,t(z),s(z))$ based
at $z=z(f)$.
Notice that $s(z)$ and $t(z)$ are not necessarily distinct.
In fact, by Proposition~\ref{dynamics-typeAB} (4), $s(z^-)=t(z^-)$ or $s(z^+)=t(\label{f:0=internal-notation}z^+)$.
Moreover $s(z),t(z)$ are independent of $f \in \cI_H(0) \setminus \{\fstar\}$.
The arguments $t(z)$ and $s(z)$ are left and right supporting for $V(a_k(f))$.

\begin{figure}[h]
  \centering
   \fbox{ \includegraphics[width=8.8cm]{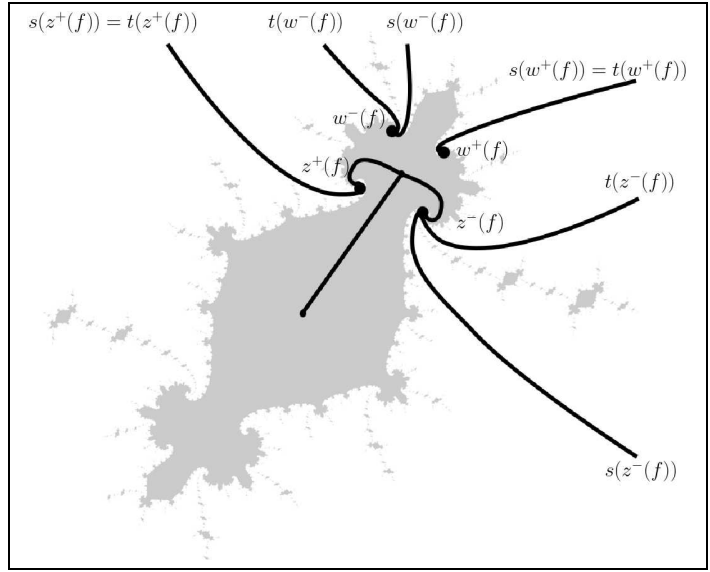} }
  \caption{Illustrates notations (1), (2) and (3) of \S~\ref{s:landing0-internal}\label{f:0-internal-notation}}
\end{figure}

\begin{proposition}
  \label{internal-landing-0}
  Consider a type $A$ or $B$ hyperbolic component and an internal ray $\cI_\cH(0)$ as above.
  Then $\cI_\cH(0)$ lands at a parabolic map $f_0$ such that
  the landing point of $I_{f_0,a_k}(0)$ is a parabolic periodic point $z_0$. Moreover, all of the following statements hold:
\begin{enumerate}
\item The period of $z_0$ is the minimum $\ell$ of the periods of 
  $z^+(f)$ and $z^-(f)$ for any $\fstar \neq f \in \cI_\cH(0)$.
  The multiplier of $z_0$ is a primitive $p/\ell$-root of unity. 
\item $z^\pm(f) \to z_0$ as $f \in \cI_\cH(0)$ approaches $f_0$ and 
  $z_0 \in \partial V_{f_0} (-a(f_0))$.
\item A ray $R_{f_0}(t)$ lands at $z_0$ if and only if $R_f(t)$ lands at
  the $f^\ell$-orbit of $z^+(f)$ or $z^-(f)$
  for all $\fstar \neq f \in \cI_\cH(0)$.
 \item  $\sector_{f_0} (z_0, t(z^+),s(z^-))$ is the sector at $z_0$ that
  contains $a_k(f_0)$.
\item $w^\pm (f) \to w_0$ as $f \in \cI_\cH(0)$ approaches $f_0$ where $w_0$ is the unique non-periodic preimage of $f_0(z_0)$ in $\partial V_{f_0} (-a(f_0))$.
\item The external rays with arguments $s(w^\pm)$ and $t(w^\pm)$ of $f_0$ 
    land at $w_0$.
\item  $\sector_{f_0}(z_0, t(z^-),s(z^+))$ is a sector at $z_0$ and 
  $\sector_{f_0}(w_0, t(w^-),s(w^+))$ is a sector at $w_0$. Both of these sectors contain $-a(f_0)$.
\item If $\theta$ is such that $\{\theta + 1/3,\theta-1/3\}$ is one of
  the pairs $\{t(z^-), t(w^-)\}$, $\{s(z^-), s(w^-)\}$, $\{t(z^+), t(w^+)\}$, $\{s(z^+), s(w^+)\}$, then $\theta$ is a take-off argument for $f_0$.
\item For any pair of arguments
$s, t \in \{ s (z^\pm), t(z^\pm), s (w^\pm), t(w^\pm) \}$, 
  if $$a_j(f_\star) \in \sector_{f_\star} (\overline{V(a_k(f_\star))}, s,t),$$ then
$$a_j(f_0) \in \sector_{f_0} (\overline{V(a_k(f_0))\cup V(-a(f_0))}, s,t).$$
\end{enumerate}
\end{proposition}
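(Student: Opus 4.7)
The plan is to adapt the parameter external ray techniques from Lemma~\ref{l:limsupperiodic} to the parameter internal ray setting. First I would establish landing at a parabolic map. For any accumulation point $g \in \partial \cH$ of $\cI_\cH(0)$, the limits of $I^\pm_{f,a_k}(0)$ terminate at a point of $\partial V_g(a_k(g))$ which must either be parabolic or meet the grand orbit of the free critical point; the latter is impossible in our situation by Proposition~\ref{parametrizationH}, since $\Phi_\cH(g)$ would then lie in $\partial \D$ with a nonsmooth terminal point only when $g$ is parabolic. Since parabolic parameters in $\cS_p$ are countable and the accumulation set is connected, $\cI_\cH(0)$ lands at a parabolic map $f_0$, with $I_{f_0,a_k}(0)$ landing at a parabolic periodic point $z_0$.

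Next I would identify the period and multiplier. For $\fstar \neq f \in \cI_\cH(0)$, by Proposition~\ref{dynamics-typeAB} and Lemma~\ref{consecutive} the points $z^+(f)$ and $z^-(f)$ are consecutive fixed points of $f^p\colon\partial V(a_k(f)) \to \partial V(a_k(f))$, and by Proposition~\ref{dynamics-typeAB}(5) their periods are $\ell$ and $p$ in some order. Because $-a(f_0)$ is the unique free critical point of $f_0$ in the basin of $z_0$, there is a single cycle of attracting petals at $z_0$, so only one fixed point of $f_0^\ell$ together with one period-$p/\ell$ orbit of $f_0^\ell$ can coalesce at $z_0$ as $f \to f_0$. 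This forces the period of $z_0$ to be $\ell$ with multiplier a primitive $p/\ell$-root of unity, and gives $z^\pm(f) \to z_0$, proving (1)--(2). Assertion (3) on which rays of $f_0$ land at $z_0$ follows verbatim from the hyperbolic-distance comparison used in the proof of Lemma~\ref{l:limsupperiodic}, applied to sectors in $\CDC$ cut out by needles of $\phi_f$.

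For (5)--(7), by Proposition~\ref{dynamics-typeAB}(2) the cyclic list $z^-(f), w^+(f), w^-(f), z^+(f)$ is preserved along $\cI_\cH(0)$, so the $w^\pm(f)$ coalesce at a common boundary point $w_0 \in \partial V_{f_0}(-a(f_0))$ which is the unique non-periodic preimage of $f_0(z_0)$ there; continuity of external rays with non-periodic strictly preperiodic arguments (using that the corresponding periodic landing points have already been shown to be repelling and stable) yields that $R_{f_0}(s(w^\pm))$ and $R_{f_0}(t(w^\pm))$ land at $w_0$. The sector assertions in (4) and (7) then follow from the preserved cyclic order of $z^-, w^+, w^-, z^+$ together with the supporting-ray definitions of $s(\cdot), t(\cdot)$: the rays at arguments $t(z^+)$ and $s(z^-)$ sandwich $V(a_k(f_0))$ from the opposite side to $-a(f_0)$, while the pairs $\{t(z^-),s(z^+)\}$ and $\{t(w^-),s(w^+)\}$ carve out the sectors containing $-a(f_0)$. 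Assertion (8) is then obtained by matching the two rays bounding $\partial V_{f_0}(-a(f_0))$ at $z_0$ and $w_0$ as iterated preimages of one another under $m_3$, so that $\{\theta+1/3,\theta-1/3\}$ coincides with one of the listed pairs and $\theta$ becomes a take-off argument in the sense of Definition~\ref{def:take-off}.

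The main obstacle I expect is assertion (9), which is a global statement about all the other critical-orbit points $a_j(f_0)$ and requires ruling out that any $a_j$ crosses an external ray with argument in $\{ s(z^\pm), t(z^\pm), s(w^\pm), t(w^\pm) \}$ during the deformation from $\fstar$ to $f_0$ inside $\cH \cup \cI_\cH(0)$. To handle it I would work along a path from $\fstar$ to $f_0$ contained in the closure of the sector $S$ of Proposition~\ref{sectorH}, analytically continue each of the eventually-periodic landing points corresponding to the arguments $s(\cdot), t(\cdot)$ (all of which are repelling and hence stable throughout $\cH$), and use the continuous extension of the supporting rays up to $f_0$ established in the previous steps. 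Since at the center $\fstar$ the stated sector inclusions hold by definition of $s(\cdot), t(\cdot)$, and since no $a_j$ can cross a continuously varying external ray landing at a continuously varying repelling periodic point, the inclusions persist at $f_0$. This gives (9) and completes the proof.
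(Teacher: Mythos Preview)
Your overall strategy---adapting the limsup and hyperbolic-metric machinery of Lemma~\ref{l:limsupperiodic} to the internal-ray setting---is exactly what the paper does. However, there are genuine gaps in the execution.

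The most serious is your repeated appeal to ``repelling and hence stable'' landing points. In (5)--(6) you say the preperiodic rays converge because ``the corresponding periodic landing points have already been shown to be repelling and stable.'' But the periodic points in question are $z^\pm(f)$, and these converge to the \emph{parabolic} point $z_0$; the standard stability theorem for rays landing at repelling points does not apply. The paper instead proves $w^\pm(f)\to w_0$ by a local-injectivity argument: it lifts the arcs $J^\pm_f\subset I^\pm_{f,a_k}(0)$ under $f$ to arcs $L^\pm_f$ ending at $w^\pm(f)$, and rules out $w^\pm(f)\to z_0$ by observing that this would force $f_0$ to fail local injectivity at $z_0$. A parallel argument handles the external rays in (6).

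The same issue undermines your approach to (9). You propose to track the external rays with arguments in $\{s(z^\pm),t(z^\pm),s(w^\pm),t(w^\pm)\}$ along a path to $f_0$, but these rays land at points coalescing to the parabolic $z_0$ (or its preimage $w_0$) and are not stable in the sense you invoke. The paper sidesteps this entirely: for each $j\neq k$ it uses the \emph{internal} ray $I_{f,a_j}(1/3)$, which lands at a repelling periodic point of $\partial V(a_j(f))$ lying off the parabolic orbit. That point is genuinely stable and is simultaneously the landing point of some external ray $R_f(t')$; both persist to $f_0$, pinning $a_j(f_0)$ to the correct region. The same device (with $I_{f,a_k}(1/3)$) is what actually proves (4).

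A smaller gap: your derivation of $z^\pm(f)\to z_0$ from ``only one cycle of petals'' is incomplete. Knowing which periodic points \emph{can} converge to $z_0$ does not show that $z^\pm(f)$ \emph{do}. The paper constructs auxiliary $f^p$-invariant arcs $\gamma_f(\alpha)$ in $V(a_k(f))$ (pullbacks of oblique half-lines in $\H$ via $z\mapsto\exp(iz)$) landing at $z^\pm(f)$, with uniformly bounded hyperbolic step $d_f(z,f^p(z))$, and then shows $J(f_0)\cap\limsup\gamma_f(\alpha)=\{z_0\}$. This same hyperbolic argument, applied first to $I^*_f(0)$ itself, is also what yields $z_0\in\partial V_{f_0}(-a(f_0))$ in (2), a conclusion you do not address.
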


For an illustration of a $0$-internal ray of a type $B$ component landing at a parabolic map see Figure~\ref{NoTriv}.

\begin{proof}
  If $g$ is an accumulation point of $\cI_\cH (0)$ in $\partial \cH$ and
  $z$ is the landing point of $I_{g,a_k}(0)$, then $z$ is not repelling. Otherwise, for all $f$ close to $g$ the ray $I_{g,a_k}(0)$ would be smooth and land at the analytic continuation of $z$. By the Snail Lemma $df^p/dz (z)=1$. There are only finitely many maps $g \in \cS_p$ possessing such a parabolic periodic point.
  Taking into account that the accumulation set of $\cI_\cH (0)$ in $\partial \cH$ is connected we conclude that $I_\cH(0)$ lands
  at a map $f_0$ with a parabolic periodic point $z_0$, say of period $\ell$. Moreover, $z_0$ is the landing point of $I_{f_0,a_k}(0)$. Hence, $\ell$ divides $p$ and $z_0$ has a repelling direction (in the direction of $I_{f_0,a_k}(0)$) of exact period $p/\ell$ under $f^\ell_0$. Thus, the multiplier of $z_0$ is a primitive $p/\ell$-root of unity.

  Since there is only one cycle of repelling petals at $z_0$ and periodic points along $I_\cH(0)$ move continuously, there exist one period $\ell$ periodic point $z_0(f)$ converging to $z_0$ and one period $p/\ell$-orbit of $f^{\ell}$ converging to $z_0$  (note that possibly $\ell=p$). No other
  periodic points converge to $z_0$.

  For the rest of the proof we need to consider the hyperbolic metric $d_f$ in $V(a_k(f))$.
  Given $\fstar \neq f \in \cI_\cH(0)$ the image $U^*_f$ of the B\"ottcher coordinate $\varphi_{f,a_k}: V^*(a_k(f)) \to \D$ is the unit disk with the ``needle'' $[|\varphi_{f,a_k}(-a(f))|, 1[$ removed, as well as all its iterated preimages under $z \mapsto z^2$.
If $0 < r <  |\varphi_{f,a_k}(-a(f))|$, then the hyperbolic distance in $U^*_f$ between $r$ and $r^2$ is bounded by a constant independent of $f \in \cI_\cH(0)$. Observe that $d_f$ is bounded above by the hyperbolic distance in $U^*_f$.
 Let
 $I^*_f(0)$ be the $\nabla |\varphi_{f,a_k}|$-flow line
 from $a_k(f)$ to $-a(f)$. There exists a constant $C$ independent of $f$
 such that 
  $d_f(z, f^p(z)) < C$ for all $z \in I^*_f(0)$. 
  Consider $X = K(f_0) \cap \limsup I^*_f (0)$ and note that $-a(f_0) \in X$.
  We claim that points in $X \cap J(f_0)$ are periodic of period dividing $p$.
  In fact, if $w \in X \cap J(f_0)$,
  then there exist  $f_n \to f$ and $w_n \to w$ such that $w_n \in I^*_{f_n}$. Since $\limsup J(f_n) \supset J(f_0)$  we have that
  the Euclidean distance  from $w_n$ to $J(f_n)$ converges to
  $0$, which together
  with $d_{f_n}(w_n,f^p_n(w_n)) <C$, implies that $|w_n - f^p_n(w_n)| \to 0$. Thus, $f_0^p(w) =w$ as claimed.

  As in the proof of Lemma~\ref{l:limsup}, it follows that $X \cap J(f_0)$ is a singleton for otherwise there would 
  exist a bounded
  Fatou component with two distinct periodic points of period dividing $p$ in its boundary.  
  Since $z_0 \in X$ we have that $X \cap J(f_0) = \{ z_0 \}$.
  Moreover $-a(f_0) \in X$ so $z_0 \in \partial V(-a(f_0))$. 

  Now (1) and (2) would follow after proving that $z^\pm(f) \to z_0$.
  We argue in a similar fashion but instead of considering $I^*_f(0)$ we consider arcs $I^\pm_f$ landing at $z^\pm(f)$ (cf. proof of Lemma~\ref{l:limsupperiodic}). To introduce these arcs
let us consider $h: \H \to \D$ given by $z \mapsto \exp ( i z)$ and let $\widetilde{U}_f = h^{-1} (U^*_f)$.
For $f \in \cI_\cH(0)$ such that $-\log|\varphi_{f,a_k}(-a(f))| < 2 \pi$, we have that $\widetilde{U}_f$ contains the sectors $S^+= \{ z \in \H : \pi/4<\arg z<\pi/2 \}$ and
$S^-= \{ z \in \H : \pi/2<\arg z<3\pi/4 \}$.
Given $\alpha \in ]\pi/4, \pi/2[$ (resp. $]\pi/4, \pi/2[$),
the arc $\gamma_f(\alpha) = \varphi_{f,a_k}^{-1}(h(]0,\infty[ \cdot \exp(i\alpha))$
is $f^p$ invariant and
$d_f (z,f^p(z))$ is uniformly bounded for all $z \in \gamma_f(\alpha)$ and all $f \in \cI_\cH(0)$ such that $-\log|\varphi_{f,a_k}(-a(f))| < 2 \pi$. It follows that
$\gamma_f(\alpha)$ limits towards a periodic point $w(\alpha)$ at one end
and towards $a_k(f)$ at the other. The periodic point $w(\alpha)$
is independent of $\alpha \in ]\pi/4, \pi/2[$ (resp. $]\pi/4, \pi/2[$)
(e.g. see~\cite[Corollary~17.10]{DynInOne}).
Hence, $w(\alpha)= z^-(f)$ (resp. $z^+(f)$) since $\gamma_f(\alpha)$ converges
to $I^-_f(0)$ (resp. $I^+_f(0)$) as $\alpha \to \pi/2^-$ (resp. $\pi/2^+$).
The corresponding arc $\gamma_{f_0}(\alpha)$ in the dynamical plane of $f_0$
is homotopic rel $J(f_0)$ to the internal ray $I_{f_0,a_k}(0)$, hence it limits toward $z_0$ at one end. 
A similar argument as the one used above for $X$ yields that $J(f_0) \cap \limsup \gamma_f(\alpha) = \{z_0\}$. Therefore, $z^\pm(f) \to z_0$.

Let us now show that  if a periodic ray $R_f(t)$ lands at $z^\pm(f)$, then
$R_{f_0} (t)$ lands at $z_0$. Note that the connected set
$K(f_0) \cap \limsup R_f(t)$ contains $z_0$. We may again use the elementary hyperbolic distance estimate to show that $J(f_0) \cap \limsup R_f(t)$ consists of points of period dividing $p$. Again taking into account that  periodic Fatou components of $f_0$ have degree $2$ return maps and therefore only have one point of period dividing $p$ on their boundaries, we conclude that $J(f_0) \cap \limsup R_f(t)$ is just the singleton $\{ z_0 \}$. Thus, $R_f(t)$ lands at $z_0$.

From the previous paragraph, if $R_f(t)$ lands at  $f^{\ell m}(z^\pm (f))$ then  $R_{f_0}(t)$ lands at $z_0$.
For $(3)$, we claim that if a ray $R_{f_0} (t)$ lands at $z_0$ then
$R_f(t)$ lands at  $f^{\ell m}(z^\pm (f))$ for some $m$.
Observe that no ray landing at $f^{\ell m + j}(z^\pm(f))$ for $j=1, \dots, \ell-1$ may land at $z_0$ since the period of $z_0$ is exactly $\ell$. Now if a ray $R_f(t)$ does not land in the orbit of $z^\pm(f)$ then its landing point, say $z(f)$ converges to a repelling periodic point $z(f_0)$. Thus $R_{f_0}(t)$ lands at $z(f_0) \neq z_0$, which proves our claim and  (3) follows.

Now we assert that the arguments of the rays of $f_0$ landing at $z_0$ are contained in
$[s(z^+),t(z^+)] \cup [s(z^-),t(z^-)]$. 
In fact, from Proposition~\ref{dynamics-typeAB} (4),  $s(z^+) = t(z^+)$ or $s(z^-)=t(z^-)$. For simplicity assume that the latter holds. If $\ell < p$, 
then the period of $z^+(f)$ is $\ell$ and
the boundaries of $V(a_k(f)), V(a_{k+\ell}(f)), \dots, V(a_{k+p-\ell}(f))$ meet at $z^+(f)$.
Hence for $0 < m < p/\ell$ the sector $\sector_f (z^+(f), s(z^+), t(z^+))$
contains $f^{\ell m}(z^{-}(f))$. Thus the rays landing at
$z_0$ in the dynamical plane of $f_0$ are contained in  $[s(z^+),t(z^+)] \cup [s(z^-),t(z^-)]$.

From the previous paragraph, $\sector_{f_0}(z_0,t(z^+),s(z^-))$ is a sector at $z_0$.
Observe that in the dynamical plane of $f \in \cI_\cH(0)$, the internal ray
$I_{f,a_k} (1/3)$ lands at a periodic point $u(f)$ in the arc from $z^+(f)$ to
$z^-(f)$ of $\partial V(a_k(f))$. The repelling periodic point $u(f)$ is the landing point of an external ray $R_f(t')$ with $t' \in ]t(z^+),s(z^-)[$ and
$u(f)$ must converge to a repelling periodic point $u(f_0)$ of $f_0$. Hence,
$I_{f_0,a_k} (1/3)$ and $R_{f_0}(t')$ land at  $u(f_0)$. Therefore,
$a_k(f_0) \in \sector_{f_0}(z_0,t(z^+),s(z^-))$. That is, we have proven (4).

Consider $\fstar \neq f \in \cI_\cH(0)$. 
In order to study the convergence of $w^\pm(f)$, let $J_f^\pm$ be the portion of
$I^\pm_{f,a_k}(0)$ connecting $-a(f)$ with $z^\pm(f)$. Then there exist
arcs $L^\pm_f$ connecting $-a(f)$ with $w^\pm(f)$ such that
$f: L^\pm_f \to f(J_f^\pm)$ is a homeomorphism. As $f \to f_0$ along $I_\cH(0)$,
consider $w^\pm \in J(f_0) \cap \limsup L^\pm_f$. Then $f(w^\pm) =f(z_0)$.
We claim that $w^+ \neq z_0$ and $ w^- \neq z_0$. Otherwise for $\sigma =+$ or $-$,
we could choose
$f_n \in \cI_\cH(0)$ converging to $f_0$,
$w_n \in L_{f_n}^\sigma$ converging to $w^\sigma$
and $ z_n \in L_{f_n}^\sigma$ converging to $z^\sigma$ such that $f(z_n) = f(w_n)$. This would imply that $f_0$ is not locally injective at $z_0$ which is impossible. We conclude that $J(f_0) \cap \limsup L^\pm_f$ consists
of strictly preperiodic points that map onto $f(z_0)$. Since Fatou components of
$f_0$ contain at most one of these points in their boundary and $-a(f_0) \in 
\limsup L^+_f \cap \limsup L^-_f$ it follows that $w^+=w^-$ is the unique point $w_0$
in $\partial V(-a(f_0))$ with this property. Moreover, any limit point of $w^\pm(f)$ lies in $J(f_0) \cap \limsup L^\pm_f$. Hence $w^\pm(f) \to w_0$ and we have proven (5).

Let $\sigma = +$ or $-$. 
For $f \in \cI_\cH(0)$ consider $t$ such that $R_f(t)$ lands at $w^\sigma(f)$.
As $f \in \cI_\cH(0)$ converges to $f_0$ we claim that that $J(f_0) \cap \limsup R_f(t)$ consists of $f_0$-strictly preperiodic points $w$ such that $f_0(w) = f(z_0)$. In fact, we may apply a similar reasoning than in the previous paragraph to show that if $z_0 \in  J(f_0) \cap \limsup R_f(t)$ then $z_0$ is critical which is impossible. Thus, every point in $J(f_0) \cap \limsup R_f(t)$ is a strictly preperiodic preimage of $f_0(z_0)$. Since there is at most one such preimage in the boundary of any bounded Fatou component of $f_0$, it follows that
$\{ w_0 \} = J(f_0) \cap \limsup R_f(t)$ which implies statement (6).

To establish (7) just observe that the rays of $f_0$ with arguments $s(w^\pm)$ and $t(w^\pm)$ all lie in the sector $\sector_{f_0} (z_0, t(z^-),s(z^+))$ since $w^\pm(f)$ lie in the arc from $z^-(f)$ to $z^+(f)$ of $\partial V(a_k(f))$. Hence, $w_0 \in \sector_{f_0} (z_0, t(z^-),s(z^+))$ and therefore $-a(f_0) \in \sector_{f_0} (z_0, t(z^-),s(z^+))$. It follows that the sector at $w_0$ containing $-a(f_0)$ must be bounded by rays with arguments that map onto $3t(z^-)$ and  $3s(z^+)$. Thus (7) holds.

Since $s(z^+) = t(z^+)$ or $s(z^-)=t(z^-)$, the sectors $\sector_{f_0} (z_0, t(z^-),s(z^+))$ and  $\sector_{f_0}(z_0,t(z^+),s(z^-))$ share a boundary ray. Hence the rays with arguments involved in the definitions of these sectors land at $z_0$ through a repelling petal adjacent to the attracting petal corresponding to
$V(-a(f_0))$ and (8) follows.

Finally to prove (9) just observe that if $0 < j < p$ and $j \neq k$ then
$I_{f,a_j}(1/3)$ lands at a repelling periodic point which is the landing point of some external ray $R_f(t')$. In the dynamical plane of the limit $f_0$ we also have that  $I_{f_0,a_j}(1/3)$ and $R_{f_0}(t')$ land at a common repelling periodic point. From here (9) follows.
\end{proof}

\subsubsection{Landing of $1/2$-parameter internal rays}
\label{s:landing-half}
Consider a ray $\cI_\cH(1/2)$ of a type $A$ hyperbolic component centered at $\fstar$.
For any $f \in \cI_\cH(1/2)$ distinct from $\fstar$,
we will use the following notation:

(1) Let $z_0(f)$ be the landing point of $I_{f,a}(0)$ and
$w^\pm(f)$ be the landing points of $I^\pm_{f,a}(1/2)$.

(2) For $z=z_0(f),w^\pm(f)$ let $s(z),t(z)$ be such that $\sector_f(z,t(z),s(z))$ is the sector at $z$ containing  $a(f)$.
Note that $s(w^\pm) = s(z_0) \mp 1/3$ and $t(w^\pm) = t(z_0) \mp 1/3$.

\begin{proposition}
  \label{internal-landing-half}
  Consider a type $A$ hyperbolic component and an internal ray $\cI_\cH(1/2)$ as above.
  Then $\cI_\cH(1/2)$ lands at a pcf map $f_0$ such that $-a(f_0)$ is the landing point of $I_{f_0,a}(1/2)$. Moreover, all of the following statements hold:
  \begin{enumerate}
    \item As $f \in \cI_\cH(1/2)$ converges to $f_0$, the periodic point $z_0(f)$ converges to a repelling periodic point $z_0(f_0)$ and $w^\pm(f) \to -a(f_0)$.
    \item The external rays $s(w^\pm)$ and $t(w^\pm)$ land at $-a(f_0)$.
    \item The sector  at $-a(f_0)$  containing $a(f_0)$ is $\sector_{f_0}(-a(f_0),t(w^+),s(w^-))$.
    \item If $\theta = t(z_0)$ or $s(z_0)$, then $\theta$ is a take-off argument for $f_0$
      and $R_{f_0}(\theta)$ lands at $z_0(f_0) \in \partial V(a(f_0))$.
    \item For all $\fstar \neq f \in
      \cI_\cH(1/2)$, if $a_j(f) \in \sector(\overline{V(a(f))}, s, t)$ for some
      $s,t \in \{s(z_0),t(z_0), s(w^\pm), t(w^\pm)\}$, then $a_j(f_0) \in
      \sector(\overline{V(a(f_0))}, s, t)$.
  \end{enumerate}
\end{proposition}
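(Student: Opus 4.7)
The plan is to mirror the proof of Proposition~\ref{internal-landing-0}, now with the critical point (rather than a periodic point) approaching the boundary of $V(a(f))$ as $f$ leaves $\cH$ along $\cI_\cH(1/2)$.

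First I would establish landing and identify $f_0$. For $\fstar \neq f \in \cI_\cH(1/2)$, the B\"ottcher coordinate satisfies $\varphi_{f,a}(-a(f)) \in \,]-1,0[\,$, so $-a(f) \in V(a(f))$ and $I^*_{f,a}(1/2)$ terminates at $-a(f)$. Any accumulation point $g \in \partial\cH$ of $\cI_\cH(1/2)$ must satisfy $-a(g) \in \partial V(a(g))$, for otherwise a full neighborhood of $g$ would lie in $\cH$. Such a map $g$ is pcf, since the forward orbit of $-a(g)$ is preperiodic inside the closure of the superattracting cycle of $a(g)$. As pcf parameters are isolated and the accumulation set in $\partial\cH$ is connected, $\cI_\cH(1/2)$ lands at a unique pcf map $f_0$, and continuity of $\varphi_{f,a}(-a(f))$ along the ray gives that $-a(f_0)$ is the landing point of $I_{f_0,a}(1/2)$.

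Next I would prove (1). The internal ray $I_{f,a}(0)$ is smooth along $\cI_\cH(1/2)$ (the critical point sits at argument $1/2$, not $0$), so $z_0(f)$ is a repelling periodic point depending analytically on $f$; at $f_0$ all Julia cycles are repelling, and $I_{f_0,a}(0)$ is also smooth, so $z_0(f) \to z_0(f_0)$ repelling. For $w^\pm(f) \to -a(f_0)$, I would adapt the argument of Proposition~\ref{internal-landing-0}(5): consider arcs $L^\pm_f \subset I^\pm_{f,a}(1/2)$ joining $-a(f)$ to $w^\pm(f)$, invariant under $f^p$ up to length and controlled in the hyperbolic metric of $V(a(f))$ using the needle structure of the image of $\varphi_{f,a}$. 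The set $J(f_0)\cap \limsup L^\pm_f$ lies in the $f_0^p$-preimages of $z_0(f_0)$ on $\partial V(a(f_0))$; since $w^+(f)$ and $w^-(f)$ pinch together around the critical point $-a(f)$ as $-a(f)\to -a(f_0)$, any such limit must coincide with $-a(f_0)$.

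Assertions (2), (3) and (4) then follow from the external-metric version of the same limsup technique employed in the proof of Lemma~\ref{l:limsup} and Proposition~\ref{internal-landing-0}. For $t \in \{s(w^\pm),t(w^\pm)\}$, the ray $R_f(t)$ lands at $w^\pm(f)\to -a(f_0)$, and the hyperbolic distance bound between $z \in R^*_f(t)$ and $f^p(z)$ forces $J(f_0)\cap \limsup R_f(t) = \{-a(f_0)\}$, so $R_{f_0}(t)$ lands at $-a(f_0)$; that gives (2). Statement (3) follows by transporting the identification of the sector $\sector_f(w^\pm(f),t(w^\pm),s(w^\pm))$ containing $a(f)$ to the limit. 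For (4), the relations $s(w^\pm)=s(z_0)\mp 1/3$ and $t(w^\pm)=t(z_0)\mp 1/3$ given in the setup make $R_{f_0}(\theta\pm 1/3)$ land at $-a(f_0)$ whenever $\theta\in\{s(z_0),t(z_0)\}$, so $\theta$ is a take-off argument by Definition~\ref{def:take-off}; continuity at the repelling point $z_0(f_0)$ shows $R_{f_0}(\theta)$ lands at $z_0(f_0)\in\partial V(a(f_0))$. Finally, for (5) I proceed as in Proposition~\ref{internal-landing-0}(9): for each $j$, the internal ray $I_{f,a_j}(1/3)$ lands at a repelling periodic point, co-landing with some external ray $R_f(t')$; both persist to the limit $f_0$, giving a repelling co-landing pair in $\partial V(a_j(f_0))$ whose position within the ambient sector is preserved, so $a_j(f_0)$ lies in the asserted sector.

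The main obstacle will be step two, and specifically the convergence $w^\pm(f)\to -a(f_0)$. Unlike in Proposition~\ref{internal-landing-0}, here the limit target is the critical point itself on $\partial V(a(f_0))$, and the two arcs $I^+_{f,a}(1/2)$ and $I^-_{f,a}(1/2)$ collapse together in a degenerating geometry as $-a(f)$ approaches the Julia set. Rigorously controlling this pinching via the hyperbolic metric on $V(a(f))$, and excluding spurious limit preimages of $z_0(f_0)$, is the technical heart of the argument.
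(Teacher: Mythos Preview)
Your proposal has a genuine gap in the landing step, and this gap is precisely the point you misidentify when you say the ``main obstacle'' is the convergence $w^\pm(f)\to -a(f_0)$.

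You assert that any accumulation point $g\in\partial\cH$ of $\cI_\cH(1/2)$ satisfies $-a(g)\in\partial V(a(g))$, ``for otherwise a full neighborhood of $g$ would lie in $\cH$''. This does not follow. What you can conclude is that $-a(g)\notin V(a(g))$ (open), since otherwise $g$ would be hyperbolic of type $A$. But $-a(g)$ could a priori lie in some other Fatou component: if $g$ were parabolic, $-a(g)$ would sit in a parabolic basin disjoint from $V(a(g))$, and $g$ would still lie on $\partial\cH$. The paper spends a full paragraph ruling this scenario out. One introduces $L=\limsup I^*_{f_n,a}(1/2)$, shows that $L\cap J(g)$ consists of non-periodic preimages of $g(z_0)$ (using $g(L)=\limsup I^*_{f_n,a}(0)$), and then argues by contradiction: if $-a(g)$ were in a parabolic basin, the continuum $L$ would force the unique non-periodic preimage $w_0$ of $g(z_0)$ in $\partial V(a(g))$ to lie also in $\partial V(-a(g))$, making $w_0$ periodic under the return map of that parabolic component, which is impossible. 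Only after this does one know $-a(g)\in J(g)$, hence $g$ is pcf, and the landing follows by finiteness. Your appeal to ``continuity of $\varphi_{f,a}(-a(f))$'' does not substitute for this, since the B\"ottcher coordinate and the component $V(a(f))$ need not vary continuously at a parabolic boundary parameter.

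Conversely, once landing at a pcf $f_0$ is secured, you are working much harder than necessary on (1)--(4). Since $f_0$ is pcf, $z_0(f_0)$ is repelling; then $f_0(z_0(f_0))$ has exactly the three preimages $z_0(f_0)$ (simple) and $-a(f_0)$ (double), so $-a(f_0)$ is the unique non-periodic preimage. Continuity of roots immediately gives $w^\pm(f)\to -a(f_0)$, with no limsup or hyperbolic-metric estimates needed. For (2), every non-periodic argument $t$ with $3t$ landing at $f_0(z_0(f_0))$ must land at $-a(f_0)$ for the same reason; (3) and (4) follow directly, and your argument for (5) matches the paper's. So the technical heart is the exclusion of the parabolic scenario at the landing step, not the convergence $w^\pm(f)\to -a(f_0)$.
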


\begin{proof}
  Assume that $f_n \in \cI_\cH (1/2)$ converges to $g \in \partial \cH$.
  We claim that $I_{g,a}(1/2)$ lands at $-a(g)$. To prove this consider
  $L=\limsup I^*_{f_n,a}(1/2)$ and $L_0 =\limsup I^*_{f_n,a}(0)$.
  Let $z_0$ be the landing point of $I_{g,a}(0)$. Since $g(L) = L_0$
  and as we have seen before $L_0 \cap J(g) = \{z_0\}$, if $w \in L \cap J(g)$,
  then $g(w) = g(z_0)$. Moreover, $w \neq z_0$ for otherwise $z_0$ would be a critical point. 
  Note that $-a(g) \in L$.

  Let us first show  that $-a(g)$ lies in the Julia set.
  We suppose that $-a(g)$ lies in the Fatou set and after some work arrive to a contradiction. Observe that
  $V(-a(g))$ has to be a periodic component of a parabolic basin,
  for otherwise $g \in \partial \cH$ would be hyperbolic since $J(g)$ is critical point free.
  In particular, the continuum $L$ that connects $-a(g)$ and $a(g)$ must contain
  the unique point $w_0 \neq z_0$ in $\partial V(a(g))$ such that
  $f(w_0) = f(z_0)$. It follows that $w_0 \in \partial V(-a(g))$ for otherwise
  there would exist a Fatou component distinct from $V(\pm a(g))$ containing $2$ preimages of $f(z_0)$ in its boundary. Let $m$ be such that the Fatou components $V(\pm a(g))$ are fixed under $g^m$. Then $g^m (w_0) = w_0$ which contradicts the fact that $w_0$ is strictly preperiodic. This contradiction shows that $-a(g)$ lies in the Julia set.
  
  It follows that  $g(-a(g)) = g(z_0)$ and therefore $-a(g)$ is the unique preimage of $g(z_0)$ distinct from $z_0$.
  That is, $L \cap J(g)$ is the singleton $\{-a(g)\}$. Since $I_{g,a}(1/2) \subset L$ we conclude that the landing point of $I_{g,a}(1/2)$ is $-a(g)$. 

  Recall that $g$ is an arbitrary accumulation point of $\cI_{\cH}(1/2)$.
  Since there are only finitely many $g \in \cS_p$ such that $g(-a(g))$ is
  periodic of period dividing $p$, we conclude that $\cI_{\cH}(1/2)$ lands at a map
  $f_0$ such that $-a(f_0)$ is the landing point of $I_{f_0,a}(1/2)$. It follows
  that the landing point $z_0 (f_0)$ of $I_{f_0,a}(0)$ is a repelling periodic point which is the limit of $z_0(f)$. Moreover, the non-periodic preimages $w^\pm (f)$ of $f(z_0(f))$ converge to $-a(f_0)$, the unique non-periodic preimage  of $f_0(z_0(f_0))$.  That is, we have proven (1). 

  Since $z_0(f_0)$ is repelling, the rays of $f_0$ with arguments $s(z_0)$ and
  $t(z_0)$ land at $z_0(f_0)$. Taking into
  account that $-a(f_0)$ and  $z_0(f_0)$ have the same image,
  all the rays with
  non-periodic arguments with image landing
  at $f_0(z_0(f_0))$ must land at $-a(f_0)$. That is (2) holds as well as (3) and (4).

  To establish (5), for $f \in \cI_{\cH}(1/2)$ note that the landing point $z(f)$ of the internal ray $I_{f,a_j}(1/3)$ is a repelling periodic point which is also the landing point of some  a periodic ray $R_f(t')$. It follows that $z(f)$ converges to a repelling periodic point of $f_0$ which is the common landing point of $I_{f_0,a_j}(1/3)$ and $R_{f_0}(t')$. Thus, if $a_j(f)$ lies in the region $\sector_f(\overline{V(a(f))},s,t)$ as in the statement of the proposition, then $t' \in ]s,t[$ and
  $R_{f_0}(t')$ lands at $\partial V(a_j(f_0))$. We conclude that (5) holds.
\end{proof}

\subsection{Type $B$ trekking Theorem}
\label{s:trekkingB}

Recall that $S(\cH,\theta')$ denotes a sector of a type $A$ or $B$ hyperbolic component between $0$-internal parameter rays as introduced
in Definition~\ref{d:sector-theta}. 

\begin{theorem}
  \label{thr:trekkingB}
  Assume that $\cH$ is a type $B$ component centered at $f_ \star$
  such that  $-a(f_\star) =a_k(f_\star)$. Suppose that
  $R_{f_\star}(\theta \pm 1/3)$ support $\partial V(a_k(f))$ and
  $\theta'=\theta +1/3$ or $\theta-1/3$ is a period $p$ argument.
  Then:
  \begin{enumerate}
  \item the $0$-parameter internal rays that bound the sector $S(\cH,\theta')$ land  at parabolic maps
    $f_0, f_1$ with take-off argument $\theta$ and,
  \item the associated kneadings $\kappa (f_0, \theta)$ and $\kappa (f_1,\theta)$  differ at the $k$-th symbol and coincide in the rest.
  \end{enumerate}
\end{theorem}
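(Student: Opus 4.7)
The plan is to apply Proposition~\ref{internal-landing-0} to both $0$-parameter internal rays $\cI^+,\cI^-$ bounding $S(\cH,\theta')$ and then track the kneading combinatorics at the two resulting parabolic landings. By Proposition~\ref{sectorH}, on $\cI^+$ the ray $I^+_{f,a_k}(0)$ lands at the analytic continuation $z(f)$ of the landing point of $R_f(\theta')$, while on $\cI^-$ it is $I^-_{f,a_k}(0)$ that lands at $z(f)$. Proposition~\ref{internal-landing-0} then yields that $\cI^+,\cI^-$ land at parabolic maps $f_0,f_1$ with parabolic periodic points $z_0^{(0)},z_0^{(1)}$ on $\partial V(-a(f_i))$. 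To see that $\theta$ is a take-off argument for each $f_i$, I would identify $\{\theta+1/3,\theta-1/3\}$ with one of the pairs listed in part (8) of that proposition. At $\fstar$ both rays $R_{\fstar}(\theta\pm 1/3)$ land at the critical point $-a(\fstar)=a_k(\fstar)$ and share the common image $R_{\fstar}(3\theta)$; moving into $\cH$ these two rays separate, one continuing to support $V(a_k(f))$ at the period-$p$ point $z(f)\in\{z^+(f),z^-(f)\}$ and the other at the non-periodic preimage of $f(z(f))$ among $\{w^+(f),w^-(f)\}$. Hence $\{\theta+1/3,\theta-1/3\}$ is one of $\{s(z^\pm),s(w^\pm)\}$ or $\{t(z^\pm),t(w^\pm)\}$, and part (8) delivers the take-off property.

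For the kneading words I would separate the indices $j\neq k$ and $j=k$. For $j\neq k$, the marked critical orbit has exact period $p$, so the Fatou component $V(a_j(f_i))$ is disjoint from $\overline{V(a_k(f_i))\cup V(-a(f_i))}$. Applying part (9) of Proposition~\ref{internal-landing-0} with $(s,t)=(\theta-1/3,\theta+1/3)$---the admissible pair identified above---shows that the side of $\Gamma$ occupied by $a_j(f_i)$ coincides with the side of $R_{\fstar}(\theta-1/3)\cup\overline{V(a_k(\fstar))}\cup R_{\fstar}(\theta+1/3)$ occupied by $a_j(\fstar)$. Since this reference side does not depend on $i$, one obtains $\iota_j^{(0)}=\iota_j^{(1)}$ for all $j\neq k$.

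The main obstacle is the $k$-th symbol, where I must show that $V(a_k(f_0))$ and $V(a_k(f_1))$ lie on \emph{opposite} sides of their respective take-off graphs. By parts (4) and (7) of Proposition~\ref{internal-landing-0}, at each $f_i$ the sectors at $z_0^{(i)}$ containing $a_k(f_i)$ and $-a(f_i)$ are bounded by disjoint pairs of rays landing at $z_0^{(i)}$ and sit on opposite sides of each other. The crucial combinatorial swap comes from Lemma~\ref{consecutive}: on $\cI^+$ the collision at $z_0^{(0)}$ is between $z(f)=z^+(f)$ and its cyclic \emph{successor} $z^-(f)$ on $\partial V(a_k(f))$, whereas on $\cI^-$ it is between $z(f)=z^-(f)$ and its \emph{predecessor} $z^+(f)$. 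Under this $+\leftrightarrow -$ relabeling the bounding arguments in (4) and (7) interchange, so the local direction in which $V(-a(f_i))$ pinches off from $V(a_k(f_i))$ at $z_0^{(i)}$ reverses between $i=0$ and $i=1$. Consequently $V(a_k(f_0))$ lies on one side of $R_{f_0}(\theta\pm 1/3)$ and $V(a_k(f_1))$ on the other, forcing $\iota_k^{(0)}\ne\iota_k^{(1)}$. The delicate step will be making this mirror-image relation rigorous by tracking the admissible pair from part (8) consistently through the swap.
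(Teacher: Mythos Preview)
Your approach is essentially the paper's: both proofs apply Proposition~\ref{sectorH} to identify $\cI^\pm$, invoke Proposition~\ref{internal-landing-0}(8) for the take-off property via the identification $\{\theta+1/3,\theta-1/3\}=\{\theta',\theta''\}$ with $\theta''$ the third $m_3$-preimage of $3\theta$, and use part~(9) for $j\neq k$. For the $k$-th symbol the paper makes your ``mirror-image'' intuition concrete by a direct interval inclusion: along $\cI^-$ one has $z^-(f)=z(f)$, $w^-(f)=w(f)$, and since $z^-,w^+,w^-,z^+$ are in cyclic order on $\partial V(a_k(f))$, the point $z^+(f)$ lies in the boundary arc from $w(f)$ to $z(f)$, forcing $]t(z^+),s(z^-)[\subset]\theta'',\theta'[$; along $\cI^+$ the roles swap and one gets $]t(z^+),s(z^-)[\subset]\theta',\theta''[$, so by part~(4) the sector containing $a_k$ lands on opposite sides of the take-off graph at $f_-$ versus $f_+$.
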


\begin{proof}
For $\theta, \theta'$ as in the statement of the theorem, denote by $\theta''$ 
the argument distinct from $\theta,\theta'$ such that $3 \theta'' = 3 \theta$.
That is, $\theta'$ and $\theta''$ support $V(a_k(\fstar))$.
Then $R_f(\theta')$ and $R_f(\theta'')$ land at points $z(f), w(f) \in \partial V(a_k(f))$, respectively, for all $f \in \cH$.

Consider the sector $S(\cH,\theta')$ which according to  Proposition~\ref{sectorH} is bounded by
$0$-parameter rays $\cI^\pm$. It will be convenient to rename the landing point of $\cI^\pm$ 
by $f_\pm$, respectively.
Let $\sigma \in \{+,-\}$, also from Proposition~\ref{sectorH},
for all $\fstar \neq f \in \cI^\sigma$, the internal ray $I^\sigma_{f,a_k}(0)$ lands at
$z(f)$. In the notation of \S~\ref{s:landing0-internal}, $z(f) =z^\sigma(f)$
and $w(f) =w^\sigma(f)$.  Moreover, either $\theta'=s(z^\sigma)$ and
$\theta''=s(w^\sigma)$ or, $\theta'=t(z^\sigma)$ and
$\theta''=t(w^\sigma)$. In view of Proposition~\ref{internal-landing-0} (8), $\theta$ is a take-off argument for the
landing point $f_\sigma$ of $\cI^\sigma$.

From Proposition~\ref{internal-landing-0} (9), if $j \neq k$, then 
$a_j(f_\star) \in \sector_{f_\star} (\overline{V(a_k(f_\star))},\theta',\theta'')$ if and only
$a_j(f_\sigma) \in \sector_{f_\sigma} (\overline{V(-a(f_\sigma))},
\theta',\theta'')$. In particular, for $j \neq k$, the kneading symbol corresponding to
$a_j(f_\sigma)$ is independent on whether $\sigma=+$ or $\sigma=-$.

To finish the proof we have to show that the $k$-th symbol of the kneading words of $\kappa(f_+,\theta)$ and $\kappa (f_-,\theta)$ are different.
In fact, for $f$ along the ray $\cI^-$, we have that the landing point
of $I^-_{f,a_k}(0)$ is $z^-=z(f)$ and the landing point of
$I^+_{f,a_k}(0)$ is a point $z^+$ which is in the  arc
in $\partial V(a_k(f))$ from $w^-=w(f)$ to $z^-$. Thus, 
$]t(z^+),s(z^-)[ \subset ]\theta'',\theta'[$.
Hence, $a_k(f_-) \in \sector_{f_-} (z_0,t(z^+),s(z^-)) \subset \sector_{f_-} (\overline{V(-a(f_-)},\theta'',\theta')$. 
Along the ray $\cI^+$  we have that the landing point
of $I^+_{f,a_k}(0)$ is $z^+=z(f)$ and the landing point of
$I^-_{f,a_k}(0)$ is a point $z^-$ which is in the arc
in $\partial V(a_k(f))$ from $z^+$ to $w^+=w(f)$.
Since $]t(z^+),s(z^-)[ \subset ]\theta',\theta''[$
we have  $a_k(f_+) \in \sector_{f_+} (z_0,t(z^+),s(z^-)) \subset \sector_{f_+} (\overline{V(-a(f_+))},
\theta',\theta'')$.
That is, we have proven that $a_k(f_+) \in \sector_{f_+} (\overline{V(-a(f_+))},\theta',\theta'')$ and  
$a_k(f_-) \in \sector_{f_-} (\overline{V(-a(f_-))},\theta'',\theta')$. 
Thus the $k$-th symbol of the kneading words of $\kappa(f_+,\theta)$ and $\kappa(f_-,\theta)$ are indeed different.
\end{proof}

\subsection{Type $A$ trekking Theorem}
  \label{s:trekkingA}
\begin{theorem}
  \label{thr:trekkingA}
  Assume that $\cH$ is a type $A$ component
  centered at $f_\star$ and  $\theta'$ is a period $p$ supporting argument for $V(a(f_\star))$.
  The $0$-parameter rays of $\cH$ land at parabolic maps $f_1,f_2$ and the
  $1/2$-parameter ray contained in $\overline{S(\cH,\theta')}$ lands at a pcf
  map $f_0$ such that the following holds,
  modulo relabeling of $f_1$ and $f_2$:
  \begin{enumerate}
  \item For $j=0,1,2$ the map $f_j$ has take-off argument $\theta'+j/3$.
    \item $$\kappa(f_j,\theta'+j/3) =\iota_0 \dots \iota_{p-1} 0$$
  where
  $$\iota_n =
  \begin{cases}
    1, & \text{if }  3^n \theta' \in j/3+]\theta'+1/3,\theta'+2/3[,\\
    0, & \text{otherwise}.
  \end{cases}
$$
  \end{enumerate}
\end{theorem}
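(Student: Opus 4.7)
The plan mirrors Theorem~\ref{thr:trekkingB}, applying Propositions~\ref{internal-landing-0} and~\ref{internal-landing-half} in tandem because a type~$A$ hyperbolic component carries both $0$- and $1/2$-parameter internal rays. Proposition~\ref{internal-landing-0} gives that the two $0$-parameter internal rays bounding $S(\cH,\theta')$ land at parabolic maps (call them $f_\pm$), while Proposition~\ref{internal-landing-half} gives that the unique $1/2$-parameter internal ray contained in $\overline{S(\cH,\theta')}$ lands at a pcf map $f_0$. The take-off argument $\theta'$ for $f_0$ follows from Proposition~\ref{internal-landing-half}~(4): the definition of $S(\cH,\theta')$ says that $R_{f_\star}(\theta')$ lands at $z_0(f_\star)$, so $\theta'\in\{s(z_0),t(z_0)\}$.

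For $f_\pm$, in a type~$A$ component the map $f\colon\partial V(a(f))\to\partial V(a_1(f))$ has degree~$3$ (Proposition~\ref{dynamics-typeAB}~(1) with $k=0$), so the three $f$-preimages of $f(z_0(f))$ on $\partial V(a(f))$ carry external arguments $\theta',\theta'+1/3,\theta'+2/3$. The first preimage is $z_0(f)$ itself, and the other two are the points $w^+(f)$ and $w^-(f)$ from Proposition~\ref{internal-landing-0} (with opposite labelings on $\cI^+$ and $\cI^-$, by Lemma~\ref{consecutive}). Inserting $\theta'\in\{s(z^\sigma),t(z^\sigma)\}$ (where $z^\sigma(f)=z_0(f)$) together with $s(w^\sigma),t(w^\sigma)\in\{\theta'+1/3,\theta'+2/3\}$ into Proposition~\ref{internal-landing-0}~(8) forces a valid take-off at each of $f_\pm$ to lie in $\{\theta'+1/3,\theta'+2/3\}$, with $\cI^+$ and $\cI^-$ yielding opposite values. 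After relabeling, this produces $f_1$ with take-off $\theta'+1/3$ and $f_2$ with take-off $\theta'+2/3$, establishing~(1).

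For statement~(2), the two external-ray arguments bounding $\Gamma_j$ are $\theta'+(j-1)/3$ and $\theta'+(j+1)/3$, cutting $\RZ$ into a short arc of length~$1/3$ equal to $j/3+]\theta'+1/3,\theta'+2/3[$ and a long arc of length~$2/3$ containing the take-off $\theta'+j/3$. The two components of $\C\setminus\Gamma_j$ are $U_0\ni a(f_j)$ and $U_1$, and I will show $U_0$ corresponds to the long arc. For $j=0$ this follows from Proposition~\ref{internal-landing-half}~(3): the sector at $-a(f_0)$ containing $a(f_0)$ is bounded by rays with arguments $t(w^+)=\theta'+2/3$ and $s(w^-)=\theta'+1/3$, producing the long-arc component. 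For $j=1,2$, I will apply Proposition~\ref{internal-landing-0}~(9) with $(s,t)$ taken to be the two $\Gamma_j$-boundary arguments (both lying in the admissible list from the take-off analysis), transporting the position of each $a_n(f_\star)$ across sectors at $f_\star$ to the corresponding sectors at $f_j$. For $n\neq 0$, $V(a_n(f_j))$ is disjoint from $\Gamma_j$ and lies in the same component of $\C\setminus\Gamma_j$ as the ray $R_{f_j}(3^n\theta')$, which lands on $\partial V(a_n(f_j))$; hence $\iota_n=1$ iff $3^n\theta'\in j/3+]\theta'+1/3,\theta'+2/3[$, while $\iota_0=0$ because $a(f_j)\in U_0$ and $\theta'$ belongs to no open short arc.

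The main obstacle is the correct identification of $U_0$ with the long-arc component in the parabolic cases $j=1,2$, where the geometry of $\Gamma_j$ is complicated by the saddle-node pinching of $V(a(f))$ into $V(a(f_j))\cup V(-a(f_j))$ at the parabolic fixed point $z_0(f_j)$. The delicate combinatorial step is to verify, along each $\cI^\pm$, which of $\theta'+1/3$ and $\theta'+2/3$ is the external argument of $w^\sigma(f)$, so that Proposition~\ref{internal-landing-0}~(9) can be applied with the correct pair $(s,t)$ and the kneading formula falls out.
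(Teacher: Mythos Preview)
Your proposal is correct and follows essentially the same route as the paper. Both arguments use Proposition~\ref{sectorH} to pin down $z^\sigma(f)$ as the landing point of $R_f(\theta')$ along $\cI^\sigma$, then Proposition~\ref{internal-landing-0}~(8) (respectively Proposition~\ref{internal-landing-half}~(4)) to read off the take-off arguments at the parabolic (respectively pcf) landing points, and finally determine each kneading symbol $\iota_n$ by observing that $R_{f_j}(3^n\theta')$ lands on $\partial V(a_n(f_j))$ and checking on which side of $\Gamma_j$ this ray falls.

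Two small remarks. First, the ``delicate combinatorial step'' you flag is exactly what the paper handles with the line ``Necessarily, $R_f(\theta'+2/3)$ lands at $w^+(f)$'': along $\cI^+$ one has $z^+(f)$ carrying $\theta'$, and the cyclic order $z^-,w^+,w^-,z^+$ from \S\ref{s:landing0-internal} forces $w^+(f)$ to carry $\theta'+2/3$, so the relevant pair in Proposition~\ref{internal-landing-0}~(8) is $\{\theta',\theta'+2/3\}$ and the take-off is $\theta'+1/3$; the analogous reasoning along $\cI^-$ gives $\theta'+2/3$. Second, the identification of $U_0$ with the long arc in the parabolic cases is not really an obstacle: the paper bypasses your appeal to Proposition~\ref{internal-landing-0}~(9) and instead reads the kneading directly from the external rays $R_{f_j}(3^n\theta')$, noting that $U_1=\sector_{f_1}(\overline{V(-a(f_1))},\theta'+2/3,\theta')$ because $a(f_1)$ lies in the sector $\sector_{f_1}(z_0,t(z^+),s(z^-))$ by Proposition~\ref{internal-landing-0}~(4). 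Your route via~(9) is equally valid.
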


\begin{figure}[h]
\begin{center}
\fbox{\includegraphics[width=7cm]{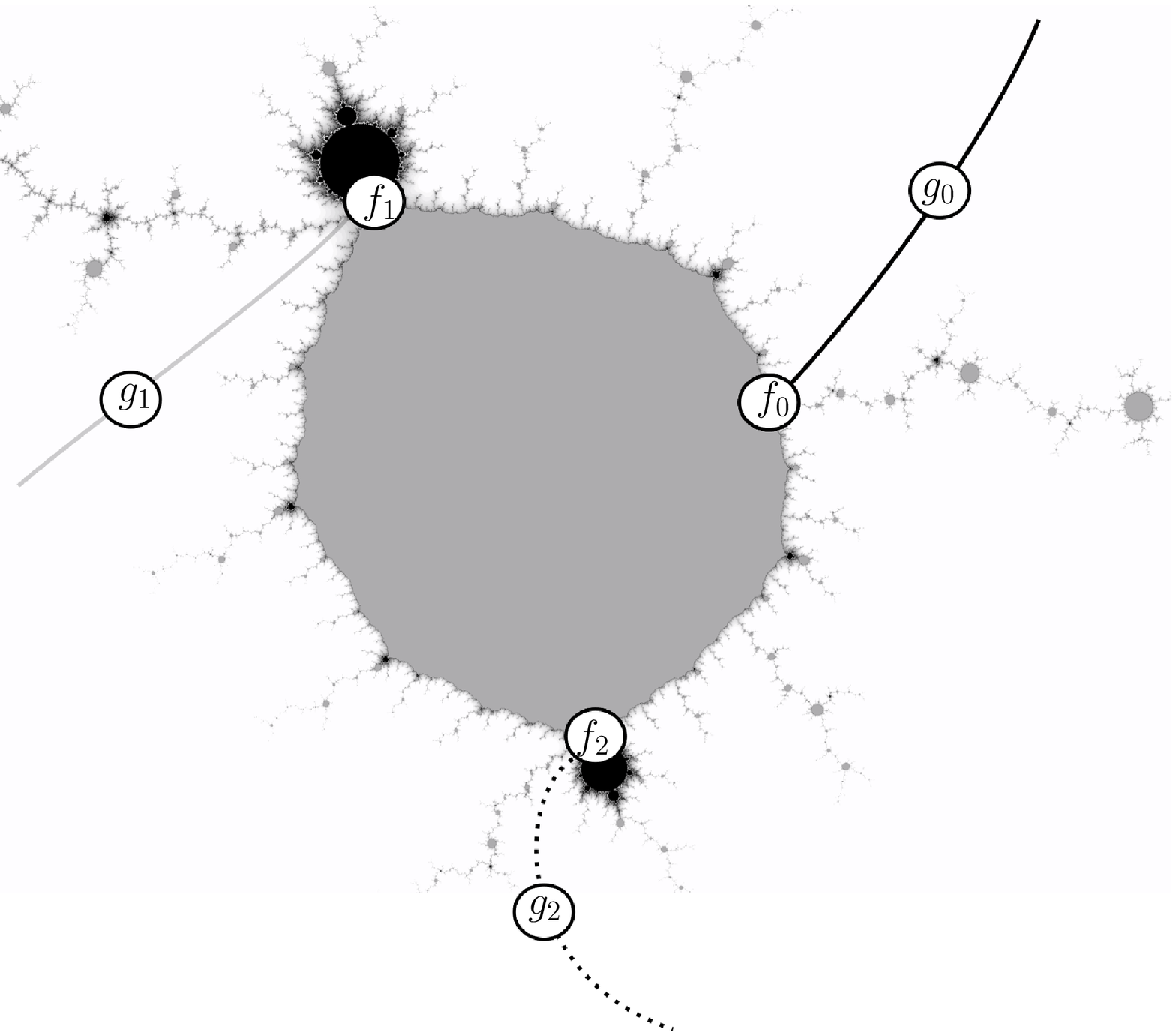} }
\end{center}

\begin{minipage}[c]{.30\linewidth}
\begin{center}
\fbox{\includegraphics[width=4.2cm]{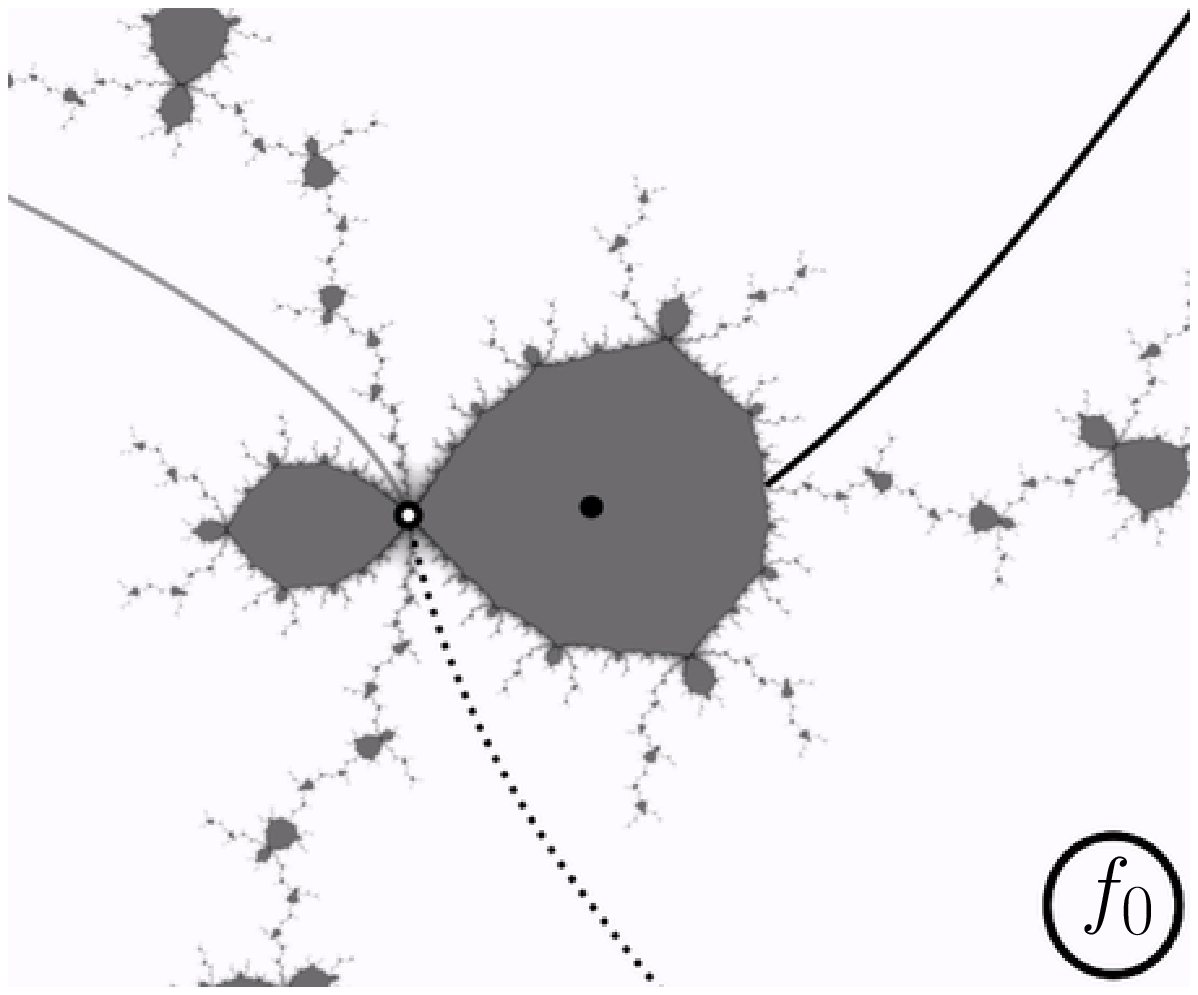} } 
\end{center}
\end{minipage}
\begin{minipage}[c]{.30\linewidth}
\begin{center}
\fbox{\includegraphics[width=4.2cm]{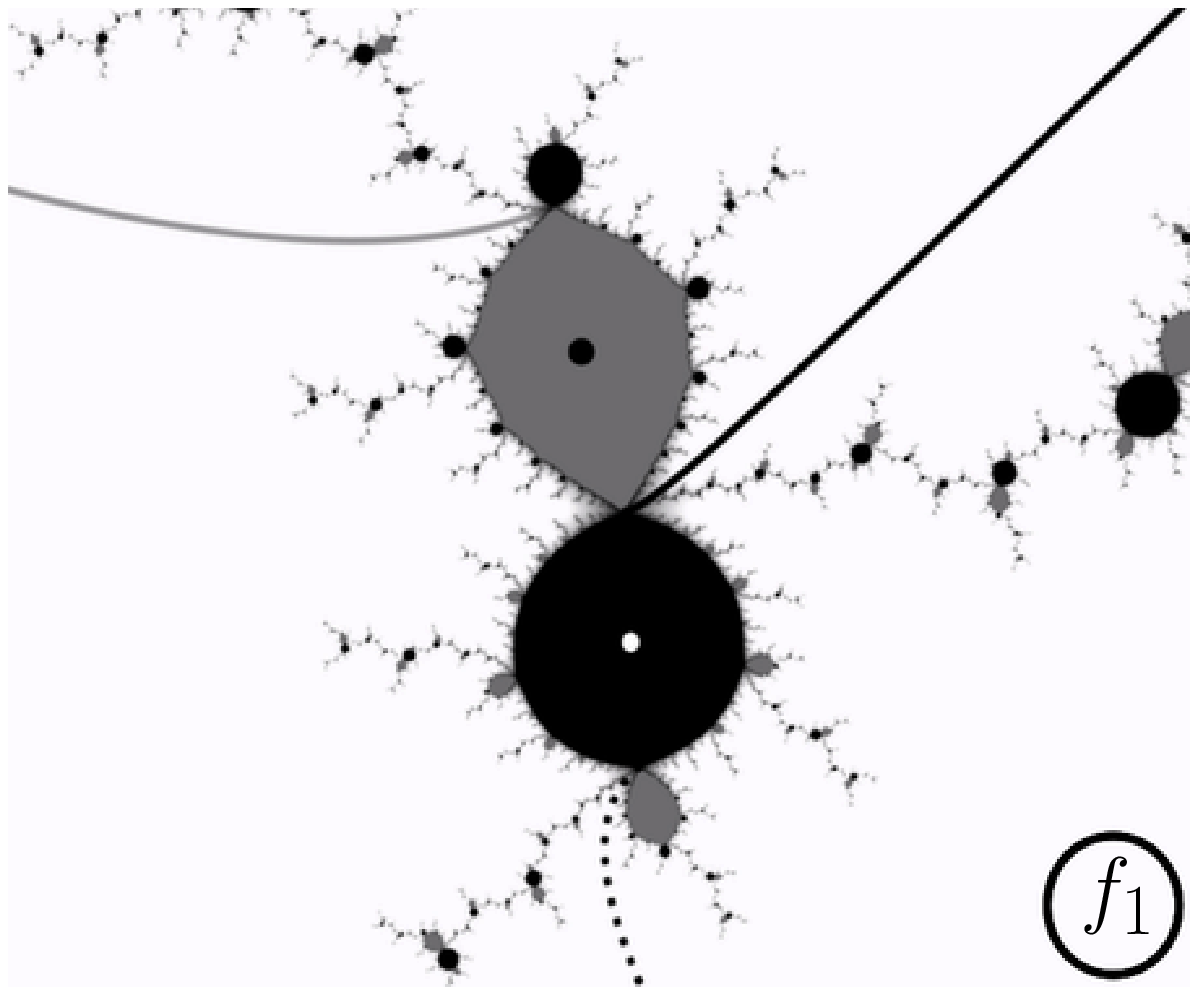} } 
\end{center}
\end{minipage}
\begin{minipage}[c]{.3\linewidth}
\begin{center}
\fbox{\includegraphics[width=4.2cm]{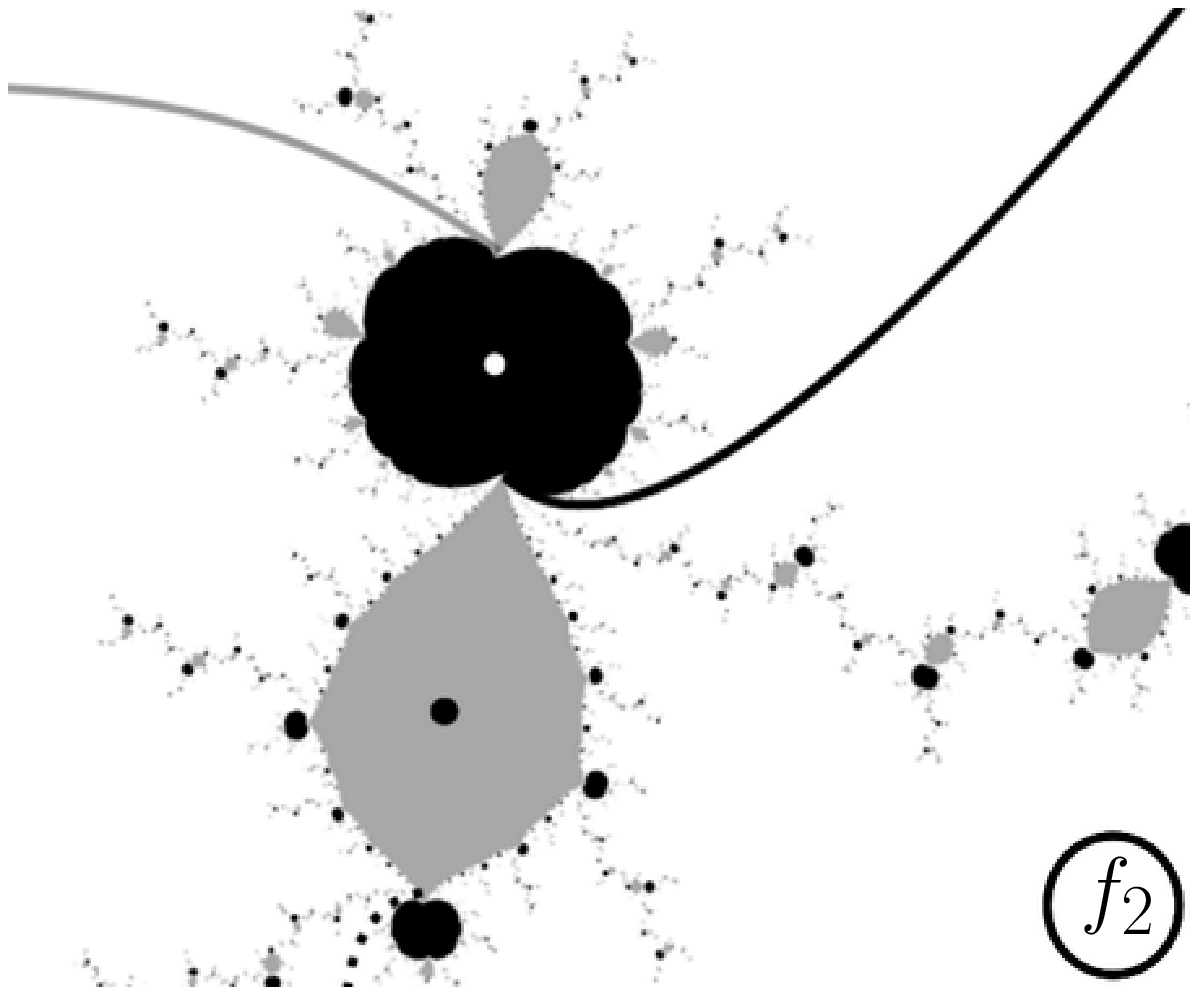} } 
\end{center}
\end{minipage}
\begin{minipage}[c]{.3\linewidth}
\begin{center}
\fbox{\includegraphics[width=4.2cm]{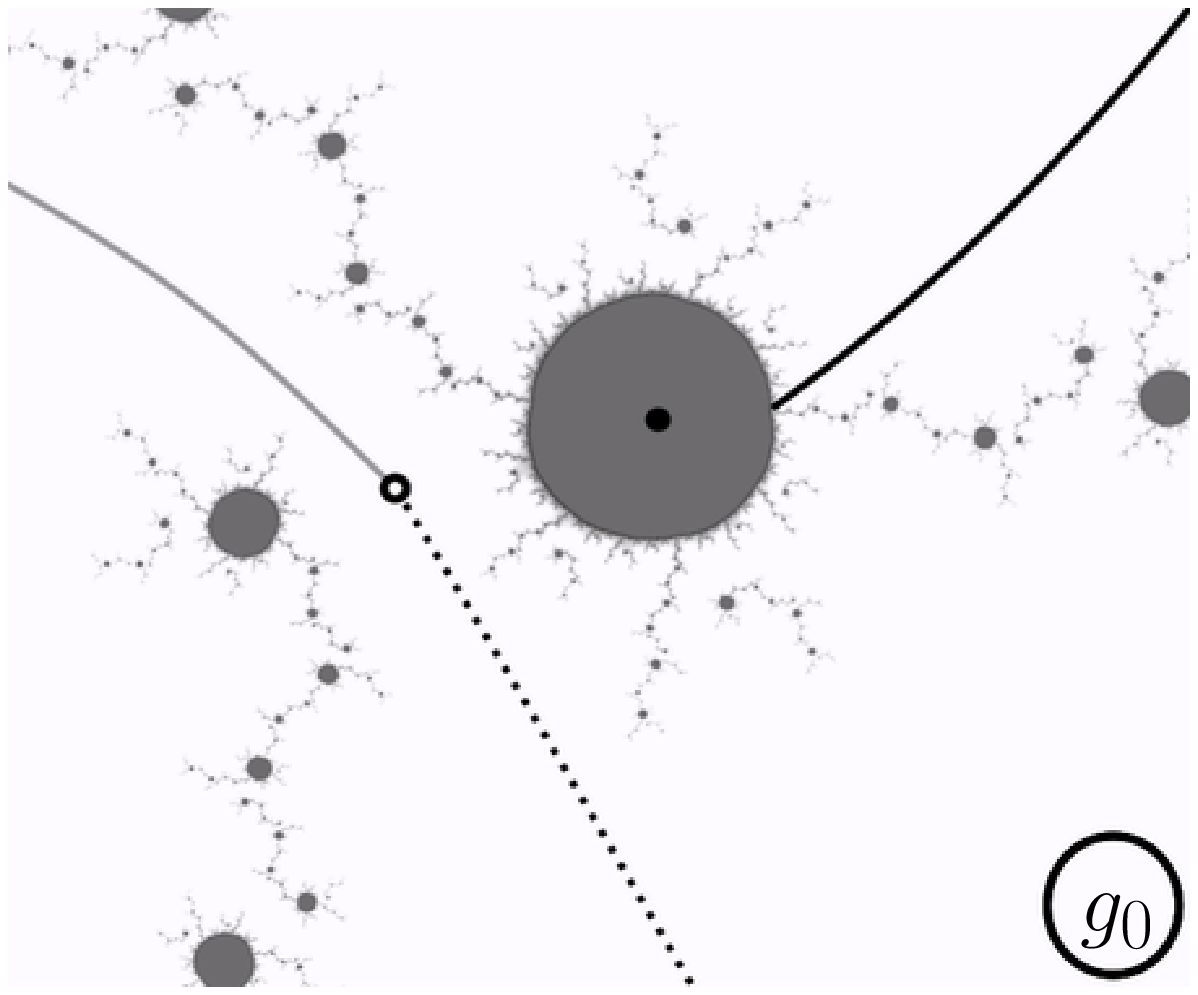} } 
\end{center}
\end{minipage}
\begin{minipage}[c]{.3\linewidth}
\begin{center}
\fbox{\includegraphics[width=4.2cm]{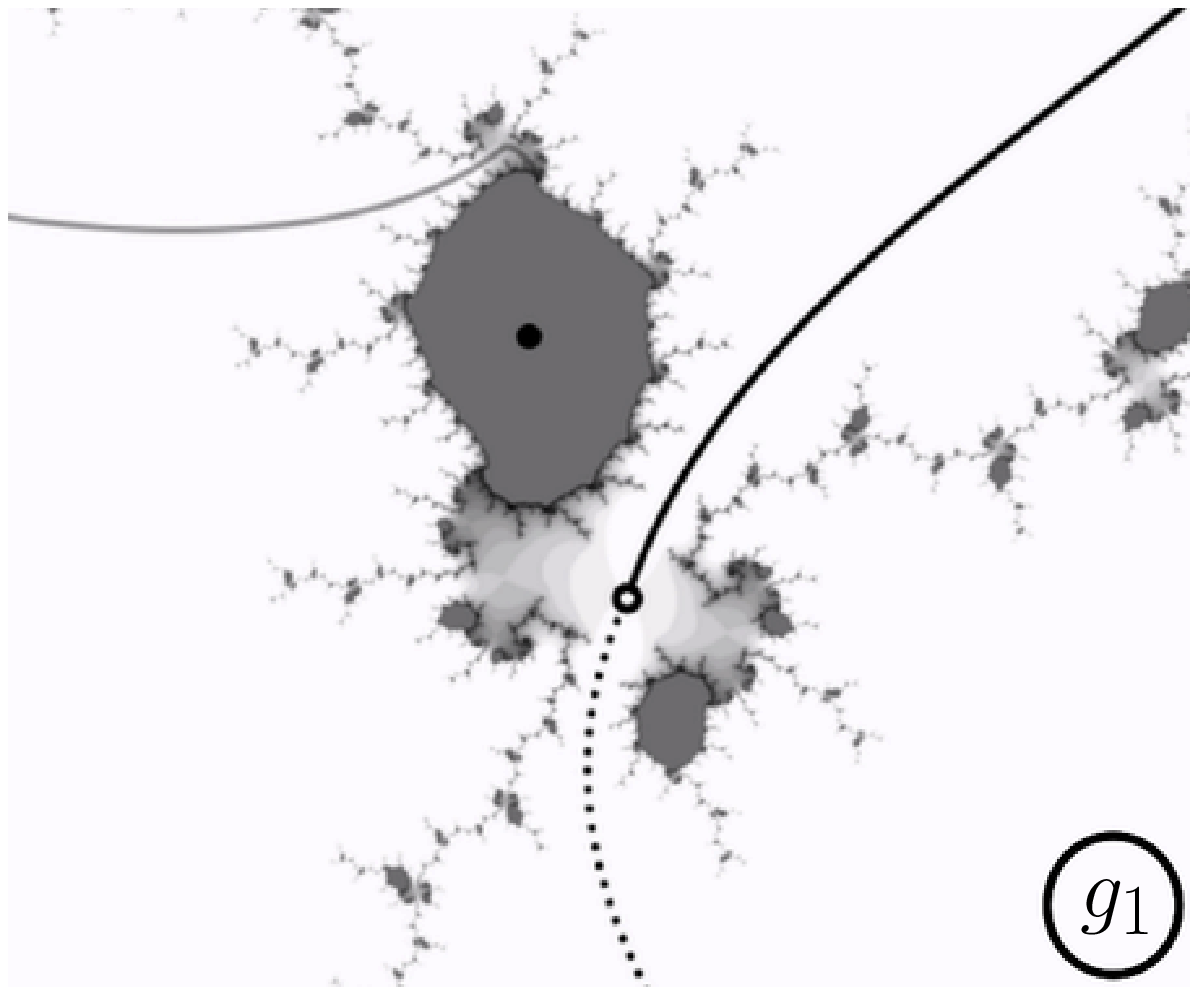} } 
\end{center}
\end{minipage}
\begin{minipage}[c]{.3\linewidth}
\begin{center}
\fbox{\includegraphics[width=4.2cm]{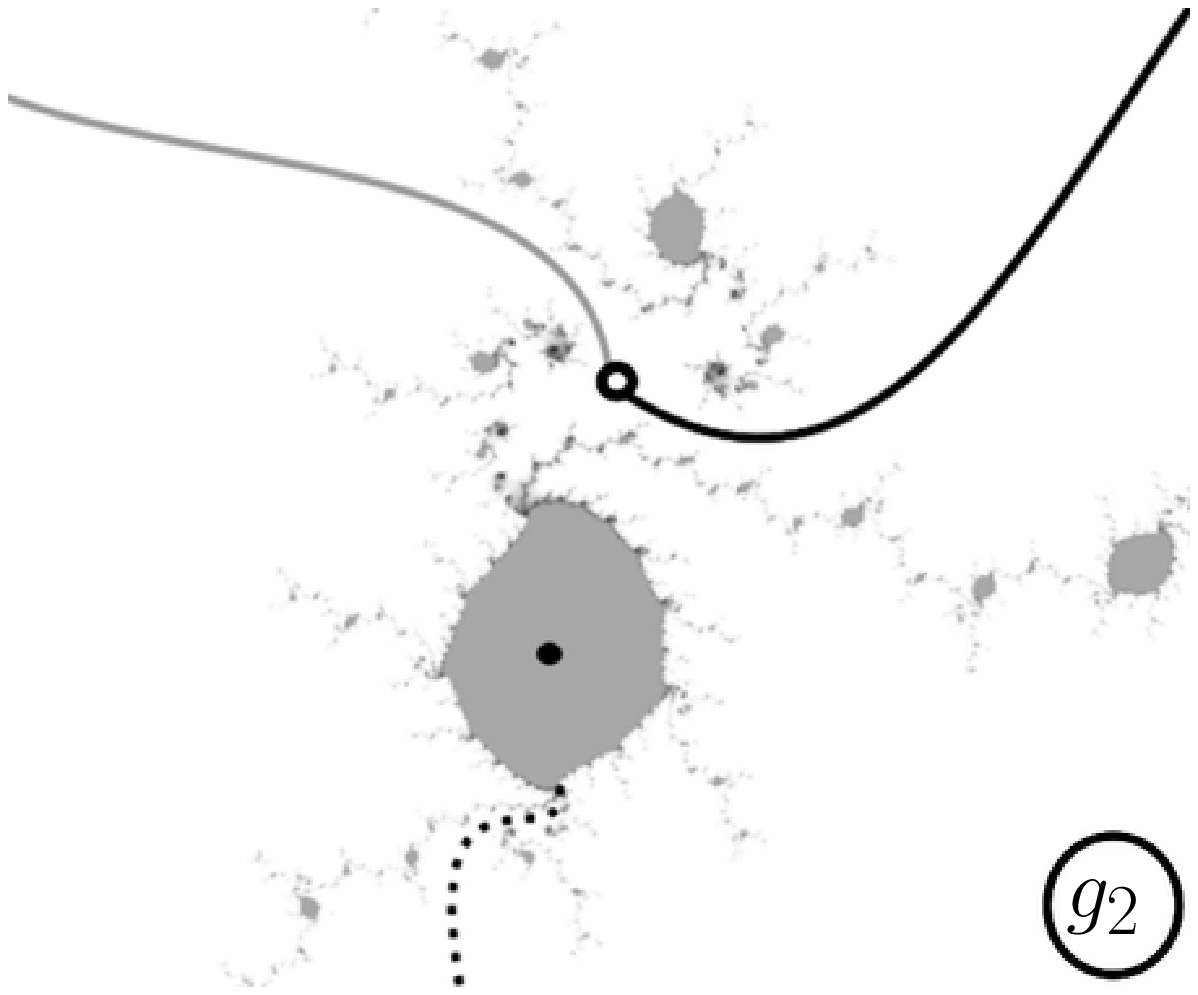} } 
\end{center}
\end{minipage}
\caption{The upper  illustration is centered at a type $A$ hyperbolic component. The black parameter ray has a
  $p$-periodic argument $\theta$ landing at a co-root. The other ones have arguments $\theta\pm1/3$ and land at the roots. Details of the corresponding dynamical planes are illustrated below where the white dot corresponds to $-a$ and the black one to $+a$.
 In the first row, the position of $+a$ with respect to these rays illustrates Theorem \ref{thr:trekkingA}. In the second, it illustrates theorems~\ref{thr:take-off-parabolic} and~\ref{thr:take-off-parabolic}.
}
\label{trekingA}
\end{figure}

After proving the previous theorem, in Corollary~\ref{cor:trekkingA}, we deduce that we can choose to cross from one root or co-root 
to another one in order to switch the symbols  $0, 1$
of an initial segment of the kneading
words involved. See Figure~\ref{trekingA}.

\begin{proof}
    Consider the sector $S(\cH,\theta')$ which according to  Proposition~\ref{sectorH} is bounded by
$0$-parameter rays $\cI^\pm$. Let $f_1$ be the landing point of $\cI^+$ and $f_2$ the landing point of $\cI^-$.

Consider $\fstar \neq f \in \cI^+$ and denote by $z^\pm(f)$ the landing point of
$I^\pm_{f,a}(0)$.
According to Proposition~\ref{sectorH},  
$R_f(\theta')$ lands at $z^+(f)$.
By \S~\ref{s:landing0-internal}, the open arc in $\partial V(a(f))$ from $z^-(f)$ to $z^+(f)$ contains a unique point $w^+(f)$ with image $f(z^+(f))$. Necessarily, $R_f(\theta'+2/3)$ lands at $w^+(f)$.
Since $\theta'$ is supporting, in the notation of \S~\ref{s:landing0-internal} (3),
we have that $\theta' = s(z^+(f))$ or $t(z^+(f))$ and $\theta'+2/3 = s(w^+(f))$ or $t(w^+(f))$, respectively. From Proposition~\ref{internal-landing-0} (8),
it follows that $\theta'+1/3$ is a take-off argument for $f_1$, which proves
assertion (1) for $j=1$. 
Moreover, points $a_n(f_1)$ in
the region $\sector_{f_1}(V(-a(f_1)),\theta'+2/3,\theta')$ correspond to symbols ``$1$'' in
the kneading word 
$\kappa(f_1, \theta'+1/3)$. Equivalently, since $R_{f_1} (3^n \theta')$ lands in $\partial V(a_n(f_1))$, 
symbols ``$1$'' correspond to rays $R_{f_1} (3^n \theta')$ contained in this region, that is,
to $3^n \theta' \in 1/3 + ]\theta'+1/3,\theta'+2/3[$. Thus we have proven the theorem for $j=1$.

A similar analysis for $\fstar \neq f \in \cI^-$
with the aid of propositions~\ref{sectorH} and~\ref{internal-landing-0}
proves that the landing point $f_2$ of $\cI^-$ has the claimed properties.

Now consider the landing point $f_0$ of the $1/2$ internal ray contained in the sector $S(\cH,\theta')$.
By Proposition~\ref{internal-landing-half} (4), we have that  $\theta'$ is a take off argument for $f_0$ with corresponding
ray landing at $\partial V(a(f_0))$. Thus $\kappa (f_0, \theta')$ is as described in the statement of the theorem.
\end{proof}

\begin{corollary}
  \label{cor:trekkingA}
   Assume that $\cH$ is a type $A$ component
  centered at $f_\star$ and $\theta'$ is a period $p$ supporting argument for $V(a(f_\star))$. Let $g_0$ be the landing point of a $0$ or $1/2$ ray in 
  $\overline{S(\cH,\theta')}$ with take-off argument $\vartheta \in m_3^{-1}(3\theta')$ and kneading word $\kappa(g_0,\vartheta) =1^{\mu_0-1} 0 \, \iota_{\mu_0+1} \cdots 0$.
  Then there exists a  landing point $g_1$ of a $0$ or $1/2$ ray in 
  $\overline{S(\cH,\theta')}$ with take-off argument $\vartheta' \in m_3^{-1}(3\theta')$ and kneading word $\kappa(g_1,\vartheta') =0^{\mu_0-1} 1 \,\iota'_{\mu_0+1} \cdots 0$.
\end{corollary}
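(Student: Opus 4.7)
The plan is to derive the corollary essentially mechanically from Theorem~\ref{thr:trekkingA}. That theorem identifies the three maps $f_0, f_1, f_2$ landing the $1/2$- and $0$-parameter internal rays in $\overline{S(\cH,\theta')}$, each with take-off argument $\theta' + j/3$ for $j = 0, 1, 2$, and supplies the combinatorial formula: the $n$-th symbol of $\kappa(f_j, \theta' + j/3)$ equals $1$ iff $3^n \theta'$ lies in the arc
$$I_j \;:=\; j/3 + \,]\theta' + 1/3,\; \theta' + 2/3[ \;\subset\; \RZ.$$
The arcs $I_0, I_1, I_2$ are pairwise disjoint and cover $\RZ$ minus the three points $\theta',\, \theta' + 1/3,\, \theta' + 2/3$.

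First I would unpack the hypotheses. Since $\vartheta \in m_3^{-1}(3\theta') = \{\theta' + j/3 : j \in \{0,1,2\}\}$ and $g_0$ is the landing point of a $0$- or $1/2$-ray in $\overline{S(\cH,\theta')}$, Theorem~\ref{thr:trekkingA} forces $g_0 = f_j$ and $\vartheta = \theta' + j/3$ for a unique $j$. The assumption $\kappa(g_0,\vartheta) = 1^{\mu_0-1} 0 \iota_{\mu_0+1} \cdots 0$ then translates, via the combinatorial formula, into: $3^n\theta' \in I_j$ for $n = 1,\ldots,\mu_0-1$, while $3^{\mu_0}\theta' \notin I_j$.

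Next I would locate $3^{\mu_0}\theta'$ and choose $j'$. The existence of a $0$ at position $\mu_0 \le p-1$ in the kneading forces $3^{\mu_0}\theta' \ne \theta'$, because $\theta'$ has exact period $p$. Moreover $\theta' \pm 1/3$ are strictly preperiodic (their image $3\theta'$ has period $p$), hence distinct from the periodic iterate $3^{\mu_0}\theta'$. Consequently $3^{\mu_0}\theta' \in I_{j'}$ for a uniquely determined $j' \in \{0,1,2\}\setminus\{j\}$. I set $g_1 := f_{j'}$ and $\vartheta' := \theta' + j'/3$, which is a landing point of a $0$- or $1/2$-ray in $\overline{S(\cH,\theta')}$ by Theorem~\ref{thr:trekkingA}.

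Finally I would verify the kneading of $g_1$. For $n \le \mu_0-1$, we have $3^n\theta' \in I_j$, which is disjoint from $I_{j'}$, so the $n$-th symbol of $\kappa(g_1,\vartheta')$ is $0$; while $3^{\mu_0}\theta' \in I_{j'}$ yields a $1$ in position $\mu_0$. Position $p$ is automatically a $0$ since $3^p\theta' = \theta' \notin I_{j'}$. Hence $\kappa(g_1,\vartheta') = 0^{\mu_0-1} 1 \iota'_{\mu_0+1} \cdots 0$ for some $\iota'_{\mu_0+1},\dots,\iota'_{p-1} \in \{0,1\}$, as required. I do not anticipate any genuine obstacle: once the trekking formula is in hand the entire argument is bookkeeping, and the only minor subtlety is confirming that $3^{\mu_0}\theta'$ avoids the three partition points of $\{I_0, I_1, I_2\}$.
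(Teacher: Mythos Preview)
Your proposal is correct and follows essentially the same approach as the paper: both derive the corollary directly from Theorem~\ref{thr:trekkingA} by locating $3^{\mu_0}\theta'$ in one of the three arcs and choosing $\vartheta'$ accordingly. The paper's proof is terser---it phrases everything in terms of $\vartheta$ and simply says ``if $3^{\mu_0}\vartheta \in\,]\vartheta-1/3,\vartheta[$ take $\vartheta'=\vartheta+1/3$, otherwise $\vartheta'=\vartheta-1/3$''---while you are more explicit about why $3^{\mu_0}\theta'$ avoids the partition points, but the content is the same.
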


\begin{proof}
  Given $g_0$ we have that $3^{\mu_0} \vartheta \in ]\vartheta-1/3,\vartheta[$ or $]\vartheta,\vartheta+1/3[$. In the first case, choose $g_1$ with take-off argument $\vartheta'=\vartheta+1/3$ and in the latter with take-off argument $\vartheta'=\vartheta-1/3$.
  From the previous theorem, it follows that $\kappa(g_1,\vartheta')$ is
  as desired.
\end{proof}

\subsection{Parabolic immigration}
\label{s:parabolic-immigration}

\begin{theorem}
  \label{thr:parabolic-immigration}
  Let $f_0$ be a parabolic map with  take-off argument $\theta$.
  Assume that the periodic argument $\theta' =\theta+1/3$ or $\theta-1/3$ is a
  period $p$ relatively supporting argument for $V(a_k(f_0))$. 
  Then there exists a type $A$ or $B$ hyperbolic component $\cH$ centered at a  map $\fstar$
  such that the following hold:

  (1) $a_k(\fstar) = -a(\fstar)$,

  (2)  the rays $R_\fstar (\theta \pm 1/3)$ support  $V(-a(f_\star))$,

  (3) $f_0$ is the landing point of a $0$-internal ray $\cI$ of $\cH$ contained in  $\partial S(\cH,\theta')$.
\end{theorem}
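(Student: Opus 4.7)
The plan is to adapt the perturbation arguments from the proof of Theorem~\ref{thr:take-off-parabolic} to the landing-in-the-interior setting. For setup: let $z_0 \in \partial V(-a(f_0)) \cap \partial V(a_k(f_0))$ be the parabolic landing point of $R_{f_0}(\theta')$, a periodic point of some period $\ell \mid p$ whose multiplier under $f_0^\ell$ is a primitive $(p/\ell)$-th root of unity (these properties follow from the take-off hypothesis and the fact that $\theta'$ is a period $p$ relatively supporting argument for $V(a_k(f_0))$). Passing to a branched cover $\lambda \mapsto f_\lambda$ of a neighborhood of $f_0$, the parabolic point $z_0$ splits continuously into a fixed point $z(\lambda)$ of $f_\lambda^\ell$ together with a $(p/\ell)$-cycle of period $p$ points. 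I will work on the parameter sector $\Lambda'$ on which the multiplier of $z(\lambda)$ under $f_\lambda^\ell$ lies in $\D$.

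For $\lambda \in \Lambda'$, the attracting petal of $f_0^\ell$ at $z_0$ that contains $-a(f_0)$ deforms to the immediate basin of the geometrically attracting cycle $z(\lambda)$, which then captures the critical point $-a(f_\lambda)$ by continuity. Consequently $-a(f_\lambda) \in V(a_k(f_\lambda))$, so $f_\lambda$ belongs to a hyperbolic component $\cH$ of type $A$ (if $k = 0$) or $B$ (if $k \neq 0$). Its center $\fstar$ then satisfies $-a(\fstar) = a_k(\fstar)$, establishing (1). At $\fstar$ the components $V(-a(\fstar))$ and $V(a_k(\fstar))$ coincide and $f_\fstar^p$ acts as $m_3$ or $m_4$ on their common Jordan boundary by Proposition~\ref{dynamics-typeAB}; the relatively supporting rays $R_{f_0}(\theta\pm 1/3)$ of $V(-a(f_0))$ continue throughout $\cH$ to rays landing on this boundary, and at $\fstar$ they become genuinely supporting, giving (2).

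For (3), I must show that $f_\lambda$ lies on a $0$-parameter internal ray of $\cH$, i.e. $\Phi_\cH(f_\lambda) \to 1$ radially as $\lambda \to 0$ inside $\Lambda'$. This is where Shishikura's perturbed Fatou coordinates enter, exactly as in the proof of Theorem~\ref{thr:take-off-parabolic}. On the emerging attracting basin of $z(\lambda)$, the B\"ottcher coordinate $\varphi_{f_\lambda, a_k}$ is comparable, via linearization near $z(\lambda)$, to the perturbed Fatou coordinate. A Walz-type computation, analogous to Claims~1 and~2 in the proof of Theorem~\ref{thr:take-off-parabolic} but performed on the attracting side rather than the escape side, places $-a(f_\lambda)$ at a point whose $\varphi_{f_\lambda, a_k}$-image tends radially to $1$ as $\lambda \to 0$. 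This proves that $f_\lambda$ lies on a $0$-internal ray $\cI$ of $\cH$ landing at $f_0$. Finally, by Proposition~\ref{sectorH}, the two $0$-internal rays bounding $S(\cH, \theta')$ are characterized by the property that $I^\pm_{f, a_k}(0)$ lands at the continuation of the fixed point $z_0$ in $\partial V(a_k(f))$; since $z(\lambda)$ is by construction this continuation, $\cI \subset \partial S(\cH, \theta')$.

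The main obstacle is the Walz-type computation in the last step: tracking $\varphi_{f_\lambda, a_k}(-a(f_\lambda))$ requires matching three coordinate systems (the perturbed Fatou coordinate at $z(\lambda)$, the B\"ottcher coordinate $\varphi_{f_\lambda, a_k}$ on the emerging basin, and the parameter $\lambda$), and showing that the critical point approaches the boundary of the basin asymptotically along the radial line of argument $0$. The argument parallels the escape-region Walz computation in the proof of Theorem~\ref{thr:take-off-parabolic}, but with the target being the B\"ottcher disk at $a_k$ rather than the basin of infinity; the role there played by the external ray $R^*_{f_\lambda}(\theta')$ is replaced here by the internal ray $I^\pm_{f_\lambda, a_k}(0)$, both of which degenerate into a single petal boundary at $\lambda = 0$.
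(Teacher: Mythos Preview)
Your perturbation strategy contains a genuine error in the second paragraph. You choose $\Lambda'$ so that the multiplier of $z(\lambda)$ lies in $\D$, making $z(\lambda)$ geometrically attracting, and then assert that $-a(f_\lambda)\in V(a_k(f_\lambda))$. But these are two distinct periodic Fatou cycles: the superattracting cycle through $a_k(f_\lambda)$ (which persists for every $f\in\cS_p$) and the newly created attracting cycle of $z(\lambda)$. The free critical point $-a(f_\lambda)$ is captured by the latter, so $f_\lambda$ lies in a type~$D$ component, not a type~$A$ or~$B$ one. To reach a type~$A$ or~$B$ component via a Walz-type argument you would instead have to perturb through the gate between the attracting petal of $V(-a(f_0))$ and the repelling petal containing the internal ray $I_{f_0,a_k}(0)$, replacing the role of $R^*_{f_\lambda}(\theta')$ in Theorem~\ref{thr:take-off-parabolic} by that internal ray from the outset rather than only in the final step; you would first have to check that this repelling petal is adjacent to the attracting one, and then carry out the full Walz computation in that direction. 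None of this is what you wrote.

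The paper's proof avoids perturbed Fatou coordinates entirely and proceeds combinatorially. It first establishes a rigidity lemma: a parabolic map in $\cS_p$ is determined by a left (or right) supporting \emph{critical marking}, namely the pair of argument pairs recording which external rays support $V(-a)$ and $V(a)$ (proved via Ha\"issinsky's deformation to a hyperbolic map together with Poirier's uniqueness for pcf critical portraits). It then constructs the center $f_\star$ directly: starting from the real lamination $\lambda(f_0)$, one ``unpinches'' the grand orbit of the separating argument $\beta'$ to produce a new degree~$3$ lamination $\lambda$, which by the realization theorem of~\cite{kiwi04} is the lamination of a hyperbolic pcf map $f_\star$ with $a_k(f_\star)=-a(f_\star)$ and with the rays $\theta\pm1/3$ supporting $V(-a(f_\star))$. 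Finally, Proposition~\ref{internal-landing-0} shows that the appropriate $0$-internal ray of the component $\cH$ centered at $f_\star$ lands at a parabolic map $g$ with the same critical marking as $f_0$, and the rigidity lemma forces $g=f_0$.
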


Note that by Theorem~\ref{thr:arrival-periodic-connection}
the hypothesis of the previous theorem are fulfilled by maps $f_0$
which are the landing points
of parameter rays $R_\cU(\theta)$ with a ray connection between $-a(f)$ and
$a_k(f)$ for  $f \in R_\cU(\theta)$.

In order to prove the theorem above we first identify  parabolic maps in
$\cS_p$ with the aid of critical portraits.
Given a map $g \in \cS_p$ with a parabolic cycle we say that 
the ordered pair of pairs of arguments in $\QS$:
$(\{\beta',\beta''\}, \{\vartheta',\vartheta''\})$ is a \emph{left (resp. right) supporting
  critical marking for $g$} if the following hold:

(1) $\beta'\neq \beta''$ and $3\beta'= 3\beta''$, $\vartheta'\neq \vartheta''$ and $3\vartheta'= 3\vartheta''$.

(2) $R_g(\beta')$ lands at the parabolic cycle. $R_g(\beta')$ and $R_g(\beta'')$ left (resp. right) support
$V(-a(g))$.

(3) The internal ray $I_{g,a}(0)$ and $R_g(\vartheta')$ land at the same point.
$R_g(\vartheta')$ and $R_g(\vartheta'')$ left (resp. right) support $V(a(g))$.

\begin{lemma}
  Let $g_0,g_1 \in \cS_p$ be maps with a parabolic cycle such that $g_0$ and $g_1$ have the same left (resp. right) supporting critical marking. Then $g_0=g_1$.
\end{lemma}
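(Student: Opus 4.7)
The plan is to show that the critical marking determines the parabolic polynomial $g$ up to the normalization $f_{a,v}(z) = (z-a)^2(z+2a)+v$, by extracting from the marking a complete topological model of the dynamics and then upgrading it to a conformal model. I will only discuss the left-supporting case; the right-supporting case is entirely symmetric.

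First I would use the marking $(\{\beta', \beta''\}, \{\vartheta', \vartheta''\})$ together with the combinatorics of $m_3$ to produce a complete combinatorial description of $g$. The pair $\{\beta', \beta''\}$ specifies the two left-supporting rays of $V(-a(g))$ that meet at the parabolic cycle, while $\{\vartheta', \vartheta''\}$ specifies the two left-supporting rays of $V(a(g))$ that land at the endpoint of $I_{g,a}(0)$. Iterating these pairs forward under $m_3$ and pulling back along $m_3$-preimages, I obtain the equivalence relation $\sim$ on $\QS$ describing which rays co-land---the \emph{rational lamination} of $g$---together with the combinatorial location of $\pm a(g)$ relative to it. All of this data depends only on the marking, hence is identical for $g_0$ and $g_1$.

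Next, from the common combinatorial data I would build a homeomorphism $\Psi \colon \C \to \C$ conjugating $g_0$ to $g_1$: on the basin of infinity using B\"ottcher coordinates, on $V(\pm a(g_j))$ and their iterated preimages using the internal coordinate normalized by $\vartheta'$, on the parabolic basin using Fatou coordinates, and across the Julia sets by matching co-landing rays, where the Roesch-Yin theorem that $\partial V(a_k(g_j))$ is a Jordan curve allows consistent gluing. Finally, I would upgrade $\Psi$ to a conformal conjugacy. Since $g_0$ has a unique free critical point whose orbit accumulates only on the parabolic cycle, a standard quasiconformal deformation argument in the spirit of Ma\~n\'e--Sad--Sullivan and Shishikura rules out non-trivial invariant Beltrami differentials; thus $\Psi$ is conformal, hence affine, and the monic normalization together with the fact that the $\vartheta$-pair pins down $a(g_j)$ as distinct from $-a(g_j)$ forces $\Psi = \mathrm{id}$, so $g_0 = g_1$. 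The main obstacle is this final rigidity step: verifying that the topological conjugacy extends to a quasiconformal one across the Julia set and that no non-trivial invariant line field survives in the parabolic setting will be the technical heart of the argument.
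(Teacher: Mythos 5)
Your proposal is correct in spirit, but it takes a genuinely different route from the paper's proof; in fact it is precisely the ``longer but more direct'' pull-back argument that the paper explicitly mentions and then chooses \emph{not} to carry out. The paper instead proceeds as follows: by Ha\"issinsky's theorem on geometrically finite polynomials there is, for each $g_i$, a continuous one-parameter deformation $G_{i,\eta} \in \cS_p$ ($\eta \in [0,1]$) with $G_{i,1}=g_i$, $G_{i,\eta}$ hyperbolic for $\eta<1$, and \emph{constant Caratheodory loop}. For $\eta<1$ the map $G_{i,\eta}$ lies in a disjoint-type hyperbolic component $\cH_i$ whose unique pcf center $g_{i,\star}$ therefore carries the same supporting critical marking as $g_i$. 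Poirier's uniqueness theorem for critically marked pcf polynomials then gives $g_{0,\star}=g_{1,\star}$, so $\cH_0 = \cH_1$, and since that component is parametrized by the multiplier of the free attracting cycle, the two radial paths $G_{0,\eta}$ and $G_{1,\eta}$ coincide and hence land at the same parabolic boundary map, i.e.\ $g_0=g_1$. This sidesteps almost all of the work your proposal requires: building the conjugacy on the Julia set, gluing coordinates across Fatou/Julia boundaries, and the rigidity step are all packaged into Ha\"issinsky's and Poirier's theorems as black boxes.

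If you do want to run the pull-back argument directly, two points in your sketch should be tightened. First, ``iterating forward and pulling back'' the two marked pairs does not by itself produce the rational lamination: you need the full Poirier-style combinatorial machinery to promote the finite critical marking to an invariant unlinked equivalence relation and verify that it is realized; this is not automatic and is essentially what makes the direct route longer. Second, the final rigidity step should not be phrased as ``ruling out non-trivial invariant Beltrami differentials'' in the Ma\~n\'e--Sad--Sullivan / Shishikura sense. Once you have a quasiconformal conjugacy $\Psi$ that is conformal on the Fatou set, the relevant fact is simply that the Julia set of a geometrically finite polynomial has Lebesgue measure zero, so the Beltrami coefficient of $\Psi$ vanishes a.e.\ and $\Psi$ is conformal; one then only needs to check that the normalization (asymptotic to the identity at $\infty$, together with the marked critical point being the one supported by the $\vartheta$-pair) forces $\Psi=\mathrm{id}$. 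The ``no invariant line field'' theorem for parabolic maps is a deeper statement about deformations of a single map, not what is needed to upgrade a conjugacy here.
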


The proof of the lemma presented here relies on Ha\"issinsky's deformation of geometrically finite polynomials~\cite{Haissinsky2000} and in Poirier's uniqueness result for
critically marked pcf polynomials~\cite{PoirierCP}.
However, using the pull-back argument, one could give a longer but more direct proof that bypasses the use of Ha\"issinsky's deformation and show, in the spirit of Poirier's work, that $g_0$ and $g_1$ are conjugate by a
quasiconformal map $h$ which is conformal in the Fatou set and conclude that $g_0=g_1$.

\begin{proof}
  For simplicity of the exposition we assume that the arguments
  involved are right supporting.

According to
Ha\"issinsky~\cite{Haissinsky2000}, for $i=0,1$,
there exist a continuous family
  $G_{i,\eta}\in \cS_p$ parametrized by
  $\eta \in [0,1]$ such that
  $G_{i,\eta}$ is hyperbolic for all $\eta \in [0,1[$, $G_{i,1}=g_i$,
  and the Caratheodory loop of $G_{i,\eta}$ is independent of
  $\eta$. The maps $G_{i,\eta}$ for $\eta <1$ have one attracting cycle
  converging to the parabolic cycle of $g_i$ as $\eta \to 1$. The multiplier of
  this attracting cycle converges to $1$. 

  In particular, $ \beta', \beta''$ are right
  supporting arguments for $V(-a(G_{i,\eta}))$ and $\vartheta', \vartheta''$
  are right supporting arguments for $V(a (G_{i,\eta}))$.  

  Note that $G_{i,\eta}$ belongs to a hyperbolic component $\cH_i$ of
  disjoint type, for $i=0,1$ and $\eta<1$. 
  According to Milnor $\cH_i$ contains a unique pcf polynomial $g_{i,\star}$~\cite{MilnorHyperbolicComponents}.
  Since the Caratheodory loops of $g_{i,\star}$ and $g_i$ agree,
  an admissible right supporting critical portrait for $g_{i,\star}$ is given by
  the pairs  $\{ \vartheta', \vartheta'' \}$ and $ \{ \beta', \beta''\} $.
  According to Poirier it follows that $g_{0,\star}= g_{1,\star}$. Therefore $\cH_0 =\cH_1$. Since the hyperbolic component $\cH_0= \cH_1$ is parametrized by
  the multiplier of the attracting cycle (not equal to the orbit of $+a$) it follows that $g_0=g_1$.   
\end{proof}

\begin{proof}[Proof of Theorem~\ref{thr:parabolic-immigration}]
  Suppose that $f_0$ is a parabolic map  as in the statement of
  the theorem. That is, $R_{f_0}(\theta')$ is a period $p$ ray landing at a parabolic periodic point $z_0$ relatively supporting $V(-a(f_0))$ and
$V(a_k(f_0))$. Here $\theta'=\theta +1/3$ or $\theta-1/3$, and $R_{f_0}(\theta\pm1/3)$ lands in $\partial V(-a(f_0))$.

It will be convenient to consider the  period $p$ arguments
$\alpha', \beta', \gamma' \in \QS$ such that the corresponding rays land at
$z_0$ and  the sectors at $z_0$ containing $a_k(f_0)$ and $-a(f_0)$ are
$\sector_{f_0}(z_0,\alpha',\beta')$ and $\sector_{f_0}(z_0,\beta',\gamma')$, maybe not respectively.
Obviously, $\theta'= \alpha'$ or $\beta'$ or $\gamma'$. 
If $-a(f_0) \in \sector_{f_0}(z_0,\alpha',\beta')$, then $\beta'$ is right supporting
for $-a(f_0)$. Otherwise, $-a(f_0) \in \sector_{f_0}(z_0,\beta',\gamma')$ and $\beta'$ is left supporting for $-a(f_0)$.

Denote by $w_0\neq z_0$ the point in $\partial V(-a(f_0))$ such that
$f_0(w_0) =f_0(z_0)$. Denote by $\alpha'',\beta'',\gamma''$ the arguments of the rays landing at $w_0$ such that $3 \alpha'' = 3 \alpha'$, $3 \beta''
= 3 \beta'$, $3 \gamma'' = 3 \gamma'$.
For $\theta, \theta'$ as in the statement of the theorem
it follows that $\theta'=\alpha',\beta'$ or $\gamma'$ and
$\{\theta+1/3,\theta-1/3\}$ is 
$\{ \alpha',\alpha''\}, \{ \beta',\beta''\},$ or $\{ \gamma',\gamma''\}$, respectively.

Observe that if $\beta'$ is left supporting for $-a(f_0)$, then
$$\gamma', \alpha', \beta', \gamma'', \alpha'', \beta'' $$
are in cyclic order.
Otherwise, if $\beta'$ is right supporting for $-a(f_0)$, then
$$\beta', \gamma', \alpha', \beta'', \gamma'', \alpha'' $$
are in cyclic order.

\smallskip
To find a hyperbolic component with $f_0$ in its boundary, loosely speaking, we
``unpinch'' $\beta'$. To formalize this idea we use the language of laminations.
According to~\cite{kiwi04}, a closed equivalence relation $\lambda$ in $\RZ$ such that all equivalence classes are finite is called a \emph{real lamination of degree~$3$} if the following holds:
\begin{itemize}
\item If $A$ is an equivalence class then $m_3(A)$ is an equivalence class.
\item If $A$ is an equivalence class and $]s,t[$ is a connected component of $\RZ \setminus A$, then $]3s,3t[$ is a connected component of $\RZ \setminus m_3(A)$ (i.e. $m_3$ is consecutive preserving on classes).
  \item If $A$ and $B$ are distinct equivalence classes, then $A$ is contained in a connected component of $\RZ \setminus B$ (i.e. classes are \emph{unlinked}). 
\end{itemize}
Given a map $f \in \cC(\cS_p)$ with locally connected Julia set,
the equivalence relation $\lambda(f)$ in $\RZ$ that identifies $s$ and $t$ if and only if $R_{f}(s)$ and $R_{f}(t)$ land at the same point is a real lamination.
We say that $\lambda(f)$ is the \emph{real lamination of $f$}.

Since $f_0$ is geometrically finite its Julia set is locally connected
and therefore  $\lambda(f_0)$ is a real lamination.
Let $\lambda$ be the equivalence relation in $\RZ$
that identifies two distinct arguments $s,t$ if and only if $s,t$ are
$\lambda(f_0)$-equivalent and both $s$ and $t$ are not in the grand orbit of $\beta'$.
It is not difficult to check that $\lambda$ is a real lamination.
For this real lamination $\lambda$ we have the following:

\begin{lemma} There exists a hyperbolic pcf polynomial
  $f_\star \in \cS_p$ such that
  $\lambda = \lambda(\fstar)$,  $a_k(\fstar)=-a(\fstar)$ and the rays with arguments $\alpha',\beta',\gamma', \alpha'', \beta'',\gamma''$ land in $\partial V(a_k(\fstar))$.
  Moreover, $\alpha'$ is left supporting, $\gamma'$ is right supporting and,
  $\beta'$ is both left and right supporting for $V(a_k(\fstar))$. 
\end{lemma}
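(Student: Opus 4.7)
The plan is to realize the lamination $\lambda$ by a polynomial using the polynomial–lamination dictionary of~\cite{kiwi04}, identify this polynomial as the center of a type $A$ or $B$ hyperbolic component, and read off the supporting properties of the six rays from the combinatorics of $\lambda$.

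First I would invoke the realization theorem from~\cite{kiwi04}: an admissible degree~$3$ real lamination with finite postcritical classes is realized as $\lambda(g)$ for a polynomial $g$, unique up to affine conjugation once a critical marking is prescribed. Admissibility of $\lambda$ is inherited from $\lambda(f_0)$ because $\lambda$ differs from $\lambda(f_0)$ only by detaching the $m_3$-grand orbit of $\beta'$, an operation that preserves finiteness, forward invariance, the consecutive-preserving property, and unlinkedness. The critical marking we feed in collapses the classes of $z_0$ and $w_0$ of $\lambda(f_0)$ into a single critical cluster, namely the two classes $\{\alpha',\gamma'\}$ and $\{\alpha'',\gamma''\}$ glued together by the absence of the $\beta'$-identification. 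With $\fstar$ realizing $\lambda$, the pair $\alpha',\gamma'$ is still a single class, so $R_{\fstar}(\alpha')$ and $R_{\fstar}(\gamma')$ co-land at a point $z_{\star}$; likewise $R_{\fstar}(\alpha'')$ and $R_{\fstar}(\gamma'')$ co-land at $w_{\star}$; whereas $R_{\fstar}(\beta')$ and $R_{\fstar}(\beta'')$ each land alone.

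Second, I would argue that the two Fatou components $V(a_k(f_0))$ and $V(-a(f_0))$, which in $f_0$ were pinched at $z_0$, correspond in $\fstar$ to a single bounded Fatou component $U$ whose boundary carries the six landing points in the cyclic order inherited from $f_0$. Counting local degrees on $\partial U$ via $\fstar^p$ forces $U$ to absorb the free critical point: $a_k(\fstar)=-a(\fstar)\in U$, with return degree $3$ if $k=0$ (type~$A$) and $4$ otherwise (type~$B$). Hyperbolicity follows because $f_0$ had exactly one parabolic cycle, at $z_0$, and the unpinching of $\beta'$ destroys precisely that cycle. Thus $\fstar$ is the pcf center of a type $A$ or $B$ hyperbolic component of $\cS_p$.

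Third, the supporting properties fall out of the cyclic ordering of the six rays around $\partial U$. Since $\alpha',\gamma'$ co-land at $z_{\star}$ in the same cyclic arrangement as they did around $z_0$ in $f_0$, and the sector at $z_\star$ containing $U$ inherits its position from the union of the two old sectors $\sector_{f_0}(z_0,\alpha',\beta')$ and $\sector_{f_0}(z_0,\beta',\gamma')$, the argument $\alpha'$ is left supporting and $\gamma'$ is right supporting for $V(a_k(\fstar))=U$. The argument $\beta'$ lands at a boundary point lying strictly between $z_\star$ and $w_\star$ in $\partial U$ with no other ray co-landing with it, so it supports $U$ both on the left and on the right. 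The main obstacle is the realization step itself: formulating admissibility of $\lambda$ in the precise language of~\cite{kiwi04}, choosing the critical portrait compatible with a \emph{hyperbolic} pcf polynomial, and ruling out any stray parabolic behaviour — the last being controlled by the fact that the unique parabolic cycle of $f_0$ is exactly what the unpinching at $\beta'$ annihilates.
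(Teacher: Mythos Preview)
Your overall strategy---realize $\lambda$ by a polynomial via~\cite{kiwi04} and then argue that the two Fatou components $V(a_k(f_0))$ and $V(-a(f_0))$ merge into a single critical periodic component---is exactly the paper's approach. However, there is one substantial gap and one minor misstep.

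The substantial gap is that you never verify that the merged Fatou component $U$ has period \emph{exactly} $p$. Your degree-counting line (``return degree $3$ if $k=0$ and $4$ otherwise'') presupposes this, and without it you cannot conclude that $\fstar\in\cS_p$. A priori, the unpinching could produce a component of some proper divisor period. The paper devotes a full paragraph to this: it first observes that the set $C$ of arguments landing on $\partial V_{f_0}(-a(f_0))\cup\partial V_{f_0}(a_k(f_0))$ all map into $\partial U$, giving period at most $p$; then it rules out smaller period by a minimality argument on the angular widths $|]3^j\alpha',3^j\gamma'[|$, using that every sector based at a periodic point must contain a postcritical point. This combinatorial step is the heart of the lemma and is entirely absent from your sketch.

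The minor misstep is your justification of hyperbolicity. You argue heuristically that unpinching ``destroys'' the parabolic cycle, but this is neither rigorous nor necessary: the realization theorem you invoke from~\cite{kiwi04} already delivers a polynomial all of whose cycles are repelling or superattracting, so hyperbolicity comes for free once the Julia set is critical-point-free. Relatedly, your assertion that $\{\alpha',\gamma'\}$ remains a single $\lambda$-class tacitly assumes neither lies in the $m_3$-grand orbit of $\beta'$; the paper sidesteps this by arguing only that all six arguments land on $\partial U$ (via the lamination comparison), not that specific pairs co-land, and then reads the supporting properties directly from the definition of $\lambda$.
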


\begin{proof}
  Since every $\lambda(f_0)$-class maps bijectively onto its image the same occurs with $\lambda$-classes. In particular, the action of $m_3$ on $(\RZ)/\lambda$
  has no periodic topological circle with bijective return map.
  Thus, we may apply~\cite[Theorem~1, Lemma 6.34]{kiwi04},
  to obtain a polynomial $f$ such that
  $\lambda(f)= \lambda$,  every cycle of $f$ is repelling or superattracting
  and every Fatou critical point is eventually periodic.
  Since the Julia set of $f$ is critical point free,
  $f$ is a pcf hyperbolic map with locally connected Julia set naturally homeomorphic to $(\RZ)/\lambda$. In a locally connected
Julia set, the landing point of $R_f(s)$ and $R_f(t)$ lie in the boundary of the same bounded Fatou component if and only there is no $\lambda(f)$-class $A$
such that $s,t$ are in different components of $\RZ \setminus A$.
Hence if $R_{f_0}(s)$ and $R_{f_0}(t)$ land in the boundary of a bounded Fatou component of $f_0$, then the corresponding rays $R_{f}(s)$ and $R_{f}(t)$ land
in the boundary of some bounded Fatou component of $f$. 
It follows that $\alpha',\beta',\gamma', \alpha'', \beta'',\gamma''$ land in the boundary of a Fatou component $V_f(\omega)$
which has to be periodic and contain a critical point $\omega$.

We claim that the period of $V_f(\omega)$ is $p$. It will be convenient to introduce the Caratheodory loops $\gamma: \RZ \to J(f)$ and $\gamma_0 : \RZ \to J(f_0)$ that map an argument $t$ to the landing point of the ray $R_f(t)$ and $R_{f_0}(t)$, respectively. 
Let $C \subset \RZ$ be the set formed by all arguments $t$ such that
$\gamma_0 (t)$ belongs to
$\partial V_{f_0}(-a(f_0))$ or to $ \partial V_{f_0}(a_k(f_0))$. From the previous paragraph, $\gamma(C) \subset \partial V_f(\omega)$. We conclude that
$V_f(\omega)$ has period at most $p$.
To rule out the possibility of a period
strictly smaller than $p$, let $0 \le j <p$ be such that $]3^j \alpha',3^j \gamma'[$ has minimal length. It is sufficient to show that the region between the rays with arguments $3^j \alpha'$ and $3^j \gamma'$ contains the
component $V_{f_0}(a_j(f_0))$ but no other component in its cycle.
By contradiction suppose that
$Z_{f_0} (f_0^j(z_0), 3^j \alpha', 3^j \gamma')$ contains
$V_{f_0}(a_n(f_0))$ for some $n \neq j$, then choose $n$ such that
the $]3^n \gamma', 3^n \alpha'[$ has minimal length and
$]3^n \gamma', 3^n \alpha'[ \subset ]3^j \alpha', 3^j \gamma'[$.
Since every sector contains a postcritical point, the region 
$Z_{f_0} (f_0^n(z_0), 3^n \gamma', 3^n \alpha')$
must contain a postcritical point say $a_m(f_0)$ with $m \neq n,j$.
Therefore, either  $]3^n \alpha', 3^n \gamma'[$
or $]3^m \gamma', 3^m \alpha'[$  is contained in $]3^j \alpha', 3^j \gamma'[$,
which contradicts the choice of $j$ and $n$ and implies that the period of $V_f(\omega)$ is $p$ as claimed.

\smallskip
Recall that $\gamma(C) \subset \partial V_f(\omega)$. If $k=0$ we conclude that $\omega$ is a double critical point and $f$ is conjugate to a hyperbolic pcf map $f_\star \in \cS_p$ with $-a(f_\star) = a_0(f_\star)$. If $k \neq 0$, then $V_f(\omega)$ is the
image under the $k$-th iterate of a critical Fatou component $V_f(\omega')$
and $f$ is conjugate to a map
$f_\star \in \cS_p$ such that the labeling of the critical points $\omega$ and $\omega'$ correspond to $-a(f_\star)$ and
$a(f_\star)$.

The fact that the arguments $\alpha', \beta'$ and $\gamma'$ have the supporting properties of the statement is a direct consequence of the definition of $\lambda$.
\end{proof}

To finish the proof of Theorem~\ref{thr:parabolic-immigration}
let us assume that $\beta'$ is left supporting for $-a(f_0)$.
The case in which is left supporting follows along similar lines.
Under this assumption, $\vartheta'=3^{p-k} \alpha'$ is left supporting for $V(a(f_0))$.
Let $\vartheta'' = \vartheta'+1/3$ or $\vartheta'-1/3$ be such that
$\vartheta'$ and $\vartheta''$ are in the same component of $\RZ \setminus \{\beta',\beta''\}$.
It follows that
$(\{\beta',\beta''\}, \{\vartheta',\vartheta''\})$ is an admissible
critical marking for $f_0$. 

Let $\cH$ be the hyperbolic component centered at the map $\fstar$ given by the lemma.  Given $f \in \cH$,   denote by $z_\alpha(f)$
(resp. $z_\beta(f)$, $w_\alpha(f)$, $w_\beta(f)$) the landing point of the ray of $f$ with argument $\alpha'$ (resp. $\beta'$, $\alpha''$, $\beta''$).
Then $z_\beta(f)$ is a successor of 
$z_\alpha(f)$ among the $f^p$-fixed points in $\partial V(a_k(f))$.
By Proposition~\ref{sectorH}, there exists a unique $0$-parameter internal ray $\cI$ of $\cH$ such that for all $f \neq \fstar$ along $\cI$
the dynamical  ray  $I^-_{f,a_k}(0)$
lands at $z_\alpha(f)$. The same proposition guarantees that
$I^+_{f,a_k}(0)$ lands at $z_\beta(f)$.
From Proposition~\ref{internal-landing-0}, it follows that the landing point $g$ of this ray
$\cI$ is a parabolic map  such that the ray  $R_g(\beta')$
lands at a parabolic point $z_0(g)$, it is right supporting for
$V(-a(g))$, and the ray $R_g(\beta'')$ lands at
$w_0 (g) \in \partial V(-a(g))$.  Moreover, $R_g(\alpha')$ lands at
$z_0(g)$ and it is right supporting for $a_k(g)$.
It follows that $(\{\beta',\beta''\}, \{\vartheta',\vartheta''\})$ is also
an admissible
critical marking for $g$. Thus $g=f_0$.

To finish the proof we check that $\cI$ is one of the
rays $\cI^\pm$ that bound the sector $S(\cH,\theta')$.
Indeed, applying Proposition~\ref{sectorH}, when $\theta' =\alpha'$ or $\gamma'$,  we conclude that $\cI = \cI^-$, otherwise, $\theta'=\beta'$ and it follows that $\cI = \cI^+$. 
\end{proof}

\subsection{Prerepelling immigration}
\label{s:pcf-immigration}

\begin{theorem}
  \label{thr:prerepelling-immigration}
  Let $f_0 \in \partial \cC(\cS_p)$ be a pcf map with take-off argument $\theta$.
  Suppose that $\theta$ is also a period $p$ argument which supports $V(a(f_0))$. 
  Then there exists a type $A$ hyperbolic component $\cH$ centered at a map $\fstar$
  such that the following hold:

  (1)  $R_\fstar (\theta)$ supports  $V(-a(f_\star))$,

  (2) the $1/2$-internal ray $\cI$ of $\cH$ contained in 
$\overline{S(\cH,\theta)}$ lands at $f_0$.
\end{theorem}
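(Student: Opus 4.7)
The strategy parallels the proof of Theorem~\ref{thr:parabolic-immigration}: extract the combinatorial data at $f_0$, construct a type~$A$ pcf center $\fstar$ via a lamination argument, and then identify $f_0$ as the landing point of the appropriate $1/2$-internal ray using Propositions~\ref{sectorH} and~\ref{internal-landing-half} together with Poirier's uniqueness theorem for critically marked pcf polynomials~\cite{PoirierCP}.

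First, I record the combinatorial data at $f_0$. Since $\theta$ is a take-off argument supporting $V(a(f_0))$, the rays $R_{f_0}(\theta+1/3)$ and $R_{f_0}(\theta-1/3)$ both land at $-a(f_0)\in\partial V(a(f_0))$, while $R_{f_0}(\theta)$ lands at a repelling period-$p$ point $\zeta\in\partial V(a(f_0))$ with $f_0(\zeta)=f_0(-a(f_0))$; say $\theta$ is left-supporting, the right case being analogous. Denote by $\{\vartheta',\vartheta''\}$ the pair of arguments of the rays of $f_0$ landing at the endpoint of the internal ray $I_{f_0,a}(0)$, so that $(\{\theta+1/3,\theta-1/3\},\{\vartheta',\vartheta''\})$ is an admissible critical marking for $f_0$ in the sense used in the proof of Theorem~\ref{thr:parabolic-immigration}.

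Second, I produce a type~$A$ pcf polynomial $\fstar\in\cS_p$. Define the equivalence relation $\lambda$ on $\RZ$ by declaring two distinct arguments $s,t$ to be $\lambda$-equivalent iff they are $\lambda(f_0)$-equivalent and neither lies in the grand orbit under $m_3$ of the class $\{\theta+1/3,\theta-1/3\}$. As in the proof of Theorem~\ref{thr:parabolic-immigration}, unlinking and consecutive preservation carry over from $\lambda(f_0)$, so $\lambda$ is a real lamination of degree~$3$. By Kiwi's realization theorem~\cite[Theorem~1, Lemma~6.34]{kiwi04}, $\lambda=\lambda(f)$ for some pcf polynomial $f$ all of whose periodic cycles are repelling or superattracting and all of whose Fatou critical points are eventually periodic. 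The absence of the preperiodic class formerly located at $-a(f_0)$ forces the two critical points of $f$ to coalesce inside a single superattracting periodic Fatou component of period~$p$ and local degree~$3$; hence $f$ is of type~$A$. Normalizing yields $\fstar\in\cS_p$ with $-a(\fstar)=a(\fstar)$, and by construction $R_\fstar(\theta)$ supports $V(a(\fstar))=V(-a(\fstar))$ at an $f_\star^p$-fixed boundary point, which is conclusion~(1).

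Third, let $\cH$ be the type~$A$ hyperbolic component centered at $\fstar$ and let $\cI$ be the $1/2$-internal ray of $\cH$ contained in $\overline{S(\cH,\theta)}$; existence and uniqueness of $\cI$ follow from Propositions~\ref{parametrizationH} and~\ref{sectorH} together with the supporting convention fixed at $\fstar$. By Proposition~\ref{internal-landing-half}, $\cI$ lands at a pcf map $g\in\partial\cH$ such that $-a(g)\in\partial V(a(g))$ is the landing point of $I_{g,a}(1/2)$, both $R_g(\theta+1/3)$ and $R_g(\theta-1/3)$ land at $-a(g)$, and $R_g(\theta)$ supports $V(a(g))$ at a repelling period-$p$ boundary point $\zeta(g)$. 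Consequently $g$ admits the same admissible critical marking $(\{\theta+1/3,\theta-1/3\},\{\vartheta',\vartheta''\})$ as $f_0$, and Poirier's uniqueness theorem~\cite{PoirierCP} forces $g=f_0$, which is conclusion~(2).

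The main difficulty lies in the second step: one must verify that the modified lamination $\lambda$ is a valid degree-$3$ real lamination and, more importantly, that its Kiwi realization is of type~$A$ rather than of type~$C$ (capture) or type~$D$ (disjoint). The unlinking and consecutive-preservation checks parallel the parabolic immigration argument, while the type~$A$ identification requires a combinatorial count showing that, after the preperiodic class at $-a(f_0)$ is discarded, both critical points are forced into the same superattracting cycle of local degree~$3$, rather than landing in a strictly preperiodic Fatou component or in a disjoint superattracting cycle.
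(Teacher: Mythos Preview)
Your overall architecture matches the paper's: produce a type~$A$ center $f_\star$, land the $1/2$-internal ray of $\cH$ in $\overline{S(\cH,\theta)}$ at some pcf map $g$ via Proposition~\ref{internal-landing-half}, and conclude $g=f_0$ by Poirier's uniqueness. The divergence is entirely in the construction of $f_\star$, and there your argument has a genuine gap.

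The paper does \emph{not} use a lamination surgery here. It exploits the fact that a type~$A$ center is affinely conjugate to a unicritical cubic $z\mapsto z^3+c$, so $f_\star$ can be located directly in the multibrot locus~$\cM_3$: one verifies (using the combinatorics of $f_0$) that $\theta$ is the unique argument with its $m_3$-itinerary relative to the partition $[\theta,\theta+1/3[\,\cup\,[\theta+1/3,\theta+2/3[\,\cup\,[\theta+2/3,\theta[$, so that $\{\theta,\theta+1/3,\theta+2/3\}$ is an admissible Poirier marking realized by a unique $f_\star(z)=z^3+c_\star$ with $\theta$ supporting $V(-a(f_\star))$. This bypasses any realization theorem for laminations.

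Your lamination approach, modeled on the parabolic immigration proof, removes the grand orbit of the preperiodic class $\{\theta+1/3,\theta-1/3\}$ from $\lambda(f_0)$. The assertion that the Kiwi realization of the resulting $\lambda$ is type~$A$ is the crux, and you do not prove it; you only restate it as the ``main difficulty''. In the parabolic case the removed argument $\beta'$ is periodic and visibly separates two adjacent \emph{periodic} Fatou components meeting at the parabolic point, so unpinching merges them and one reads off the new return degree. Here the removed class is strictly preperiodic and sits at a Julia critical point on $\partial V(a(f_0))$; why its removal should force both critical points of the realization into a single period~$p$ component of local degree~$3$ (rather than, say, producing a capture configuration) requires a degree/gap argument you have not supplied. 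Note also that the grand orbit of $\{\theta\pm1/3\}$ contains the entire periodic orbit of $\theta$ itself, so your $\lambda$ alters more of $\lambda(f_0)$ than you may intend.

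A smaller point: your marking $\{\vartheta',\vartheta''\}$ for the Fatou critical point $a(f_0)$ is described as the pair of arguments landing at the endpoint of $I_{f_0,a}(0)$, but that endpoint is $\zeta$ and may well have only one ray. The Poirier marking the paper uses for $a(f_0)$ is the pair $\{\theta,\theta-1/3\}$ (for the left-supporting convention): two arguments with common image $3\theta$, both supporting $V(a(f_0))$ but landing at the distinct boundary points $\zeta$ and $-a(f_0)$.
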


Observe that due to Theorem~\ref{thr:arrival-preperiodic-connection}
a landing point $f_0$ of a parameter ray with a preperiodic ray connection between $-a(f)$ and $a(f)$ fulfills the hypothesis of the previous theorem. 

\begin{proof}
Consider a map $f_0$ as in the statement of Theorem~\ref{thr:prerepelling-immigration}. For simplicity of the exposition we assume that the take-off argument 
$\theta'$ is left supporting for $V(a(f_0))$. Note that the rays $R_{f_0}(\theta' \pm 1/3)$ land at $-a(f_0) \in \partial V(a(f_0))$. Also $\theta'-1/3$ is left supporting for $V(a(f_0))$. The map $f_0$ is pcf so is completely determined by a left supporting critical marking~\cite{PoirierCP}. That is, if a pcf map $g \in \cS_p$ 
is such that $\{\theta',\theta'-1/3\}$ are left supporting arguments for  $V(a(g))$ and $\{\theta'+1/3,\theta'-1/3\}$ land at $-a(g)$, then $g=f_0$.

Either from Poirier's work or from the combinatorics of the multibrot set $\cM_3$, which is the connectedness locus of the family $z \mapsto z^3 +c$, it follows that there exists $f_\star (z) = z^3 + c_\star$ such that the critical point 
$\pm a(f_\star)=0$ has period $p$ and $\theta'$ is supporting for $V(-a(f_\star))$. In fact, one can use the combinatorics of $f_0$ to show that if an argument $t$ has the same itinerary than $\theta'$ according to the partition $[\theta',\theta'+1/3[, [\theta'+1/3,\theta'+2/3[, [\theta'+2/3,\theta'[$ of $\RZ$, then
$t=\theta'$. That is, $\{\theta', \theta'+1/3,\theta'+2/3\}$ is a left supporting critical marking of a pcf map $f_\star$ according to Poirier. Alternatively, using the results in~\cite{RationalMultibrot} it follows that the multibrot
external ray with argument $3\theta'$ lands at the root of a hyperbolic component (of the family $z \mapsto z^3 +c$) centered at a pcf map $f_\star$ with the required properties. 

Let $\cH \subset \cS_p$ be the
hyperbolic component centered at $f_\star$ and 
$\cI$ its $1/2$-internal ray contained in  $\overline{S(\cH,\theta')}$.
Let $g$ be the landing point of $\cI$. 
In the notation of \S~\ref{s:landing-half}, we have $\theta'=s(z)$. From
Proposition~\ref{internal-landing-half} we conclude that $\{\theta',\theta'-1/3\}$ are supporting arguments for $+a(g)$ and $\{\theta'+1/3,\theta'-1/3\}$ land at $-a(g)$. Therefore $g=f_0$.
\end{proof}

\section{Proof of Theorem~\ref{thr:main}}
\label{s:final}

Consider an escape component $\cU$ with undistinguished kneading word $\kappa$ (i.e. $\kappa \neq 1^{p-1}0$).
Let $\mu$ denote the maximal return time to $D_0$.
According to Theorem \ref{thr:ray-connection}, there exists $f\in\cU$ with a ray connection $R_f^\sigma(\theta')$ between $-a(f)$ and $a_k(f)\in D_0$ such that the return time of $a_k(f)$ to $D_0$ is maximal. Denote by $\cR_\cU(\theta)$ the parameter ray containing $f$. Observe that
$\theta'=\theta+1/3$ or $\theta-1/3$.


According to Theorem \ref{thr:arrival-periodic-connection} and Theorem \ref{thr:arrival-preperiodic-connection}, the ray $\cR_\cU(\theta)$ lands at a parabolic or a pcf map $f_0$ such that $\kappa(f_0,\theta)=\kappa$. The map $f_0$ is pcf only when $k=0$ and $\theta'$ is preperiodic.

From Theorem \ref{thr:parabolic-immigration} when $\theta'$ is periodic or from Theorem \ref{thr:prerepelling-immigration} when $\theta'$ is preperiodic,
$f_0$ is in the boundary of a type $A$ or $B$ hyperbolic component $\cH$.
The component is of type $A$ if $k=0$ and of type $B$ if $k \neq 0$.

If $k \neq 0$, we may apply Theorem \ref{thr:trekkingB} to find in  $\partial \cH$ a parabolic map $f_1$
with take-off argument $\theta$ such that $\kappa':=\kappa(f_1,\theta)$ is obtained from $\kappa(f_0,\theta)$ by a type $B$ move.
If $k=0$, we apply Corollary \ref{cor:trekkingA} to find in  $\partial \cH$ a parabolic or pcf map $f_1$ with take-off argument $\vartheta = \theta+1/3$
or $\theta-1/3$ such that $\kappa':=\kappa(f_1,\vartheta)$ is obtained from $\kappa(f_0,\theta)$ by a type $A$ move.


Applying Theorem \ref{thr:take-off-parabolic} when $f_1$ is a parabolic map and, theorems~\ref{thr:bms} and~\ref{thr:arrival-preperiodic-connection} when $f_1$ is a pcf map
we obtain a new escape region  $\cU'$ such that $\kappa(\cU')=\kappa'$ and $f_1\in\partial \cU'$.
That is, $\cU'$ and $\cU$ are in the same connected component of $\cS_p$.

Recall that there is only one escape region with the distinguished kneading word $1^{p-1}0$.
The theorem then follows from the  lemma below
which shows that after successively applying at most $p^2+p$ times type $A$ or $B$ moves  
we obtain the distinguished kneading word $1^{p-1}0$.

\begin{lemma}
  \label{lem:typeAB}
  Consider any sequence $\kappa_1, \dots, \kappa_{\ell}$ of kneading words such that
  $\kappa_{i+1}$ is obtained from $\kappa_i$ by a type $A$ or a type $B$ move.
  Then $\ell \le p^2+p$. 
\end{lemma}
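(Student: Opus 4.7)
My plan is to bound separately the number $a$ of type $A$ moves and the number $b$ of type $B$ moves occurring in the sequence, so that $\ell = 1 + a + b$. Write $\mu(\kappa) \in \{1,\dots,p\}$ for the maximal return time, so that $\mu(\kappa) = p$ precisely when $\kappa$ is the distinguished word. The starting point is the trivial remark that each type $B$ move strictly increases $\mu$: it replaces a factor $0 \cdot 1^{\mu - 1}$ by $1^\mu$ and thereby creates a run of $1$'s of length at least $\mu$.

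The heart of the argument will be the following monotonicity claim: if $i < i'$ are two consecutive type $A$ steps (with no type $A$ move strictly between them) with values $m = \mu(\kappa_i)$ and $m' = \mu(\kappa_{i'})$, then $m' > m$. To prove this I would observe that the post-$A$ word $\kappa_{i+1}$ has positions $1,\dots,m-1$ equal to $0$ and position $m$ equal to $1$, while the pre-$A$ word $\kappa_{i'}$ has positions $1,\dots,m'-1$ equal to $1$ and position $m'$ equal to $0$. Since the intermediate moves are all type $B$, and those only turn $0$'s into $1$'s, position $m$ is still $1$ in $\kappa_{i'}$; this already rules out $m' = m$. The delicate case is $m' < m$: position $m'$ is then $0$ in $\kappa_{i+1}$ (it lies among the zeroed-out initial positions) and $0$ in $\kappa_{i'}$, hence $0$ throughout, yet some intermediate type $B$ move must flip position $m' - 1$ from $0$ to $1$, and such a move requires position $m'$ to be a $1$ at that moment, producing the contradiction.

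Granted this monotonicity, the $\mu$-values at the type $A$ steps form a strictly increasing sequence in $\{1,\dots,p-1\}$ (bounded above by $p-1$ since a word admitting a type $A$ move is necessarily undistinguished), so $a \le p - 1$. To bound $b$, I would telescope $\mu$ along the whole sequence: the net change $\mu(\kappa_\ell) - \mu(\kappa_1) \le p - 1$ equals the total $\mu$-increase along the type $B$ steps, which is at least $b$, plus $\sum_{j=1}^a (b_j - \mu_j)$, where $\mu_j$ and $b_j$ denote $\mu$ just before and just after the $j$-th type $A$ step; since the post-$A$ word always carries the $1$ freshly placed at position $\mu_j$, one has $b_j \ge 2$. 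Hence $b \le (p-1) + a(p-3)$, and combining with $a \le p - 1$ yields $\ell \le p + (p-1)(p-2) = p^2 - 2p + 2 \le p^2 + p$.

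The main obstacle is the delicate case $m' < m$ in the monotonicity claim, which requires keeping careful track of exactly which positions an intermediate type $B$ move is allowed to alter; once this rigidity is established, the remaining counting is straightforward bookkeeping.
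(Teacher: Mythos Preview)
Your argument is correct, and in fact yields the sharper bound $\ell \le p^2 - 2p + 2$. The paper takes a different and somewhat more streamlined route: it introduces a single rank $(b(\kappa), w(\kappa)) \in \{0,\dots,p\} \times \{0,\dots,p-1\}$, where $w(\kappa)$ counts the symbols $1$ and, writing $\kappa = 0^n 1^m 0\cdots$ for $\kappa \neq 0^p$, one sets $b(\kappa) = n+m+1$ (and $b(0^p)=0$); one then checks directly that every type~$A$ or type~$B$ move strictly increases $(b,w)$ in the lexicographic order, giving $\ell \le p(p+1)$ at once. Your approach instead tracks $\mu$ and separates the two kinds of moves: the monotonicity lemma (that $\mu$ strictly increases along the subsequence of type~$A$ indices) requires the position-by-position analysis you describe, but once it is in hand the telescoping is routine. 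The paper's rank function sidesteps that case analysis at the price of a looser constant; your argument trades a little more work in the delicate case $m' < m$ for a tighter final bound.
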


\begin{proof}
  Let us introduce two numbers $b(\kappa)$ and $w(\kappa)$ associated to a word $\kappa = \iota_1 \dots \iota_{p-1} 0$.
  The weight $w(\kappa)$ is simply defined as the number of symbols $1$ in $\kappa$.
  For $\kappa \neq 0^p$ let $n \ge 0$, $m \ge 1$ be such that
  $\kappa = 0^n 1^m 0 \cdots,$
  and define $b(\kappa) = n+m +1$. Let  $b(0^p)=0$. It follows that
  $0 \le b(\kappa) \le p$ and $0 \le w(\kappa) \le p-1$. That
  is,
  $(b(\kappa),w(\kappa)) \in \{0, \dots, p\} \times \{0, \dots,
  p-1\}.$ It is not difficult to check that if $\kappa'$ is obtained
  via a type $A$ or $B$ move from $\kappa \neq 1^{p-1}0$ then, in the
  lexicographic order, $(b(\kappa'), w(\kappa'))$ is greater than
  $(b(\kappa), w(\kappa))$. Moreover, $1^{p-1}0$ is the unique maximal
  kneading word with respect to this order. The lemma follows.
\end{proof}

\bibliographystyle{alpha}
\bibliography{/Users/jkiwi/Dropbox/Papers/index2020}

\end{document}